\newtheorem{theorem}{Theorem}[section]
\newtheorem{definition}{Definition}[section]
\newtheorem{lemma}{Lemma}[section]
\newtheorem{remark}{Remark}[section]
\newtheorem{proposition}{Proposition}[section]
\numberwithin{equation}{section}
\begin{document}
\title[Global well-posedness to the combustion piston problem]
{Global well-posedness of shock front solutions to one-dimensional piston problem for combustion Euler flows}

\author{Kai Hu}
\address{School of Mathematics and Statistics, Southwest University, Chongqing, 400715, China}
\email{hukaimath@swu.edu.cn}

\author{Jie Kuang }
\address{ Innovation Academy for Precision Measurement Science and
Technology, and Wuhan Institute of Physics and Mathematics,
Chinese Academy of Sciences Chinese Academy of Sciences, Wuhan 430071, China}
%\address{ Wuhan Institute of Physics and Mathematics,
%Chinese Academy of Sciences,  Wuhan 430071, China}
\email{jkuang@wipm.ac.cn,\ jkuang@apm.ac.cn }

\keywords{Piston problem, Characteristic boundary, Strong combustion wave, Reaction rate with ignition temperature, Fractional-step wave front tracking scheme, $L^1$-stability and uniqueness}

\subjclass[2010]{35B07, 35B20, 35D30; 76J20, 76L99, 76N10}
\date{\today}

\begin{abstract}
This paper is devoted to the well-posedness theory of piston problem for compressible {combustion} Euler flows with physical ignition condition.
A significant combustion phenomena called detonation will occur provided the reactant is compressed and ignited by a leading shock.
Mathematically, the problem can be formulated as an initial-boundary value problem for hyperbolic balance laws with a large shock front as free boundary.
In present paper, we establish the global well-posedness of entropy solutions via wave front tracking scheme within the framework of $BV\cap L^1$ space.
The main difficulties here stem from the discontinuous source term without uniform dissipation structure, and from the characteristic-boundary associated with degenerate characteristic field.
In dealing with the obstacles caused by ignition temperature,
we develop a modified Glimm-type functional to control the oscillation growth of combustion waves, even if the exothermic source fails to uniformly decay.
As to the characteristic boundary, the degeneracy of contact discontinuity is fully employed to get elegant stability estimates near the piston boundary.
Meanwhile, we devise a weighted Lyapunov functional to balance the nonlinear effects arising from large shock, characteristic boundary and exothermic reaction,
then obtain the $L^1-$stability of combustion wave solutions. Our results reveal that
one dimensional \emph{ZND} detonation waves {supported} by a forward piston are indeed nonlinearly stable under small perturbation in $BV$ sense.
This is the first work on well-posedness of inviscid reacting Euler fluids dominated by ignition temperature.
\end{abstract}

\date{}
\maketitle

\tableofcontents

\section{Introduction and main result}\setcounter{equation}{0}

In Eulerian coordinates, the one-dimensional compressible Euler equations for combustible fluids can be written in the following form
%For reacting fluid, the dynamics is governed by combustion equations as follow.
\begin{eqnarray}\label{eq:1.1}
\left\{
\begin{array}{llll}
\partial_t\rho + \partial_x(\rho u) =0, \\[5pt]
\partial_t(\rho u) + \partial_x(\rho u^2 + p) =0 ,\\[5pt]
\partial_t\big(\rho (\frac{1}{2}u^2+e)\big) + \partial_x\big(\rho u(\frac{1}{2}u^2+e)+pu\big) = \mathfrak{q}_0 \rho Y\phi (T) , \\[5pt]
\partial_t(\rho Y) + \partial_x(\rho u Y) = - \rho Y\phi (T),	
\end{array}
     \right.
\end{eqnarray}
where $\rho$, $u$, $p$, $T$ and $e$ are respectively the density, velocity, pressure, temperature and specific internal energy of the fluid, $Y$ denotes the mass fraction of the reactant in mixed gas, and function $\phi(T)$ stands for the combustion reaction rate. Positive constant $\mathfrak{q}_0$ is the specific binding energy of reactant.

If we choose the density $\rho$ and entropy $S$ as two independent thermodynamical variables, then $p$, $e$ and $T$ can be seen as the functions of $(\rho,S)$, \emph{i.e.},
$p=p(\rho, S), \ e=e(\rho, S), \ T=T(\rho, S)$ through
thermodynamical relation:
\begin{eqnarray}\label{eq:1.2}
TdS=de-\frac{p}{\rho^2}d\rho.
\end{eqnarray}
For ideal polytropic gas, the constitutive relations are represented by
\begin{eqnarray}\label{eq:1.3}
p=\kappa\rho^{\gamma}\textrm{e}^{\frac{S}{c_{\textrm{v}}}},\qquad e=\frac{\kappa}{\gamma-1}\rho^{\gamma-1}\textrm{e}^{\frac{S}{c_{\textrm{v}}}},  \qquad T=\frac{\kappa}{(\gamma-1)c_{\textrm{v}}}\rho^{\gamma-1}\textrm{e}^{\frac{S}{c_{\textsc{v}}}},
\end{eqnarray}
where $\gamma>1$ is the adiabatic exponent, $c_{\textrm{v}}>0$ is the specific heat at constant volume, and $\kappa>0$ is any constant under scaling.

Throughout this paper, we suppose that reaction rate $\phi(T)$ for combustion flows satisfies the following \emph{Arrhenius Law} with physical ignition condition, namely
\begin{equation}\label{eq:1.4}
\phi(T)= \left\{
\begin{aligned}
		& T^{\vartheta}\textrm{e}^{-\frac{\mathfrak{R}}{T}}\qquad &\mbox{if}\quad T>T_{\textrm{i}},\\[5pt]
		& 0 \qquad                         & \mbox{if}\quad T \leq T_{\textrm{i}},
\end{aligned}
\right.
\end{equation}
where $T_{\textrm{i}}>0$ stands for the ignition temperature of reactant, and constants $\vartheta\geq0$, $\mathfrak{R} >0$.
In terms of (\ref{eq:1.4}), the combustion process is switched on-off by reactant temperature.

System \eqref{eq:1.1} with the reaction rate \eqref{eq:1.4} is also called \emph{Zeldovich-von Neumann-D\"{o}ring} model (\emph{i.e.} \emph{ZND}) which is widely applied in combustion theory and related numerical simulation (see \cite{Williams-1985} for more details).
In this paper, we are concerned with the combustion process caused by piston motion in a tube, and aim to investigate the well-posedness of combustion waves for the \emph{ZND} model.
Specifically, a fuel-filled long tube is closed by a piston at one end, and open at the other end.
If we push the piston with a speed $u_{\mathfrak{p}}(t)>0$ towards the combustible gas at rest, then a shock front appears and departs from the piston (see Fig.\ref{fig1}). Since the temperature of the flow increases across the shock front,
chemical reaction will be initiated behind the leading shock. Consequently, a kind of fierce combustion phenomena, called detonation, may occur in the tube.
Its dynamics research has a significant application of detonation engine for hypersonic aircrafts.
%Otherwise, it requires that the temperature of the gas across the shock front is greater than the ignition temperature $T_{\textrm{i}}$.
%\begin{figure}[ht]
%\begin{center}
%\begin{tikzpicture}[scale=0.7]
%\draw [thick][->] (-2, -1)--(4, -1);
%\draw [thick][->] (-1.5, -1.5)--(-1.5, 3);
%\draw [line width=0.05cm](-1.5,-1)to[out=65,in=-150](0, 1.5)to [out=65,in=-150](2.5,2.7);
%\draw [line width=0.02cm][blue](-1.5,-1)to [out=50,in=-175](0.5,0.5)to [out=55,in=-170](2.5,1.2);
%\%draw [thin][<-] (-2.5,0) --(-0.5,0.5);
%%\node at (4,0) {$(\rho_0(x), u_0(x), p_0(x))^{\top}$};
%\node at (4,-1.2) {$x$};
%\node at (-1.7, 3) {$t$};
%\node at (-1.7, -1.2) {$O$};
%\node at (3.2, 2.7) {$\Gamma_\mathfrak{p}$};
%\node at (3.2, 1) {$\Gamma_s$};
%%\node at (-0.9, -0.3) {$x=b(t)$};
%\end{tikzpicture}
%\end{center}
%\caption{Shock front for combustion piston problem}\label{fig1}
%\end{figure}

\begin{figure}[ht]
\begin{minipage}[t]{0.45\textwidth}
\centering
\begin{tikzpicture}[scale=0.7]
\draw [thick][->] (-2, -1)--(4, -1);
\draw [thick][->] (-1.5, -1.5)--(-1.5, 3.3);
\draw [line width=0.05cm](-1.5,-1)to[out=65,in=-120](2.2, 2.8);
\draw [line width=0.02cm][red](-1.5,-1)to [out=40,in=-140](2.8,1.2);
\%draw [thin][<-] (-2.5,0) --(-0.5,0.5);
%\node at (4,0) {$(\rho_0(x), u_0(x), p_0(x))^{\top}$};
\node at (4,-1.3) {$x$};
\node at (-1.8, 3.2) {$t$};
\node at (-1.8, -1.3) {$O$};
\node at (2.8, 2.8) {$\Gamma_\mathfrak{p}$};
\node at (3.3, 1.1) {$\Gamma_{\rm{s}}$};
\node at (1.5, -0.3) {$\Omega_{\mathfrak{p}}$};
\end{tikzpicture}
\caption{Shock front for combustion wave to piston problem}\label{fig1}
\end{minipage}
\begin{minipage}[t]{0.5\textwidth}
\centering
\begin{tikzpicture}[scale=0.7]
\draw [thick][->] (-2, -1)--(4, -1);
\draw [thick][->] (-1.5, -1.5)--(-1.5, 3);
\draw [line width=0.05cm](-1.5,-1)to (2.0,2.7);
\draw [line width=0.02cm][red](-1.5,-1)to(3.2,1.1);
\node at (4,-1.3) {$x$};
\node at (-1.8, 2.9) {$t$};
\node at (-1.8, -1.3) {$O$};
\node at (3.0, 2.8) {$x=\bar{u}_{\mathfrak{p}}t$};
\node at (4.2, 1.1) {$x=\bar{s}_{\mathfrak{p}}t$};
\node at (1.9, -0.5) {$\bar{U}_{\mathfrak{b}, \textrm{r}}$};
\node at (1.9, 1.5) {$\bar{U}_{\mathfrak{b}, \textrm{l}}$};
\end{tikzpicture}
\caption{Shock front for piston problem with constant speed}\label{fig2}
\end{minipage}
\end{figure}

%The evolutionary mechanics and stability/instability of combustion waves are indeed longstanding open problems in
%mathematics and engineering. In present paper, we aim to investigate the global well-posedness of combustion solutions to piston problem
%in one-dimensional situation.
%for $t>0, \ x>\chi_1(t)$. This is well-known as Zeldovich-von Neumann-D\"{o}ring equations, and widely applied in combustion theory and numerical simulation.
%In system (\ref{Combustion equations}) unknown $\mathbf{U} \doteq (U, Y)$, where component $U=(\rho, u, p)^\mathsf{T}$ consists of density $\rho$, velocity $u$,
%pressure $p$ of the flow, and component $Y$ stands for the reactant mass fraction.
%The other state variables include total energy $E$, internal energy $e$, temperature $T$ and entropy $S$. For polytropic gas, they satisfy the relations
%$$E=\frac{1}{2}u^2 + e,\ \ \ e=c_v T, \ \ \ p=K_0 \rho^\gamma \text{e}^{S/c_v},$$
%where the adiabatic constant $\gamma>1$, and constants
%$c_v, K_0>0$. Besides, there are another two quantities, $q$ and $\phi$, to influence the reaction process.
%Positive constant $q$ stands for binding energy.

Mathematically, for given piston velocity $u_{\mathfrak{p}}(t)$, the path of moving piston can be determined by $\chi_\mathfrak{p}(t)=\int_0^t u_\mathfrak{p}(\tau) d\tau$.
Thus, we can define the domain and its boundary respectively by
\begin{eqnarray*}
\Omega_{\mathfrak{p}}\doteq\{(t,x)\in\mathbb{R}^2:t>0, x>\chi_{\mathfrak{p}}(t)\},\quad
\Gamma_{\mathfrak{p}}\doteq\{(t,x)\in\mathbb{R}^{2}:t>0, x=\chi_{\mathfrak{p}}(t)\}.
\end{eqnarray*}
See Fig.\ref{fig1}. Set $U=(\rho, u,p,Y)^{\top}$. Then we prescribe the initial state of the flow in the tube as
\begin{eqnarray}\label{eq:1.5}
U(0,x)=U_{0}(x)=(\rho_0, u_0, p_0,Y_0)^{\top}(x)\qquad \mbox{for}\ x>0,
\end{eqnarray}
and along the path of piston, the flow satisfies
\begin{equation}\label{eq:1.6}
u(t,x)=u_\mathfrak{p}(t) \qquad \mbox{on}\  \Gamma_\mathfrak{p}.
\end{equation}
With this set-up, the piston problem described above can be formulated as the initial-boundary value problem to equations \eqref{eq:1.1} with shock front as a free boundary.
In this paper, we study the entropy solutions to problem \eqref{eq:1.1} and \eqref{eq:1.5}-\eqref{eq:1.6} defined as follows.
\begin{definition}[Entropy solution]\label{D:1.1}
A vector-valued function $U=(\rho, u,p,Y)^{\top}\in BV_{\textnormal{loc}}(\Omega_\mathfrak{p})$ is called an entropy solution to the initial-boundary value problem \eqref{eq:1.1} and \eqref{eq:1.5}-\eqref{eq:1.6}
provided that

\noindent{\textnormal{(i)}} $U(t, x)$ is a weak solution to equations \eqref{eq:1.1} in the distribution sense in $\Omega_\mathfrak{p}$, and satisfies \eqref{eq:1.5}-\eqref{eq:1.6} in the trace sense;

\noindent{\textnormal{(ii)}} $U(t, x)$ satisfies the entropy inequality
\begin{align*}\label{eq:1.7}
\partial_{t}(\rho S)+\partial_{x}(\rho u S)\geq \frac{\mathfrak{q}_0\rho Y\phi(T)}{T}
\end{align*}
in the distribution sense in $\Omega_\mathfrak{p}$.

\end{definition}

%\begin{figure}[ht]
%\begin{center}
%\begin{tikzpicture}[scale=0.7]
%\draw [thin][->] (-2, -1)--(4, -1);
%\draw [thin][->] (-1.5, -1.5)--(-1.5, 3);
%\draw [line width=0.05cm](-1.5,-1)to (2.0,2.7);
%\draw [line width=0.02cm][blue](-1.5,-1)to(3.2,1.1);
%\node at (4,-1.2) {$x$};
%\node at (-1.7, 3) {$t$};
%\node at (-1.7, -1.2) {$O$};
%\node at (2.9, 2.8) {$x=\bar{u}_{\mathfrak{p}}t$};
%\node at (4.1, 1.1) {$x=\bar{s}_{\mathfrak{p}}t$};
%\node at (1.9, -0.5) {$\bar{U}_{\mathfrak{b}, \textrm{r}}$};
%\node at (1.9, 1.5) {$\bar{U}_{\mathfrak{b}, \textrm{l}}$};
%\end{tikzpicture}
%\end{center}
%\caption{Shock front for combustion piston problem with constant speed}\label{fig2}
%\end{figure}

When the piston moves with a constant speed, that is $u_{\mathfrak{p}}(t)=\bar{u}_{\mathfrak{p}}>0$, into the static gas with constant state $\bar{U}_{\mathfrak{b},\textrm{r}}=(\bar{\rho}_{\textrm{r}}, 0,\bar{p}_{\textrm{r}},0)^{\top}$  in the tube (see Fig.\ref{fig2}), then we follow the arguments in \cite{Courant-Friedrichs-1948} and know that the initial-boundary value problem \eqref{eq:1.1} and \eqref{eq:1.5}-\eqref{eq:1.6}
admits a unique piecewise constant solution $\bar{U}_{\mathfrak{b}}=(\bar{\rho}_{\mathfrak{b}}, \bar{u}_{\mathfrak{b}}, \bar{p}_{\mathfrak{b}}, \bar{Y}_{\mathfrak{b}})^{\top}$ which contains a strong 4-shock front $x=\bar{\chi}_{\rm{s}}(t)$  with constant speed $\dot{\bar{\chi}}_{\rm{s}}(t)=\bar{\rm{s}}_{\mathfrak{b}}$ for $t\geq 0$. Precisely,
%In this paper, we are mainly interested in the well-posedness of large shock and detonation. Hence consider the reference solution $(U_\text{ref},Y_\text{ref})$
%as a piecewise constant function. Precisely
\begin{equation}\label{eq:1.8}
\begin{aligned}
\bar{U}_{\mathfrak{b}}(t,x)&= \left\{
\begin{array}{llll}
\bar{U}_{\mathfrak{b},\textrm{l}}\doteq (\bar{\rho}_{\textrm{l}}, \bar{u}_{\textrm{l}}, \bar{p}_{\textrm{l}},0)^{\top}&\quad \mbox{for}\quad x\in( \bar{u}_{\mathfrak{p}}t,\bar{\rm{s}}_{\mathfrak{b}}t),\\[5pt]
\bar{U}_{\mathfrak{b},\textrm{r}}\doteq (\bar{\rho}_{\textrm{r}}, 0, \bar{p}_{\textrm{r}},0)^{\top} & \quad  \mbox{for}\quad x\in(\bar{\rm{s}}_{\mathfrak{b}}t, +\infty),
\end{array}
\right.
\end{aligned}
\end{equation}
where $(\bar{\rho}_{\textrm{l}}, \bar{u}_{\textrm{l}}, \bar{p}_{\textrm{l}},0)$ and $(\bar{\rho}_{\textrm{r}}, 0, \bar{p}_{\textrm{r}},0)$ are both constant states satisfying
\begin{eqnarray*}\label{eq:1.9}
\begin{split}
\bar{\rho}_{\textrm{l}}>\bar{\rho}_{\textrm{r}}>0,\quad \bar{u}_{\textrm{l}}=\bar{u}_{\mathfrak{p}}, \quad \bar{\rm{s}}_{\mathfrak{b}}>\bar{u}_{\textrm{l}}>0.
\end{split}
\end{eqnarray*}
%Let the gas initial velocity take constant $u_r$, while the piston
%move at constant speed $u_l$. Assume velocity $u_l>u_r=0$. As a result,
%states $U_l$ and $U_r$ are connected by a large shock front with speed $\lambda>0$.
Thus, the corresponding temperatures $\bar{T}_{\textrm{l}}$ and $\bar{T}_{\rm{r}}$ (determined by the relations in \eqref{eq:1.3}) respectively for $\bar{U}_{\textrm{l}}$ and $\bar{U}_{\textrm{r}}$
satisfy
\begin{eqnarray*}\label{eq:1.10}
\begin{split}
\bar{T}_{\textrm{l}} > \bar{T}_{\textrm{r}}.
\end{split}
\end{eqnarray*}
In this case, we call the solution $\bar{U}_{\mathfrak{b}}$ with shock front $x=\bar{\rm{s}}_{\mathfrak{b}}t$ as the \emph{background solution} to problem \eqref{eq:1.1} and \eqref{eq:1.5}-\eqref{eq:1.6}. Then, the main purpose in this paper is to investigate the well-posedness of entropy solutions with more general data $(U_0, u_{\mathfrak{p}})$ near the \emph{background} state $(\bar{U}_{\mathfrak{b},\textrm{r}}, \bar{u}_{\mathfrak{p}})$ in the $BV\cap L^{1}$ framework. We begin with some basic assumptions on the initial data $U_{0}(x)$ and velocity $u_{\mathfrak{p}}(t)$ of moving piston.
Assume that
\begin{enumerate}
\item[$(\mathbf{A1})$] the initial data $U_{0}$ defined  in \eqref{eq:1.5} satisfies $U_{0}-\bar{U}_{\mathfrak{b},\textrm{r}}\in (BV\cap L^{1})(\mathbb{R}_{+}; \mathbb{R}^{4})$,
$\inf_{x\in\mathbb{R}_{+}}\rho_0(x)>0$ and $Y_0\in [0,1]$;
\item[$(\mathbf{A2})$] the velocity $u_{\mathfrak{p}}$ of the moving piston defined on $\Gamma_{\mathfrak{p}}$ in \eqref{eq:1.6} satisfies $u_{\mathfrak{p}}-\bar{u}_{\mathfrak{p}}\in (BV\cap L^{1})(\mathbb{R}_{+}; \mathbb{R})$.
\end{enumerate}
For notational convenience, denote the  initial-boundary value problem \eqref{eq:1.1} and \eqref{eq:1.5}-\eqref{eq:1.6} by (\textbf{\emph{IBVP}}).
Our main result in this paper is stated as follows.

\begin{theorem}\label{thm:1.1}
Under the assumptions $(\mathbf{A1})$--$(\mathbf{A2})$, there exists a constant $\epsilon>0$ depending solely on $\bar{U}_{\mathfrak{b}}$
such that the ${(IBVP)}$ has a global entropy solution $U(t,x)$ if the initial-boundary data $(U_{0}(x),u_{\mathfrak{p}}(t))$ satisfy one of the following conditions:
\begin{enumerate}
\item[\textrm{(C1)}] $\bar{T}_{\rm{l}}<T_{\rm{i}}$ and
\begin{eqnarray}\label{eq:1.11}
T.V.\{U^{\textsc{e}}_{0}(\cdot)-\bar{U}^{\textsc{e}}_{\mathfrak{b},\rm{r}}; \mathbb{R}_{+}\}+T.V.\{u_{\mathfrak{p}}(\cdot); \mathbb{R}_{+}\}<\epsilon,
\end{eqnarray}
where $U^{\textsc{e}}_0=(\rho_0,u_0,p_0)^{\top}$ and $\bar{U}^{\textsc{e}}_{\mathfrak{b},\rm{r}}=(\bar{\rho}_{\rm{r}}, 0,\bar{p}_{\rm{r}})^{\top}$;

\item[\textrm{(C2)}] $\bar{T}_{\rm{r}}> T_{\rm{i}}$ and
\begin{eqnarray}\label{eq:1.12}
T.V.\{U_{0}(\cdot)-\bar{U}_{\mathfrak{b},\rm{r}}; \mathbb{R}_{+}\}+T.V.\{u_{\mathfrak{p}}(\cdot); \mathbb{R}_{+}\}<\epsilon;
\end{eqnarray}

\item[\textrm{(C3)}]  $\bar{T}_{\rm{l}}> T_{\rm{i}}>\bar{T}_{\rm{r}}$ and
\begin{eqnarray}\label{eq:1.12}
T.V.\{U_{0}(\cdot)-\bar{U}_{\mathfrak{b},\rm{r}}; \mathbb{R}_{+}\}+T.V.\{u_{\mathfrak{p}}(\cdot); \mathbb{R}_{+}\}+\bar{u}^{-1}_{\mathfrak{p}}\|Y_{0}\|_{L^1(\mathbb{R}_{+})}<\epsilon.
\end{eqnarray}

\end{enumerate}
The solution $U(t,x)$ is the unique limit of approximations obtained by fractional-step wave front tracking algorithm, which contains a strong shock front $x=\chi_{\rm{s}}(t)\in Lip(\mathbb{R}_{+})$ such that $\chi_{\rm{s}}(0)=0$, $\chi_{\mathfrak{p}}(t)<\chi_{\rm{s}}(t)$
and $\dot{\chi}_{\rm{s}}(t)\in BV(\mathbb{R}_{+})$. Furthermore, there holds
\begin{eqnarray}\label{eq:1.14}
\begin{split}
&\sup_{\chi_\mathfrak{p}(t)<x<\chi_{\rm{s}}(t)}|U(t,\cdot)-\bar{U}_{\mathfrak{b},\rm{l}}|+\sup_{x>\chi_{\rm{s}}(t)}|U(t,\cdot)-\bar{U}_{\mathfrak{b}, \rm{r}}|
+\sup_{t>0}|\dot{\chi}_{\rm{s}}(t)-\bar{\rm{s}}_{\mathfrak{b}}|\\
&\quad +T.V.\{U(t,\cdot);(\chi_\mathfrak{p}(t),\chi_{\rm{s}}(t)) \}+T.V.\{U(t,\cdot);(\chi_{\rm{s}}(t), +\infty)\}
 \leq C_{0}\epsilon,
\end{split}
\end{eqnarray}
where the constant $C_0>0$ depends solely on $\bar{U}_{\mathfrak{b}}$.

Assume $V(t,x)=(\tilde{\rho}, \tilde{u}, \tilde{p}, \tilde{Y})^{\top}(t,x)$ is another global entropy solution to ${(IBVP)}$ with the initial-boundary data $({V_{0}(x)},v_{\mathfrak{p}}(t))$ satisfying the similar condition to $(U_{0}(x),u_{\mathfrak{p}}(t))$ from \textnormal{(C1)-(C3)} for $V_0(x)=(\tilde{\rho}_0, \tilde{u}_0, \tilde{p}_0, \tilde{Y}_0)^{\top}(x)$.
 % and velocity $\tilde{u}_{\mathfrak{p}}(t)$ of the moving piston,
Then for any $t>\tilde{t}\geq 0$, there holds the $L^1$-stability estimate
\begin{align}\label{eq:1.16}
\begin{split}
\big\|U(t,\cdot)-V(t,\cdot)\big\|_{L^{1}(\mathbb{R})}
\leq L\Big(\|U(\tilde{t},\cdot)-V(\tilde{t},\cdot)\|_{L^{1}(\mathbb{R})}+\|u_{\mathfrak{p}}(\cdot)-v_{\mathfrak{p}}(\cdot)\|_{L^{1}({[\tilde{t},t]})}\Big),
\end{split}
\end{align}
where $L>0$ is a Lipschitz constant dependent of $\bar{U}_{\mathfrak{b}}$ and {$\|Y_0\|_{L^1(\mathbb{R}_{+})}$, $\|\tilde{Y}_0\|_{L^1(\mathbb{R}_{+})}$}.

\end{theorem}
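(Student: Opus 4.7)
My plan is to run a fractional-step wave front tracking scheme adapted to the free-boundary/characteristic-boundary geometry, derive uniform $BV\cap L^1$ bounds via a carefully weighted Glimm-type functional, and then prove $L^1$-stability through a Bressan--Liu--Yang style Lyapunov functional. First, I would approximate $U_0$ and $u_{\mathfrak p}$ by piecewise constant data, and on each time slab of length $\Delta t$ solve the \emph{homogeneous} Euler system \eqref{eq:1.1} (with source set to zero) by the front tracking algorithm, replacing each Riemann fan by physical and non-physical fronts in the usual way; at the end of each slab I would apply the ODE step by integrating $\partial_t(\rho Y)=-\rho Y\phi(T)$ and $\partial_t(\rho e + \tfrac12\rho u^2)=\mathfrak{q}_0\rho Y\phi(T)$ pointwise, which updates $Y$, $p$ (hence $T$) but preserves $\rho$ and $u$, so no new fronts are created by the ODE step. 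The leading $4$-shock emanating from the origin is tracked as a free boundary and the piston path $\Gamma_{\mathfrak{p}}$ is treated as a characteristic boundary on which only the linearly degenerate $2$-family can be reflected, so reflections produce only $2$-contact fronts whose strengths are controlled by $|u_{\mathfrak p}|$ increments.

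Next, I would introduce a modified Glimm functional of the form
\begin{equation*}
F(t)=L_{\rm{b}}(t)+L_{\rm{a}}(t)+K\,Q(t)+K_1|\sigma_{\rm{s}}(t)-\bar{\rm{s}}_{\mathfrak b}|+K_2\!\!\int_{\chi_{\rm{s}}(t)}^{\infty}\!\!\!\rho Y\,dx,
\end{equation*}
where $L_{\rm{b}}$, $L_{\rm{a}}$ are weighted linear sums of wave strengths behind and ahead of the strong shock (with weights blowing up appropriately across the big shock to absorb reflection coefficients as in Lewicka--Trivisa), $Q$ is the standard quadratic interaction potential (with extra terms pairing fronts of the burnt region with the unburnt mass $\int \rho Y\,dx$), the shock-speed penalty absorbs interactions of weak fronts with the large shock, and the last term provides a reservoir for the exothermic forcing. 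In case (C1) the support of $\phi(T)$ stays empty for small perturbations, so the analysis collapses to a non-reactive piston problem; in case (C2) the ignition switch is inactive because $T$ stays above $T_{\rm i}$, and $\phi$ is smooth so standard perturbation arguments for hyperbolic balance laws with BV source apply; the genuinely new case is (C3), where $\phi$ is discontinuous exactly at fronts crossing the level set $\{T=T_{\rm i}\}$. Here the key observation is that $\|Y(t,\cdot)\|_{L^1}$ is monotonically decreasing along the ODE step (since $\bar u_{\mathfrak p}>0$ pushes unburnt gas towards the shock where it is consumed), and the initial assumption $\bar u_{\mathfrak p}^{-1}\|Y_0\|_{L^1}<\epsilon$ gives the required smallness of the total heat release, allowing $F(t)$ to remain bounded despite the lack of uniform dissipation of the source.

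For each interaction — weak/weak away from the boundary, weak/strong at the big shock, weak/boundary at the piston, and ignition/extinction events where a front crosses $\{T=T_{\rm i}\}$ — I would establish the usual quadratic estimate $\Delta F\lesssim -|\alpha||\beta|+\mathcal O(|\alpha|\|Y\|_{L^1})$; for the boundary reflections the degeneracy of the $2$-field is crucial because the reflection coefficient is exactly $1$ so only the weighting on $L_{\rm b}$ (via the shock weight) keeps $F$ monotone. The ODE step contributes $\Delta F \lesssim \Delta t\cdot \|\phi\|_{L^\infty}\|\rho Y\|_{L^1}$ which is absorbed by the reservoir term. Uniform $BV$ bounds together with the standard Helly/compactness argument (using Lipschitz shock front and BV shock speed from the functional) then yield a subsequence converging in $L^1_{\rm loc}$ to an entropy solution. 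For the $L^1$-stability \eqref{eq:1.16} I would construct, following Bressan--Liu--Yang and its extensions to systems with boundaries (Colombo--Guerra), a Lyapunov functional
\begin{equation*}
\Phi(U,V)=\sum_{i=1}^{4}\int W_i(t,x)\,|q_i(t,x)|\,dx+\Phi_{\rm s}+\Phi_{\rm p},
\end{equation*}
where $q_i$ are the shock/rarefaction shifts of Bressan, $W_i$ are weights depending on both solutions' wave measures and on $\|Y\|_{L^1}+\|\tilde Y\|_{L^1}$ (this is where the $L^1$-norms of $Y_0,\tilde Y_0$ enter the Lipschitz constant $L$), and the boundary correction $\Phi_{\rm p}$ absorbs the difference $u_{\mathfrak p}-v_{\mathfrak p}$ through reflection coefficients at the piston.

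\textbf{Main obstacle.} I expect the hardest single step to be showing that $F$ and $\Phi$ do not grow across \emph{ignition/extinction events} in case (C3): at such events the source term $\phi$ jumps discontinuously, so that even though no new front is created, the residual $L^1$-distance between the exact Riemann solver of $U$ and $V$ (when one has ignited and the other not yet has) can leak mass across the piecewise definition of $\phi$. Controlling this leakage requires the monotone decay of $\|Y\|_{L^1}$ under the forward piston and a careful accounting of the measure $|\{T>T_{\rm i}\}\triangle\{\tilde T>T_{\rm i}\}|$, which is ultimately bounded by the strengths of the $2$-contacts separating the two temperature regions — hence by $T.V.\{u_{\mathfrak p}-v_{\mathfrak p}\}$ and by the difference of initial data. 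This is precisely what allows the source discontinuity to be absorbed into the Lipschitz constant without destroying the Glimm functional or the Lyapunov functional.
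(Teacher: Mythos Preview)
Your overall architecture (fractional-step front tracking, weighted Glimm functional, then Bressan--Liu--Yang Lyapunov functional) matches the paper, but several of the mechanisms you propose are incorrect and would cause the argument to fail.

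\textbf{Boundary reflection.} You write that on $\Gamma_{\mathfrak p}$ ``only the linearly degenerate $2$-family can be reflected, so reflections produce only $2$-contact fronts.'' This is wrong: the piston boundary is characteristic for the $2$- and $3$-fields, and the actual reflection is of incoming $1$-waves into outgoing $4$-waves, with $\alpha_4=\alpha_1^*+\mathcal O(|\alpha_1^*|^2)$ (reflection coefficient $\approx 1$). The crucial structural fact that makes the Glimm functional decrease is not a weight ``blowing up across the big shock,'' but that the reflection coefficient $\kappa_{\rm s}$ for a weak $4$-wave hitting the \emph{strong shock from the left} satisfies $|\kappa_{\rm s}|<1$. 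The product of the boundary and shock reflection coefficients is therefore strictly less than $1$, and the functional uses weights $\theta_1|\kappa_{\rm s}|<\theta_4<\theta_1$ in a term $\mathcal Q_{\mathfrak p}+\mathcal Q_{\rm s}$ to exploit this. Without identifying this inequality your functional will not close.

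\textbf{Reaction step and case (C3).} Your claim that ``no new fronts are created by the ODE step'' is false here: the pointwise update changes $p$ (hence $T$) differently on the two sides of every front, so each $\alpha_i^*$ splits as $\alpha_i^*\mapsto\alpha_1+\alpha_2+\alpha_3+\alpha_4$ with error $\mathcal O(|\alpha_i^*|Y^*\varepsilon)$. Also, under (C3) with small BV perturbation the temperature never crosses $T_{\rm i}$ except at the strong shock, so there are no ``ignition/extinction events'' at weak fronts; your identification of this as the main obstacle, and your proposed control via $|\{T>T_{\rm i}\}\triangle\{\tilde T>T_{\rm i}\}|$, are beside the point. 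The actual difficulty is that behind the shock the source does not decay uniformly in time. The paper handles this by a reservoir term $\mathcal L_{\textsc{by}}(t)=\sum_{k\varepsilon\ge t}(\Upsilon_k^{\mathfrak p}+\Upsilon_k^{\rm s})$ recording \emph{future} reactant consumption at the piston and at the shock, combined with the spatial estimate $\sum_{x_\beta<x_\alpha}|\beta_3^*|\ge Y_\alpha^*-Y(\chi_{\mathfrak p})$; your reservoir $\int_{\chi_{\rm s}}^\infty\rho Y\,dx$ sits in the unburnt region and cannot compensate growth behind the shock.

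\textbf{$L^1$-stability at the boundary.} Because $U$ and $V$ live on different domains, the Lyapunov functional is defined only on $x>\chi_{\mathfrak p}^{\textsc M}(t):=\max\{\chi_{\mathfrak p,1},\chi_{\mathfrak p,2}\}$, and its time derivative produces a boundary term $\sum_i\mathscr E_{i,\mathfrak p}$ that is \emph{linear} in $|q_i^{\mathfrak p+}|$. A generic ``boundary correction $\Phi_{\rm p}$'' will not kill this. The paper cancels it by proving the exact relation $q_4^{\mathfrak p+}=-q_1^{\mathfrak p+}-\tfrac{\gamma+1}{2}\Delta u_{\mathfrak p}^{\textsc M}+\text{h.o.t.}$ (using the degeneracy of the $2$-field), choosing $\ell_4^{\mathfrak p+}=\kappa_0\ell<\ell=\ell_1^{\mathfrak p+}$, and then bounding the residual $|\Delta u_{\mathfrak p}^{\textsc M}|$ via a separate \emph{velocity comparison} along quasi-characteristics (Proposition~5.1), which converts it into $\|u_{\mathfrak p}-v_{\mathfrak p}\|_{L^1}$. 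This quasi-characteristic argument is an essential ingredient that your proposal is missing.
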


%When time $s=0$,
%the stability estimate (\ref{continuity dependence for ZND}) implies the continuous dependence of $\mathbf{U}(t,x)$
%on initial-boundary data.
\begin{remark}\label{rem:1.1}
Throughout this paper, $\|U(t,\cdot)\|_{L^1(\mathbb{R})}$ is defined by
\begin{equation*}
\|U(t,\cdot)\|_{L^1(\mathbb{R})} \doteq \int_{\mathbb{R}} |U_{\rm{ex}}(t,x)| dx,
\end{equation*}
where the extension $U_{\rm{ex}}(t,x)$ is given by
\begin{equation*}
U_{\rm{ex}} (t,x)=
\left\{
\begin{array}{llll}
 U(t,x) \qquad\qquad &\textnormal{if}\quad  x\geq \chi_\mathfrak{p}(t),\\[5pt]
0   \qquad  &\textnormal{if}\quad  x<\chi_\mathfrak{p}(t).
\end{array}
\right.
\end{equation*}

\end{remark}

\begin{remark}\label{rem:1.2}
The solutions under conditions \textnormal{(C1)-(C3)} correspond to entirely unburnt flow, partially ignited flow and completely ignited flow, respectively.
For non-reacting fluid, the variation of $Y_0$ can be arbitrarily large, because the reactant is just transported along particle path, but never amplifies the oscillation of fluid.
For reacting fluid, we require only smallness condition on $T.V.\{Y_0;\mathbb{R}_{+}\}$, but allow the total reactant $\|Y_0\|_{L^1(\mathbb{R}_{+})}$ and $\|\rho_0 Y_0\|_{L^1(\mathbb{R}_{+})}$ suitably large. Thus Theorem \ref{thm:1.1} shows that one-dimensional ZND detonation waves are nonlinearly stable provided the leading shock is strong enough.
\end{remark}

\begin{remark}\label{rem:1.3}
Comparing with the results in \cite{Ding-2018, Kuang-Zhao-2020} that require the adiabatic constant $\gamma$ satisfies $1<\gamma \leq 3$, we only need
 $\gamma>1$ for Theorem \ref{thm:1.1} in this paper. Besides, we also consider more realistic reaction rate $\phi(T)$ which has a discontinuity at the ignition temperature $T_{\rm{i}}$, and establish the global well-posedness of the entropy solution that contains a strong combustion wave front. Therefore, the results in \cite{Ding-2018, Kuang-Zhao-2020} can be seen as special cases for Theorem \ref{thm:1.1} from this point of view.

\end{remark}

%Impose the following conditions on $\mathbf{U}_0=(U_0, Y_0)$ and $u_b$, i.e.
%\begin{align}
%\label{Small data 1 for ZND}   & ||U_0 - U_r ||_{L^1}+ ||Y_0||_{L^1} + ||u_b -u_l||_{L^1}  +  TV(Y_0;(0,+\infty)) < +\infty, \\
%\label{Small data 2 for ZND}   & TV(U_0;(0,+\infty))+TV(u_b; (0,+\infty))<\delta, \\
%\label{Small data 3 for ZND}   & TV(Y_0;(0,+\infty))<\delta, \\
%\label{Small data 4 for ZND}   &  |u_l-u_r|^{-1}\cdot ||Y_0||_{L^1} <\delta,
%\end{align}
%for $0<\delta\ll 1$.
%In order to measure the $L^1$ distance of distinct solutions within identical topology space,
%we extended their domain to entire $(-\infty, +\infty)$ and define
%\begin{equation*}
%U_{\text{ex}} (t,x) \doteq
%\left\{
%\begin{array}{llll}
% U(t,x) \qquad &\mbox{if}\quad\  x\geq \chi_1(t),\\[5pt]
%0   \qquad  &\mbox{if}\quad\  x<\chi_1(t).
%\end{array}
%\right.
%\end{equation*}
%
%Throughout this paper, we employ notation $U$ instead of extension $U_{\text{ex}}$ for convenience. Accordingly, let its $L^1$ norm
%$$||U||_{L_x^1} \doteq \int_{-\infty}^{+\infty} |U_{\text{ex}}| dx .$$
%For any functions $U$ and $V$, their $L^1$ distance is defined by
%$||U-V||_{L_x^1} \doteq \int_{-\infty}^{+\infty} |U_{\text{ex}}-V_{\text{ex}}| dx .$for the source term with dissipative structure
%This convention is also valid for $\mathbf{U}$.

There are many literatures on the mathematical theories of piston problem for compressible inviscid flow, which involve the existence and structural stability of shock waves or rarefaction waves in the framework of $BV$ or $L^{\infty}$ space. When the piston moves to static gas in a tube, the global existence of shock front solutions to one-dimensional piston problem
for Euler equations was established in $BV$ space by Wang \cite{Wang-2005} and Ding \cite{Ding-2018}.
For multidimensional case, the authors in \cite{Chen-Wang-Zhang-2004, Chen-Wang-Zhang-2008} considered the axially symmetric Euler flow,
and established the global stability of multidimensional shock front solutions for both weak and strong shocks induced by axially symmetric piston motion.
On the other hand, when the piston is withdrawn in the tube, the global stability of rarefaction wave solutions to one-dimensional piston problem was completed in \cite{Ding-Kuang-Zhang-2017} through a modified  wave-front tracking scheme. In the $L^{\infty}$ framework,  the global entropy solution  for spherically symmetric piston problem was constructed  in \cite{Chen-Chen-Wang-Wang-2005} by means of compensated compactness and shock capturing.

For exothermically reacting Euler flow, by developing a fractional-step Glimm scheme, Chen and Wagner \cite{Chen-Wagner-2003} firstly established the global existence of entropy solutions for one-dimensional Cauchy problem without smallness assumption on initial data in $BV$ norm. We refer to \cite{Liu-1977, Schochet-1991} for more details about the $BV$ solutions in gas dynamics with large data. After that, Ding \cite{Ding-2017} considered the piston problem of reacting fluid and obtained
the global existence and asymptotic behavior of the entropy solutions that contain a strong rarefaction wave.
Recently, the authors in \cite{Kuang-Zhao-2020} also studied shock front solution to one-dimensional piston problem with reaction, and established the structural stability of strong shock wave as well as its asymptotic behavior by fractional-step wave front tracking scheme.
Particularly, all the previous literatures on combustion equations require that the reaction rate $\phi(T)$ is continuous and positive everywhere
in order to derive the exponential decay of exothermic source.

Finally, we remark that there are a lot of important results on well-posedness theory for general hyperbolic conservation or balance laws in one dimension situation.
For more details, we can refer to \cite{Lewicka-2004, Lewicka-Trivisa-2002} for Cauchy problem with initial data containing large profile, \cite{Amadori-1997, Donadello-Marson-2007} for initial-boundary value problem of conservation laws, \cite{Amadori-Goss-Guerra-2002, Amadori-Guerra-2002, Christoforou-2006, Colombo-Guerra-2010, Dafermos-Hsiao-1982, Liu-1979} for Cauchy problem or initial-boundary value problem with inhomogeneous term, and the references therein.

In this paper, we establish the well-posedness theory of one-dimensional piston problem for compressible and combustible Euler flows within the $BV\cap L^1$ framework. To the best of our knowledge, it is the first work on $BV\cap L^1$ theory for inviscid combustible fluids with physical ignition condition \eqref{eq:1.4}.
Mathematically, this problem can be formulated as an initial-boundary value problem for one-dimensional hyperbolic balance laws with a discontinuous source.
There are two main technical difficulties arising from ignition temperature and characteristic boundary. For the presence of ignition temperature $T_{\text{i}}$ in \eqref{eq:1.4}, the partial reaction phenomena will occur such that the decrease rate of exothermic source varies greatly within reaction zone. It leads to the
loss of uniform dissipation structures (for instance exponential decay), so that the arguments in \cite{Chen-Wagner-2003,Ding-2017, Hu-2018, Hu-2019, Kuang-Zhao-2020} are invalid here.
Besides, nonuniform heat release definitely amplifies the oscillation of combustion waves, and probably induces the instability of free boundary of flame.
The characteristic boundary of multiple degenerate fields is another difficulty of proving well-posedness. Concerning the time derivative of stability functional near characteristic boundary, the linear terms of distance index $q_i$ of two solutions (see Definition \ref{Def: distance index}) cannot be completely canceled by means of direct weight manipulation as in \cite{Bressan-2000,Bressan-Liu-Yang-1999,Lewicka-2004,Colombo-Guerra-2010}.
This is the major reason why standard Lyapunov functional does not work for the characteristic boundary value problems of many hyperbolic systems.

In dealing with the obstacles stated above, we employ a fractional-step wave front tracking scheme to construct approximate solutions, and then devise a modified Glimm-type functional $\mathcal{G} (t)$ for seeking their uniform $BV$ bound.
Observe that the product of reflection coefficients on the strong shock and on the piston boundary is less than one (see Propositions \ref{prop:2.2}--\ref{prop:2.3} below).
Thus we can choose appropriate weights in $\mathcal{G} (t)$ to prove its monotonicity in non-reaction process, which indicates the global existence of the entropy solutions to (\emph{IBVP}) without combustion.
Based on this, we further consider the reacting flows dominated by ignition temperature. The crux of this matter is how to calculate the growth of oscillation within reaction zone if the source fails to uniformly decay.
To achieve this, we utilize nonuniform spatial estimates in present paper instead of the temporal decay of dissipation in earlier framework. Insert a special consumption term $\mathcal{L}_{\textsc{by}}(t)$ into functional $\mathcal{G} (t)$, which accounts for the consumption of reactant along the strong shock front and piston boundary. This crucial term finally offsets the growth of combustion waves, and enables us to establish the existence of combustion solutions.

We further apply the technique of Lyapunov functional and quasi-characteristics analysis to $L^1$-stability argument. Precisely, take two approximate solutions $U^{\varepsilon}$ and $V^{\varepsilon}$ with data $(U^{\varepsilon}_{0},u^{\varepsilon}_{\mathfrak{p}})$ and $(V^{\varepsilon}_{0},v^{\varepsilon}_{\mathfrak{p}})$ respectively, and consider the maximal curve $x=\chi^{\textsc{m}}_{\mathfrak{p}}(t) $ of distinct piston paths for  $U^{\varepsilon}$ and $V^{\varepsilon}$. We design
a weighted Lyapunov functional $\mathscr{L}(U^{\varepsilon}, V^{\varepsilon})$, which is a equivalent metric to $||U^{\varepsilon}-V^{\varepsilon}||_{L^1([\chi^{\textsc{m}}_{\mathfrak{p}} , +\infty))}$.
For the purpose of stability estimates, a concise weights distribution is proposed to balance the nonlinear effects arising from large shock, characteristic boundary and exothermic reaction.
Then, we will show that $\mathscr{L}(U^{\varepsilon}, V^{\varepsilon})$ is strictly decreasing in reaction process. To cope with the difficulty caused by characteristic boundary, we first establish a new accurate estimate on different distance indices of genuinely nonlinear fields (see \eqref{estimate q3+ for case 4.3.7} in section 5.2) by employing the degeneracy feature of contact discontinuity.
As a result, all of the linear terms of index $q_i$ can be successfully canceled near the piston boundary. It implies that the boundary parts of $\frac{d}{d t}\mathscr{L}(U^{\varepsilon}, V^{\varepsilon})$ are dominated by the velocities difference of $U^{\varepsilon}$ and $V^{\varepsilon}$ at $\chi^{\textsc{m}}_{\mathfrak{p}}$.
Subsequently, by introducing a tool of quasi-characteristic curve,  we establish the velocities comparison estimate along the characteristic boundary and maximal curve.
Based on linear terms cancellation and velocities comparison, we obtain the stability estimate on piston boundaries, then eventually establish the $L^1-$stability
result for both non-reacting and reacting flows.

%Finally, this paper shows the fact that $\{U^{\varepsilon}\}$ is a Cauchy sequence and has a unique limit which is the entropy solution to
%(\emph{IBVP}).
%For reacting flows under condition $\rm{(C2)}$ or $\rm{(C3)}$, we employ the operator splitting technique to rewrite $U^{\varepsilon}$ in the operator form of ${U}^\varepsilon (t,x) = \mathcal{P}_t ^\varepsilon ({U}^\varepsilon_0(x),u^\varepsilon_{\mathfrak{p}}(t))$, where
%$\mathcal{P}_t ^\varepsilon$ is composed of operators $\mathcal{S}^{\varepsilon}_t$ for non-reaction process and $\mathcal{T}_{\varepsilon}$ for reaction process. The strong convergence of $\{U^{\varepsilon}\}$ will be concluded from the operator commutation estimate
%and tangent estimate.

The remainder of this paper will be arranged as follows.
In section 2, we consider the elementary wave curves for homogeneous system, and then study the Riemann problems including perturbations of strong shock front and piston boundary.
Moreover, some local estimates of interaction between weak wave and strong shock, as well as the weak wave reflection on piston boundary are also presented.
We then construct the approximate solutions to (\emph{IBVP}) in section 3 by proposing a fractional-step wave front tracking algorithm.
Section 4 is devoted to the global existence of entropy solutions for both non-reacting and reacting flows by means of modified Glimm-type functional. In addition, the notion of quasi-characteristic curve is introduced, and the velocities comparison in $L^1$ norm on distinct curves is shown.
In section 5, we further develop the weighted Lyapunov functional and derive $L^1$-stability estimates of entropy solutions.
As a byproduct, we can show the uniqueness of entropy solution for non-reacting flow.
In the end, section 6 exhibits the uniqueness of limit solution for reacting flow obtained by fractional-step wave front tracking scheme.

\section{Homogenous system}\setcounter{equation}{0}

In this section, as a preliminary, we first present some elementary wave curves and their properties for the homogeneous system reduced from \eqref{eq:1.1}, and then study the Riemann problems which involve the piston boundary
as well as the strong shock front. Based on these facts, we can further show some local estimates on interaction between weak waves, weak wave and strong shock, as well as the weak wave reflection on the boundary.

\subsection{Elementary wave curves for homogeneous system}

Let
\begin{eqnarray*}
\begin{split}
&E(U) =\Big(\rho, \rho u, \rho\big(\frac{1}{2}u^2+e\big), \rho Y\Big)^\top,\\
&F(U)=\Big( \rho u, \rho u^2 +p, \rho u\big(\frac{1}{2}u^2+e+\frac{p}{\rho}\big), \rho uY\Big)^\top,\\
&G(U) = \Big(0,0, \mathfrak{q}_0\rho Y\phi (T), -\rho Y\phi (T) \Big)^\top.
 \end{split}
\end{eqnarray*}
Then equations  \eqref{eq:1.1} can be rewritten as the following hyperbolic balance laws:
\begin{equation}\label{eq:2.1}
\partial_t E(U) + \partial_x F(U)=G(U).
\end{equation}
When $G(U)\equiv0$, equations \eqref{eq:2.1} become one-dimensional hyperbolic system of conservation laws, \emph{i.e.},
\begin{equation}\label{eq:2.2}
\partial_t E(U) + \partial_x F(U)=0.
\end{equation}
The system \eqref{eq:2.2} admits four eigenvalues:
\begin{eqnarray*}\label{eq:2.3}
\begin{split}
&\lambda_1(U) = u-c, \quad\ \lambda_2(U)= \lambda_3(U) =u,\quad\ \lambda_4(U) =u+c,
\end{split}
\end{eqnarray*}
and the corresponding linearly independent eigenvectors are
\begin{eqnarray*}\label{eq:2.4}
\begin{split}
&\textbf{r}_1 (U)= \frac{2}{(\gamma+1)c}\big(-\rho,\  c,\ -\gamma p,\ 0\big)^\top,\quad\ \textbf{r}_2 (U) = (1,0,0,0)^\top,\\[5pt]
& \textbf{r}_3 (U) = (0,0,0,1)^\top, \quad\ \textbf{r}_4 (U) =\frac{2}{(\gamma+1)c} \big(\rho,\ c,\ \gamma p,\ 0 )^\top,
\end{split}
\end{eqnarray*}
where $c$ is the sound speed defined by $c=\sqrt{\frac{\gamma p}{\rho}}$. Moreover, a direct computation shows that
\begin{eqnarray*}
\begin{split}
\nabla_{U}\lambda_i(U)\cdot \textbf{r}_i(U)\equiv1 \quad \mbox{for}\  i=1,4, \quad\ \nabla_{U}\lambda_j(U)\cdot \textbf{r}_j(U)\equiv0\quad \mbox{for}\  j=2,3.
\end{split}
\end{eqnarray*}
This means that the $i$th characteristic fields are genuinely nonlinear for $i=1,4$, while the $j$th characteristic fields are linearly degenerate for $j=2,3$.

Given constant state $U_*=(\rho_*, u_*, p_*, Y_*)^{\top}$ away from vaccum,
the \emph{Rankine-Hugoniot} condition with wave speed ${\textrm{s}}$, i.e.,
\begin{eqnarray}\label{eq:2.6}
\begin{split}
F(U)-F(U_*)={\textrm{s}}\big(E(U)-E(U_*)\big),
\end{split}
\end{eqnarray}
for system \eqref{eq:2.2} determines the \emph{Hugoniot loci} through $U_*$ in state space as follows:

\begin{eqnarray}\label{eq:2.5}
\begin{split}
&{\mathscr{S}}_1(U_*):\ \frac{p}{p_*}=\frac{\eta \rho- \rho_*}{\eta \rho_*- \rho},\quad
\frac{u-u_*}{c_* }=-\sqrt{\frac{2}{\gamma-1}}\frac{\rho-\rho_*}{\sqrt{(\eta\rho_*-\rho)\rho}}, \quad Y=Y_*, \quad \rho \neq \rho_*;\\[5pt]
&\mathscr{S}_2(U_*):\ \rho\neq \rho_* ,\qquad   u=u_*,\qquad p=p_*,\qquad Y=Y_*;\\[5pt]
&\mathscr{S}_3(U_*):\ \rho= \rho_* ,\qquad   u=u_*,\qquad  p=p_*,\qquad Y\neq Y_*;\\[5pt]
&{\mathscr{S}}_4(U_*):\ \frac{p}{p_*}=\frac{\eta \rho- \rho_*}{\eta \rho_*- \rho},\quad
\frac{u-u_*}{c_* }=\sqrt{\frac{2}{\gamma-1}}\frac{\rho-\rho_*}{\sqrt{(\eta\rho_*-\rho)\rho}}, \quad Y=Y_*, \quad \rho \neq \rho_*,
\end{split}
\end{eqnarray}
where constant $\eta=\frac{\gamma+1}{\gamma-1}$.
Let the Hugoniot locus ${\mathscr{S}}_i(U_*)$ be parameterized by mapping $\alpha_i \mapsto S_i (\alpha_i)(U_*)$  with
\begin{equation}\label{parameterization for Si}
  \alpha_i= \left\{
  \begin{aligned}
    & \lambda_i (S_i (\alpha_i)(U_*)) - \lambda_i(U_*) \ \ \  && \text{if } i=1,4,\\
    & \rho- \rho_*  &&  \text{if } i=2,\\
    & Y- Y_*  &&  \text{if } i=3.
  \end{aligned}
  \right.
\end{equation}
The isentropes through $U_*$ for system \eqref{eq:2.2} read
\begin{eqnarray}\label{eq:2.7}
\begin{split}
&\mathscr{R}_{1}(U_*):\ u+\frac{2c}{\gamma-1}=u_*+\frac{2c_*}{\gamma-1},\qquad \frac{p}{\rho^{\gamma}}=\frac{p_*}{\rho^{\gamma}_*},
\qquad Y=Y_*;\\[5pt]
&\mathscr{R}_{4}(U_*):\ u-\frac{2c}{\gamma-1}=u_*-\frac{2c_*}{\gamma-1},\qquad \frac{p}{\rho^{\gamma}}=\frac{p_*}{\rho^{\gamma}_*},
\qquad Y=Y_*,
\end{split}
\end{eqnarray}
which are parameterized by mapping  $\alpha_i \mapsto R_i (\alpha_i)(U_*)$ with
\begin{equation*}
\alpha_i= \lambda_i (R_i (\alpha_i)(U_*)) - \lambda_i(U_*),\qquad  i=1,4.
\end{equation*}

Notice that the last equation for $Y$ in \eqref{eq:2.2} can be decoupled from the first three equations (\emph{i.e.}, non-isentropic Euler equations).
Henceforth, we can rewrite any state
$$U=(U^{\textsc{e}}, Y)^{\top}\ \ \text{with} \ \ U^{\textsc{e}}\doteq(\rho, u, p)^{\top}.$$
Then the states along parameterized curves ${\mathscr{S}}_i(U_*)$ and ${\mathscr{R}}_i(U_*)$ in \eqref{eq:2.5}\eqref{eq:2.7} are decomposed into
Eulerian part and mass fraction part as follows,
\begin{align*}\label{eq:2.9}
& S_i(\alpha_i)(U_*)=\big(S^{\textsc{e}}_i(\alpha_i)(U^{\textsc{e}}_*), Y_*\big)^{\top} \quad  (i=1,2,4), \\
& R_i(\alpha_i)(U_*)=\big(R^{\textsc{e}}_i(\alpha_i)(U^{\textsc{e}}_*), Y_*\big)^{\top} \quad (i=1,4), \\
& S_3(\alpha_3)(U_*)=(U^{\textsc{e}}_*, S^{\textsc{y}}_3(\alpha_3)(Y_*))^{\top}.
\end{align*}

Treat $U_{\rm{l}}=(U^{\textsc{e}}_{\rm{l}}, Y_{\rm{l}})^{\top}$ and
$U_{\rm{r}}=(U^{\textsc{e}}_{\rm{r}}, Y_{\rm{r}})^{\top}$ as the left and right states of Riemann problem $(U_{\rm{l}}, U_{\rm{r}})$.
Thus, according to \emph{Lax's entropy condition}, the elementary wave curves projected into $U^{\textsc{e}}-$subspace are defined by
\begin{equation*}\label{eq:2.12}
\begin{split}
&\Phi^{\textsc{e}}_i(\alpha_i)( U^{\textsc{e}}_{\rm{l}})= \left\{
\begin{aligned}
S^{\textsc{e}}_i(\alpha_i)(U^{\textsc{e}}_{\rm{l}}) \qquad & \mbox{for}\ \alpha_i<0,\\[5pt]
R^{\textsc{e}}_i(\alpha_i)(U^{\textsc{e}}_{\rm{l}}) \qquad & \mbox{for}\  \alpha_i\geq0,
\end{aligned}
\right.\ \ \ \ \ (i=1,4) \\[5pt]
&\Phi^{\textsc{e}}_{2}(\alpha_2)(U^{\textsc{e}}_{\rm{l}})=S^{\textsc{e}}_{2}(\alpha_{2})(U^{\textsc{e}}_{\rm{l}}).
 \end{split}
 \end{equation*}
It is clear that
\begin{equation*}\label{eq:2.16}
\Phi^{\textsc{e}}_i(\alpha_i)( U^{\textsc{e}}_{\rm{l}})\big|_{\alpha_i=0}=U^{\textsc{e}}_{\rm{l}},\qquad \frac{d\Phi^{\textsc{e}}_i(\alpha_i)( U^{\textsc{e}}_{\rm{l}})}{d\alpha_i}\Big|_{\alpha_i=0}=\textbf{r}^{\textsc{e}}_i (U^{\textsc{e}}_{\rm{l}})\qquad  (i=1,2,4).
\end{equation*}
Here $\textbf{r}^{\textsc{e}}_i (U^{\textsc{e}})$ consists of the first three components of eigenvector $\textbf{r}_i (U)$.
Accordingly, denote the projected elementary curves through $U^{\textsc{e}}_{\rm{r}}$ by
\begin{equation*}
\begin{split}
&\Psi^{\textsc{e}}_i(\alpha_i)( U^{\textsc{e}}_{\rm{r}})= \left\{
\begin{aligned}
S^{\textsc{e}}_i(\alpha_i)(U^{\textsc{e}}_{\rm{r}}) \qquad & \mbox{for}\ \alpha_i>0,\\[5pt]
R^{\textsc{e}}_i(\alpha_i)(U^{\textsc{e}}_{\rm{r}}) \qquad & \mbox{for}\  \alpha_i\leq0,
\end{aligned}
\right.\ \ \ \ \ (i=1,4) \\
&\Psi^{\textsc{e}}_{2}(\alpha_2)( U^{\textsc{e}}_{\rm{r}})=S^{\textsc{e}}_{2}(\alpha_{2})(U^{\textsc{e}}_{\rm{r}}).
\end{split}
\end{equation*}
With regard to background shock in \eqref{eq:1.8}, we claim that
\begin{equation}\label{eq:2.12}
  \bar{U}^{\textsc{e}}_{\mathfrak{b},\textrm{r}}=\Phi^{\textsc{e}}_{4}(\bar{\alpha}^{\rm{s}}_{4})(\bar{U}^{\textsc{e}}_{\mathfrak{b},\textrm{l}}), \quad
  \bar{U}^{\textsc{e}}_{\mathfrak{b},\textrm{l}}=\Psi^{\textsc{e}}_{4}(-\bar{\alpha}^{\rm{s}}_{4})(\bar{U}^{\textsc{e}}_{\mathfrak{b},\textrm{r}})
\end{equation}
for a negative $\bar{\alpha}^{\rm{s}}_{4}= \lambda_4(\bar{U}^{\textsc{e}}_{\mathfrak{b},\textrm{r}})-  \lambda_4(\bar{U}^{\textsc{e}}_{\mathfrak{b},\textrm{l}})$.

Let
$\textbf{r}^{\textsc{e}}_i (U^{\textsc{e}}_*, \alpha_i)$ be the tangent vector for curve $S^{\textsc{e}}_i(\alpha_i)(U^{\textsc{e}}_*)$, while
$\textbf{r}^{\textsc{e}}_i ({R^{\textsc{e}}_{i}}(\alpha_i)(U^{\textsc{e}}_*))$ be the tangent vector for curve ${R^{\textsc{e}}_{i}}(\alpha_i)(U^{\textsc{e}}_*)$.
Taking $U^{\textsc{e}}_*=U^{\textsc{e}}_{\rm{r}}$ when $i=1,4$, we define the tangent field along curve $\Psi^{\textsc{e}}_i (\alpha_i)( U^{\textsc{e}}_{\rm{r}})$ by
\begin{equation}\label{def:tangent vector}
\tilde{\textbf{r}}^{\textsc{e}}_i (U^{\textsc{e}}_{\rm{r}}, \alpha_i)\doteq \left\{
\begin{aligned}
&\textbf{r}^{\textsc{e}}_i (U^{\textsc{e}}_{\rm{r}},\alpha_i) \quad & \mbox{for}\ \alpha_i>0,\\[5pt]
&\textbf{r}^{\textsc{e}}_i (R^{\textsc{e}}_i(\alpha_i)(U^{\textsc{e}}_{\rm{r}})) \quad & \mbox{for}\ \alpha_i \leq 0.
\end{aligned}
\right.
\end{equation}
In particular, one has $\textbf{r}^{\textsc{e}}_i (U^{\textsc{e}}_{\rm{r}},0)=\textbf{r}^{\textsc{e}}_i (U^{\textsc{e}}_{\rm{r}})$.

Henceforth, we write the derivatives $\displaystyle\dot\ \doteq\frac{d}{dt}$ and $\displaystyle '\doteq  \frac{d}{d\alpha_4}$ for brevity.

The previous parametrization of elementary curves will provide us some technical advantages to analyze the solvability,  wave-interaction and stability, etc.
For instance, the following lemma exhibits the monotonicity of fluid state along the shock curve.%, whose proof will be given later in Appendix.

\begin{lemma}\label{lem:2.1a}
Assume state $U^{\textsc{e}}_{\rm{r}}=(\rho_{\rm{r}},u_{\rm{r}},p_{\rm{r}})^{\top}$ is away from vacuum.
Along the shock curve corresponding to the compressive branch of $\Psi^{\textsc{e}}_4(\alpha_4)( U^{\textsc{e}}_{\rm{r}})$, the density, velocity, pressure and shock speed
are all increasing with respect to parameter $\alpha_4$; i.e.,
$$
\rho'(\alpha_4)>0, \quad u'(\alpha_4)>0, \quad  p'(\alpha_4)>0, \quad  {\rm{s}}'(\alpha_4)>0,
$$
where $(\rho, u, p) =U^{\textsc{e}}= S^{\textsc{e}}_4 (\alpha_4)(U^{\textsc{e}}_{\rm{r}})$ with $\alpha_4>0$.
\end{lemma}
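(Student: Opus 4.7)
The plan is to parameterize the compressive branch of $\Psi^{\textsc{e}}_4(\alpha_4)(U^{\textsc{e}}_{\rm{r}})$ by the dimensionless density ratio $\xi=\rho/\rho_{\rm{r}}$, prove monotonicity of $\rho$, $u$, $p$, ${\rm{s}}$ in $\xi$ by a short direct computation on the explicit Hugoniot formulas, and then transfer these to monotonicity in $\alpha_4$ by the chain rule, once $\xi\mapsto\alpha_4$ has been shown to be a smooth strictly increasing bijection. To start, fix $U^{\textsc{e}}_{\rm{r}}$; the sign constraints built into $\mathscr{S}_4$ in \eqref{eq:2.5} force $\xi\in(1,\eta)$ on the shock branch $\alpha_4>0$, and on this interval one has
\begin{equation*}
\frac{p}{p_{\rm{r}}}=\frac{\eta\xi-1}{\eta-\xi},\qquad (u-u_{\rm{r}})^{2}=\frac{2c_{\rm{r}}^{2}}{\gamma-1}\cdot\frac{(\xi-1)^{2}}{\xi(\eta-\xi)},
\end{equation*}
together with $c^{2}/c_{\rm{r}}^{2}=(\eta\xi-1)/[\xi(\eta-\xi)]$ and the closed-form shock speed $({\rm{s}}-u_{\rm{r}})^{2}=p_{\rm{r}}(\eta+1)\xi/[\rho_{\rm{r}}(\eta-\xi)]$, obtained by eliminating the mass flux between the mass and momentum components of \eqref{eq:2.6}.

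Positivity of each $\xi$-derivative then follows by elementary manipulation. For $p$ the numerator of $\frac{d}{d\xi}(p/p_{\rm{r}})$ collapses to $\eta^{2}-1>0$; in $({\rm{s}}-u_{\rm{r}})^{2}$ both $\xi$ and $(\eta-\xi)^{-1}$ are manifestly increasing on $(1,\eta)$; and for $(u-u_{\rm{r}})^{2}$ positivity of the derivative of the rational factor $(\xi-1)^{2}/[\xi(\eta-\xi)]$ reduces to the inequality $\eta(\xi+1)>2\xi$, which is immediate from $\eta>1$. Since $u-u_{\rm{r}}>0$ and ${\rm{s}}-u_{\rm{r}}>0$ on the shock branch by the Lax admissibility of the $4$-family, square roots can be extracted safely to give $du/d\xi>0$ and $d{\rm{s}}/d\xi>0$. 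Because the parameterization in the excerpt sets $\alpha_4=(u+c)-(u_{\rm{r}}+c_{\rm{r}})$, I additionally need $dc/d\xi>0$; differentiating $c^{2}/c_{\rm{r}}^{2}$ reduces this to $\eta(\xi^{2}+1)>2\xi$, which follows from $\eta>1$ and the AM--GM estimate $\xi^{2}+1\ge 2\xi$. Hence $d\alpha_4/d\xi>0$ and $\xi\mapsto\alpha_4$ is a smooth strictly increasing bijection of $(1,\eta)$ onto $(0,+\infty)$, so the chain rule delivers the four claimed inequalities $\rho'(\alpha_4),u'(\alpha_4),p'(\alpha_4),{\rm{s}}'(\alpha_4)>0$.

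The main obstacle I anticipate is precisely the global monotonicity $d\alpha_4/d\xi>0$ across the \emph{entire} compressive interval $(1,\eta)$ and uniformly in the adiabatic exponent $\gamma>1$. For weak shocks this would be automatic from the Lax condition on the genuinely nonlinear $4$-field, but the lemma will later be invoked for the strong background shock $\bar{\alpha}^{\rm{s}}_{4}$ of \eqref{eq:2.12} and for large admissible perturbations of it, so monotonicity must persist up to the compression limit $\xi\to\eta^{-}$. The reduction above to the single polynomial inequality $\eta(\xi^{2}+1)>2\xi$ handles this uniformly; should a sharper estimate later be needed for quantitative purposes, I would instead analyze $u+c$ directly as a function of the specific volume $\tau=1/\rho$, for which the Hugoniot relations take a particularly convex form.
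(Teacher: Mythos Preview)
Your proof is correct and follows essentially the same strategy as the paper: parameterize the Hugoniot branch by the density (your $\xi=\rho/\rho_{\rm r}$), verify directly that $p$, $u$, $c$ are increasing in this parameter so that $\alpha_4=u+c-(u_{\rm r}+c_{\rm r})$ is a strictly increasing reparametrization, and conclude by the chain rule. The one tactical difference is in the treatment of the shock speed: the paper shows $d^2(\rho u)/d\rho^2>0$ and then uses the secant--tangent inequality on $\mathrm{s}=[\rho u]/[\rho]$, whereas you derive the closed form $(\mathrm{s}-u_{\rm r})^2=p_{\rm r}(\eta+1)\xi/[\rho_{\rm r}(\eta-\xi)]$ and read off monotonicity by inspection; your route is slightly more direct here, while the paper's convexity argument is a more transferable device.
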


{
\begin{proof}
Suppose state $U=(\rho ,u , p, Y)^{\top}=S_4 (\alpha_4)(U_{\rm{r}})$.
One has $\alpha_4= \lambda_4(U) - \lambda_4(U_{\rm{r}})>0$ iff
$\rho\in (\rho_{\rm{r}}, \eta \rho_{\rm{r}})$, where constant $\eta=(\gamma+1)/(\gamma-1)$.
Clearly, Hugoniot locus ${\mathscr{S}}_4(U_{\rm{r}})$ in \eqref{eq:2.5} can be treated as a curve of single parameter $\rho$.
Since $U\in {\mathscr{S}}_4(U_{\rm{r}})$, we differentiate $u$ and $p$ with respect to $\rho$ and obtain

\begin{equation}\label{eq:A1}
\begin{aligned}
  & \frac{dp}{d\rho}  = p_{\rm{r}}\rho_{\rm{r}} \cdot\frac{\eta^2-1}{(\eta\rho_{\rm{r}}- \rho)^2}>0 ,\\
  & \frac{du}{d\rho}   = c_{\rm{r}}\rho_{\rm{r}} \cdot\frac{\sqrt{\eta-1}}{2} \cdot \frac{(\eta-2)\rho+ \eta \rho_{\rm{r}}}{(\eta \rho_{\rm{r}}\rho- \rho^2)^{3/2}} >0
\end{aligned}
\end{equation}
for any $\rho\in (\rho_{\rm{r}}, \eta \rho_{\rm{r}})$; furthermore,
\begin{align}
\notag
  & \frac{dc}{d\rho}   = c_{\rm{r}} \sqrt{\rho_{\rm{r}}} \cdot\frac{(\rho-\rho_{\rm{r}})^2 + (\eta-1)(\rho^2+\rho_{\rm{r}}^2)}{2\rho^{3/2}(\eta\rho_{\rm{r}}- \rho)^{3/2}(\eta \rho- \rho_{\rm{r}})^{1/2}}>0 ,\\
\label{eq:A2}
  & \frac{d^2}{d\rho^2} (\rho u) = c_{\rm{r}}\rho_{\rm{r}}^2 \cdot \frac{\sqrt{\eta-1}}{4} \cdot\frac{\eta \rho \big((3\eta -4)\rho + \eta \rho_{\rm{r}}\big)}{(\eta \rho_{\rm{r}}\rho- \rho^2)^{5/2}} >0 .
\end{align}
They immediately give
\begin{equation}\label{eq:A3}
\frac{d\alpha_4}{d\rho} = \frac{du}{d\rho}+\frac{dc}{d\rho} >0.
\end{equation}
From \eqref{eq:A1} and \eqref{eq:A3}, we see that
$$\rho'>0, \qquad   p'=\frac{dp}{d\rho} \cdot \frac{d\rho}{d\alpha_4}>0, \qquad
u'=\frac{du}{d\rho} \cdot \frac{d\rho}{d\alpha_4}>0. $$

The Hugoniot-Rankine condition $\textrm{s}\big(E(U)-E(U_{\rm{r}})\big) = F(U)-F(U_{\rm{r}})$ determines the $4-$shock speed   by $\textrm{s}=[\rho u]/ [\rho]$,
where $[\rho]= \rho-\rho_{\rm{r}}>0$. Note that inequality \eqref{eq:A2} implies the fact momentum $\rho u$ is convex in $\rho$.
Hence we have
$$\textrm{s}'  = \frac{d\textrm{s}}{d\rho} \cdot \frac{d\rho}{d\alpha_4} =  \frac{\rho'}{[\rho]} \cdot
 \left\{ \frac{d(\rho u )}{d\rho} -\frac{[\rho u]}{[\rho]}  \right\} >0.$$

\end{proof}
}

\subsection{Riemann problem and local interaction estimates for homogeneous system }

First, we consider the Riemann problem of \eqref{eq:2.2} with the data
\begin{eqnarray}\label{eq:2.18}
U|_{t=\hat{t}}=\left\{
\begin{array}{llll}
U_{\rm{l}}=(U^{\textsc{e}}_{\rm{l}}, Y_{\rm{l}})^{\top}, \qquad x<\hat{x},\\[5pt]
U_{\rm{r}}=(U^{\textsc{e}}_{\rm{r}}, Y_{\rm{r}})^{\top}, \qquad x>\hat{x},
\end{array}
\right.
\end{eqnarray}
where $U^{\textsc{e}}_{\rm{l}}=(\rho_{\rm{l}}, u_{\rm{l}},p_{\rm{l}})^{\top}$ and
$U^{\textsc{e}}_{\rm{r}}=(\rho_{\rm{r}}, u_{\rm{r}},p_{\rm{r}})^{\top}$.
Then, its Riemann solver involving only weak waves is given by the following lemma.

\begin{lemma}\label{lem:2.1}
There exists $\delta>0$ sufficiently small such that for $U_{\rm{l}}, U_{\rm{r}}\in \mathcal{N}_{\delta}
(\bar{U}_{\mathfrak{b}, \rm{l}})$ $($or $\mathcal{N}_{\delta}(\bar{U}_{\mathfrak{b}, \rm{r}}))$, the Riemann problem \eqref{eq:2.2}\eqref{eq:2.18}
admits a unique admissible solution that consists of at most four constant states separated by four elementary waves. Moreover, $U_{\rm{l}}$ and $U_{\rm{r}}$ satisfy
\begin{eqnarray*}\label{2.19}
U^{\textsc{e}}_{\rm{r}}=\Phi^{\textsc{e}}_{4}(\alpha_{4})\circ \Phi^{\textsc{e}}_{2}(\alpha_{2})\circ
\Phi^{\textsc{e}}_{1}(\alpha_{1})( U^{\textsc{e}}_{\rm{l}}),\quad Y_{\rm{r}}=Y_{\rm{l}}+ \alpha_3,
\end{eqnarray*}
and
\begin{eqnarray*}\label{2.20}
\sum^{4}_{j=1}|\alpha_{j}|\leq \mathcal{O}(1)\Big(|U^{\textsc{e}}_{\rm{r}}-U^{\textsc{e}}_{\rm{l}}|+|Y_{\rm{r}}-Y_{\rm{l}}|\Big).
\end{eqnarray*}
\end{lemma}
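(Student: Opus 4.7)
The proof will follow Lax's classical construction of the Riemann solver via the implicit function theorem, adapted to the structure where the mass-fraction equation decouples. The plan is to reduce the 4-dimensional problem to a 3-dimensional one on the Eulerian part and use a separate 3-contact for $Y$.

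First I would observe that since $\mathbf{r}_3=(0,0,0,1)^\top$, the 3-wave acts trivially on $U^{\textsc{e}}$. Together with the fact that the 1, 2, 4 waves all lie in the hyperplane $Y=\mathrm{const}$, this means the Eulerian part and the reactant part can be resolved independently. Concretely, I would set $\alpha_3 \doteq Y_{\rm r}-Y_{\rm l}$ and then concentrate on finding $(\alpha_1,\alpha_2,\alpha_4)$ so that
\begin{equation*}
  \mathcal{T}(\alpha_1,\alpha_2,\alpha_4)(U^{\textsc{e}}_{\rm l}) \doteq \Phi^{\textsc{e}}_{4}(\alpha_4)\circ \Phi^{\textsc{e}}_{2}(\alpha_2)\circ \Phi^{\textsc{e}}_{1}(\alpha_1)(U^{\textsc{e}}_{\rm l}) = U^{\textsc{e}}_{\rm r}.
\end{equation*}
A standard fact (already implicit in the construction of $\Phi^{\textsc{e}}_i$) is that the shock curve $S^{\textsc{e}}_i$ and the rarefaction curve $R^{\textsc{e}}_i$ are tangent to second order at $\alpha_i=0$ with common tangent $\mathbf{r}^{\textsc{e}}_i$, so $\Phi^{\textsc{e}}_i$ is of class $C^2$ in a neighborhood of the base point.

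Next I would compute $D\mathcal{T}$ at $\alpha=0$. By the chain rule and the tangency property noted above,
\begin{equation*}
  \frac{\partial \mathcal{T}}{\partial \alpha_i}\Big|_{\alpha=0} = \mathbf{r}^{\textsc{e}}_i(U^{\textsc{e}}_{\rm l}), \qquad i=1,2,4.
\end{equation*}
Because the three eigenvectors $\mathbf{r}^{\textsc{e}}_1,\mathbf{r}^{\textsc{e}}_2,\mathbf{r}^{\textsc{e}}_4$ correspond to distinct eigenvalues $u-c,\,u,\,u+c$ of the $3\times 3$ non-isentropic Euler system and are linearly independent away from vacuum (which is guaranteed in a small neighborhood of the background state $\bar U_{\mathfrak b}$), the Jacobian is invertible. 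The implicit function theorem then yields, for $U^{\textsc{e}}_{\rm l},U^{\textsc{e}}_{\rm r}\in\mathcal{N}_{\delta}(\bar U_{\mathfrak b,\rm l})$ (or $\mathcal{N}_{\delta}(\bar U_{\mathfrak b,\rm r})$) with $\delta$ sufficiently small, a unique triple $(\alpha_1,\alpha_2,\alpha_4)$ of small amplitude solving $\mathcal{T}(\alpha)(U^{\textsc{e}}_{\rm l})=U^{\textsc{e}}_{\rm r}$, along with the Lipschitz bound
\begin{equation*}
  \sum_{j\in\{1,2,4\}} |\alpha_j| \leq \mathcal{O}(1)\,|U^{\textsc{e}}_{\rm r}-U^{\textsc{e}}_{\rm l}|,
\end{equation*}
which combined with $|\alpha_3|=|Y_{\rm r}-Y_{\rm l}|$ gives the stated estimate.

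Finally I would verify admissibility. By the parameterization \eqref{parameterization for Si}, the sign convention in the definition of $\Phi^{\textsc{e}}_i$ is arranged precisely so that $\alpha_i<0$ corresponds to a compressive branch of $S^{\textsc{e}}_i$ — i.e., a Lax shock with $\lambda_i(U_{\rm right})<\mathrm{s}<\lambda_i(U_{\rm left})$ — while $\alpha_i\geq 0$ corresponds to a centered rarefaction fan. Consequently, the four waves with amplitudes $(\alpha_1,\alpha_2,\alpha_3,\alpha_4)$ together with the five (three genuinely distinct in speed) intermediate constant states furnish an admissible Riemann solution; uniqueness within the class of small-amplitude Lax-admissible solutions follows from the uniqueness in the implicit function theorem.

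The only nontrivial technical point is the $C^1$ (in fact $C^2$) regularity of $\Phi^{\textsc{e}}_i$ across $\alpha_i=0$, which I would invoke from the classical theory of genuinely nonlinear systems rather than re-derive; everything else reduces to the invertibility of the Wronskian of eigenvectors of the $3\times 3$ Euler sub-system, which is a direct linear-algebra computation.
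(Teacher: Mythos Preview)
Your proposal is correct and is precisely the classical Lax construction via the implicit function theorem, exploiting the decoupling of the $Y$-component from the Eulerian sub-system. The paper does not supply a proof for this lemma, treating it as standard (it immediately invokes the arguments in \cite{Bressan-2000} for the subsequent interaction estimates), so your write-up is exactly the kind of argument the authors are tacitly relying on.
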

Throughout this paper, symbol $\mathcal{O}(1)$ denotes the bounded quantity that depends solely on $\bar{U}_{\mathfrak{b}}$.

By Lemma \ref{lem:2.1} and the arguments in \cite{Bressan-2000}, we have the following the local estimates on interaction between weak waves.
\begin{proposition}\label{prop:2.1}
Assume that the solution $U(t,x)$ to system \eqref{eq:2.2} contains three adjacent constant states $U_{\rm{l}}=(U^{\textsc{e}}_{\rm{l}}, Y_{\rm{l}})^{\top}, U_{\rm{m}}=(U^{\textsc{e}}_{\rm{m}}, Y_{\rm{m}})^{\top},
U_{\rm{r}}=(U^{\textsc{e}}_{\rm{r}}, Y_{\rm{r}})^{\top} \in \mathcal{N}_{\delta}(\bar{U}_{\mathfrak{b}, \rm{r}})$ $($or $\mathcal{N}_{\delta}(\bar{U}_{\mathfrak{b}, \rm{l}}))$ with $\delta>0$ sufficiently small. They are separated by two incoming waves with strengths $\alpha^*_{i}, \alpha^*_{j}$ respectively.

\noindent{\textnormal{(i)}}\ If
$1 \leq j\leq i\leq 4$ and $i,j \neq3 $ such that
$U^{\textsc{e}}_{\rm{m}}= \Phi^{\textsc{e}}_{i}(\alpha^*_{i})( U^{\textsc{e}}_{\rm{l}}), \
U^{\textsc{e}}_{\rm{r}}=\Phi^{\textsc{e}}_{j}(\alpha^*_{j})( U^{\textsc{e}}_{\rm{m}}), \
 Y_{\rm{l}}=Y_{\rm{m}}=Y_{\rm{r}}$,
then for the outgoing waves determined by equality
$U^{\textsc{e}}_{\rm{r}}=\Phi^{\textsc{e}}_{4}(\alpha_{4})\circ \Phi^{\textsc{e}}_{2}(\alpha_{2})\circ
\Phi^{\textsc{e}}_{1}(\alpha_{1})( U^{\textsc{e}}_{\rm{l}})$,
there hold
\begin{eqnarray}\label{eq:2.21}
\begin{split}
&|\alpha_{i}-\alpha^*_{i}|+|\alpha_{j}-\alpha^*_{j}|+\sum_{k\neq i,j,3}|\alpha_{k}|=\mathcal{O}(1)|\alpha^*_{i}\alpha^*_{j}| \qquad &\text{if }\ i> j,\\[5pt]
&|\alpha_{i}-\alpha^*_{i}-\alpha^*_{j}|+\sum_{k\neq i,3}|\alpha_{k}|=\mathcal{O}(1)|\alpha^*_{i}\alpha^*_{j}| \qquad &\text{if }\ i=j.
\end{split}
\end{eqnarray}

\noindent{\textnormal{(ii)}}\ If
$i=1$ and $j=3 $ such that
$U^{\textsc{e}}_{\rm{m}}=U^{\textsc{e}}_{\rm{l}}$, $Y_{\rm{m}}=Y_{\rm{l}}+ \alpha_3^*$ and
$U^{\textsc{e}}_{\rm{r}}=  \Phi^{\textsc{e}}_{1}(\alpha^*_{1})( U^{\textsc{e}}_{\rm{m}})$, $ Y_{\rm{r}}=Y_{\rm{m}}$, then for
$\alpha_k$ $(1\leq k\leq 4)$ determined by
\begin{equation}\label{eq:2.22}
  U^{\textsc{e}}_{\rm{r}}=\Phi^{\textsc{e}}_{4}(\alpha_{4})\circ \Phi^{\textsc{e}}_{2}(\alpha_{2})\circ
\Phi^{\textsc{e}}_{1}(\alpha_{1})( U^{\textsc{e}}_{\rm{l}}), \ \ \ \ Y_{\rm{r}}=Y_{\rm{l}}+ \alpha_3,
\end{equation}
there hold
\begin{eqnarray*}\label{eq:2.23}
\begin{split}
\alpha_{1}=\alpha^*_{1},\quad \alpha_{3}=\alpha^*_{3},\quad \alpha_{2}=\alpha_{4}=0 .
\end{split}
\end{eqnarray*}
Similarly, if
$i= 4$ and $j=3 $ such that
$U^{\textsc{e}}_{\rm{m}} =\Phi^{\textsc{e}}_{4}(\alpha^*_{4})( U^{\textsc{e}}_{\rm{l}})$, $Y_{\rm{m}}=Y_{\rm{l}}$ and
$U^{\textsc{e}}_{\rm{r}}= U^{\textsc{e}}_{\rm{m}}$, $ Y_{\rm{r}}=Y_{\rm{m}}+ \alpha_3^*$, then for
$\alpha_k$ $(1\leq k\leq 4)$ determined by \eqref{eq:2.22},
there hold
\begin{eqnarray*}\label{eq:2.24}
\begin{split}
\alpha_{1}=\alpha_{2}=0, \quad \alpha_{3}=\alpha^*_{3},\quad \alpha_{4}=\alpha^*_{4}.
\end{split}
\end{eqnarray*}

\end{proposition}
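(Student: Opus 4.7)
The plan is to treat the two parts separately, since they rely on quite different mechanisms: part (i) is the classical Glimm-type second-order interaction estimate applied to the Eulerian subsystem, while part (ii) is essentially an exact identity that exploits the fact that the $3$-field (mass-fraction) decouples from $(\rho,u,p)$.

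For part (i) I would work entirely in the $U^{\textsc{e}}$-subspace, since by hypothesis $Y_{\rm l}=Y_{\rm m}=Y_{\rm r}$, so the third component plays no role and the composition $\Phi^{\textsc{e}}_4(\alpha_4)\circ\Phi^{\textsc{e}}_2(\alpha_2)\circ\Phi^{\textsc{e}}_1(\alpha_1)(U^{\textsc{e}}_{\rm l})=\Phi^{\textsc{e}}_j(\alpha^*_j)\circ\Phi^{\textsc{e}}_i(\alpha^*_i)(U^{\textsc{e}}_{\rm l})$ defines an implicit map $(\alpha^*_i,\alpha^*_j)\mapsto(\alpha_1,\alpha_2,\alpha_4)$. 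At the origin the map vanishes, and because $\frac{d}{d\alpha_k}\Phi^{\textsc{e}}_k\big|_0=\textbf{r}^{\textsc{e}}_k(U^{\textsc{e}}_{\rm l})$ with $\{\textbf{r}^{\textsc{e}}_1,\textbf{r}^{\textsc{e}}_2,\textbf{r}^{\textsc{e}}_4\}$ linearly independent, the linearization is invertible. When $i>j$ the Jacobian is block-diagonal in the $(i,j)$-slots, so the linear part returns $\alpha_i=\alpha^*_i$, $\alpha_j=\alpha^*_j$, $\alpha_k=0$ ($k\ne i,j$); the $C^2$-smoothness of the wave curves then yields a quadratic remainder controlled by $|\alpha^*_i\alpha^*_j|$. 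When $i=j$ the two incoming waves belong to the same family and their composition to first order collapses to the single parameter $\alpha^*_i+\alpha^*_j$, from which a second-order Taylor expansion gives $\alpha_i=\alpha^*_i+\alpha^*_j+\mathcal{O}(1)|\alpha^*_i\alpha^*_j|$ and $\alpha_k=\mathcal{O}(1)|\alpha^*_i\alpha^*_j|$ for $k\ne i$. The smallness threshold $\delta$ is chosen so that the implicit function theorem applies uniformly around $\bar U_{\mathfrak{b},\textrm{l}}$ and $\bar U_{\mathfrak{b},\textrm{r}}$.

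For part (ii) the mechanism is exact rather than asymptotic. In the first configuration, $U^{\textsc{e}}_{\rm m}=U^{\textsc{e}}_{\rm l}$ (the $3$-wave changes only $Y$), so $U^{\textsc{e}}_{\rm r}=\Phi^{\textsc{e}}_1(\alpha^*_1)(U^{\textsc{e}}_{\rm l})$; by the uniqueness of the Riemann decomposition in Lemma \ref{lem:2.1} applied to \eqref{eq:2.22}, the identity $\Phi^{\textsc{e}}_4(\alpha_4)\circ\Phi^{\textsc{e}}_2(\alpha_2)\circ\Phi^{\textsc{e}}_1(\alpha_1)(U^{\textsc{e}}_{\rm l})=\Phi^{\textsc{e}}_1(\alpha^*_1)(U^{\textsc{e}}_{\rm l})$ forces $\alpha_1=\alpha^*_1$ and $\alpha_2=\alpha_4=0$. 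The relation $Y_{\rm r}=Y_{\rm l}+\alpha^*_3$ then reads $\alpha_3=\alpha^*_3$. The second configuration ($i=4$, $j=3$) is obtained verbatim after swapping the roles of the $1$- and $4$-components, since a $3$-wave again leaves $U^{\textsc{e}}$ invariant and a $4$-wave leaves $Y$ invariant.

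The main obstacle is the $i=j$ case of part (i): one has to verify that the second-order terms do not spawn leading-order waves of other families, which requires more than smoothness and rests on the one-parameter-group property of wave curves within a single family. This goes back to Glimm's original argument and is standard for our system; nevertheless, to keep the constant $\mathcal{O}(1)$ uniform over $\mathcal{N}_\delta(\bar U_{\mathfrak{b},\textrm{l}})\cup\mathcal{N}_\delta(\bar U_{\mathfrak{b},\textrm{r}})$ one must track the tangent field $\tilde{\textbf{r}}^{\textsc{e}}_i(U^{\textsc{e}},\alpha_i)$ defined in \eqref{def:tangent vector} on both the shock and rarefaction branches, using Lemma \ref{lem:2.1a} to control the monotonicity of $(\rho,u,p)$ along the $4$-shock branch and its analogue along the $1$-shock branch.
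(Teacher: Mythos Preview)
Your approach is correct and aligns with the paper, which does not give a self-contained proof but simply invokes Lemma~\ref{lem:2.1} together with the standard interaction estimates in \cite{Bressan-2000}; your sketch is precisely that standard argument, and part~(ii) is handled exactly as you say, by the decoupling of the $Y$-component and uniqueness of the Riemann decomposition.

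One point deserves tightening. In part~(i) you write that ``the $C^2$-smoothness of the wave curves then yields a quadratic remainder controlled by $|\alpha^*_i\alpha^*_j|$.'' Mere $C^2$-smoothness only gives a generic quadratic remainder $\mathcal{O}(|\alpha^*_i|^2+|\alpha^*_j|^2)$; to obtain the \emph{product} bound $\mathcal{O}(|\alpha^*_i\alpha^*_j|)$ you must use the additional fact that the error functions $\alpha_k-\delta_{ki}\alpha^*_i-\delta_{kj}\alpha^*_j$ (or $\alpha_i-\alpha^*_i-\alpha^*_j$ when $i=j$) vanish identically when either $\alpha^*_i=0$ or $\alpha^*_j=0$. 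This is immediate from the uniqueness in Lemma~\ref{lem:2.1}, since setting one incoming wave to zero makes the configuration a single wave, for which the decomposition is exact. With that observation the Taylor remainder has no pure-square terms and the product bound follows. Your final paragraph invoking Lemma~\ref{lem:2.1a} and monotonicity along the strong-shock branch is not needed for this proposition; those tools enter only later, in the analysis near the large $4$-shock (Propositions~\ref{prop:2.3}--\ref{prop:2.4} and Lemma~\ref{lem:2.4}).
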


\begin{figure}[ht]
\begin{minipage}[t]{0.45\textwidth}
\centering
\begin{tikzpicture}[scale=1.1]
\draw [line width=0.04cm](-1.0,-0.8)to (-0.3,0.5)to(-0.7,1.9);
\draw [thin][blue](-0.3,0.5)to(0.6,1.2);
\draw [thin][blue](-0.3,0.5)to(0.7,1.1);
\draw [thin][blue](-0.3,0.5)to(0.7,1.0);

\node at (1.1, 1.2) {$\alpha_{4}$};
\node at (0.0, 1.3) {$U_{\rm{l}}$};
\node at (0.5, 0.4) {$U_{\rm{r}}$};

\node at (-1.2, 0.5) {$(\hat{t}, \chi^{\varepsilon}_{\mathfrak{p}}(\hat{t}))$};
\end{tikzpicture}
\caption{Riemann problem near the corner point of the piston}\label{fig3}
\end{minipage}
\begin{minipage}[t]{0.5\textwidth}
\centering
\begin{tikzpicture}[scale=1.1]
\draw [line width=0.04cm](-1.0,-0.8)--(0.0,2.0);
\draw [thin][blue](0.4,-0.6)--(-0.5,0.6);
\draw [thin](-0.5,0.6)--(0.9,1.2);

\node at (0.3, 1.4) {$U^{+}_{\mathfrak{b}}$};
\node at (0.4, 0.4) {$U_{\rm{r}}$};
\node at (-0.3, -0.4) {$U^{-}_{\mathfrak{b}}$};
\node at (-1.3, 0.5) {$(\hat{t}, \chi^{\varepsilon}_{\mathfrak{p}}(\hat{t}))$};

\node at (1.3, 1.25) {$\alpha_{4}$};
\node at (0.7, -0.7) {$\alpha^{*}_{1}$};

\end{tikzpicture}
\caption{Weak wave reflection on the piston}\label{fig4}
\end{minipage}
\end{figure}

Next we turn to study the Riemann problem for the system \eqref{eq:2.2} near the piston with the following initial-boundary data:
\begin{eqnarray}\label{eq:2.27}
\left\{
\begin{array}{llll}
u=u_{\mathfrak{p}}(\hat{t}+0) &\qquad \mbox{on}\quad \big\{(t,x): x=\chi^{\varepsilon}_{\mathfrak{p}}(t), t>\hat{t}\big\},\\[5pt]
U=U_{\rm{r}} &\qquad \mbox{on}\quad \big\{(t,x): x>\chi^{\varepsilon}_{\mathfrak{p}}(t), t=\hat{t}\big\},
\end{array}
\right.
\end{eqnarray}
where $U_{\rm{r}}=(U^{\textsc{e}}_{\rm{r}}, Y_{\rm{r}})^{\top}$ for $U^{\textsc{e}}_{\rm{r}}=(\rho_{\rm{r}}, u_{\rm{r}}, p_{\rm{r}})^{\top}$, and
$\chi^{\varepsilon}_{\mathfrak{p}}(t)$ represents the polygonal approximate boundary that will be defined in section 3.
Then we have the following solvability of Riemann problem \eqref{eq:2.2} and \eqref{eq:2.27}.

\begin{lemma}\label{lem:2.2}
There exists a sufficiently small parameter $\delta>0$ such that if $u_{\rm{r}}=u_{\mathfrak{p}}(\hat{t}-0)$ and $u_{\rm{l}}=u_{\mathfrak{p}}(\hat{t}+0)$ with $u_{\mathfrak{p}}(\hat{t}\pm0)\in \mathcal{N}_{\delta}(\bar{u}_{\mathfrak{p}})$
and $U_{\rm{r}}\in \mathcal{N}_{\delta}(\bar{U}_{\mathfrak{b}, \rm{l}})$,
then the Riemann problem \eqref{eq:2.2} and \eqref{eq:2.27} admits a unique admissible solution which includes a $4$-wave with strength $\alpha_4$ (see Fig.\ref{fig3}).
Moreover, there holds the estimate
\begin{eqnarray}\label{eq:2.28}
\alpha_4=\mathcal{O}(1)|u_{\mathfrak{p}}(\hat{t}+0)-u_{\mathfrak{p}}(\hat{t}-0)|.
\end{eqnarray}
\end{lemma}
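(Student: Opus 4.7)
The plan is to recognise that the piston sits at the left edge of the flow domain, so admissibility forces every wave emanating from the corner $(\hat{t}, \chi^{\varepsilon}_{\mathfrak{p}}(\hat{t}))$ to propagate strictly faster than the piston. Near $\bar{U}_{\mathfrak{b},\rm{l}}$ one has $\bar{u}_{\mathfrak{b},\rm{l}} + \bar{c}_{\mathfrak{b},\rm{l}} > \bar{u}_{\mathfrak{p}} = \bar{u}_{\mathfrak{b},\rm{l}}$ and $\bar{u}_{\mathfrak{b},\rm{l}} - \bar{c}_{\mathfrak{b},\rm{l}} < \bar{u}_{\mathfrak{p}}$, while the linearly degenerate $2,3$-fields travel at the fluid velocity $u\approx \bar{u}_{\mathfrak{p}}$. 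Consequently only the genuinely nonlinear $4$-family can appear in the Riemann fan, and since neither $4$-shocks nor $4$-rarefactions alter the mass fraction (cf.\ \eqref{eq:2.5} and \eqref{eq:2.7}), $Y$ is automatically preserved across the wave.

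Once the wave structure is fixed, the problem reduces to the scalar equation
\begin{equation*}
\mathcal{F}(\alpha_4) \doteq u\bigl(\Phi^{\textsc{e}}_{4}(\alpha_4)(U^{\textsc{e}}_{\rm{r}})\bigr) - u_{\mathfrak{p}}(\hat{t}+0) = 0,
\end{equation*}
where $u(\,\cdot\,)$ denotes the second component of a projected state. At $\alpha_4=0$ we have $\mathcal{F}(0) = u_{\rm{r}} - u_{\mathfrak{p}}(\hat{t}+0) = u_{\mathfrak{p}}(\hat{t}-0) - u_{\mathfrak{p}}(\hat{t}+0)$, which is small by hypothesis. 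The derivative $\mathcal{F}'(0)$ equals the velocity component of $\textbf{r}^{\textsc{e}}_{4}(U^{\textsc{e}}_{\rm{r}})$, which from the explicit formula of $\textbf{r}_4$ equals $\tfrac{2}{\gamma+1}$ and is bounded away from zero uniformly on $\mathcal{N}_{\delta}(\bar{U}_{\mathfrak{b},\rm{l}})$. Hence the implicit function theorem furnishes a unique $\alpha_4$, and its first-order Taylor expansion at $\alpha_4=0$ gives
\begin{equation*}
\alpha_4 = \tfrac{\gamma+1}{2}\bigl(u_{\mathfrak{p}}(\hat{t}+0) - u_{\mathfrak{p}}(\hat{t}-0)\bigr) + \mathcal{O}(1)\bigl|u_{\mathfrak{p}}(\hat{t}+0) - u_{\mathfrak{p}}(\hat{t}-0)\bigr|^{2},
\end{equation*}
which immediately yields the desired bound \eqref{eq:2.28}.

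The step I expect to be most delicate is the admissibility check rather than the existence one. For $\alpha_4>0$ the outgoing wave is a $4$-rarefaction whose slowest characteristic speed is $u(U^{+})+c(U^{+})$, and for $\alpha_4<0$ it is a $4$-shock whose speed $\mathrm{s}$ satisfies the Lax inequality (compare Lemma \ref{lem:2.1a}); continuity from the background values shows both exceed $u_{\mathfrak{p}}(\hat{t}+0)$ once $\delta$ is small, so the constructed wave genuinely lives in $\{x>\chi^{\varepsilon}_{\mathfrak{p}}(t)\}$. A companion subtlety is ruling out a priori ghost $1$- or $2$-waves: the piston condition is only the one-dimensional constraint $u=u_{\mathfrak{p}}$, and one must argue via the signs of characteristic speeds relative to $\dot{\chi}^{\varepsilon}_{\mathfrak{p}}$ that the Riemann fan collapses to a single $4$-wave, so that the scalar reduction used above is legitimate.
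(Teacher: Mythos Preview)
Your proposal is correct. The paper does not supply an explicit proof of Lemma~\ref{lem:2.2}, treating it as standard; your implicit-function-theorem argument on the velocity constraint, combined with the characteristic-speed analysis that rules out $1$-, $2$-, and $3$-waves at the piston, is exactly the expected route and mirrors the technique the paper uses for the adjacent results (notably Proposition~\ref{prop:2.2}).

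One small notational point: since $U_{\rm{r}}$ is the \emph{right} state of the outgoing $4$-wave and the new boundary state $U_{\rm{l}}$ is its \emph{left} state, the paper's conventions would have you write $U^{\textsc{e}}_{\rm{l}}=\Psi^{\textsc{e}}_{4}(-\alpha_4)(U^{\textsc{e}}_{\rm{r}})$ rather than $\Phi^{\textsc{e}}_{4}(\alpha_4)(U^{\textsc{e}}_{\rm{r}})$. This affects which branch (shock vs.\ rarefaction) is selected for a given sign of $\alpha_4$, but since both curves share the tangent $\textbf{r}^{\textsc{e}}_4(U^{\textsc{e}}_{\rm{r}})$ at $\alpha_4=0$, your derivative computation $\mathcal{F}'(0)=\tfrac{2}{\gamma+1}$ and the first-order estimate \eqref{eq:2.28} are unaffected.
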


The result below shows us an estimate for weak wave reflection on the boundary.

\begin{proposition}\label{prop:2.2}
Suppose that constant states $U_{\rm{r}}=(U^{\textsc{e}}_{\rm{r}}, Y_{\rm{r}})^{\top}$, $U^{-}_{\mathfrak{b}}=(U^{\textsc{e},-}_{\mathfrak{b}}, Y^{-}_{\mathfrak{b}})^{\top}\in \mathcal{N}_{\delta}(\bar{U}_{\mathfrak{b},\rm{l}})$ and velocity $u_{\mathfrak{p}}\in \mathcal{N}_{\delta}(\bar{u}_{\mathfrak{p}})$ satisfy
\begin{eqnarray}\label{eq:2.29}
U^{\textsc{e},-}_{\mathfrak{b}}=\Psi^{\textsc{e}}_{1}(-\alpha^*_{1})( U^{\textsc{e}}_{\rm{r}}),\quad Y^{-}_{\mathfrak{b}}=Y_{\rm{r}},\quad u^{-}_{\mathfrak{b}}=u_{\mathfrak{p}},
\end{eqnarray}
and that constant state $U^+_{\mathfrak{b}}=(U^{\textsc{e},+}_{\mathfrak{b}}, Y^{+}_{\mathfrak{b}})^{\top}\in \mathcal{N}_{\delta}(\bar{U}_{\mathfrak{b},\rm{l}})$
satisfies
\begin{eqnarray}\label{eq:2.30}
U^{\textsc{e},+}_{\mathfrak{b}}=\Psi^{\textsc{e}}_{4}(-\alpha_{4})( U^{\textsc{e}}_{\rm{r}}),\quad Y^{+}_{\mathfrak{b}}=Y_{\rm{r}},\quad u^{+}_{\mathfrak{b}}=u_{\mathfrak{p}},
\end{eqnarray}
where $U^{\textsc{e},\pm}_{\mathfrak{b}}=(\rho^{\pm}_{\mathfrak{b}},u^{\pm}_{\mathfrak{b}}, p^{\pm}_{\mathfrak{b}})^{\top}$,
$\alpha^*_{1}= \lambda(U_{\rm{r}})- \lambda(U^-_{\mathfrak{b}})$ and $\alpha_{4}= \lambda(U_{\rm{r}})- \lambda(U^+_{\mathfrak{b}})$ (see Fig.\ref{fig4}).
Then, for $\delta>0$ sufficiently small, it holds the estimate
\begin{eqnarray}\label{eq:2.31}
\alpha_4=\alpha^{*}_1+\mathcal{O}(1)|\alpha^{*}_{1}|^2.
\end{eqnarray}

\begin{proof}
By \eqref{eq:2.29} and \eqref{eq:2.30}, we have the relation
$ \Psi^{\textsc{e}}_{4}(-\alpha_{4})( U^{\textsc{e}}_{\rm{r}}) \cdot \mathbf{n}
=\Psi^{\textsc{e}}_{1}(-\alpha^*_{1})( U^{\textsc{e}}_{\rm{r}}) \cdot \mathbf{n}$,
where vector $\mathbf{n}=(0,1,0)^\top $.
To evaluate $\alpha_{4}$, let's consider the function
\begin{eqnarray*}
f_{\mathfrak{b}}(\alpha_{4},\alpha^*_{1}) \doteq\Psi^{\textsc{e}}_{4}(-\alpha_{4})( U^{\textsc{e}}_{\rm{r}})\cdot \mathbf{n}
 -\Psi^{\textsc{e}}_{1}(-\alpha^*_{1})( U^{\textsc{e}}_{\rm{r}})\cdot \mathbf{n}.
\end{eqnarray*}
Notice that $f_{\mathfrak{b}}(0,0)=0$ and
$$\frac{\partial f_{\mathfrak{b}}}{\partial\alpha_{4}} \Big|_{\alpha_{4}=\alpha^*_{1}=0}=-\mathbf{r}^{\textsc{e}}_{4}(U^{\textsc{e}}_{\rm{r}})\cdot \mathbf{n} \neq0 .$$
Then, by implicit function theorem, we know that $\alpha_{4}$ can be solved as
$C^2$-function of $\alpha^{*}_1$ from the equation $f_{\mathfrak{b}}(\alpha_{4},\alpha^*_{1})=0$ provided $\delta>0$ sufficiently small. Moreover, a direct computation shows that
\begin{eqnarray*}
\frac{\partial \alpha_4}{\partial \alpha^{*}_1}\Big|_{\alpha^*_{1}=0}=\frac{\mathbf{r}^{\textsc{e}}_{1}(U^{\textsc{e}}_{\rm{r}})\cdot \mathbf{n}}
{\mathbf{r}^{\textsc{e}}_{4}(U^{\textsc{e}}_{\rm{r}})\cdot \mathbf{n} }=1.
\end{eqnarray*}
By Taylor formula, we finally derive the estimate \eqref{eq:2.31}.
\end{proof}

\end{proposition}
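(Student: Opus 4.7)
The plan is to exploit the fact that both $U^+_{\mathfrak{b}}$ and $U^-_{\mathfrak{b}}$ lie on the piston boundary, so their velocity components must coincide, namely $u^+_{\mathfrak{b}} = u^-_{\mathfrak{b}} = u_{\mathfrak{p}}$. First I would use this to project the vector equality $\Psi^{\textsc{e}}_{4}(-\alpha_{4})( U^{\textsc{e}}_{\rm{r}}) = \Psi^{\textsc{e}}_{1}(-\alpha^*_{1})( U^{\textsc{e}}_{\rm{r}})|_u$ onto the velocity axis via $\mathbf{n}=(0,1,0)^\top$, since the $\rho$- and $p$-components are not expected to agree across the reflection. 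This reduces the three-dimensional matching problem to a scalar implicit relation
$$
f_{\mathfrak{b}}(\alpha_{4},\alpha^*_{1}) \doteq \bigl[\Psi^{\textsc{e}}_{4}(-\alpha_{4})( U^{\textsc{e}}_{\rm{r}}) - \Psi^{\textsc{e}}_{1}(-\alpha^*_{1})( U^{\textsc{e}}_{\rm{r}})\bigr]\cdot \mathbf{n} = 0,
$$
with the obvious base point $f_{\mathfrak{b}}(0,0)=0$.

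Next I would apply the implicit function theorem. The derivative at the origin is
$\partial_{\alpha_4}f_{\mathfrak{b}}\bigl|_{0}=-\mathbf{r}^{\textsc{e}}_{4}(U^{\textsc{e}}_{\rm{r}})\cdot \mathbf{n}$,
which, reading off the explicit form $\mathbf{r}_4=\tfrac{2}{(\gamma+1)c}(\rho,c,\gamma p,0)^\top$, equals $-\tfrac{2}{\gamma+1}\neq 0$. Hence, after possibly shrinking $\delta$, the variable $\alpha_4$ can be solved as a $C^2$ function of $\alpha^*_1$ in a neighborhood of zero. Implicit differentiation then gives
$$
\frac{\partial \alpha_4}{\partial \alpha^*_1}\bigg|_{0} = \frac{\mathbf{r}^{\textsc{e}}_{1}(U^{\textsc{e}}_{\rm{r}})\cdot \mathbf{n}}{\mathbf{r}^{\textsc{e}}_{4}(U^{\textsc{e}}_{\rm{r}})\cdot \mathbf{n}} = 1,
$$
because both $\mathbf{r}^{\textsc{e}}_1$ and $\mathbf{r}^{\textsc{e}}_4$ share the velocity component $\tfrac{2}{\gamma+1}$ (only the signs of $\rho$ and $p$ components differ). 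A Taylor expansion to second order in $\alpha^*_1$ then delivers exactly the desired $\alpha_4 = \alpha^*_1 + \mathcal{O}(1)|\alpha^*_1|^2$.

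The main obstacle I anticipate is a regularity subtlety: the curves $\Psi^{\textsc{e}}_{i}$ are patched from Hugoniot loci ($\alpha_i>0$) and rarefaction curves ($\alpha_i\leq 0$) according to the definition in \eqref{def:tangent vector}, so one must verify that the matching at $\alpha_i=0$ is smooth enough — at least $C^2$ — to justify both the implicit function theorem step and the quadratic Taylor remainder. For genuinely nonlinear fields of the Euler system this is classical, the two branches meeting with matching second derivatives via the Lax construction, but it is the one point where a careful check is needed rather than a bare appeal to the implicit function theorem. Once this regularity is in hand, the rest of the argument is a direct linearization at the background state $\bar{U}_{\mathfrak{b},\rm{l}}$ and requires no further interaction analysis.
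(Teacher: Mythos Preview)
Your proposal is correct and follows essentially the same route as the paper: project onto the velocity component via $\mathbf{n}=(0,1,0)^\top$, define the scalar function $f_{\mathfrak{b}}$, apply the implicit function theorem at the origin using $\mathbf{r}^{\textsc{e}}_{4}(U^{\textsc{e}}_{\rm{r}})\cdot\mathbf{n}\neq 0$, compute the derivative $\partial\alpha_4/\partial\alpha^*_1|_0=1$ from the symmetry of the velocity components of $\mathbf{r}^{\textsc{e}}_1$ and $\mathbf{r}^{\textsc{e}}_4$, and Taylor-expand. Your explicit remark on the $C^2$ matching of the shock and rarefaction branches at $\alpha_i=0$ is a welcome addition that the paper leaves implicit.
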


We further consider the Riemann problem for system \eqref{eq:2.2} with the data \eqref{eq:2.18} in different regions,
and obtain the following solvability of this problem involving strong shock.
\begin{lemma}\label{lem:2.4}
Given two constant sates $U_{\rm{l}}, U_{\rm{r}}$ in \eqref{eq:2.18} satisfying $U_{\rm{l}}\in \mathcal{N}_{\delta}
(\bar{U}_{\mathfrak{b}, \rm{l}})$ and $U_{\rm{r}}\in \mathcal{N}_{\delta}(\bar{U}_{\mathfrak{b}, \rm{r}})$, there exists a small constant $\delta>0$ such that the Riemann problem \eqref{eq:2.2}\eqref{eq:2.18}
admits a unique admissible solution which consists of at most four constant states separated by three weak waves $\alpha_k$ $(k=1,2,3)$ and a strong shock $\alpha^{\rm{s}}_4$.
Moreover, $U_{\rm{l}}$ and $U_{\rm{r}}$ satisfy
\begin{eqnarray}\label{eq:2.32}
U^{\textsc{e}}_{\rm{r}}=S^{\textsc{e}}_{4}({\alpha^{\rm{s}}_{4}}) \circ \Phi^{\textsc{e}}_{2}(\alpha_{2})\circ  \Phi^{\textsc{e}}_{1}(\alpha_{1})(U^{\textsc{e}}_{\rm{l}}),
\quad Y_{\rm{r}}=Y_{\rm{l}}+ \alpha_3.
\end{eqnarray}
\end{lemma}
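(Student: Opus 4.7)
The plan is to invoke the implicit function theorem around the background configuration $(\bar{U}^{\textsc{e}}_{\mathfrak{b},\textrm{l}}, \bar{U}^{\textsc{e}}_{\mathfrak{b},\textrm{r}}, \bar{\alpha}^{\rm{s}}_4)$ identified in \eqref{eq:2.12}. Since the $Y$-equation of \eqref{eq:2.2} decouples from the three Eulerian components and $Y$ only jumps across the linearly degenerate $3$-field, I would first set $\alpha_3 := Y_{\rm{r}} - Y_{\rm{l}}$ and reduce the problem to solving
\begin{equation*}
\Theta(U^{\textsc{e}}_{\rm{l}}; \alpha_1, \alpha_2, \alpha^{\rm{s}}_4) := S^{\textsc{e}}_{4}(\alpha^{\rm{s}}_{4}) \circ \Phi^{\textsc{e}}_{2}(\alpha_{2}) \circ \Phi^{\textsc{e}}_{1}(\alpha_{1})(U^{\textsc{e}}_{\rm{l}}) = U^{\textsc{e}}_{\rm{r}}
\end{equation*}
for the three unknown wave strengths $(\alpha_1, \alpha_2, \alpha^{\rm{s}}_4) \in \mathbb{R}^3$. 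By \eqref{eq:2.12}, $\Theta(\bar{U}^{\textsc{e}}_{\mathfrak{b},\textrm{l}}; 0, 0, \bar{\alpha}^{\rm{s}}_4) = \bar{U}^{\textsc{e}}_{\mathfrak{b},\textrm{r}}$, so the argument reduces to checking that the $3 \times 3$ Jacobian $J := \partial \Theta / \partial(\alpha_1, \alpha_2, \alpha^{\rm{s}}_4)$ at the background is nonsingular.

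A chain-rule computation, using $\Phi^{\textsc{e}}_i(0) = \mathrm{id}$ and $(d/d\alpha_i)\Phi^{\textsc{e}}_i(0) = \mathbf{r}^{\textsc{e}}_i$ for $i=1,2$, yields
\begin{equation*}
J = \Bigl[\, \Pi\, \mathbf{r}^{\textsc{e}}_1(\bar{U}^{\textsc{e}}_{\mathfrak{b},\textrm{l}}) \,\Bigm|\, \Pi\, \mathbf{r}^{\textsc{e}}_2(\bar{U}^{\textsc{e}}_{\mathfrak{b},\textrm{l}}) \,\Bigm|\, \mathbf{t}^{\textsc{e}}_4 \,\Bigr],
\end{equation*}
where $\Pi := D_{U^{\textsc{e}}} S^{\textsc{e}}_4(\bar{\alpha}^{\rm{s}}_4)(\bar{U}^{\textsc{e}}_{\mathfrak{b},\textrm{l}})$ and $\mathbf{t}^{\textsc{e}}_4 := \partial_\alpha S^{\textsc{e}}_4(\bar{\alpha}^{\rm{s}}_4)(\bar{U}^{\textsc{e}}_{\mathfrak{b},\textrm{l}})$ is the tangent to the $4$-Hugoniot locus at $\bar{U}^{\textsc{e}}_{\mathfrak{b},\textrm{r}}$. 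Invertibility of $\Pi$ is essentially a consequence of the strict monotonicity $\rho' > 0$, $u' > 0$, $p' > 0$ from Lemma~\ref{lem:2.1a}, which shows that $S^{\textsc{e}}_4(\bar{\alpha}^{\rm{s}}_4)(\cdot)$ is a local diffeomorphism near $\bar{U}^{\textsc{e}}_{\mathfrak{b},\textrm{l}}$ away from the sonic locus.

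The main obstacle is verifying that the pulled-back shock tangent $\Pi^{-1}\mathbf{t}^{\textsc{e}}_4$ is not in $\operatorname{span}\{\mathbf{r}^{\textsc{e}}_1(\bar{U}^{\textsc{e}}_{\mathfrak{b},\textrm{l}}),\mathbf{r}^{\textsc{e}}_2(\bar{U}^{\textsc{e}}_{\mathfrak{b},\textrm{l}})\}$. At zero shock strength this vector reduces to $\mathbf{r}^{\textsc{e}}_4(\bar{U}^{\textsc{e}}_{\mathfrak{b},\textrm{l}})$ and independence is immediate from the linear independence of the four eigenvectors; for the finite background strength I would differentiate the Rankine-Hugoniot relations \eqref{eq:2.5} with respect to the parameter and, using the sign information furnished by Lemma~\ref{lem:2.1a}, read off explicit nonzero components of $\mathbf{t}^{\textsc{e}}_4$ that cannot be matched by any combination of $\mathbf{r}^{\textsc{e}}_2 = (1,0,0)^\top$ and the known shape of $\mathbf{r}^{\textsc{e}}_1$. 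Once $\det J \neq 0$ is established, the implicit function theorem delivers a unique $C^1$ solution $(\alpha_1, \alpha_2, \alpha^{\rm{s}}_4)$ on a $\delta$-neighborhood of $(\bar{U}^{\textsc{e}}_{\mathfrak{b},\textrm{l}}, \bar{U}^{\textsc{e}}_{\mathfrak{b},\textrm{r}})$. Admissibility of the resulting fan is then automatic for $\delta$ small: the weak $1$-wave inherits Lax's condition through the parameterization \eqref{parameterization for Si}, the $2$- and $3$-contacts are linearly degenerate, and the perturbed $4$-shock remains a Lax shock by continuity of $\lambda_4$ and of the shock speed from \eqref{eq:2.6}. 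Combined with $\alpha_3 = Y_{\rm{r}} - Y_{\rm{l}}$, this yields \eqref{eq:2.32} and concludes the proof.
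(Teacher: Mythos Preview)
Your overall plan---decouple the $Y$-equation to set $\alpha_3=Y_{\rm r}-Y_{\rm l}$, then apply the implicit function theorem around the background $(0,0,\bar\alpha^{\rm s}_4)$---is exactly what the paper does. The difference is in how the composition is written, and this matters for the Jacobian computation.

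The paper parameterizes \emph{backward} from $U^{\textsc e}_{\rm r}$: it rewrites \eqref{eq:2.32} as
\[
U^{\textsc e}_{\rm l}=\Psi^{\textsc e}_1(-\alpha_1)\circ\Psi^{\textsc e}_2(-\alpha_2)\circ S^{\textsc e}_4(-\alpha^{\rm s}_4)(U^{\textsc e}_{\rm r}),
\]
so that at the background the three Jacobian columns are simply $\mathbf r^{\textsc e}_1(\bar U^{\textsc e}_{\mathfrak b,\rm l})$, $\mathbf r^{\textsc e}_2$, and $\mathbf r^{\textsc e}_4(\bar U^{\textsc e}_{\mathfrak b,\rm r},-\bar\alpha^{\rm s}_4)=(\rho',u',p')^\top$. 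The determinant then reduces to $\frac{2}{(\gamma+1)c_{\rm l}}\bigl(c_{\rm l}p'+\gamma p_{\rm l}u'\bigr)$, which is positive by Lemma~\ref{lem:2.1a}. No auxiliary differential $\Pi$ appears.

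Your forward parameterization forces you to contend with $\Pi=D_{U^{\textsc e}}S^{\textsc e}_4(\bar\alpha^{\rm s}_4)$, and here your justification has a gap: Lemma~\ref{lem:2.1a} gives the signs of derivatives \emph{along} the shock curve (with respect to $\alpha_4$), not derivatives with respect to the base state $U^{\textsc e}$. Monotonicity in $\alpha_4$ does not by itself imply that $U^{\textsc e}\mapsto S^{\textsc e}_4(\bar\alpha^{\rm s}_4)(U^{\textsc e})$ is a local diffeomorphism. That map \emph{is} invertible, but one needs a separate argument (e.g.\ smooth dependence of the Hugoniot locus on the base point together with $\Pi=\mathrm{id}$ at $\alpha=0$). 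The backward parameterization sidesteps this issue entirely and makes the sign of the Jacobian determinant an immediate consequence of Lemma~\ref{lem:2.1a}, which is presumably why the paper chose it.
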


\begin{proof}
It is obvious that $\alpha_{3}=Y_{\rm{r}}-Y_{\rm{l}}$ by definition \eqref{parameterization for Si}. To prove \eqref{eq:2.32} for $U^{\textsc{e}}$, we need to investigate the equivalent equation
\begin{eqnarray*}
U^{\textsc{e}}_{\rm{l}}= \Psi^{\textsc{e}}_1 (-\alpha_1)\circ \Psi^{\textsc{e}}_2(-\alpha_2)\circ S^{\textsc{e}}_4 (-{\alpha^{\rm{s}}_{4}})(U^{\textsc{e}}_{\rm{r}}).
\end{eqnarray*}
Set the intermediate state $U^{\textsc{e}}_{\rm{m}} \doteq  \Psi^{\textsc{e}}_2 (-\alpha_2)\circ S^{\textsc{e}}_4 (-{\alpha^{\rm{s}}_{4}})(U^{\textsc{e}}_{\rm{r}})$. Then define a function
\begin{eqnarray*}
f_{\rm{s}}(\alpha_1, \alpha_2, \alpha^{\rm{s}}_{4}; U^{\textsc{e}}_{\rm{l}}, U^{\textsc{e}}_{\rm{r}})
= U^{\textsc{e}}_{\rm{r}} +\int^{-{\alpha^{\rm{s}}_{4}}}_0 \textbf{r}^{\textsc{e}}_4( U^{\textsc{e}}_{\rm{r}}, \sigma) d\sigma-\alpha_2\textbf{r}^{\textsc{e}}_2
+ \int^{-\alpha_1}_0 \tilde{\textbf{r}}^{\textsc{e}}_1(U^{\textsc{e}}_{\rm{m}}, \sigma) d\sigma -U^{\textsc{e}}_{\rm{l}},
\end{eqnarray*}
where vector $\tilde{\textbf{r}}^{\textsc{e}}_1$ is given by \eqref{def:tangent vector}. The fact \eqref{eq:2.12} and Lemma \ref{lem:2.1a} directly yield that $f_{\rm{s}}(0, 0, \bar{\alpha}^{\rm{s}}_{4}; \bar{U}^{\textsc{e}}_{\mathfrak{b},\rm{l}}, \bar{U}^{\textsc{e}}_{\mathfrak{b},\rm{r}})\equiv0$ and
\begin{eqnarray*}
\begin{split}
\det \big(\nabla_{(\alpha_1, \alpha_2, \alpha^{\rm{s}}_{4})} f_{\rm{s}} (0, 0, \bar{\alpha}^{\rm{s}}_{4}; \bar{U}^{\textsc{e}}_{\mathfrak{b},\rm{l}}, \bar{U}^{\textsc{e}}_{\mathfrak{b},\rm{r}})\big)
&=-\det \big(\textbf{r}^{\textsc{e}}_1(\bar{U}^{\textsc{e}}_{\mathfrak{b},\rm{l}}), \textbf{r}^{\textsc{e}}_2(\bar{U}^{\textsc{e}}_{\mathfrak{b},\rm{l}}),  \textbf{r}^{\textsc{e}}_4( \bar{U}^{\textsc{e}}_{\mathfrak{b},\rm{r}}, -\bar{\alpha}^{\rm{s}}_{4})\big)\\[5pt]
&= \frac{2\big(c_{\rm{l}} p'(-\bar{\alpha}^{\rm{s}}_{4}) + \gamma p_{\rm{l}} u'(-\bar{\alpha}^{\rm{s}}_{4})\big)}{(\gamma+1)c_{\rm{l}}}>0.
\end{split}
\end{eqnarray*}
If choosing $\delta>0$ small enough, then by implicit function theorem, $f_{\rm{s}}=0$ has a unique solution $(\alpha_1, \alpha_2, \alpha^{\rm{s}}_{4})$ near
$(0, 0, \bar{\alpha}^{\rm{s}}_{4})$. Finally, relation \eqref{eq:2.32} is a direct conclusion of $f_{\rm{s}}=0$.

%The solvability of Riemann problem to Euler equations can be verified by the similar argument as above.
%Hence there exists $(\alpha_1, \alpha_2, \eta_4)$ such that
%$$U^+ = \varphi_4 (\eta_4)\circ  \varphi_2 (\alpha_2) \circ\varphi_1 (\alpha_1)(U^-)$$
%and $(\alpha_1, \alpha_2, \eta_4)$  close to $( 0, 0,\eta)$.
%\hfill$\square$
\end{proof}

%\begin{remark}\label{rem:2.1}
%Lax's Theorem in \cite{Lax-1957} ensures the solvability of Riemann problem to hyperbolic system under consideration of small data.
%Similarly local solvability of \eqref{eq:2.32} is true if the left and the right states lie in the same small neighborhood. For polytropic gas,
%it is confirmed in \cite{Smoller-1994} that generic Riemann problem can be uniquely solved provided vacuum does not occur. Here refer to \cite{Smith-1979} for general gas.
%In contrast with the references above, Lemma \ref{lem:2.3} reveals the possibility that two states can be connected via projected Hugoniot curves successively.
%This is the basis of $L^1$ stability argument in Section 6.
%\end{remark}

\begin{figure}[ht]
\begin{minipage}[t]{0.45\textwidth}
\centering
\begin{tikzpicture}[scale=1.1]
\draw [line width=0.04cm][red](-0.7,-0.7)--(-0.3,0.5);
\draw [thin](-1.6,-0.4)--(-0.3,0.5);

\draw [thin](-0.3,0.5)--(-0.6,1.8);
\draw [dashed][thin][blue](-0.3,0.5)to(0.5,1.7);
\draw [line width=0.04cm][red](-0.3,0.5)to(0.8, 1.0);

\node at (1.1, 1.1) {$\alpha^{\rm{s}}_{4}$};
\node at (0.55, 1.95) {$\alpha_{2(3)}$};
\node at (-0.6, 2.0) {$\alpha_{1}$};
\node at (-1.8, -0.7) {$\alpha^*_{4}$};
\node at (-0.7, -1.0) {$\alpha^{\rm{s},*}_{4}$};

\node at (-0.9, -0.3) {$U_{\rm{m}}$};
\node at (-0.1, 1.5) {$U_{\rm{m1}}$};
\node at (0.3, -0.1) {$U_{\rm{r}}$};
\node at (-1.2, 0.7) {$U_{\rm{l}}$};
\end{tikzpicture}
\caption{Weak-strong \protect\\ waves interaction}\label{fig5}
\end{minipage}
\begin{minipage}[t]{0.45\textwidth}
\centering
\begin{tikzpicture}[scale=1.1]
\draw [line width=0.04cm][red](-1.0,-0.8)--(-0.5,0.6);
\draw [thin](0.4,-0.6)--(-0.5,0.6);

\draw [line width=0.04cm][red](-0.5,0.6)--(0.9,1.2);
\draw [dashed][thin](-0.5,0.6)--(0.3, 1.7);
\draw [thin](-0.5,0.6)--(-0.7,1.9);

\node at (0.4, 0.4) {$U_{\rm{r}}$};
\node at (-0.3, -0.4) {$U_{\rm{m}}$};
\node at (-0.25, 1.6) {$U_{\rm{m1}}$};
\node at (-1.3, 0.5) {$U_{\rm{l}}$};

\node at (1.25, 1.25) {$\alpha^{\rm{s}}_{4}$};
\node at (0.4, 2.0) {$\alpha_{2(3)}$};
\node at (-0.7, 2.1) {$\alpha_{1}$};

\node at (-1.0, -1.1) {$\alpha^{\rm{s},*}_{4}$};
\node at (0.7, -0.7) {$\alpha^{*}_{k}$};

\end{tikzpicture}
\caption{Strong-weak \protect\\ waves interaction }\label{fig6}
\end{minipage}
\end{figure}

\begin{proposition}\label{prop:2.3}
Suppose that constant states $U_{\rm{l}}, U_{\rm{m}}\in \mathcal{N}_{\delta}(\bar{U}_{\mathfrak{b},\rm{l}})$
and $U_{\rm{r}}\in \mathcal{N}_{\delta}(\bar{U}_{\mathfrak{b},\rm{r}})$ (see Fig.\ref{fig5}) with $\delta>0$ sufficiently small, and they satisfy
\begin{eqnarray}\label{eq:2.33}
U^{\textsc{e}}_{\rm{m}}=\Phi^{\textsc{e}}_4(\alpha^*_4)(U^{\textsc{e}}_{\rm{l}}),\quad
U^{\textsc{e}}_{\rm{r}}=S^{\textsc{e}}_4(\alpha^{\rm{s},*}_4)(U^{\textsc{e}}_{\rm{m}}),\quad
Y_{\rm{r}}=Y_{\rm{m}}=Y_{\rm{l}}.
\end{eqnarray}
If
\begin{eqnarray}\label{eq:2.34}
U^{\textsc{e}}_{\rm{r}}=S^{\textsc{e}}_4(\alpha^{\rm{s}}_4)\circ \Phi^{\textsc{e}}_2(\alpha_2) \circ \Phi^{\textsc{e}}_1(\alpha_1)(U^{\textsc{e}}_{\rm{l}}),\qquad
Y_{\rm{r}}=Y_{\rm{l}} +\alpha_3 ,
\end{eqnarray}
then there hold that
\begin{eqnarray}\label{eq:2.35}
\alpha_1  = \kappa_{\rm{s}} \alpha^*_4 + \mathcal{O}(1)|\alpha^*_4 |^2,\quad   \alpha_2 =\mathcal{O}(1)|\alpha^*_4|,\quad \alpha_3=0,
\end{eqnarray}
and
\begin{eqnarray}\label{eq:2.36}
&\alpha^{\rm{s}}_4=\alpha^{\rm{s},*}_4+\mathcal{O}(1)|\alpha^{*}_4|,
\end{eqnarray}
where the reflection coefficient $\kappa_{\rm{s}}$ satisfies $|\kappa_{\rm{s}}|<1$.
\end{proposition}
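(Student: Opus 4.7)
The plan is to combine the implicit function theorem with Cramer's rule, following the template of the proof of Lemma \ref{lem:2.4}, and then to extract the reflection coefficient $\kappa_{\rm{s}}$ through an explicit linearization. Since $Y_{\rm{r}}=Y_{\rm{l}}$ by \eqref{eq:2.33}, the parametrization \eqref{parameterization for Si} immediately gives $\alpha_3=0$. The remaining task is purely Eulerian: find $(\alpha_1,\alpha_2,\alpha^{\rm{s}}_4)$ depending smoothly on $\alpha^*_4$ so that
\begin{equation*}
S^{\textsc{e}}_{4}(\alpha^{\rm{s}}_4)\circ \Phi^{\textsc{e}}_{2}(\alpha_2)\circ \Phi^{\textsc{e}}_{1}(\alpha_1)(U^{\textsc{e}}_{\rm{l}}) = S^{\textsc{e}}_{4}(\alpha^{\rm{s},*}_4)\circ \Phi^{\textsc{e}}_{4}(\alpha^*_4)(U^{\textsc{e}}_{\rm{l}}),
\end{equation*}
and to derive the first-order behaviour in $\alpha^*_4$.

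Denote the difference of the two sides by $G(\alpha_1,\alpha_2,\alpha^{\rm{s}}_4;\alpha^*_4)$. At the base point $(0,0,\alpha^{\rm{s},*}_4,0)$ one has $G=0$ by construction and uniqueness from Lemma \ref{lem:2.4}, while the Jacobian of $G$ with respect to $(\alpha_1,\alpha_2,\alpha^{\rm{s}}_4)$ consists of the columns $D_{U}S^{\textsc{e}}_{4}\cdot \mathbf{r}^{\textsc{e}}_i(\bar U^{\textsc{e}}_{\mathfrak{b},\rm{l}})$ for $i=1,2$ together with the shock tangent $\mathbf{r}^{\textsc{e}}_4(\bar U^{\textsc{e}}_{\mathfrak{b},\rm{l}},\alpha^{\rm{s},*}_4)$. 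Its determinant is nonzero by exactly the same computation used in the proof of Lemma \ref{lem:2.4}, so the implicit function theorem delivers $C^2$ solutions and in particular the $\mathcal{O}(|\alpha^*_4|)$ bounds required for \eqref{eq:2.35}--\eqref{eq:2.36}. Differentiating $G=0$ at $\alpha^*_4=0$, pulling back by $(D_{U}S^{\textsc{e}}_{4})^{-1}$, and applying Cramer's rule expresses $\dot\alpha_1$, $\dot\alpha_2$, $\dot\alpha^{\rm{s}}_4$ as quotients of $3\times 3$ determinants involving the eigenvectors $\mathbf{r}^{\textsc{e}}_i$ and the shock tangent; setting $\kappa_{\rm{s}}\doteq\dot\alpha_1$ and invoking Taylor's formula gives the quadratic remainder in \eqref{eq:2.35}.

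The main obstacle is the strict inequality $|\kappa_{\rm{s}}|<1$; this is the real content of the proposition and it guarantees that weak 4-perturbations do not amplify on reflection from the strong shock. I would verify it directly for the ideal polytropic gas by decomposing the shock tangent $\mathbf{r}^{\textsc{e}}_4(\cdot,\alpha^{\rm{s},*}_4)$ in the basis $\{\mathbf{r}^{\textsc{e}}_j(\bar U^{\textsc{e}}_{\mathfrak{b},\rm{r}})\}_{j=1,2,4}$ via the Rankine-Hugoniot relations \eqref{eq:2.6} and the monotonicity $\rho'>0$, $u'>0$, $p'>0$, $\mathrm{s}'>0$ from Lemma \ref{lem:2.1a}. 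The resulting determinant quotient simplifies to an algebraic expression in the density ratio $\bar\rho_{\rm{l}}/\bar\rho_{\rm{r}}$, which lies in the compressive window $(1,\eta)$ with $\eta=(\gamma+1)/(\gamma-1)$; a routine algebraic inequality then shows $|\kappa_{\rm{s}}|<1$ uniformly on that interval. Continuity propagates the bound to a full neighbourhood of the background state, which completes the proof once $\delta>0$ is chosen sufficiently small.
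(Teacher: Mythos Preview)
Your overall strategy---implicit function theorem plus Cramer's rule on the linearized system, followed by Taylor expansion---is exactly what the paper does, and the reduction $\alpha_3=0$ is handled identically. The difference is in parametrization and in how cleanly the key inequality $|\kappa_{\rm s}|<1$ drops out.

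The paper writes the matching condition \emph{backward} from $U^{\textsc{e}}_{\rm r}$ via the inverse curves $\Psi^{\textsc{e}}_i$, i.e.\ it solves
\[
\Psi^{\textsc{e}}_1(-\alpha_1)\circ \Psi^{\textsc{e}}_2(-\alpha_2)\circ S^{\textsc{e}}_4(-\alpha^{\rm s}_4)(U^{\textsc{e}}_{\rm r})
=\Psi^{\textsc{e}}_4(-\alpha^*_4)\circ S^{\textsc{e}}_4(-\alpha^{\rm s,*}_4)(U^{\textsc{e}}_{\rm r}),
\]
and differentiates at the background. This avoids the chain-rule factor $D_U S^{\textsc{e}}_4$ that your forward parametrization from $U^{\textsc{e}}_{\rm l}$ introduces, and it delivers the linear system with columns $\mathbf{r}^{\textsc{e}}_1(\bar U^{\textsc{e}}_{\mathfrak{b},\rm l})$, $\mathbf{r}^{\textsc{e}}_2$, $\mathbf{r}^{\textsc{e}}_4(\bar U^{\textsc{e}}_{\mathfrak{b},\rm r},-\bar\alpha^{\rm s}_4)$ and right-hand side $\mathbf{r}^{\textsc{e}}_4(\bar U^{\textsc{e}}_{\mathfrak{b},\rm l})$ directly. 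Cramer's rule then gives the closed form
\[
\kappa_{\rm s}=\frac{\gamma \bar p_{\rm l}\,u'(-\bar\alpha^{\rm s}_4)-\bar c_{\rm l}\,p'(-\bar\alpha^{\rm s}_4)}{\gamma \bar p_{\rm l}\,u'(-\bar\alpha^{\rm s}_4)+\bar c_{\rm l}\,p'(-\bar\alpha^{\rm s}_4)},
\]
and $|\kappa_{\rm s}|<1$ is immediate from $u'>0$, $p'>0$ in Lemma~\ref{lem:2.1a}---it is literally $(A-B)/(A+B)$ with $A,B>0$. No density-ratio algebra on $(1,\eta)$ is needed. Your proposed route to the inequality would work in principle, but it is left as a sketch (``routine algebraic inequality'') and is considerably less transparent; also note the relevant eigenvector basis sits at $\bar U^{\textsc{e}}_{\mathfrak{b},\rm l}$, not $\bar U^{\textsc{e}}_{\mathfrak{b},\rm r}$. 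If you switch to the backward parametrization, the whole thing collapses to two lines.
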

\begin{remark}\label{rem:2.2}
The fact $|\kappa_{\rm{s}}|<1$ in \eqref{eq:2.35} is a key point in proving the global existence of entropy solution to $($\textit{IBVP}$)$.
\end{remark}

\begin{proof}
First, by relations \eqref{eq:2.33} and \eqref{eq:2.34}, we have $\alpha_{3}=0$ and
\begin{eqnarray}\label{eq:2.37}
\Psi^{\textsc{e}}_1(-\alpha_1)\circ \Psi^{\textsc{e}}_2(-\alpha_2)\circ  S^{\textsc{e}}_4(-\alpha^{\rm{s}}_4)( U^{\textsc{e}}_{\rm{r}})
={\Psi^{\textsc{e}}_4(-\alpha^*_4)\circ S^{\textsc{e}}_4(-\alpha^{\rm{s},*}_4)(U^{\textsc{e}}_{\rm{r}}).}
\end{eqnarray}
Then, from Lemma \ref{lem:2.4}, we know that $\alpha_1$, $\alpha_2$ and $\alpha^{\rm{s}}_4$ can be solved from equation \eqref{eq:2.37} as $C^2$-functions of variables $\alpha^*_4$
and $\alpha^{\rm{s},*}_4$ in the vicinity of point $(0,\bar{\alpha}^{\rm{s}}_4)$.

 To estimate $\alpha_1$, $\alpha_2$ and $\alpha^{\rm{s}}_4$, we rewrite the equation \eqref{eq:2.37} as
\begin{eqnarray*}
\begin{split}
&U^{\textsc{e}}_{\rm{r}}+\int^{-\alpha^{\rm{s}}_4}_0 \mathbf{r}^{\textsc{e}}_{4} (U^{\textsc{e}}_{\rm{r}}, \sigma) d\sigma-\alpha_2\mathbf{r}^{\textsc{e}}_2
	+ \int^{-\alpha_1}_0 \tilde{\mathbf{r}}^{\textsc{e}}_{1} ( U^{\textsc{e}}_{\rm{m1}}, \sigma) d\sigma \\[5pt]
&\qquad\qquad\quad  =U^{\textsc{e}}_{\rm{r}}+\int^{-\alpha^{\rm{s},*}_4}_0 \mathbf{r}^{\textsc{e}}_{4} ( U^{\textsc{e}}_{\rm{r}}, \sigma) d\sigma+ \int^{-\alpha_4^*}_0 \tilde{\mathbf{r}}^{\textsc{e}}_{4} ( U^{\textsc{e}}_{\rm{m}}, \sigma) d\sigma,
\end{split}
\end{eqnarray*}
where $U^{\textsc{e}}_{\rm{m1}} = \Psi^{\textsc{e}}_2(-\alpha_2)\circ  S^{\textsc{e}}_4(-\alpha^{\rm{s}}_4)( U^{\textsc{e}}_{\rm{r}})$.
Differentiating this with respect to $\alpha_4^*$, and letting $\alpha_4^* =0, \alpha^{\rm{s},*}_4=\bar{\alpha}^{s}_{4}$ and $(U^{\textsc{e}}_{\rm{m}},U^{\textsc{e}}_{\rm{r}})=(\bar{U}^{\textsc{e}}_{\mathfrak{b},\rm{l}},\bar{U}^{\textsc{e}}_{\mathfrak{b},\rm{r}})$,
one derives that
\begin{equation*}
\mathbf{r}^{\textsc{e}}_1 (\bar{U}^{\textsc{e}}_{\mathfrak{b},\rm{l}}) \frac{\partial \alpha_1}{\partial \alpha^*_4}\Big|_{\alpha_4^* =0, \alpha^{\rm{s},*}_4=\bar{\alpha}^{\rm{s}}_{4}}
	+ \mathbf{r}^{\textsc{e}}_2 (\bar{U}^{\textsc{e}}_{\mathfrak{b},\rm{l}})\frac{\partial \alpha_2}{\partial \alpha^*_4}\Big|_{\alpha_4^* =0, \alpha^{\rm{s},*}_4=\bar{\alpha}^{\rm{s}}_{4}}
	+ \mathbf{r}^{\textsc{e}}_4 ( \bar{U}^{\textsc{e}}_{\mathfrak{b},\rm{r}},  -\bar{\alpha}^{\rm{s}}_4) \frac{\partial \alpha^{\rm{s}}_4}{\partial \alpha_4^*}\Big|_{\alpha_4^* =0, \alpha^{\rm{s},*}_4=\bar{\alpha}^{\rm{s}}_{4}}
	= \mathbf{r}^{\textsc{e}}_4 (\bar{U}^{\textsc{e}}_{\mathfrak{b},\rm{l}}),
\end{equation*}
which gives
$$
\frac{\partial \alpha_1}{\partial \alpha^*_4}\Big|_{\alpha_4^* =0, \alpha^{\rm{s},*}_4=\bar{\alpha}^{\rm{s}}_{4}}
= \frac{\det \big(\mathbf{r}^{\textsc{e}}_4 (\bar{U}^{\textsc{e}}_{\mathfrak{b},\rm{l}}), \mathbf{r}^{\textsc{e}}_2(\bar{U}^{\textsc{e}}_{\mathfrak{b},\rm{l}}), \mathbf{r}^{\textsc{e}}_4 (\bar{U}^{\textsc{e}}_{\mathfrak{b},\rm{r}},  -\bar{\alpha}^{\rm{s}}_4) \big)}
{\det\big(\mathbf{r}^{\textsc{e}}_1 (\bar{U}^{\textsc{e}}_{\mathfrak{b},\rm{l}}), \mathbf{r}^{\textsc{e}}_2(\bar{U}^{\textsc{e}}_{\mathfrak{b},\rm{l}}), \mathbf{r}^{\textsc{e}}_4 ( \bar{U}^{\textsc{e}}_{\mathfrak{b},\rm{r}}, -\bar{\alpha}^{\rm{s}}_4)\big)}
=\frac{\gamma \bar{p}_{\rm{l}}u'(-\bar{\alpha}^{\rm{s}}_{4}) -\bar{c}_{\rm{l}}p'(-\bar{\alpha}^{\rm{s}}_{4})}{\gamma\bar{p}_{\rm{l}}u'(-\bar{\alpha}^{\rm{s}}_{4}) + \bar{c}_{\rm{l}}p'(-\bar{\alpha}^{\rm{s}}_{4})}.
$$
With the help of Lemma \ref{lem:2.1a}, we can further deduce that
\begin{eqnarray*}
\begin{split}
\Bigg|\frac{\partial \alpha_1}{\partial \alpha^*_4}\Big|_{\alpha_4^* =0, \alpha^{\rm{s},*}_4=\bar{\alpha}^{\rm{s}}_{4}}\Bigg|
=\left|\frac{\gamma \bar{p}_{\rm{l}}u'(-\bar{\alpha}^{\rm{s}}_{4}) -\bar{c}_{\rm{l}}p'(-\bar{\alpha}^{\rm{s}}_{4})}{\gamma \bar{p}_{\rm{l}}u'(-\bar{\alpha}^{\rm{s}}_{4}) + \bar{c}_{\rm{l}}p'(-\bar{\alpha}^{\rm{s}}_{4})}\right|<1,
\end{split}
\end{eqnarray*}
and
\begin{eqnarray*}
\begin{split}
\frac{\partial \alpha^{\rm{s}}_4}{\partial \alpha^*_4}\Big|_{\alpha_4^* =0, \alpha^{\rm{s},*}_4=\bar{\alpha}^{\rm{s}}_{4}}
&=\frac{\det \big(\mathbf{r}^{\textsc{e}}_1 (\bar{U}^{\textsc{e}}_{\mathfrak{b},\rm{l}}), \mathbf{r}^{\textsc{e}}_2(\bar{U}^{\textsc{e}}_{\mathfrak{b},\rm{l}}), \mathbf{r}^{\textsc{e}}_4 (\bar{U}^{\textsc{e}}_{\mathfrak{b},\rm{l}}) \big)}
{\det\big(\mathbf{r}^{\textsc{e}}_1 (\bar{U}^{\textsc{e}}_{\mathfrak{b},\rm{l}}), \mathbf{r}^{\textsc{e}}_2(\bar{U}^{\textsc{e}}_{\mathfrak{b},\rm{l}}), \mathbf{r}^{\textsc{e}}_4 (\bar{U}^{\textsc{e}}_{\mathfrak{b},\rm{r}}, -\bar{\alpha}^{\rm{s}}_4)\big)}\\[5pt]
&=\frac{4\gamma\bar{c}_{\rm{l}}\bar{p}_{\rm{l}}}{(\gamma+1)\bar{c}_{\rm{l}}\big(\gamma \bar{p}_{\rm{l}}u'(-\bar{\alpha}^{\rm{s}}_{4})+\bar{c}_{\rm{l}}p'(-\bar{\alpha}^{\rm{s}}_{4})\big)}>0.
\end{split}
\end{eqnarray*}
By Taylor formula and the regularity of $\alpha_{1}$, we finially obtain that for any $U_{\rm{l}}, U_{\rm{m}}\in \mathcal{N}_{\delta}(\bar{U}_{\mathfrak{b},\rm{l}})$ and $U_{\rm{r}}\in \mathcal{N}_{\delta}(\bar{U}_{\mathfrak{b},\rm{r}})$
with $\delta$ sufficiently small, there always holds
\begin{eqnarray*}
\begin{split}
\alpha_{1}(\alpha^{*}_{4},\alpha^{\rm{s},*}_{4})&=\alpha_{1}(0,\alpha^{\rm{s}}_{4})+\frac{\partial \alpha_1}{\partial \alpha^*_4}\Big|_{\alpha_4^* =0}\cdot \alpha^*_4+\mathcal{O}(1)|\alpha^*_4 |^2\\[5pt]
&=\kappa_{\rm{s}}\alpha^*_4+\mathcal{O}(1)|\alpha^*_4 |^2
\end{split}
\end{eqnarray*}
with coefficient $|\kappa_{\rm{s}}|<1$. This gives the estimate for $\alpha_{1}$ in \eqref{eq:2.35}. The other estimates in \eqref{eq:2.35}\eqref{eq:2.36} can be obtained by analogous argument.

\end{proof}

Similarly, if a weak wave interacts with the strong $4$-shock from right, we also have

\begin{proposition}\label{prop:2.4}
Suppose constant states $U_{\rm{l}}\in \mathcal{N}_{\delta}(\bar{U}_{\mathfrak{b},\rm{l}})$
and $ U_{\rm{m}}, U_{\rm{r}}\in \mathcal{N}_{\delta}(\bar{U}_{\mathfrak{b},\rm{r}})$ (see Fig.\ref{fig6}) with $\delta>0$ sufficiently small.

\noindent
{\rm{(i)}} If these states satisfy
\begin{eqnarray*}\label{eq:2.38a}
U^{\textsc{e}}_{\rm{m}}=S^{\textsc{e}}_4(\alpha^{\rm{s},*}_4)(U^{\textsc{e}}_{\rm{l}}),\quad
U^{\textsc{e}}_{\rm{r}}=\Phi^{\textsc{e}}_k(\alpha^*_k)(U^{\textsc{e}}_{\rm{m}}),\quad
Y_{\rm{r}}=Y_{\rm{m}}=Y_{\rm{l}},\ \ k\neq 3,
\end{eqnarray*}
then, for $\alpha_i$ $(1 \leq i \leq 3)$ and $\alpha^{\rm{s}}_4$ determined by
\begin{eqnarray}\label{eq:2.38}
U^{\textsc{e}}_{\rm{r}}=S^{\textsc{e}}_4(\alpha^{\rm{s}}_4)\circ \Phi^{\textsc{e}}_2(\alpha_2) \circ \Phi^{\textsc{e}}_1(\alpha_1)( U^{\textsc{e}}_{\rm{l}}),\quad Y_{\rm{r}}=Y_{\rm{l}}+\alpha_{3},
\end{eqnarray}
there holds
\begin{eqnarray*}\label{eq:2.39}
\alpha_1=\mathcal{O}(1)|\alpha^*_k|, \quad  \alpha_2=\mathcal{O}(1)|\alpha^*_k|,\quad \alpha_3=0, \quad \alpha^{\rm{s}}_4=\alpha^{\rm{s},*}_4+\mathcal{O}(1)|\alpha^{*}_k|;
\end{eqnarray*}
%and
%\begin{equation*}
%  \alpha_1= \left\{
%  \begin{aligned}
%     & \mathcal{O}(1)|\alpha^*_k| \qquad &&\text{if}\ k=1,2,\\
%     & { \kappa_{\rm{s}}??}\alpha^*_4 +\mathcal{O}(1)|\alpha^*_4|^2 \qquad &&\text{if}\ k=4,
%  \end{aligned}
%  \right.
%\end{equation*}
%where coefficient ${ |\kappa_{\rm{s}}|<1??}$.

\noindent
{\rm{(ii)}} If these states satisfy
\begin{eqnarray*}\label{eq:2.38b}
U^{\textsc{e}}_{\rm{r}}=U^{\textsc{e}}_{\rm{m}}=S^{\textsc{e}}_4(\alpha^{\rm{s},*}_4)(U^{\textsc{e}}_{\rm{l}}),\quad
Y_{\rm{m}}=Y_{\rm{l}}, \quad
Y_{\rm{r}}=Y_{\rm{m}}+\alpha^*_{3},
\end{eqnarray*}
then, for $\alpha_i$ $(1 \leq i \leq 3)$ and $\alpha^{\rm{s}}_4$ determined by \eqref{eq:2.38},
there holds
\begin{eqnarray*}\label{eq:2.40}
\alpha_1=\alpha_2=0,\quad \alpha_3=\alpha^{*}_3, \quad \alpha^{\rm{s}}_4=\alpha^{\rm{s},*}_4.
\end{eqnarray*}

\end{proposition}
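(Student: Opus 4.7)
\smallskip

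\noindent\textbf{Proof proposal for Proposition \ref{prop:2.4}.}
The plan is to mimic the strategy used in the proof of Proposition \ref{prop:2.3}, but with the roles of ``left'' and ``right'' swapped: the weak incoming wave now sits on the right of the strong $4$-shock, and after the interaction the outgoing pattern is again a $1$-wave, $2$-wave, $3$-wave and a strong $4$-shock. The key tool is Lemma \ref{lem:2.4}, which guarantees that the map $(\alpha_1,\alpha_2,\alpha^{\rm s}_4)\mapsto U^{\textsc{e}}_{\rm l}$ defined through \eqref{eq:2.38} is a local $C^2$-diffeomorphism near the background value $(0,0,\bar\alpha^{\rm s}_4)$. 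Consequently the outgoing strengths will depend in a $C^2$ fashion on the incoming data $(\alpha^*_k,\alpha^{\rm s,*}_4)$ or $(\alpha^*_3,\alpha^{\rm s,*}_4)$, and the quantitative estimates will follow from Taylor expansion around the background.

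For part (i), I would first equate the two equivalent parameterizations of $U^{\textsc{e}}_{\rm l}$ in terms of the incoming and outgoing waves, namely
\begin{equation*}
\Psi^{\textsc{e}}_1(-\alpha_1)\circ \Psi^{\textsc{e}}_2(-\alpha_2)\circ S^{\textsc{e}}_4(-\alpha^{\rm s}_4)(U^{\textsc{e}}_{\rm r})
=\Psi^{\textsc{e}}_k(-\alpha^*_k)\circ S^{\textsc{e}}_4(-\alpha^{\rm s,*}_4)(U^{\textsc{e}}_{\rm r}),
\end{equation*}
and encode this as a vector equation $f=0$ analogous to the function $f_{\rm s}$ used in the proof of Lemma \ref{lem:2.4}. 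Because every curve $\Psi^{\textsc{e}}_k$ with $k\neq 3$ as well as $S^{\textsc{e}}_4$ preserves the mass fraction $Y$, we immediately get $Y_{\rm r}=Y_{\rm l}$, hence $\alpha_3=0$. At the base point $\alpha^*_k=0$, $\alpha^{\rm s,*}_4=\bar\alpha^{\rm s}_4$ the equation has the trivial solution $\alpha_1=\alpha_2=0$, $\alpha^{\rm s}_4=\bar\alpha^{\rm s}_4$. Differentiating in $\alpha^*_k$ and solving the resulting $3\times 3$ linear system by Cramer's rule in the basis $\{\mathbf{r}^{\textsc{e}}_1(\bar U^{\textsc{e}}_{\mathfrak{b},\rm l}),\mathbf{r}^{\textsc{e}}_2(\bar U^{\textsc{e}}_{\mathfrak{b},\rm l}),\mathbf{r}^{\textsc{e}}_4(\bar U^{\textsc{e}}_{\mathfrak{b},\rm r},-\bar\alpha^{\rm s}_4)\}$ produces the linearization coefficients $\partial_{\alpha^*_k}\alpha_j$ and $\partial_{\alpha^*_k}\alpha^{\rm s}_4$ at the background; all of these are finite, which together with Taylor's formula yields the $\mathcal{O}(1)|\alpha^*_k|$ bounds claimed in the statement. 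In this case the ``reflection coefficient less than one'' property of Proposition \ref{prop:2.3} is not required, since the produced $1$- and $2$-waves are no longer directed back toward the shock from the right.

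For part (ii), the argument is essentially a decoupling observation: the $3$-characteristic field is linearly degenerate and only carries the mass fraction $Y$, so $\Phi^{\textsc{e}}_3\equiv\mathrm{id}$ on the Eulerian component. Therefore the assumption $U^{\textsc{e}}_{\rm r}=U^{\textsc{e}}_{\rm m}=S^{\textsc{e}}_4(\alpha^{\rm s,*}_4)(U^{\textsc{e}}_{\rm l})$ reduces the Eulerian Riemann problem $(U^{\textsc{e}}_{\rm l},U^{\textsc{e}}_{\rm r})$ to a pure strong $4$-shock, and the uniqueness part of Lemma \ref{lem:2.4} forces $\alpha_1=\alpha_2=0$ and $\alpha^{\rm s}_4=\alpha^{\rm s,*}_4$. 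The last identity $\alpha_3=Y_{\rm r}-Y_{\rm l}=\alpha^*_3$ follows directly from the parameterization \eqref{parameterization for Si}.

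The main obstacle I foresee is purely computational, namely writing out the linear system for $\partial_{\alpha^*_k}(\alpha_1,\alpha_2,\alpha^{\rm s}_4)$ carefully enough to see that the right-hand side is a bounded multiple of $\mathbf{r}^{\textsc{e}}_k(\bar U^{\textsc{e}}_{\mathfrak{b},\rm r})$ (rather than of $\mathbf{r}^{\textsc{e}}_k(\bar U^{\textsc{e}}_{\mathfrak{b},\rm l})$ as in Proposition \ref{prop:2.3}), since the incoming weak wave lives on the right of the strong shock; this is why no contraction-type inequality like $|\kappa_{\rm s}|<1$ appears. Once this bookkeeping is in place, both parts of the proposition are immediate consequences of the implicit function theorem and Taylor expansion.
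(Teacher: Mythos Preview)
Your approach is correct and matches the paper's own treatment, which simply says ``Similarly'' and refers back to the implicit-function/Taylor argument of Proposition~\ref{prop:2.3}; in particular your handling of part~(ii) via the decoupling of the $Y$-component and the uniqueness in Lemma~\ref{lem:2.4} is exactly right. One small slip to fix in the write-up: since the incoming configuration is $U^{\textsc e}_{\rm r}=\Phi^{\textsc e}_k(\alpha^*_k)\circ S^{\textsc e}_4(\alpha^{\rm s,*}_4)(U^{\textsc e}_{\rm l})$, the correct inverted right-hand side is $S^{\textsc e}_4(-\alpha^{\rm s,*}_4)\circ \Psi^{\textsc e}_k(-\alpha^*_k)(U^{\textsc e}_{\rm r})$, not the composition in the order you wrote.
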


For the purpose of $L^1$-stability analysis in section 5, we state the following result on solvability of Riemann problems in the sense of states connection by Hugoniot loci.

\begin{lemma}\label{lem:2.5}
Assume constant states $U_{\rm{l}}, U_{\rm{r}} \in \mathcal{N}_{\delta}(\bar{U}_{\mathfrak{b},\rm{l}}) \cup \mathcal{N}_{\delta}(\bar{U}_{\mathfrak{b},\rm{r}})$ with $\delta>0$ sufficiently small.
Then there exists a unique vector $\mathbf{q}=(q_1,q_2,q_3,q_4)$ such that
$$
U_{\rm{r}}=S_{4}(q_4)\circ S_{3}(q_3)\circ S(q_2)\circ S_{1}(q_1)(U_{\rm{l}}).
$$
When $U_{\rm{l}} \in \mathcal{N}_{\delta}(\bar{U}_{\mathfrak{b},\rm{l}})$ and  $U_{\rm{r}} \in \mathcal{N}_{\delta}(\bar{U}_{\mathfrak{b},\rm{r}})$,
the component $q_4$ is very close to $\bar{\alpha}^{\rm{s}}_4$.
\end{lemma}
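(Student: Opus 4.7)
The plan is to apply the implicit function theorem to the four-parameter composition map, after exploiting the decoupling of the mass-fraction coordinate. From the parametrizations in \eqref{eq:2.5}, the Hugoniot loci $S_1, S_2, S_4$ leave $Y$ unchanged while $S_3$ shifts $Y\mapsto Y+q_3$ with $U^{\textsc{e}}$ fixed. Projecting the equation $U_{\rm{r}}=S_4(q_4)\circ S_3(q_3)\circ S_2(q_2)\circ S_1(q_1)(U_{\rm{l}})$ onto its $Y$-component immediately forces $q_3=Y_{\rm{r}}-Y_{\rm{l}}$ uniquely, so the task reduces to solving
\begin{equation*}
  U^{\textsc{e}}_{\rm{r}}=S^{\textsc{e}}_4(q_4)\circ S^{\textsc{e}}_2(q_2)\circ S^{\textsc{e}}_1(q_1)(U^{\textsc{e}}_{\rm{l}})
\end{equation*}
for $(q_1,q_2,q_4)\in\mathbb{R}^3$.

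I would then carry out a case split based on the neighborhoods containing $U_{\rm{l}}$ and $U_{\rm{r}}$. When both endpoints lie in the same neighborhood $\mathcal{N}_\delta(\bar U_{\mathfrak{b},\rm{l}})$ (or both in $\mathcal{N}_\delta(\bar U_{\mathfrak{b},\rm{r}})$), linearize at the trivial reference $(q_1,q_2,q_4)=(0,0,0)$ with a coincident reference state; the Jacobian equals $\det(\mathbf{r}^{\textsc{e}}_1,\mathbf{r}^{\textsc{e}}_2,\mathbf{r}^{\textsc{e}}_4)$ at that state, which is nonzero by strict hyperbolicity. In the main case $U_{\rm{l}}\in\mathcal{N}_\delta(\bar U_{\mathfrak{b},\rm{l}})$, $U_{\rm{r}}\in\mathcal{N}_\delta(\bar U_{\mathfrak{b},\rm{r}})$, linearize at $(q_1,q_2,q_4)=(0,0,\bar\alpha^{\rm{s}}_4)$ anchored at $(\bar U_{\mathfrak{b},\rm{l}},\bar U_{\mathfrak{b},\rm{r}})$, which is a valid reference by \eqref{eq:2.12}. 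The remaining crosswise case $U_{\rm{l}}\in\mathcal{N}_\delta(\bar U_{\mathfrak{b},\rm{r}})$, $U_{\rm{r}}\in\mathcal{N}_\delta(\bar U_{\mathfrak{b},\rm{l}})$ is treated analogously after swapping the two background states.

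The substantive verification is the nondegeneracy of the Jacobian in the main case. A direct computation parallel to the one carried out in the proof of Lemma \ref{lem:2.4} shows that this Jacobian reduces to the $3\times 3$ determinant formed by the columns $\mathbf{r}^{\textsc{e}}_1(\bar U^{\textsc{e}}_{\mathfrak{b},\rm{l}})$, $\mathbf{r}^{\textsc{e}}_2(\bar U^{\textsc{e}}_{\mathfrak{b},\rm{l}})$, and the tangent to $\mathscr{S}_4$ at the shock endpoint. This tangent involves the derivative factors $p'$ and $u'$ along the compressive $4$-branch, both strictly positive by Lemma \ref{lem:2.1a}, so the determinant is nonzero. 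The implicit function theorem then delivers a locally unique $C^2$ solution $(q_1,q_2,q_4)(U_{\rm{l}},U_{\rm{r}})$, and by continuity $q_4$ stays within any prescribed distance of $\bar\alpha^{\rm{s}}_4$ provided $\delta$ is chosen small enough. The key obstacle would be loss of nondegeneracy somewhere along the compressive $4$-shock curve, and it is precisely the monotonicity results of Lemma \ref{lem:2.1a} that rule this out uniformly.
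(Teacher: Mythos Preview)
Your proposal is correct and follows precisely the approach the paper indicates: the paper omits the proof entirely, stating only that it ``is completely based on the technique of tangent field analysis and implicit function theorem, as done in Lemma~\ref{lem:2.4}.'' Your decoupling of $q_3$, reduction to the three-parameter problem in $U^{\textsc{e}}$, case split on the neighborhoods, and invocation of Lemma~\ref{lem:2.1a} for nondegeneracy of the Jacobian at the strong-shock reference are exactly the ingredients of that template.
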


The proof of above lemma is completely based on the technique of tangent field analysis and implicit function theorem, as done in Lemma \ref{lem:2.4}.
So we skip the details here.

\begin{definition}\label{Def: distance index}
For given states $U_{\rm{l}}$ and $U_{\rm{r}}$, if there is a unique vector $\mathbf{q}=(q_1,q_2,q_3,q_4)$ such that
\begin{equation*}
  U_{\rm{r}} =\mathcal{H}(\mathbf{q})(U_{\rm{l}}) \doteq S_{4}(q_4)\circ S_{3}(q_3)\circ S(q_2)\circ S_{1}(q_1)(U_{\rm{l}}),
\end{equation*}
we say that $q_i$ $(1\leq i\leq 4)$ are the distance indices of two states $U_{\rm{l}}$ and $U_{\rm{r}}$.
\end{definition}

\section{Construction of the approximate solutions to (\emph{IBVP})}
In this section, we will construct the approximate solutions to (\emph{IBVP})
via a modified fractional-step wave front tracking scheme. The detailed algorithm is stated as follows.

\begin{figure}[ht]
\begin{center}
\begin{tikzpicture}[scale=0.8]
\draw [thin][->] (-2.3, -1)--(4.4, -1);
\draw [thin][->] (-1.5, -1.7)--(-1.5, 3.3);
\draw [line width=0.03cm](-1.5,-1)to (-1.0,0);
\draw [line width=0.03cm](-1.0,0)to(-0.8,1);
\draw [line width=0.03cm](-0.8,1)to(-0.2,2);
\draw [line width=0.03cm](-0.2,2)to(-0.3,2.9);

\draw  [dashed][thin](-1.0,0)to(3.8,0);
\draw  [dashed][thin](-0.8,1)to(3.8,1);
\draw  [dashed][thin](-0.2,2)to(3.8,2);

\draw [red][line width=0.02cm](-1.5,-1)to(-0.7,-0.5);
\draw [thin](-0.3,-1)to(-0.7,-0.5);
\draw [red][line width=0.02cm](-0.7,-0.5)to(0.3,0);
\draw [dashed][thin](-0.7,-0.5)to(-0.2,0);
\draw [thin](-0.7,-0.5)to(-0.7,0);

\draw [thin](1.0,-1)to(1.5,-0.5);
\draw [thin](2.0,-1)to(1.5,-0.5);

\draw [thin](1.5,-0.5)to(1.0,0);
\draw [dashed][thin](1.5,-0.5)to(1.6,0);
\draw [thin](1.5,-0.5)to(2.4,0);

\draw [thin](-1.0,0)to(-0.6,0.45);
\draw [thin](-0.7,0)to(-0.6,0.45);
\draw [thin](-0.6,0.45)to(-0.85, 0.7);
\draw [thin](-0.85, 0.7)to(-0.3,1)to(-0.5,1.5)to(-0.1,1.5);

\draw [thin](-0.2,0)to(0,0.5);
\draw [thin](0.3,0)to(0,0.5)to(-0.04,1)to(-0.1,1.5)to(-0.1,2);
\draw [thin](-0.04,1)to(0.4,1.5);
\draw [dashed][thin](-0.1,1.5)to(0.2,2);
\draw [thin](-0.1,1.5)to(0.4,2);

\draw [red][line width=0.03cm](0.3,0)to(1.0,0.6)to(1.8, 1);
\draw [thin](1.0,0)to(1.0,0.6)to(1.1,1)to(1.1,1.5);
\draw [dashed][thin](0.3,0)to(0.4,0.6)to(0.5,1)to(0.4,1.5);
\draw [dashed][thin](1.0,0.6)to(1.4, 1)to(1.6,1.5);
\draw [thin](1.8, 1)to(1.6,1.5);

\draw [thin](1.1,1.5)to(0.8,2);
\draw [dashed][thin](1.1,1.5)to(1.1,2);
\draw [thin](1.1,1.5)to(1.3,2);
\draw [thin](0.4,1.5)to(0.6,2);
\draw [thin](1.6,1.5)to(1.4,2);
\draw [dashed][thin](1.6,1.5)to(1.5,2);
\draw [thin](1.6,1.5)to(1.8,2);

\draw [red][line width=0.02cm](1.8, 1)to(2.0,1.5);
\draw [thin](1.6,0)to(2.0,0.5)to(2.2,1)to(2.0,1.5);

\draw [red][line width=0.03cm](2.0,1.5)to(2.3,2)to(2.8,2.5);
\draw [dashed][thin](2.0,1.5)to(2.0,2);
\draw [thin](2.0,1.5)to(1.8, 2);

\draw [dashed][thin](2.0,0.5)to(2.0,1);
\draw [thin](2.4,0)to(2.0,0.5)to(1.9,1);

\draw [thin](0,0.5)to(0.4,0.6)to(0.8,1)to(1.1,1.5);

%\draw [line width=0.02cm][blue](-1.5,-1)to(3.2,1.1);
\node at (4.4,-1.25) {$x$};
\node at (-1.7, 3.2) {$t$};
\node at (-1.75, -1.25) {$O$};

\node at (-0.2, 3.2) {$\Gamma^{\varepsilon}_{\mathfrak{p}}$};
\node at (2.85, 2.8) {$\Gamma^{\varepsilon}_{s}$};
\node at (3.2, 0.4) {$\Omega^{\varepsilon}_{\mathfrak{p}}$};
%\node at (1.9, -0.5) {$\bar{U}_{\mathfrak{b}, \textrm{r}}$};
%\node at (1.9, 1.5) {$\bar{U}_{\mathfrak{b}, \textrm{l}}$};
\end{tikzpicture}
\end{center}
\caption{Approximate solutions to the (\emph{IBVP})}\label{fig7}
\end{figure}
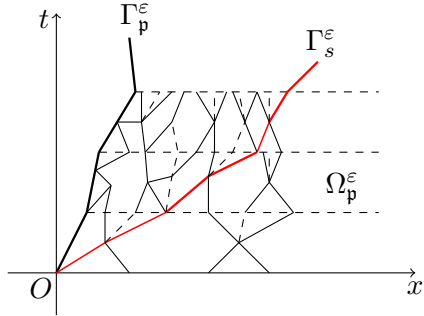

Under assumptions $(\mathbf{A1})$$(\mathbf{A2})$, we can construct the approximations for given initial-boundary data $(U_{0},u_{\mathfrak{p}})$. More precisely,
choose a small parameter $\varepsilon>0$, and then take two piecewise constant functions $U^{\varepsilon}_{0}(x)$ and $u^{\varepsilon}_{\mathfrak{p}}(t)$
such that:
\begin{itemize}
  \item both $U^{\varepsilon}_{0}(x)$ and $u^{\varepsilon}_{\mathfrak{p}}(t)$ have finitely many discontinuities;
  \item $\|U^{\varepsilon}_{0}(\cdot)-U_{0}(\cdot)\|_{L^{1}(\mathbb{R}_{+})}<\varepsilon,\quad T.V.\{U^{\varepsilon}_{0}(\cdot);\mathbb{R}_{+}\}\leq T.V.\{U_{0}(\cdot);\mathbb{R}_{+}\},$ \\
$\|u^{\varepsilon}_{\mathfrak{p}}(\cdot)-u_{\mathfrak{p}}(\cdot) \|_{L^{1}(\mathbb{R}_{+})}<\varepsilon,
\quad T.V.\{u^{\varepsilon}_{\mathfrak{p}}(\cdot);\mathbb{R}_{+}\}\leq T.V.\{u_{\mathfrak{p}}(\cdot);\mathbb{R}_{+}\}.$
\end{itemize}
Accordingly, the approximate piston boundary for $\Gamma_{\mathfrak{p}}$  and the approximate domain for $\Omega_{\mathfrak{p}}$ are defined by
\begin{eqnarray*}\label{eq:3.3}
\Gamma^{\varepsilon}_{\mathfrak{p}}=\{(t,x)\in \mathbb{R}^{2}: t>0,\ x=\chi^{\varepsilon}_{\mathfrak{p}}(t)\},\quad
\Omega^{\varepsilon}_{\mathfrak{p}}=\{(t,x)\in \mathbb{R}^{2}: t>0,\ x>\chi^{\varepsilon}_{\mathfrak{p}}(t)\},
\end{eqnarray*}
where
$x=\chi^{\varepsilon}_{\mathfrak{p}}(t)\doteq\int^{t}_{0}u^{\varepsilon}_{\mathfrak{p}}(\tau)d\tau$.
We divide time interval $[0, \varepsilon^{-1}]$ into $N$ subintervals, such that division points
\begin{eqnarray*}
t_0= 0,\quad t_1= \varepsilon,\quad  t_2= 2\varepsilon,\quad \cdot\cdot\cdot\quad  t_N= N\varepsilon,
\end{eqnarray*}
and additionally set $t_{N+1}=+\infty$.
Then the whole domain $\Omega^{\varepsilon}_{\mathfrak{p}}$ is divided into some strips
\begin{eqnarray*}\label{eq:3.4}
\Omega^{\varepsilon}_{k}=\{(t,x)\in \mathbb{R}^{2}: t_{k-1} \leq t< t_{k},\ x>\chi^{\varepsilon}_{\mathfrak{p}}(t)\} \ \ \ \
\text{for } k=1,\cdots,N.
\end{eqnarray*}

We start to construct the $\varepsilon$-approximate solution $U^{\varepsilon}$ in $\Omega^{\varepsilon}_{\mathfrak{p}}$ in the following way.

At time $t_{0}$, the Riemann problem for homogenous system \eqref{eq:2.2} at each discontinuous point of $U^{\varepsilon}_{0}$ is solved as stated in section 2, such that the solution consists of weak or strong shocks, rarefaction waves and contact discontinuities. As done in \cite{Bressan-2000}, rarefaction waves need to be partitioned into several small central rarefaction fans with strength less than $\hat{\delta}$, which is called the accurate Riemann solver denoted by $(\textsc{ARS})$ for simplicity. Another is the simplified Riemann solver denoted by $(\textsc{SRS})$, in which all the new waves are lumped into a single non-physical wave traveling with a fixed speed $\hat{\lambda}$ larger than all the characteristic speeds (see \cite[pp.129--pp.132]{Bressan-2000} for more details). At initial time $t_{0}$, we always use the $(\textsc{ARS})$.

The piecewise constant approximate solution $U^{\varepsilon}$ can be prolonged until waves interactions occur in $\Omega^{\varepsilon}_1$. This refers to (i) interactions between weak waves; (ii) interactions between weak waves and strong shock; (iii) wave reflections at non-corner points on $\Gamma^{\varepsilon}_{\mathfrak{p}}$ (see Fig.\ref{fig4}); (iv) weak wave appearance from the boundary corner (see Fig.\ref{fig3}). Then we can solve the Riemann problem again as stated in section 2. For cases (iii)(iv) on the boundary, we always use the $(\textsc{ARS})$ as in \cite{Amadori-1997, Colombo-Guerra-2010}.
To decide which Riemann solver is used for cases (i)(ii), we introduce a threshold parameter $\varrho>0$.
If the strengths $\alpha$, $\beta$ of two weak waves satisfy $|\alpha\beta|>\varrho$, the $(\textsc{ARS})$ is used; otherwise, we use the $(\textsc{SRS})$.
If a weak physical wave front with strength $|\alpha|>\varrho$ hits the strong shock, then we use the $(\textsc{ARS})$; otherwise, we use the $(\textsc{SRS})$.

At time $t_1$, we treat the combustion process by linear approximation, and hence prescribe the fluid state $U^{\varepsilon}(t_1,x)$ by the following equations
\begin{eqnarray*}\label{eq:3.5}
E(U^{\varepsilon}(t_1,x))=E(U^{\varepsilon}(t_1-0,x))+\varepsilon G(U^{\varepsilon}(t_1-0,x)),
\end{eqnarray*}
where $U^{\varepsilon}(t_1-0,x)=\lim_{t\to t_1^-} U^\varepsilon(t,x)$.
After that, solve the homogeneous system \eqref{eq:2.2} with initial data $U^{\varepsilon}(t_1,x)$.
In this process, if a physical weak wave or strong shock wave hits the line $t=t_1$, we use the $(\textsc{ARS})$;
if a non-physical wave hits the line $t=t_1$, we use the $(\textsc{SRS})$.

Inductively, assume that the approximate solution $U^{\varepsilon}$ has been constructed for $0<t\leq t_{k-1}$ ($k>1$), and contains the jumps of weak shock fronts, strong shock front, contact discontinuities, rarefaction fronts and non-physical fronts, which are denoted by $$\mathcal{J}(U^{\varepsilon})\doteq\mathcal{S}\cup\mathcal{S}_{\rm b}\cup\mathcal{C}\cup\mathcal{R}\cup\mathcal{NP}.$$
Then, in strip $\Omega^\varepsilon_k$, we construct the approximate solution $U^{\varepsilon}$ for homogeneous system \eqref{eq:2.2} with initial data $U^{\varepsilon}(t_{k-1},x)$ by repeating the above processes.
Finally, considering the combustion process at time $t_k$, we approximately solve the nonhomogeneous
system \eqref{eq:2.1} with initial data $U^{\varepsilon}(t_{k}-0,x)$ by the algebraic euqations
\begin{eqnarray}\label{eq:3.6}
E(U^{\varepsilon}(t_{k},x))=E(U^{\varepsilon}(t_{k}-0,x))+\varepsilon G(U^{\varepsilon}(t_{k}-0,x)).
\end{eqnarray}

\begin{remark}\label{rem:3.1}
In each step of construction, we can slightly perturb the speed of a single wave front
with its strength error less than $\varepsilon$, so that every interaction exactly involves two waves, and that every reflection on boundary $\Gamma^{\varepsilon}_{\mathfrak{p}}$ includes only one incident wave.
In addition to the four characteristic fields of system \eqref{eq:2.2}, we treat $\mathcal{NP}$ waves as the fifth family.
Then assign their speed $\lambda_5 \doteq \hat{\lambda}> \lambda_i$ for $1\leq i \leq 4$.
\end{remark}

\begin{remark}\label{rem:3.2}
 The parameters $\hat{\delta}$ and $\varrho$ are functions of $\varepsilon$, \emph{i.e.}, $\hat{\delta}=\hat{\delta}(\varepsilon)$, $\varrho=\varrho(\varepsilon)$,
 and satisfy that $\hat{\delta}(\varepsilon)\rightarrow0, \varrho(\varepsilon)\rightarrow0$ as $\varepsilon\rightarrow0$.
\end{remark}

Based on the above algorithm, we have the following properties for $\varepsilon$-approximate solution $U^\varepsilon$,  whose proofs are similar to those for Lemmas 7.1-7.2 in \cite{Bressan-2000}.

\begin{proposition}\label{prop:3.1}
Suppose $\alpha$ is a wave front of $\varepsilon$-approximation $U^{\varepsilon}$ to the system \eqref{eq:2.2}, which is located at $(t, x_\alpha(t))$.
Then the following local error estimates are satisfied:
\begin{eqnarray*}\label{eq:3.7}
\begin{split}
&\dot{x}_\alpha(t) [E(U^{\varepsilon})] - [F(U^{\varepsilon})]= \mathcal{O}(1) \varepsilon |\alpha|, \qquad  &&\alpha \in \mathcal{J}(U^{\varepsilon})\setminus \mathcal{NP}, \\[5pt]
&\dot{x}_\alpha(t) [E(U^{\varepsilon})] - [F(U^{\varepsilon})]  = \mathcal{O}(1)|\alpha| , \qquad  &&\alpha \in  \mathcal{NP},
\end{split}
\end{eqnarray*}
where $U^{\varepsilon,\pm}=U^{\varepsilon}(t,x_\alpha(t)\pm0)$ and
$$[E(U^{\varepsilon})]= E(U^{\varepsilon,-}) - E(U^{\varepsilon,+}), \ \ [F(U^{\varepsilon})] = F(U^{\varepsilon,-}) - F(U^{\varepsilon,+}).$$
\end{proposition}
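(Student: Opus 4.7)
The plan is to carry out a case analysis over the decomposition $\mathcal{J}(U^\varepsilon) = \mathcal{S}\cup\mathcal{S}_{\mathrm{b}}\cup\mathcal{C}\cup\mathcal{R}\cup\mathcal{NP}$ and treat each type of front on its own.

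First, for fronts in $\mathcal{S}\cup\mathcal{S}_{\mathrm{b}}\cup\mathcal{C}$ produced by the (\textsc{ARS}), the Riemann solver places each shock (weak or strong) and each contact discontinuity on a Hugoniot locus $\mathscr{S}_i(U^{\varepsilon,-})$ with its speed chosen so that the Rankine-Hugoniot relation \eqref{eq:2.6} holds exactly. Hence $\dot{x}_\alpha(t)[E(U^\varepsilon)] - [F(U^\varepsilon)] \equiv 0$ along these fronts, which trivially satisfies the asserted bound. A speed perturbation of size at most $\varepsilon$ (Remark \ref{rem:3.1}) contributes only $\mathcal{O}(1)\varepsilon|\alpha|$ to the error.

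Next, for rarefaction fronts in $\mathcal{R}$, the (\textsc{ARS}) discretizes each centered fan into jumps of strength at most $\hat\delta(\varepsilon)$, with the left state $U^{\varepsilon,-}$ joined to $U^{\varepsilon,+}=R_i(\alpha)(U^{\varepsilon,-})$ along the isentrope \eqref{eq:2.7}, and with propagation speed taken equal to $\lambda_i(U^{\varepsilon,-})$ up to an $\varepsilon$-perturbation. Taylor expansion along the rarefaction curve, together with the eigenvector identity $F'(U)\textbf{r}_i(U)=\lambda_i(U)E'(U)\textbf{r}_i(U)$, yields
\begin{equation*}
[E(U^\varepsilon)] = \alpha E'(U^{\varepsilon,-})\textbf{r}_i(U^{\varepsilon,-}) + \mathcal{O}(1)|\alpha|^2, \qquad
[F(U^\varepsilon)] = \lambda_i(U^{\varepsilon,-})[E(U^\varepsilon)] + \mathcal{O}(1)|\alpha|^2,
\end{equation*}
so that $\dot{x}_\alpha[E]-[F] = \mathcal{O}(1)|\alpha|^2 \le \mathcal{O}(1)\hat\delta(\varepsilon)|\alpha|$. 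Invoking Remark \ref{rem:3.2} to choose $\hat\delta(\varepsilon)\le\varepsilon$ completes this case.

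Finally, for non-physical fronts in $\mathcal{NP}$ produced by the (\textsc{SRS}), the jump is lumped and propagated at the fixed speed $\hat\lambda$, which need not respect any jump relation; nevertheless $E$ and $F$ are smooth on the compact neighborhood where $U^\varepsilon$ takes values, so $|[E(U^\varepsilon)]|, |[F(U^\varepsilon)]| = \mathcal{O}(1)|\alpha|$ and thus $|\dot x_\alpha[E]-[F]|\le\mathcal{O}(1)|\alpha|$ as required. The main (and really only) subtlety throughout is to ensure the $\mathcal{O}(1)$ constants depend solely on the background state $\bar U_\mathfrak{b}$; this needs a uniform range bound on $U^\varepsilon$, which is supplied independently by the modified Glimm-type functional estimate to be developed in Section 4, and which is precisely what justifies repeated use of the Taylor expansion near $\bar U_{\mathfrak{b},\mathrm{l}}$ and $\bar U_{\mathfrak{b},\mathrm{r}}$.
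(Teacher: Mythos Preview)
Your proposal is correct and follows essentially the same route as the paper, which does not give a self-contained proof but simply refers to Lemmas~7.1--7.2 in Bressan's monograph \cite{Bressan-2000}; your case-by-case analysis (exact Rankine--Hugoniot for $\mathcal{S}\cup\mathcal{S}_{\mathrm b}\cup\mathcal{C}$, second-order Taylor error on $\mathcal{R}$ controlled by $\hat\delta(\varepsilon)$, and Lipschitz bound on $\mathcal{NP}$) is precisely the content of those lemmas adapted to the present setting. One small wording point: your qualifier ``produced by the (\textsc{ARS})'' in the shock/contact case is unnecessary, since physical fronts emanating from the (\textsc{SRS}) also lie on Hugoniot loci and satisfy \eqref{eq:2.6} exactly, so the same argument covers them.
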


\begin{proposition}\label{prop:3.2}
Suppose that  $U^{\varepsilon}$ contains three adjacent constant states $U_{\rm{l}}=(U^{\textsc{e}}_{\rm{l}}, Y_{\rm{l}})^{\top}$, $U_{\rm{m}}=(U^{\textsc{e}}_{\rm{m}}, Y_{\rm{m}})^{\top}$,
$U_{\rm{r}}=(U^{\textsc{e}}_{\rm{r}}, Y_{\rm{r}})^{\top} \in \mathcal{N}_{\delta}(\bar{U}_{\mathfrak{b}, \rm{r}})$ $($or $\mathcal{N}_{\delta}(\bar{U}_{\mathfrak{b}, \rm{l}}))$ with $\delta>0$ sufficiently small, and they are separated by two incoming waves $\alpha^*_{i}, \alpha^*_{j} \in \mathcal{J}(U^{\varepsilon})$.
Then the \textnormal{(SRS)} generated from Riemann problem $(U_{\rm{l}}, U_{\rm{r}})$ satisfies the following properties.

\noindent{\textnormal{(i)}}
If $\alpha^*_{i}, \alpha^*_{j} \notin \mathcal{NP}$ for $i\geq j$ but $i,j \neq3 $, such that
$U^{\textsc{e}}_{\rm{m}}= \Phi^{\textsc{e}}_{i}(\alpha^*_{i})( U^{\textsc{e}}_{\rm{l}}), \
U^{\textsc{e}}_{\rm{r}}=\Phi^{\textsc{e}}_{j}(\alpha^*_{j})( U^{\textsc{e}}_{\rm{m}}), $
 $Y_{\rm{l}}=Y_{\rm{m}}=Y_{\rm{r}}$,
then the \textnormal{(SRS)} includes an auxiliary state $\hat{U}_{\rm{r}} = (\hat{U}^{\textsc{e}}_{\rm{r}}, Y_{\rm{r}} )^{\top}$ with
\begin{equation*}
  \hat{U}^{\textsc{e}}_{\rm{r}} \doteq \left\{
  \begin{aligned}
    & \Phi^{\textsc{e}}_{i}(\alpha^*_{i}) \circ \Phi^{\textsc{e}}_{j}(\alpha^*_{j})( U^{\textsc{e}}_{\rm{l}}) \qquad  & \text{for } i>j, \\
    & \Phi^{\textsc{e}}_{i}(\alpha^*_{i}+\alpha^*_{j})( U^{\textsc{e}}_{\rm{l}})   & \text{for } i=j, \\
  \end{aligned}
  \right.
\end{equation*}
and non-physical $\alpha_{5} \doteq |\hat{U}^{\textsc{e}}_{\rm{r}}-U^{\textsc{e}}_{\rm{r}}|$. Moreover, there holds
$$\alpha_{5}=\mathcal{O}(1)|\alpha^*_{i}\alpha^*_{j}|. $$

\noindent{\textnormal{(ii)}}
If $\alpha^*_{i} \notin \mathcal{NP}$ and $\alpha^*_{j} \in \mathcal{NP}$ for $i\neq3, j=5$, such that
$U^{\textsc{e}}_{\rm{r}}=\Phi^{\textsc{e}}_{i}(\alpha^*_{i})( U^{\textsc{e}}_{\rm{m}})$, $ Y_{\rm{l}}=Y_{\rm{m}}=Y_{\rm{r}}$ and
$\alpha^*_{5} \doteq |U^{\textsc{e}}_{\rm{m}}-U^{\textsc{e}}_{\rm{l}}|$,
then the \textnormal{(SRS)} includes an auxiliary state
$\hat{U}_{\rm{r}} \doteq (\Phi^{\textsc{e}}_{i}(\alpha^*_{i}) ( U^{\textsc{e}}_{\rm{l}}), Y_{\rm{r}})^{\top}$
and non-physical  $\alpha_{5} \doteq |\Phi^{\textsc{e}}_{i}(\alpha^*_{i}) ( U^{\textsc{e}}_{\rm{l}}) -U^{\textsc{e}}_{\rm{r}}|$. It satisfies that
$$\alpha_{5}= \alpha^*_{5}+ \mathcal{O}(1)|\alpha^*_{i}\alpha^*_{5}|. $$

\noindent{\textnormal{(iii)}}
If $\alpha^*_{i} \notin \mathcal{NP}$ and $\alpha^*_{j} \in \mathcal{NP}$  for $i=3, j=5$, such that
$U^{\textsc{e}}_{\rm{r}}= U^{\textsc{e}}_{\rm{m}}$, $ Y_{\rm{r}}=Y_{\rm{m}} + \alpha^*_{3}$ and
$\alpha^*_{5} \doteq |U^{\textsc{e}}_{\rm{l}}- U^{\textsc{e}}_{\rm{m}}|$, $ Y_{\rm{m}}=Y_{\rm{l}}$,
then the \textnormal{(SRS)} includes an auxiliary state
$\hat{U}_{\rm{r}} \doteq  ( U^{\textsc{e}}_{\rm{l}}, Y_{\rm{l}} + \alpha^*_{3})^{\top}$
and non-physical $\alpha_{5} \doteq |U^{\textsc{e}}_{\rm{l}}-U^{\textsc{e}}_{\rm{r}}|$. It is obvious that
$$\alpha_{5}=\alpha^*_{5}. $$

\end{proposition}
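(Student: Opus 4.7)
My plan is to deduce all three estimates from the accurate interaction estimates of Proposition~\ref{prop:2.1} together with a first-order expansion of the wave-curve maps $\Phi^{\textsc{e}}_k$ around the identity. The uniform strategy in each case will be: run the accurate Riemann problem $(U_{\rm{l}},U_{\rm{r}})$, compare its resulting right state with the SRS auxiliary state $\hat U_{\rm{r}}$, and bound the discrepancy that is to be absorbed into the non-physical front $\alpha_5$.

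For case (i), I will first invoke Proposition~\ref{prop:2.1}(i) at the true left and right states to write $U^{\textsc{e}}_{\rm{r}}= \Phi^{\textsc{e}}_4(\alpha_4)\circ \Phi^{\textsc{e}}_2(\alpha_2)\circ \Phi^{\textsc{e}}_1(\alpha_1)(U^{\textsc{e}}_{\rm{l}})$, where the accurate outgoing strengths satisfy $|\alpha_i-\alpha^*_i|+|\alpha_j-\alpha^*_j|+\sum_{k\ne i,j,3}|\alpha_k|=\mathcal{O}(1)|\alpha^*_i\alpha^*_j|$ for $i>j$ (or the analogous identity when $i=j$). Since $\hat U^{\textsc{e}}_{\rm{r}}$ is built from the \emph{preserved} strengths $\alpha^*_i,\alpha^*_j$, a Lipschitz estimate for the composition of $\Phi^{\textsc{e}}_1,\Phi^{\textsc{e}}_2,\Phi^{\textsc{e}}_4$ in their parameters, uniform on a neighbourhood of the background state, will give $|\hat U^{\textsc{e}}_{\rm{r}}-U^{\textsc{e}}_{\rm{r}}|=\mathcal{O}(1)|\alpha^*_i\alpha^*_j|$, which by definition is $\alpha_5$.

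For case (ii), the difference reduces to
\begin{equation*}
  \hat U^{\textsc{e}}_{\rm{r}}-U^{\textsc{e}}_{\rm{r}} = \Phi^{\textsc{e}}_i(\alpha^*_i)(U^{\textsc{e}}_{\rm{l}}) - \Phi^{\textsc{e}}_i(\alpha^*_i)(U^{\textsc{e}}_{\rm{m}}).
\end{equation*}
Because $\Phi^{\textsc{e}}_i(0)$ is the identity map in the base state, $D_U\Phi^{\textsc{e}}_i(\alpha^*_i)=I+\mathcal{O}(|\alpha^*_i|)$ uniformly, and a mean-value argument then yields $|\hat U^{\textsc{e}}_{\rm{r}}-U^{\textsc{e}}_{\rm{r}}|=\alpha^*_5+\mathcal{O}(1)|\alpha^*_i\alpha^*_5|$. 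Case (iii) is immediate: the $3$-wave acts only on the $Y$-component, so $U^{\textsc{e}}_{\rm{r}}=U^{\textsc{e}}_{\rm{m}}$, and the auxiliary state $\hat U_{\rm{r}}$ shares the $U^{\textsc{e}}_{\rm{l}}$-part, giving $\alpha_5=|U^{\textsc{e}}_{\rm{l}}-U^{\textsc{e}}_{\rm{r}}|=|U^{\textsc{e}}_{\rm{l}}-U^{\textsc{e}}_{\rm{m}}|=\alpha^*_5$ exactly.

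The principal obstacle I expect is the uniform $C^1$ dependence of the composed wave maps used in case (i); this follows from the $C^2$ regularity of the parametrizations in \eqref{parameterization for Si} across $\alpha_k=0$, but it requires care because $\Phi^{\textsc{e}}_k$ is defined piecewise (shock branch versus rarefaction branch). Cases (ii) and (iii) are then straightforward once the SRS auxiliary state is correctly identified as the state obtained by applying the physical front alone from the ``wrong'' base (case (ii)) or by commuting a contact past a trivial jump (case (iii)).
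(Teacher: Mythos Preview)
Your proposal is correct and essentially follows the standard route. The paper does not give an independent proof of this proposition; it states just before Proposition~\ref{prop:3.1} that the proofs ``are similar to those for Lemmas 7.1--7.2 in \cite{Bressan-2000}'', and your argument---reducing case~(i) to the accurate interaction estimate of Proposition~\ref{prop:2.1} plus Lipschitz dependence of the composed wave maps, case~(ii) to $D_U\Phi^{\textsc{e}}_i(\alpha^*_i)=I+\mathcal{O}(|\alpha^*_i|)$, and case~(iii) to the decoupling of the $3$-wave from the Euler components---is exactly that standard argument. The regularity concern you flag (piecewise definition of $\Phi^{\textsc{e}}_k$) is the right one to mention and is indeed handled by the well-known $C^2$ matching of the shock and rarefaction branches at $\alpha_k=0$.
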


\begin{proposition}\label{prop:3.3}
Suppose that  $U^{\varepsilon}$ contains three adjacent constant states $U_{\rm{l}}=(U^{\textsc{e}}_{\rm{l}}, Y_{\rm{l}})^{\top}$ and $U_{\rm{m}}=(U^{\textsc{e}}_{\rm{m}}, Y_{\rm{m}})^{\top}
 \in \mathcal{N}_{\delta}(\bar{U}_{\mathfrak{b}, \rm{l}})$, while $U_{\rm{r}}=(U^{\textsc{e}}_{\rm{r}}, Y_{\rm{r}})^{\top} \in\mathcal{N}_{\delta}(\bar{U}_{\mathfrak{b}, \rm{r}})$ with $\delta>0$ sufficiently small, and they are separated by two incoming waves $\alpha^*_5\in\mathcal{NP}$, $\alpha^{\rm{s},*}_4 \in \mathcal{S}_{\rm b}$.
If $U^{\textsc{e}}_{\rm{r}}=S^{\textsc{e}}_4(\alpha^{\rm{s},*}_4 )( U^{\textsc{e}}_{\rm{m}})$, $ Y_{\rm{l}}=Y_{\rm{m}}=Y_{\rm{r}}$ and
$\alpha^*_{5} \doteq |U^{\textsc{e}}_{\rm{l}}-U^{\textsc{e}}_{\rm{m}}|$,
then the \textnormal{(SRS)} of Riemann problem $(U_{\rm{l}}, U_{\rm{r}})$  includes an auxiliary state
$\hat{U}_{\rm{r}} \doteq (S^{\textsc{e}}_4(\alpha^{\rm{s},*}_4 )( U^{\textsc{e}}_{\rm{l}}), Y_{\rm{r}})^{\top}$
and non-physical  $\alpha_{5} \doteq |S^{\textsc{e}}_4(\alpha^{\rm{s},*}_4 )( U^{\textsc{e}}_{\rm{l}}) -U^{\textsc{e}}_{\rm{r}}|$. It satisfies that
$$\alpha_{5}= \mathcal{O}(1)|\alpha^*_{5}|. $$

\end{proposition}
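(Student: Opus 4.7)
The plan is to unpack the definition of the simplified Riemann solver used when a non-physical front hits the strong shock from the left, and then exploit the smoothness of the $4$-shock Hugoniot map in its base state to extract a linear bound on the new non-physical strength.

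First I would recall that, in the $(\textsc{SRS})$ at this configuration, we do not attempt to re-solve the Riemann problem accurately; instead we freeze the strength of the outgoing strong $4$-shock at its incoming value $\alpha^{\rm s,*}_4$, but now shoot it from the far-left state $U^{\textsc{e}}_{\rm l}$ rather than from $U^{\textsc{e}}_{\rm m}$. This produces the auxiliary intermediate state $\hat{U}_{\rm r}=(S^{\textsc{e}}_4(\alpha^{\rm s,*}_4)(U^{\textsc{e}}_{\rm l}), Y_{\rm r})^\top$ and leaves a residual discrepancy between $\hat{U}^{\textsc{e}}_{\rm r}$ and the true right state $U^{\textsc{e}}_{\rm r}=S^{\textsc{e}}_4(\alpha^{\rm s,*}_4)(U^{\textsc{e}}_{\rm m})$, which is absorbed by a single non-physical front of strength $\alpha_5=|\hat{U}^{\textsc{e}}_{\rm r}-U^{\textsc{e}}_{\rm r}|$ traveling with fixed speed $\hat{\lambda}$. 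The $Y$-component plays no role since $Y_{\rm l}=Y_{\rm m}=Y_{\rm r}$.

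Next I would estimate $\alpha_5$ by a straightforward application of the Lipschitz (indeed $C^2$) dependence of the shock map on the base state. More precisely, by Lemma \ref{lem:2.4} and its underlying implicit-function construction, the map $U^{\textsc{e}}_*\mapsto S^{\textsc{e}}_4(\alpha^{\rm s,*}_4)(U^{\textsc{e}}_*)$ is a $C^2$ function in a neighborhood of $\bar{U}^{\textsc{e}}_{\mathfrak{b},\rm l}$ when $\alpha^{\rm s,*}_4$ stays in a neighborhood of $\bar{\alpha}^{\rm s}_4$ (this is precisely the regime guaranteed by $U_{\rm l},U_{\rm m}\in \mathcal{N}_\delta(\bar{U}_{\mathfrak{b},\rm l})$ and $U_{\rm r}\in \mathcal{N}_\delta(\bar{U}_{\mathfrak{b},\rm r})$ for $\delta$ small). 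Hence
\begin{equation*}
\alpha_5=\bigl|S^{\textsc{e}}_4(\alpha^{\rm s,*}_4)(U^{\textsc{e}}_{\rm l})-S^{\textsc{e}}_4(\alpha^{\rm s,*}_4)(U^{\textsc{e}}_{\rm m})\bigr|
\leq \mathcal{O}(1)\,|U^{\textsc{e}}_{\rm l}-U^{\textsc{e}}_{\rm m}|=\mathcal{O}(1)\,|\alpha^*_5|,
\end{equation*}
where the $\mathcal{O}(1)$ constant depends only on $\bar{U}_{\mathfrak{b}}$ via the Lipschitz constant of the shock map on the $\delta$-neighborhood.

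The only mildly delicate point is checking that the linearization bound is indeed uniform in $\alpha^{\rm s,*}_4$ across the admissible range near $\bar{\alpha}^{\rm s}_4$; this is inherited from the $C^2$-regularity established in the proof of Lemma \ref{lem:2.4}, so no additional work is needed. I would conclude by noting that the definition $\alpha_5:=|S^{\textsc{e}}_4(\alpha^{\rm s,*}_4)(U^{\textsc{e}}_{\rm l})-U^{\textsc{e}}_{\rm r}|$ together with the estimate above gives exactly the claimed bound $\alpha_5=\mathcal{O}(1)|\alpha^*_5|$, completing the proof. No combinatorial or interaction identities are required, which is why this case is simpler than the corresponding weak-wave estimates in Proposition \ref{prop:3.2}.
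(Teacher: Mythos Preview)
Your proposal is correct and is precisely the standard argument the paper has in mind: the paper does not give its own proof of Proposition~\ref{prop:3.3} but defers to Lemmas~7.1--7.2 in Bressan's book, and your Lipschitz-continuity argument for the map $U^{\textsc{e}}_*\mapsto S^{\textsc{e}}_4(\alpha^{\rm s,*}_4)(U^{\textsc{e}}_*)$ is exactly that approach. One minor remark: the smoothness you invoke comes directly from the explicit Hugoniot formulas in~\eqref{eq:2.5} rather than from Lemma~\ref{lem:2.4} per se, but this does not affect the validity of the argument.
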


\section{Global existence of entropy solutions to (\emph{IBVP})}

On account of ignition condition \eqref{eq:1.4}, the global solutions to combustion equations \eqref{eq:2.1} may evolve in
different trajectories in state space. We make a heuristic analysis on the background solution $\bar{U}_{\mathfrak{b}}$.  If temperature behind the leading shock
is below ignition one (\emph{i.e.} $\bar{T}_{\rm{l}}< T_{\text{i}} $), then reaction rate $\phi(T)=0$ everywhere; the entirely unburnt fluid is dominated by homogeneous Euler equations and reactant transport equation.
If $\bar{T}_{\rm{r}}> T_{\text{i}} $, then $\phi(T)>0$ everywhere; and combustion reaction may take place in the whole tube. We claim that the chemical dissipative term $G(U)$
decays exponentially as $t\to + \infty$.
It implies the global well-posedness of combustion solutions. However, if $\bar{T}_{\rm{l}}> T_{\text{i}} > \bar{T}_{\rm{r}}$, then only the fuel behind the large shock is ignited; an exothermic reaction occurs within the region enclosed by piston and shock front. Thus any uniform decay rate of reaction source fails to hold. The global well-posedness of combustion solutions becomes very delicate.
In this section, we intend to establish the global existence of \emph{BV} solutions for non-reacting and reacting flows respectively.

Since the perturbation of initial-boundary data is sufficiently small,
there exist positive $\underline{d}$ and $\overline{d} $ such that
\begin{equation}\label{eq:4.1}
\begin{aligned}
&\underline{d}<|\lambda_i(U) - \lambda_{i+1}(V)| <\overline{d} \quad  &&(i=1,3),\\[5pt]
&|\lambda_j(U)-\lambda_j(V)|< \underline{d},&\\[5pt]
&\underline{d}<|\lambda_j(U) - \mathrm{s}|<\overline{d}   \quad &&(1\leq j \leq 4),
\end{aligned}
\end{equation}
for states $U, V$ in the identical domain $\mathcal{N}_{\delta}(\bar{U}_{\mathfrak{b}, \rm{l}})$ or  $\mathcal{N}_{\delta}(\bar{U}_{\mathfrak{b}, \rm{r}})$.
Recall that $\mathrm{s}$ is the speed of a large $4$-shock wave connecting states from $\mathcal{N}_{\delta}(\bar{U}_{\mathfrak{b}, \rm{l}})$ to $\mathcal{N}_{\delta}(\bar{U}_{\mathfrak{b}, \rm{r}})$.
Suppose $\alpha_i \ (i=1,4)$ is an $i$-wave which connects states $U^+$, $U^- \in \mathcal{N}_{\delta}(\bar{U}_{\mathfrak{b}, \rm{l}})$. Then there is a positive $a$ such that
\begin{eqnarray*}
\frac{1}{a}|\alpha_i| \leq \big|[u]\big| + \big|[p]\big| \leq a|\alpha_i|,
\end{eqnarray*}
where $[u]$ and $[p]$ represent the jumps of velocity and pressure across the wave $\alpha_i$, respectively.

\subsection{Interaction potential and modified Glimm-type functional}

\begin{definition}[Approaching waves]\label{Def appraoching wave}
We say that $k_\alpha$-wave $\alpha$ and $k_\beta$-wave $\beta$ in the approximate solution $U^{\varepsilon}$ are approaching at time $t$, if one of the following conditions holds.
\begin{itemize}
\item  $k_\alpha > k_\beta$ and $x_\alpha < x_\beta$.
\item  $k_\alpha = k_\beta$ and the $k_\alpha$-family field is genuinely nonlinear . At least one of $\alpha$ and $\beta$ is shock wave.
\end{itemize}
\end{definition}
Introduce the sets
\begin{align*}
&\mathcal{A}= \big\{\ (\alpha, \beta) \ \big| \ \text{neither } \alpha \text{ nor } \beta \text{ is the large 4-shock, and they are approaching}  \ \big\}, \\[5pt]
& \mathcal{A}(\alpha) = \big\{ \ \beta\  \big|\  \beta \text{ is not the large 4-shock}, \text{and it is approaching given wave } \alpha \ \big\}.
\end{align*}
Obviously $\mathcal{A} $ includes all couples of  approaching small waves at time $t$. In particular, let symbol  $\alpha^{\rm{s}}_4$ denote the large 4-shock wave in solution $U^\varepsilon(t,x)$, and curve $x=\chi^\varepsilon_{\rm{s}} (t)$ be the path of this large shock.

Then we define the modified Glimm-type functional for approximate solution $U^\varepsilon$ to (\emph{IBVP})
%problem (\ref{Combustion equations})(\ref{Boundary data for Euler})(\ref{Initial data for ZND})
by
\begin{equation}\label{eq:4.2}
\mathcal{G} (t) \doteq \mathcal{L}(t) +K \mathcal{Q}(t) + K\mathcal{L}_{\textsc{b}}(t)
\end{equation}
where functionals
\begin{eqnarray*}\label{eq:4.2a}
\begin{split}
\mathcal{L}(t) &=\mathcal{L}_{\textsc{e}}(t)  + \mathcal{L}_Y (t)\\[5pt]
	     &\doteq \bigg(|\alpha^{\rm{s}}_{4}-\bar{\alpha}^{\rm{s}}_{4}|+\sum_{i\neq 3} |\alpha_i| \bigg) + \theta_{3} \sum |\alpha_{3}|, \\[5pt]
\mathcal{Q}(t)&= \mathcal{Q}_w(t) + \mathcal{Q}_{\mathfrak{p}}(t) + \mathcal{Q}_{\rm{s}}(t)\\[5pt]
	&\doteq\sum_{\mathcal{A}} |\alpha_i \alpha_j| + \theta_1 \sum |\alpha_1|
	+  \sum_{\mathcal{A}( \alpha^{\rm{s}}_4)}\bigg (\sum_{i<3}|\alpha_i|+\theta_4 |\alpha_4| +\theta_5 |\alpha_5|\bigg),\\[5pt]
\mathcal{L}_{\textsc{b}}(t) &=\mathcal{L}_{\textsc{b}\mathfrak{p}}(t) + \mathcal{L}_{\textsc{by}}(t)  \\[5pt]
	&\doteq T.V.\{u^\varepsilon_{\mathfrak{p}}(\tau); (t,+\infty)\}  + \bigg(\sum_{ k\varepsilon\geq t} \Upsilon^{\varepsilon,\mathfrak{p}}_k + \sum_{k\varepsilon\geq t} \Upsilon^{\varepsilon,\rm{s}}_k \bigg).
\end{split}
\end{eqnarray*}

%\begin{equation}\label{Glimm functional for Euler}
%	\mathcal{F} (t) \doteq A(t) +C \cdot Q(t) + C\cdot B(t)
%\end{equation}
%where functionals
%\begin{align*}
%	A(t) &= \sum_{i\neq 3} |\alpha_i|  + \theta_{3} \sum |\alpha_{3}|\\
%	&=: A_w (t)  + A_\text{c} (t) , \\
%	Q(t) &=  \sum\limits_{\mathcal{A}} |\alpha_i \alpha_j| + \theta_1 \sum |\alpha_1|
%	+  \sum\limits_{\mathcal{A}( \eta_4)}\Big (\sum_{i<3}|\alpha_i|+\theta_4 |\alpha_4| +\theta_5 |\alpha_5|\Big)
%	\\
%	& =: Q_w(t) + Q_b(t) + Q_s(t) ,\\
%	B(t) &=  TV(u_b^\varepsilon; (t,+\infty))  + \Big(\sum\limits_{ k\varepsilon\geq t} \Upsilon_k^2 + \sum\limits_{k\varepsilon\geq t} \Upsilon_k^4 \Big) \\
%	&=: B_b(t) + B_{\text{c}}(t).
%\end{align*}
We remark that functional $\mathcal{Q}(t)$ is a variant of quadratic term in classic Glimm-type functional, which includes the sum $\mathcal{Q}_{\mathfrak{p}}(t)$ of 1-waves approaching the piston boundary $\Gamma^{\varepsilon}_{\mathfrak{p}}$, and the sum $\mathcal{Q}_{\rm{s}}(t)$ of weak waves approaching the large $4$-shock.
Functional $\mathcal{L}_{\textsc{b}}(t)$ manifests the boundary effect caused by piston motion $\mathcal{L}_{\textsc{b}\mathfrak{p}}(t)$ and by reactant consumption $\mathcal{L}_{\textsc{by}}(t)$ at piston and large leading shock.
We require that constant $K$ in \eqref{eq:4.2} is suitably large according to the standard \emph{BV} theory in \cite{Bressan-2000, Smoller-1994}.
Coefficient $\theta_1$ is suitably small, and $\theta_4,  \theta_5$ satisfy
\begin{equation}\label{eq:4.3}
0< \theta_1 |\kappa_{\rm{s}}| < \theta_4 <  \theta_1, \quad 0<\theta_5 <\theta_1,
\end{equation}
where $\kappa_{\rm{s}}$ given by Proposition \ref{prop:2.3} is the coefficient of weak waves reflection on strong $4$-shock front.
We place $\theta_1$ and  $\theta_4$ in $\mathcal{Q}(t)$ for the purpose of controlling the reflection effect
arising from weak-strong waves interaction. The significant coefficient $\theta_3$ is specified according to different conditions in Theorem \ref{thm:1.1}.
Under condition ($\textrm{C1}$) for entirely unburnt fluid, choose $\theta_3$ so small that
\begin{equation*}\label{eq:4.4}
0< \theta_3 T.V.\{Y_0(\cdot);(0,+\infty)\} \ll 1.
\end{equation*}
It ensures that $\mathcal{G}(0)$ is small enough.
In contrast, for reacting fluid under condition ($\textrm{C2}$) or ($\textrm{C3}$), we demand
$\theta_3$ is suitably large such that the decrease of $3$-waves may offset the growth of nonlinear waves caused by exothermic reaction.
Hence we introduce the transit consumption of reactant at $\chi^{\varepsilon}_{\mathfrak{p}}$ and  $\chi^{\varepsilon}_{\rm{s}}$, respectively as
\begin{eqnarray*}\label{eq:4.5}
\Upsilon^{\varepsilon,\mathfrak{p}}_k = Y^{\varepsilon,\mathfrak{p}}_{k}\cdot\phi(T^{\varepsilon,\mathfrak{p}}_{k})\varepsilon , \quad
\Upsilon^{\varepsilon,\rm{s}}_k = Y^{\varepsilon,\rm{s}}_{k}\cdot\phi(T^{\varepsilon, \rm{s}}_{k}) \varepsilon,
\end{eqnarray*}
where
\begin{eqnarray*}\label{eq:4.6}
\begin{split}
Y^{\varepsilon,\mathfrak{p}}_{k}=Y^{\varepsilon}(t_k-0, \chi^{\varepsilon}_\mathfrak{p}(t_k)+0),\qquad  T^{\varepsilon,\mathfrak{p}}_{k}=T^{\varepsilon}(t_k-0, \chi^{\varepsilon}_{\mathfrak{p}}(t_k)+0),\\[5pt]
Y^{\varepsilon, \rm{s}}_{k}=Y^{\varepsilon}(t_k-0, \chi^{\varepsilon}_{\rm{s}}(t_k)-0),\qquad  T^{\varepsilon,\rm{s}}_{k}=T^{\varepsilon}(t_k-0, \chi^{\varepsilon}_{\rm{s}}(t_k)-0),
\end{split}
\end{eqnarray*}
for $k\geq 1$.

By the scheme in section 3 and definition of $\mathcal{G}(t)$, we observe the fact that for given small $\delta>0$, there exists a small constant $\tilde{\delta}>0$ such that if $\mathcal{G}(t)<\tilde{\delta}$ at some $t>0$,
then the solution $U^\varepsilon (t,x)$ contains a unique large shock $\alpha^{\rm{s}}_4$.
Since  $|\alpha^{\rm{s}}_4- \bar{\alpha}^{\rm{s}}_4|< \tilde{\delta}$ and
$$T.V.\{U^\varepsilon (t,\cdot); (\chi^\varepsilon_{\mathfrak{p}} (t), \chi^\varepsilon_{\rm{s}} (t))\}
+T.V.\{U^\varepsilon (t,\cdot);  (\chi^\varepsilon_{\rm{s}} (t), +\infty)\}
\leq \mathcal{O}(1)\mathcal{G}(t), $$
there hold
\begin{equation}\label{eq:4.7}
 \begin{aligned}
   & U^\varepsilon (t,x)  \in\mathcal{N}_{\delta}(\bar{U}_{\mathfrak{b}, \rm{l}}), \qquad \forall \ x\in(\chi^\varepsilon_{\mathfrak{p}} (t), \chi^\varepsilon_{\rm{s}} (t)); \\
  & U^\varepsilon (t,x)  \in\mathcal{N}_{\delta}(\bar{U}_{\mathfrak{b}, \rm{r}}), \qquad \forall \ x\in(\chi^\varepsilon_{\rm{s}} (t), +\infty).
\end{aligned}
\end{equation}

Our aim is  to show that functional $\mathcal{G}(t)$ is decreasing with respect to $t$. If this monotonicity holds, we naturally suppose $\mathcal{G}(0)$ is a priori upper bound of $\mathcal{G}(t)$.
The proof involves two parts respectively for non-reacting and reacting {fluids}.

\subsection{Existence of the entropy solutions for non-reaction process}

We first investigate the non-reacting flow for equations \eqref{eq:1.1} under condition $(\textrm{C1})$, which is the basis of further
discussion on combustion phenomena in section 4.3.

Since $\bar{T}_{\rm{r}} <\bar{T}_{\rm{l}} < T_\textrm{i}$ under condition $(\textrm{C1})$, combustion does not occur at all. Thus
\begin{eqnarray*}
\Delta\mathcal{L}_{Y}\doteq\mathcal{L}_{Y} (t+0)- \mathcal{L}_{Y} (t-0)=0, \quad \mathcal{L}_{\textsc{by}} (t)\equiv 0,  \qquad \mbox{for}\ \mbox{all}\quad  t>0.
\end{eqnarray*}
We will prove the monotonicity of $\mathcal{G}(t)$ in different cases of local wave interactions. Henceforth, employ the notation
$$\alpha_i^*+\alpha_j^* \mapsto \alpha_i + \alpha_j +\alpha_k$$
to denote the process that collision of waves $\alpha_i^*$ and $\alpha_j^*$ produces a set of outgoing waves $\alpha_i , \alpha_j ,\alpha_k$.
Similarly, process $\alpha_1^*   \mapsto   \alpha_4$ stands for the wave reflection on piston boundary. We have the following lemma on monotonicity.

\begin{lemma}\label{lem:4.1}
Assume that positive $t$ is a time when interaction occurs in domain $\Omega^\varepsilon_\mathfrak{p}$ or on boundary $\Gamma^\varepsilon_\mathfrak{p}$,
and that constant $\tilde{\delta}$  is sufficiently small.
If $\mathcal{G}(t-0)<\tilde{\delta}$,
then there holds
\begin{eqnarray}\label{eq:4.8}
\mathcal{G}(t+0)<\mathcal{G}(t-0).
\end{eqnarray}
\end{lemma}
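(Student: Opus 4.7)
The plan is to carry out a case-by-case analysis of all local events in the scheme that can change $\mathcal{G}$ at time $t$. Under condition (C1) the reaction is switched off throughout, so $\mathcal{L}_Y$ and $\mathcal{L}_{\textsc{by}}$ remain invariant across every event, and it suffices to show that the net change of $\mathcal{L}_{\textsc{e}}+K(\mathcal{Q}_w+\mathcal{Q}_{\mathfrak{p}}+\mathcal{Q}_{\rm{s}})+K\mathcal{L}_{\textsc{b}\mathfrak{p}}$ is strictly negative. According to the fractional-step algorithm, the events to treat are: (i) a weak--weak interaction in the interior; (ii) a weak--strong interaction at $\chi^{\varepsilon}_{\rm{s}}$; (iii) reflection of a weak $1$-wave on $\Gamma^{\varepsilon}_{\mathfrak{p}}$; (iv) creation of a weak $4$-wave at a corner of $\Gamma^{\varepsilon}_{\mathfrak{p}}$. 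For each event I would combine the local estimate from Section 2 with a careful bookkeeping of which wave enters or leaves which weighted component of $\mathcal{Q}$.

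I would start with the easier cases. For (i), Proposition \ref{prop:2.1} furnishes $\Delta\mathcal{L}_{\textsc{e}}=\mathcal{O}(1)|\alpha_i^{*}\alpha_j^{*}|$, while the classical quadratic-decrement estimate gives $\Delta\mathcal{Q}_w\le -(1-\mathcal{O}(1)\mathcal{L}(t-0))|\alpha_i^{*}\alpha_j^{*}|$; choosing $\tilde\delta$ small and $K$ large then yields \eqref{eq:4.8}. For (iv), Lemma \ref{lem:2.2} bounds the new $4$-wave by $\mathcal{O}(1)|\Delta u^\varepsilon_{\mathfrak{p}}|$, which is exactly absorbed by the simultaneous drop of $\mathcal{L}_{\textsc{b}\mathfrak{p}}$ after weighting by a sufficiently large $K$. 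For (iii), Proposition \ref{prop:2.2} gives $\alpha_4=\alpha_1^{*}+\mathcal{O}(1)|\alpha_1^{*}|^2$, so the linear increment is only quadratic in $|\alpha_1^{*}|$; meanwhile the outgoing $\alpha_1^{*}$ leaves $\mathcal{Q}_{\mathfrak{p}}$ (losing $\theta_1|\alpha_1^{*}|$) while the reflected $\alpha_4$ now approaches the strong shock and enters $\mathcal{Q}_{\rm{s}}$ (gaining $\theta_4|\alpha_4|$). The net change of $K\mathcal{Q}$ is therefore majorised by $-K(\theta_1-\theta_4)|\alpha_1^{*}|+\mathcal{O}(1)\tilde\delta|\alpha_1^{*}|$, which is strictly negative by the ordering $\theta_4<\theta_1$ in \eqref{eq:4.3}.

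The hard part is case (ii), where a weak wave from the left hits the strong shock. By Proposition \ref{prop:2.3} the outgoing waves satisfy $\alpha_1=\kappa_{\rm{s}}\alpha_4^{*}+\mathcal{O}(|\alpha_4^{*}|^2)$, $\alpha_2=\mathcal{O}(|\alpha_4^{*}|)$, and $\alpha^{\rm{s}}_4-\alpha^{\rm{s},*}_4=\mathcal{O}(|\alpha_4^{*}|)$, so the linear increment $\Delta\mathcal{L}_{\textsc{e}}$ is of the same order as $|\alpha_4^{*}|$ and cannot possibly be absorbed by $\mathcal{Q}_w$ alone. The idea I plan to exploit is that the weight bookkeeping across the large shock forces
\begin{equation*}
\Delta(K\mathcal{Q}_{\rm{s}})+\Delta(K\mathcal{Q}_{\mathfrak{p}})\le -K\bigl(\theta_4-\theta_1|\kappa_{\rm{s}}|\bigr)|\alpha_4^{*}|+K\mathcal{O}(1)\tilde\delta|\alpha_4^{*}|,
\end{equation*}
because the incoming $\alpha_4^{*}$ loses its $\theta_4$-contribution to $\mathcal{Q}_{\rm{s}}$ while the reflected $\alpha_1$ adds at most $\theta_1|\kappa_{\rm{s}}||\alpha_4^{*}|$ to $\mathcal{Q}_{\mathfrak{p}}$. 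The strict inequality $\theta_1|\kappa_{\rm{s}}|<\theta_4$ assumed in \eqref{eq:4.3}, which is feasible precisely because $|\kappa_{\rm{s}}|<1$ in Proposition \ref{prop:2.3}, is what produces a genuine negative quadratic decrement; once $K$ is chosen large enough this dominates the $\mathcal{O}(1)|\alpha_4^{*}|$ linear increase. The symmetric case of a weak wave meeting the strong shock from the right, and the absorption of non-physical fronts, are handled by the same weight-hierarchy argument using Proposition \ref{prop:2.4} together with $\theta_5<\theta_1$. Summing the contributions over all cases yields \eqref{eq:4.8}.
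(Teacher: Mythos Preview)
Your proposal is correct and follows essentially the same approach as the paper: a case-by-case analysis (weak--weak, boundary reflection, boundary corner, weak--strong from the left, weak--strong from the right, and non-physical fronts) using Propositions~\ref{prop:2.1}--\ref{prop:2.4} and Lemma~\ref{lem:2.2}, with the decisive cancellation in the weak--strong case driven by the weight ordering $\theta_1|\kappa_{\rm s}|<\theta_4<\theta_1$ in \eqref{eq:4.3}. The only minor omission is that in case (i) you bound only $\Delta\mathcal{Q}_w$, whereas the outgoing waves also perturb $\mathcal{Q}_{\mathfrak p}$ and $\mathcal{Q}_{\rm s}$ by $\mathcal{O}(\theta_1+\theta_4)|\alpha_i^*\alpha_j^*|$; this is harmless since $\theta_1,\theta_4$ are small, but should be mentioned.
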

\begin{proof}
The argument consists of several cases as follows.

\textbf{Case 4.1.1}. \emph{Reflection $\alpha_1^*   \mapsto   \alpha_4$ on boundary.} Suppose that a incident weak $\alpha^*_1$ hits the piston boundary, and produces a reflection weak wave $\alpha_4$. Then the estimate \eqref{eq:2.31} in Proposition \ref{prop:2.2} gives
\begin{equation*}
\alpha_4= \alpha_1^* + \mathcal{O}(1) |\alpha_1^*|^2.
\end{equation*}
Based on this, we obtain
\begin{eqnarray*}
\begin{split}
\Delta\mathcal{L}(t)& = |\alpha_4| - |\alpha^*_1| = \mathcal{O}(1) |\alpha^*_1|^2 \leq \mathcal{O}(1) \mathcal{G}(t-0)|\alpha^*_1|,\\[5pt]
\Delta\mathcal{Q}(t)& = \Delta \mathcal{Q}_w(t) + \Delta \mathcal{Q}_{\mathfrak{p}}(t) + \Delta\mathcal{Q}_{\rm{s}}(t)\\[5pt]
	& = \Big(|\alpha_4| \sum_{\mathcal{A}(\alpha_4)} |\beta|  - |\alpha^*_1|\sum_{\mathcal{A}(\alpha^*_1)} |\beta| \Big)  - \theta_1 |\alpha^*_1| + \theta_4 |\alpha_4 |\\[5pt]
	& \leq \Big(\theta_4 -  \theta_1   +\mathcal{G}(t-0) + \mathcal{O}(1)\mathcal{G}(t-0) \big(\mathcal{G}(t-0) + \theta_4\big) \Big) |\alpha^*_1|  \\[5pt]
	& \leq - \frac{1}{2}(\theta_1 -  \theta_4)|\alpha^*_1| ,
\end{split}
\end{eqnarray*}
which leads to
\begin{eqnarray}\label{eq:4.9}
\begin{split}
\Delta\mathcal{G}(t)&= \Delta \mathcal{L}(t) + K\Delta \mathcal{Q}(t) \\[5pt]
& \leq \Big(\mathcal{O}(1) \mathcal{G}(t-0)- \frac{1}{2}K(\theta_1 - \theta_4)\Big) |\alpha^*_1| \\[5pt]
& < - \frac{1}{4}K (\theta_1 -  \theta_4) |\alpha_1^*| < 0,
\end{split}
\end{eqnarray}
provided $\tilde{\delta}$ is sufficiently small.

\textbf{Case 4.1.2}. \emph{Appearance of single $\alpha_4$ from boundary.}
If a weak wave $\alpha_4$ issues from a corner point $(t, \chi^{\varepsilon}_{\mathfrak{p}}(t))$ with the left and right velocities satisfying $u_{\rm{l}}=u^{\varepsilon}_{\mathfrak{p}}(t+0)$ and $u_{\rm{r}}=u^{\varepsilon}_{\mathfrak{p}}(t-0)$, then by estimate \eqref{eq:2.28} in Lemma \ref{lem:2.2}, we know that
the strength $|\alpha_4|$ is equivalent to $|u^{\varepsilon}_{\mathfrak{p}}(t+0) - u^{\varepsilon}_{\mathfrak{p}}(t-0)|$, \emph{i.e.},
\begin{equation*}
\frac{1}{a} |\alpha_4| \leq |u^{\varepsilon}_{\mathfrak{p}}({t+0}) - u^{\varepsilon}_{\mathfrak{p}}({t-0})|\leq a |\alpha_4|.
\end{equation*}
Thus it implies that
\begin{eqnarray*}
\begin{split}
&\Delta\mathcal{L}(t)= |\alpha_4|,\qquad
\Delta \mathcal{L}_{\textsc{b}}(t)= -|u^{\varepsilon}_{\mathfrak{p}}({t+0}) - u^{\varepsilon}_{\mathfrak{p}}({t-0})|\leq -\frac{1}{a} |\alpha_4|, \\[5pt]
&\Delta \mathcal{Q}(t) = \Delta \mathcal{Q}_w(t)  + \Delta \mathcal{Q}_{\rm{s}}(t)=|\alpha_4| \sum_{\mathcal{A}(\alpha_4)} |\beta| + \theta_4 |\alpha_4|
	\leq \big(\theta_1 + \mathcal{G}(t-0) \big)|\alpha_4|,
\end{split}
\end{eqnarray*}
and
\begin{align}
\notag
\Delta\mathcal{G}(t)& = \Delta \mathcal{L}(t) + K\Delta \mathcal{Q}(t) + K\Delta \mathcal{L}_{\textsc{b}}(t) \\[5pt]
\notag
&\leq |\alpha_4|  + K\Big(\theta_1 + \mathcal{G}(t-0) -\frac{1}{a} \Big)|\alpha_4|  \\[5pt]
\notag
&\leq |\alpha_4| - \frac{1}{2a} K|\alpha_4| \\[5pt]
\label{eq:4.10}
&\leq - \frac{1}{4a} K|\alpha_4|<0,
\end{align}
provided both $\theta_1$ and $\tilde{\delta}$ are small, while constant $K$ suitably large.

\textbf{Case 4.1.3}. \emph{Interaction $\alpha^*_4 + \alpha^{\rm{s},*}_4  \mapsto \alpha_1+ \alpha_2+ \alpha^{\rm{s}}_4$}.
If a weak wave $\alpha^*_{4}$ interacts with strong shock $\alpha^{\rm{s},*}_4$ from left,
then Proposition \ref{prop:2.3} directly gives the estimates
$$\alpha_1  = \kappa_{\rm{s}} \alpha^*_4 + \mathcal{O}(1) |\alpha^*_4 |^2 ,\qquad  \alpha_2  = \mathcal{O}(1) |\alpha^*_4|, \qquad
\alpha^{\rm{s}}_4=\alpha^{\rm{s},*}_4+\mathcal{O}(1)|\alpha^{*}_4|.$$
From these and coefficient condition \eqref{eq:4.3}, we deduce that
\begin{align*}
\Delta \mathcal{L}(t) & = |\alpha_1| + |\alpha_2| +|\alpha^{\rm{s}}_4- \bar{\alpha}^{\rm{s}}_4|- |\alpha^*_4|-|\alpha^{\rm{s},*}_4- \bar{\alpha}^{\rm{s}}_4|= \mathcal{O}(1)|\alpha^*_4|, \\[5pt]
\Delta \mathcal{Q}(t) & =  \Delta \mathcal{Q}_w(t) +\Delta\mathcal{Q}_{\mathfrak{p}}(t) +\Delta\mathcal{Q}_{\rm{s}}(t) \\[5pt]
	        & = \Big( |\alpha_1|\sum_{\mathcal{A}(\alpha_1)} |\beta|
	+ |\alpha_2|  \sum_{\mathcal{A}(\alpha_2)} |\beta|
	- |\alpha_4^*|  \sum_{\mathcal{A}(\alpha_4^*)} |\beta| \Big)
	+ \theta_1  |\alpha_1|  -  \theta_4|\alpha_4^*|    \\[5pt]
	& \leq  \big(|\kappa_{\rm{s}} \alpha_4^*| + \mathcal{O}(1) |\alpha_4^*|^2 \big)\big(\mathcal{G}(t-0)+ \theta_1\big)
	+ \mathcal{O}(1)\mathcal{G}(t-0) |\alpha_4^*|  - \theta_4 |\alpha_4^*|\\[5pt]
	& = \big(\theta_1|\kappa_{\rm{s}}| - \theta_4  + \mathcal{O}(1)\mathcal{G}(t-0) \big) |\alpha_4^*|\\[5pt]
	& \leq - \frac{1}{2}( \theta_4 - \theta_1|\kappa_{\rm{s}}| ) |\alpha_4^* |,
\end{align*}
which gives
\begin{equation}\label{eq:4.11}
\Delta\mathcal{G}(t)=\Delta\mathcal{L}(t)+ K\Delta \mathcal{Q}(t)\leq - \frac{1}{4} K( \theta_4 - \theta_1|\kappa_{\rm{s}}| ) |\alpha_4^*| <0,
\end{equation}
provided that $K>0$ is sufficiently large. Similarly, if a weak wave $\alpha^*_{4}$ interacts with strong shock $\alpha^{\rm{s},*}_4$ from right, then it follows from the Proposition \ref{prop:2.4} that $\Delta\mathcal{G}(t)<0$.

\textbf{Case 4.1.4}. \emph{Interaction} $ \alpha_i^* + \alpha^{\rm{s},*}_4  \mapsto \alpha_1+ \alpha_2+ \alpha^{\rm{s}}_4$ ($i=1,2$).
A weak $i$-wave $\alpha^*_{i}$ interacts with strong $4$-shock $\alpha^{\rm{s},*}_4$ from right for $i=1,2$. In this case, by Proposition \ref{prop:2.4}, we have that
\begin{eqnarray*}
|\alpha_1| + |\alpha_2| = \mathcal{O}(1) |\alpha_i^*|,\qquad \alpha^{\rm{s}}_{4}=\alpha^{\rm{s},*}_{4}+\mathcal{O}(1)|\alpha^{*}_{i}|.
\end{eqnarray*}
Then it follows that
\begin{eqnarray*}
\begin{split}
\Delta\mathcal{L}(t) & = |\alpha_1| + |\alpha_2| +|\alpha^{\rm{s}}_4- \bar{\alpha}^{\rm{s}}_4|- |\alpha^*_i|-|\alpha^{\rm{s},*}_4- \bar{\alpha}^{\rm{s}}_4|=  \mathcal{O}(1) |\alpha_i^* |,  \\[5pt]
\Delta \mathcal{Q}(t) & =\Delta\mathcal{Q}_w(t) +\Delta\mathcal{Q}_{\mathfrak{p}}(t) +\Delta\mathcal{Q}_{\rm{s}}(t) \\[5pt]
	& =\Big(|\alpha_1| \sum\limits_{\mathcal{A}(\alpha_1)} |\beta|+|\alpha_2|  \sum_{\mathcal{A}(\alpha_2)} |\beta|  - |\alpha_i^*|  \sum_{\mathcal{A}(\alpha_i^*)} |\beta| \Big) \\[5pt]
	&\qquad +\theta_1 \big( |\alpha_1| - (2-i) |\alpha_i^*|  \big)  -  |\alpha_i^*| \\[5pt]
	& \leq - |\alpha_i^*| + \mathcal{O}(1)(\mathcal{G}(t-0)+ \theta_1) |\alpha_i^*| \\[5pt]
	& \leq - \frac{1}{2}|\alpha_i^*|.
\end{split}
\end{eqnarray*}
Thus we can get
\begin{eqnarray}\label{eq:4.12}
\begin{split}
\Delta \mathcal{G}(t) &=\Delta\mathcal{L}(t)+ K\Delta \mathcal{Q}(t)
	\leq \mathcal{O}(1) |\alpha_i^* | - \frac{1}{2} K |\alpha_i^*|  <0 ,
\end{split}
\end{eqnarray}
by choosing $K$ sufficiently large.

\textbf{Case 4.1.5}. \emph{Interaction} $ \alpha_5^* + \alpha^{\rm{s},*}_4  \mapsto \alpha^{\rm{s}}_4 +\alpha_5 $ .
Suppose a $\mathcal{NP}$ wave $\alpha^*_5$ interacts with strong $4$-shock $\alpha^{\rm{s},*}_4$ from left. It follows from Proposition \ref{prop:3.3} that
\begin{eqnarray*}
\begin{split}
\Delta\mathcal{L}(t) & = |\alpha_5| +|\alpha^{\rm{s}}_4- \bar{\alpha}^{\rm{s}}_4|- |\alpha^*_5|-|\alpha^{\rm{s},*}_4- \bar{\alpha}^{\rm{s}}_4|=  \mathcal{O}(1) |\alpha_5^* |,  \\[5pt]
\Delta \mathcal{Q}(t) & =\Delta\mathcal{Q}_w(t) +\Delta\mathcal{Q}_{\mathfrak{p}}(t) +\Delta\mathcal{Q}_{\rm{s}}(t) \\[5pt]
	& =\Big(|\alpha_5| \sum\limits_{\mathcal{A}(\alpha_5)} |\beta|  - |\alpha_5^*|  \sum_{\mathcal{A}(\alpha_5^*)} |\beta| \Big)  - \theta_5 |\alpha_5^*| \\[5pt]
	& \leq \Big( \mathcal{O}(1)\mathcal{G}(t-0)-\theta_5 \Big) |\alpha_5^*| \\[5pt]
	& \leq - \frac{1}{2}\theta_5 |\alpha_5^*|.
\end{split}
\end{eqnarray*}
Thus we find that
\begin{eqnarray}\label{eq:4.12}
\begin{split}
\Delta \mathcal{G}(t) &=\Delta\mathcal{L}(t)+ K\Delta \mathcal{Q}(t)
	\leq \mathcal{O}(1) |\alpha_5^* | - \frac{1}{2} K \theta_5 |\alpha_i^*|  <0 ,
\end{split}
\end{eqnarray}
if $K$ is sufficiently large.

\textbf{Case 4.1.6}. \emph{Interactions between weak waves $\alpha^*_{i}$ and $\alpha^*_{j}$ $(1\leq i,j\leq 5)$.}
For instance, consider the interaction $\alpha_i^* + \alpha_j^* \mapsto \alpha_1+ \alpha_2+ \alpha_4$ for $i,j \in\{1,2,4\}$. Estimates in \eqref{eq:2.21} directly give
\begin{align*}
&|\alpha_i- \alpha_i^*| + |\alpha_j- \alpha_j^*| + |\alpha_k| = \mathcal{O}(1) |\alpha_i^*\alpha_j^*| \quad  \mbox{for} \quad i\neq j,  \\[5pt]
&|\alpha_i- \alpha_i^*- \alpha_j^*| + \sum\limits_{k\neq i}|\alpha_k| = \mathcal{O}(1) |\alpha_i^*\alpha_j^*|\quad  \mbox {for} \quad i= j.
\end{align*}
From these estimates, we deduce that
\begin{eqnarray*}
\begin{split}
\Delta \mathcal{L}(t) &= \mathcal{O}(1) |\alpha_i^*\alpha_j^*| ,\\[5pt]
\Delta \mathcal{Q}(t) & \leq \Big( -1 + \mathcal{O}(1) \big(\theta_1+ \theta_4 + \mathcal{G}(t-0) \big) \Big)  |\alpha_i^*\alpha_j^*|\\[5pt]
	                  &\leq -\frac{1}{2} |\alpha_i^*\alpha_j^*|,
\end{split}
\end{eqnarray*}
provided $\theta_1,\theta_4$ and $\tilde{\delta}$ are taken suitably small. It further implies that
\begin{eqnarray}\label{eq:4.13}
\begin{split}
\Delta \mathcal{G}(t) \leq  \mathcal{O}(1) |\alpha_i^*\alpha_j^*|  -\frac{1}{2}K |\alpha_i^*\alpha_j^*| <0,
\end{split}
\end{eqnarray}
by choosing $K>0$ sufficiently large.

We turn to interaction $\alpha_i^* +  \alpha_j^* \mapsto \square + \alpha_5$ for $i,j\neq 3$.
Here symbol $\square$ stands for a pair of outgoing waves of family $i$ or $j$.
Recall Proposition \ref{prop:3.2} on interactions involving $\mathcal{NP}$ waves.
If $i= j<5$, then $\square$ is actually a coalesced wave $\alpha_i$ such that $\alpha_i= \alpha_i^* +  \alpha_j^*$.
If $i\neq j$ and $i, j<5$, then $\square$ contains two waves $\alpha_i$ and $\alpha_j$ such that $\alpha_i=\alpha_i^*, \alpha_j=\alpha_j^*$.
If $i=5$ and $j<5$, then $\square$ preserves a single $\alpha_j$ after interaction, which satisfies $\alpha_j= \alpha_j^* $.
In any case, we always have
\begin{eqnarray*}
\begin{split}
\Delta \mathcal{L}(t)& =  \mathcal{O}(1) |\alpha_i^* \alpha_j^*|,\\[5pt]
\Delta \mathcal{Q}(t) & \leq  - |\alpha_i^* \alpha_j^*| +\mathcal{O}(1)(\mathcal{G}(t-0) + \theta_1) |\alpha_i^* \alpha_j^*|
	\leq - \frac{1}{2}  |\alpha_i^* \alpha_j^*|.
\end{split}
\end{eqnarray*}
It follows that
\begin{eqnarray}\label{eq:4.14}
\begin{split}
\Delta \mathcal{G}(t)  & \leq \mathcal{O}(1) |\alpha_i^* \alpha_j^*| - \frac{1}{2} K |\alpha_i^* \alpha_j^*|  <0.
\end{split}
\end{eqnarray}

%Similarly, we assert that the estimate $\Delta \mathcal{F}(t)<0$ holds in the case that $\alpha_j^*$ is
%replaced by the large 4-shock $\eta_4^*$.\\

Consider $\alpha_3^* +  \alpha_j^* \mapsto \alpha_3 + \alpha_j$ for $j\neq 2,3$. In this case, transport of reactant does not influence the motion of non-reacting fluid.
So $\alpha_3= \alpha_3^*$ and $\alpha_j = \alpha_j^*$, which leads to
\begin{eqnarray*}
\Delta\mathcal{L}(t)= 0,\quad   \Delta\mathcal{Q}(t)= -|\alpha^*_3  \alpha^*_j|.
\end{eqnarray*}
It gives that
\begin{eqnarray}\label{eq:4.15}
\Delta \mathcal{G}(t) \leq -K |\alpha_3^*  \alpha_j^* |<0.
\end{eqnarray}

Finally, combining the estimates \eqref{eq:4.9}-\eqref{eq:4.15} altogether, we derive the conclusion \eqref{eq:4.8}, and complete the proof.
\end{proof}

Lemma \ref{lem:4.1} implies the fact that $\mathcal{G}(t)\leq \mathcal{G}(0) \leq \mathcal{O}(1)\epsilon$ for every $t>0$,
provided $\epsilon$ is small enough.
Consequently, we obtain the following properties on the approximate solutions.

\begin{proposition}\label{prop:4.1}
Under the assumptions $(\mathbf{A1})$-$(\mathbf{A2})$, there exist constants $C_1>0$, $C_2>0$ depending solely on $\bar{U}_{\mathfrak{b}}$ such that if  condition $(\rm{C1})$ holds for $\epsilon>0$ sufficiently small, then the approximate solution $U^{\varepsilon}$ to $(\textit{IBVP})$ satisfies the estimates
\begin{eqnarray}\label{eq:4.16}
\begin{split}
&\sup_{x\in(\chi^{\varepsilon}_{\mathfrak{p}}(t),\chi^{\varepsilon}_{\rm{s}}(t))}|U^{\varepsilon}(t,\cdot)-\bar{U}_{\mathfrak{b},\rm{l}}|
+\sup_{x\in(\chi^{\varepsilon}_{\rm{s}}(t),+\infty)}|U^{\varepsilon}(t,\cdot)-\bar{U}_{\mathfrak{b},\rm{r}}|
+\sup_{t>0}|\dot{\chi}^\varepsilon_{\rm{s}}(t)-\bar{\rm{s}}_{\mathfrak{b}}|\\[5pt]
&\quad\ +T.V.\{U^{\varepsilon}(t,\cdot);(\chi^{\varepsilon}_{\mathfrak{p}}(t),\chi^{\varepsilon}_{\rm{s}}(t))\}
+T.V.\{U^{\varepsilon}(t,\cdot);(\chi^{\varepsilon}_{\rm{s}}(t),+\infty)\}<C_1\epsilon,
\end{split}
\end{eqnarray}
and
\begin{eqnarray}\label{eq:4.17}
\|U^{\varepsilon}(t,\cdot)-U^{\varepsilon}(\tilde{t},\cdot)\|_{L^1(\mathbb{R})}+|\chi^{\varepsilon}_{\rm{s}}(t)-\chi^{\varepsilon}_{\rm{s}}(\tilde{t})|\leq C_1(t-\tilde{t})\quad \mbox{for}\quad t>\tilde{t}>0.
\end{eqnarray}
Moreover, there holds that
\begin{eqnarray}\label{eq:4.18}
\|Y^{\varepsilon}(t,\cdot)\|_{L^1(\mathbb{R})}\leq \frac{\sup\rho^{\varepsilon}}{\inf\rho^{\varepsilon}}\|Y^{\varepsilon}(\tilde{t},\cdot)\|_{L^1(\mathbb{R})}+C_2(t-\tilde{t}){ \varepsilon}
\quad \mbox{for}\quad t>\tilde{t}>0.
\end{eqnarray}
\end{proposition}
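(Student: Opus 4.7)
The plan is to derive all three estimates from the uniform bound $\mathcal{G}(t) \leq \mathcal{G}(0) \leq \mathcal{O}(1)\epsilon$ guaranteed by Lemma \ref{lem:4.1}. First I would verify $\mathcal{G}(0) \leq C\epsilon$ under hypothesis (C1). At the initial large jump between a perturbation of $\bar{U}_{\mathfrak{b},\rm l}$ (just behind the shock) and a perturbation of $\bar{U}_{\mathfrak{b},\rm r}$, Lemma \ref{lem:2.4} produces a strong 4-shock with $|\alpha^{\rm s}_4 - \bar{\alpha}^{\rm s}_4|= \mathcal{O}(1)\epsilon$ together with three weak waves whose total strength is bounded by $T.V.\{U^{\textsc{e}}_0 - \bar{U}^{\textsc{e}}_{\mathfrak{b},\rm r}\}$. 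Resolving the remaining discontinuities of $U_0^\varepsilon$ contributes further wave strengths bounded linearly by the initial total variation. Combining with $\mathcal{L}_{\textsc{b}}(0) \leq T.V.\{u^\varepsilon_{\mathfrak p}\}$ and the fact that $\mathcal{L}_{\textsc{by}} \equiv 0$ under (C1) (because $\bar{T}_{\rm l} < T_{\rm i}$ forces $\phi(T^{\varepsilon,\mathfrak p}_k) = \phi(T^{\varepsilon,\rm s}_k) = 0$ for all $k$ provided $\epsilon$ is small), I obtain $\mathcal{G}(0) \leq C\epsilon$, and therefore $\mathcal{G}(t) \leq C\epsilon$ with the inclusions \eqref{eq:4.7} in force for all $t>0$.

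Given this uniform bound, the estimates in \eqref{eq:4.16} are almost immediate: the two $BV$ bounds follow from $T.V.\{U^\varepsilon(t,\cdot)\}$ on each component being controlled by $\mathcal{L}(t)$; the two sup bounds are obtained by integrating outward from the asymptotic state $\bar{U}_{\mathfrak{b},\rm r}$ (as $x \to \infty$) and from the piston boundary state, where $u = u^\varepsilon_{\mathfrak p} \in \mathcal{N}_\delta(\bar{u}_{\mathfrak p})$; the shock-speed estimate follows from the Rankine--Hugoniot relation across the strong 4-shock, $|\alpha^{\rm s}_4 - \bar{\alpha}^{\rm s}_4| \leq \mathcal{L}(t)$, and the $C^1$ regularity of $\rm s$ along $\mathscr{S}_4$ proved in Lemma \ref{lem:2.1a}. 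The Lipschitz estimate \eqref{eq:4.17} is obtained by a standard domain-of-dependence argument: every front (including non-physical ones) propagates at speed at most $\hat\lambda$, so the $L^1$ change of $U^\varepsilon$ over $[\tilde{t}, t]$ is bounded by $\hat\lambda\,(t-\tilde t)\cdot T.V.\{U^\varepsilon\}\leq C\epsilon(t-\tilde t)$, and the bound on $|\chi^\varepsilon_{\rm s}(t) - \chi^\varepsilon_{\rm s}(\tilde t)|$ is immediate from the uniform bound on $\dot\chi^\varepsilon_{\rm s}$.

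The main obstacle is \eqref{eq:4.18}, which requires exploiting the transport structure of $\rho Y$. Integrating the fourth equation of \eqref{eq:1.1} over $(\chi^\varepsilon_{\mathfrak p}(t),+\infty)$, the boundary flux $\rho u Y|_{\chi^\varepsilon_{\mathfrak p}}$ exactly cancels $\dot\chi^\varepsilon_{\mathfrak p}\cdot\rho Y|_{\chi^\varepsilon_{\mathfrak p}}$ because $u = u^\varepsilon_{\mathfrak p}$ on $\Gamma^\varepsilon_{\mathfrak p}$, while the decay at infinity eliminates the other boundary contribution, so the exact identity $\frac{d}{dt}\int \rho Y\, dx = -\int \rho Y \phi(T)\, dx \leq 0$ holds. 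For the approximate solution I would account for two sources of error: the Rankine--Hugoniot defect at each front bounded by Proposition \ref{prop:3.1} ($\mathcal{O}(\varepsilon|\alpha|)$ for physical fronts and $\mathcal{O}(|\alpha|)$ for non-physical fronts), and the fractional-step update at $t_k$, which under (C1) contributes nothing since $\phi(T^\varepsilon) \equiv 0$. Summing over $[\tilde t, t]$ using $\mathcal{L}(\tau) \leq C\epsilon$ yields $\int \rho^\varepsilon Y^\varepsilon(t,\cdot)\,dx \leq \int \rho^\varepsilon Y^\varepsilon(\tilde t,\cdot)\,dx + C(t-\tilde t)\varepsilon$; dividing and multiplying by $\inf\rho^\varepsilon$, $\sup\rho^\varepsilon$ respectively gives \eqref{eq:4.18}. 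The delicate point is to show that the cumulative contribution of non-physical fronts is genuinely $\mathcal{O}(\varepsilon)$ per unit time rather than $\mathcal{O}(1)$; this uses the fact that $\mathcal{NP}$ fronts are generated by the (SRS) with strengths controlled by the product of interacting wave strengths (Propositions \ref{prop:3.2}--\ref{prop:3.3}), whose sum remains $\mathcal{O}(\varepsilon)$ thanks to the monotonicity of $\mathcal{Q}(t)$ and the exit speed $\hat\lambda$ of $\mathcal{NP}$ fronts from the domain.
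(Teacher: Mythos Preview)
Your proposal is correct and follows essentially the same approach as the paper: \eqref{eq:4.16}--\eqref{eq:4.17} from the uniform bound $\mathcal{G}(t)\leq\mathcal{G}(0)\leq\mathcal{O}(1)\epsilon$ together with finite propagation speed, and \eqref{eq:4.18} by differentiating $Z^\varepsilon(t)=\int_{\chi^\varepsilon_{\mathfrak p}(t)}^{\infty}\rho^\varepsilon Y^\varepsilon\,dx$ using Proposition~\ref{prop:3.1} and the boundary cancellation $u|_{\chi^\varepsilon_{\mathfrak p}}=u^\varepsilon_{\mathfrak p}$, with the non-physical contribution handled by the standard front-tracking bound $\sum_{\alpha\in\mathcal{NP}}|\alpha|=\mathcal{O}(\varepsilon)$ that you correctly flag as the delicate point. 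One minor remark: at $t=0$ the strong shock is produced by the corner initial-boundary Riemann problem (piston velocity $u^\varepsilon_{\mathfrak p}(0+)$ against $U^\varepsilon_0(0+)$) rather than by an interior jump as your phrasing suggests, but this does not affect your conclusion $\mathcal{G}(0)\leq\mathcal{O}(1)\epsilon$.
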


\begin{proof}
Estimate \eqref{eq:4.16} is a direct result based on assumption $(\mathbf{A1})$, Lemma \ref{lem:2.1a}  and the fact
$$|\dot{\chi}^\varepsilon_{\rm{s}}(t)-\bar{\rm{s}}_{\mathfrak{b}}|+
T.V.\{U^{\varepsilon}(t,\cdot);(\chi^{\varepsilon}_{\mathfrak{p}}(t),\chi^{\varepsilon}_{\rm{s}}(t))\}
+T.V.\{U^{\varepsilon}(t,\cdot);(\chi^{\varepsilon}_{\rm{s}}(t),+\infty)\}\leq \mathcal{O}(1)\mathcal{G}(t).$$
See also \cite{Kuang-Zhao-2020} for detailed argument.
One can readily verify \eqref{eq:4.17} according to finite propagation speed for homogenous system \eqref{eq:2.2} under condition $(\rm{C1})$.
Now we continue to consider the estimate \eqref{eq:4.18}.
To this end, let's define the amount of reactant by
\begin{eqnarray*}
Z^\varepsilon(t) = \int_{\chi^\varepsilon_{\mathfrak{p}}(t)}^{+\infty} \rho^\varepsilon(t,x) Y^\varepsilon(t,x) dx.
\end{eqnarray*}
It is clear that $Z^\varepsilon(t)$ is globally Lipshcitz continuous, and differentiable everywhere except finitely many times related to  wave-interaction and angular points on piston boundary.
Let wave front $\alpha\in\mathcal{J}(U^{\varepsilon})$ be located at $x_\alpha(t)$ in $(\chi^\varepsilon_{\mathfrak{p}}(t),+\infty)$. Then, by \emph{Rankine-Hugoniot} conditions
and Proposition \ref{prop:3.1}, we get
\begin{equation}\label{eq:4.19}
\begin{split}
\dot{Z}^\varepsilon (t)&= \sum_{\alpha\in \mathcal{J}(U^{\varepsilon})} [\rho^\varepsilon Y^\varepsilon] \dot{x}_\alpha(t) - \rho^\varepsilon(t,\chi^{\varepsilon}_{\mathfrak{p}}(t)) Y^\varepsilon(t,\chi^{\varepsilon}_{\mathfrak{p}}(t)) u^{\varepsilon}_{\mathfrak{p}}(t)\\[5pt]
	& =\sum_{\alpha \in \mathcal{J}(U^{\varepsilon})\setminus \mathcal{NP}} \Big([\rho^\varepsilon u^\varepsilon Y^\varepsilon] + \mathcal{O}(1) \varepsilon|\alpha| \Big)
	+ \sum_{\alpha\in \mathcal{NP}} \Big([\rho^\varepsilon u^\varepsilon Y^\varepsilon] + \mathcal{O}(1)  |\alpha|  \Big) \\[5pt]
	& \qquad\quad- \rho^\varepsilon(t,\chi^\varepsilon_{\mathfrak{p}}(t)) Y^\varepsilon (t,\chi^\varepsilon_{\mathfrak{p}}(t)) u^\varepsilon_{\mathfrak{p}}(t)\\[5pt]
	& = \mathcal{O}(1) \varepsilon \sum_{\alpha\in \mathcal{J}(U^{\varepsilon})\setminus \mathcal{NP}}  |\alpha| + \mathcal{O}(1) \sum_{\alpha\in \mathcal{NP}} |\alpha| \\[5pt]
	&= \mathcal{O}(1) \varepsilon,
\end{split}
\end{equation}
where the notation $[f]=f(t,x_\alpha(t)-0)-f(t,x_\alpha(t)+0)$ for $f=\rho^\varepsilon Y^\varepsilon$ or $f=\rho^\varepsilon u^\varepsilon Y^\varepsilon$.
Integrate \eqref{eq:4.19} from $\tilde{t}$ to $t$ , then derive that
\begin{equation*}\label{reactant conservation}
\int^{+\infty}_{\chi^\varepsilon_{\mathfrak{p}}(t)} (\rho^\varepsilon Y^\varepsilon)(t,x) dx -  \int_{\chi^\varepsilon_{\mathfrak{p}}(\tilde{t})}^{+\infty} (\rho^\varepsilon Y^\varepsilon)(\tilde{t},x) dx
= \mathcal{O}(1) \varepsilon (t-\tilde{t}),\quad \mbox{for}\quad t>\tilde{t}>0.
\end{equation*}
Since $\bar{\rho}_{\rm{l}}>\bar{\rho}_{\rm{r}}>0$, it follows from \eqref{eq:4.16} and the smallness of $\epsilon$ that
\begin{equation*}
\|Y^\varepsilon(t,\cdot)\|_{L^1(\mathbb{R})} \leq \frac{\sup \rho^\varepsilon}{\inf \rho^\varepsilon} \|Y^\varepsilon(\tilde{t},\cdot)||_{L^1(\mathbb{R})}+ \mathcal{O}(1) \varepsilon (t-\tilde{t}),
\end{equation*}
which leads to the estimate \eqref{eq:4.18}.
\end{proof}

\emph{Proof of Theorem \ref{thm:1.1} under condition $(\rm{C1})$}. By Helly's compactness theorem and Proposition \ref{prop:4.1}, there is a sequence $\{\varepsilon_k\}$
such that as $\varepsilon_k \to 0^+$, the approximation $U^{\varepsilon_k}$ converges almost everywhere in $\mathbb{R}_+ \times \mathbb{R}$ to some function $U$ satisfying \eqref{eq:1.14}.
Moreover, following the standard methods as in \cite{Bressan-2000}, we can show that $U$ is an entropy solution to (\emph{IBVP}), and complete the proof on existence and structural stability under condition $(\textrm{C1})$.
\hfill$\Box$

\subsection{Existence of the entropy solutions for reaction process}
In this subsection, we are concerned with the existence of large amplitude combustion solutions to piston problems under condition $(\textrm{C2})$ or $(\textrm{C3})$. Assume that $U^\varepsilon$ is an $\varepsilon$-approximate solution constructed by fractional-step front tracking scheme as presented in section 3.
We aim to establish the uniform bound on the total variation of $U^\varepsilon$ for reacting flow.
Suppose $\mathcal{G}(t)<\tilde{\delta}$ for every $t\in (0,t_k)$. Hence \eqref{eq:4.7} holds. The piecewise continuity of $\phi(T)$ implies that there exist two positive constants $\underline{\phi}$ and $\bar{\phi}$ such that
\begin{equation}\label{eq:4.20}
	\underline{\phi} \leq \phi(T^\varepsilon (t,x)) \leq \bar{\phi} \qquad  \text{for }  t\in (0,t_k) , x\in \mathcal{I}.
\end{equation}
Here interval $\mathcal{I} \doteq (\chi^{\varepsilon}_{\mathfrak{p}}(t),+\infty)$ under condition (\textrm{C2}),
while $\mathcal{I} \doteq (\chi^{\varepsilon}_{\mathfrak{p}}(t),\chi^{\varepsilon}_{\rm{s}}(t))$ under condition (\textrm{C3}).
It suffices to prove $\mathcal{G}(t_k-0)<\mathcal{G}(t_k+0)$. Then the decrease of $\mathcal{G}(t)$ ensures that the properties
\eqref{eq:4.7}\eqref{eq:4.20} further hold for $t\in (0,t_{k+1})$.

For brevity, we always use superscript $^*$ in this subsection to mark the states before reaction.
According to the scheme \eqref{eq:3.6}, the states transition in combustion process is given by
\begin{eqnarray}\label{reaction step}
U=U^*+\varepsilon G_{0}(U^*)\qquad  \text{at time } t_k,
\end{eqnarray}
where $U\doteq U^{\varepsilon}(t_k,x) , U^* \doteq U^{\varepsilon}(t_{k}-0,x) $ and
\begin{eqnarray*}
G_{0}(U^*)=\big(0,0, (\gamma-1)\mathfrak{q}_0\rho^*\phi(T^*)Y^*, -\phi(T^*)Y^*\big)^{\top}.
\end{eqnarray*}
When a wave $ \alpha_i^*$ hits the grid line $t=t_k$, there are diverse cases in which wave $ \alpha_i^*$ splits into several new fronts after combustion (see Fig \ref{fig9}).
Here  again we use the notation
$$ \alpha_i^* \mapsto  \alpha_i+\alpha_j+ \alpha_k + \alpha_m \quad  \text{at } x_\alpha$$
to represent the process of wave splitting caused by combustion reaction at position $x_\alpha$.

\begin{lemma}\label{lem:4.2}
Suppose that $\tilde{\delta}$ and $\varepsilon$ are small enough, and that $\mathcal{G}(t)<\tilde{\delta}$ for $t\in (0,t_k)$.
Then, in reaction process at $t=t_k$, we have the estimates on wave strengths as follows.

${\rm{(i)}}$\ If
$ \alpha_3^* \mapsto  \alpha_1+\alpha_2+ \alpha_3 + \alpha_4 $ at $x_\alpha$,
 then there hold
\begin{eqnarray*}
 |\alpha_3| \leq |\alpha_3^*|  - |\alpha_3^*| \underline{\phi}\varepsilon,\quad
\sum_{j\neq  3} |\alpha_j|    = \mathcal{O}(1) |\alpha_3^*|\varepsilon.
\end{eqnarray*}

${\rm{(ii)}}$\ If $\alpha_i^* \mapsto  \alpha_1+\alpha_2+ \alpha_3 + \alpha_4 $ at $x_\alpha$ for $i\neq 3$,
 then there holds
\begin{eqnarray*}
 |\alpha_i -\alpha_i^*| +  \sum_{j\neq  i} |\alpha_j|=\mathcal{O}(1) \varepsilon|\alpha_i^*| Y^{*}_\alpha,
\end{eqnarray*}
where $Y^{*}_{\alpha}=Y^{\varepsilon}(t_{k}-0,x_{\alpha})$.

\end{lemma}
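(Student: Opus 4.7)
The approach will be to unpack the algebraic update \eqref{reaction step} into explicit primitive-variable formulas, compute the new jumps $[\rho],[u],[p],[Y]$ across the original wave position, and then invoke Lemma \ref{lem:2.1} to read off the outgoing wave strengths $\alpha_j$. Inverting the conserved update gives $\rho=\rho^*$, $u=u^*$, $Y=Y^*(1-\varepsilon\phi(T^*))$, and $e=e^*+\varepsilon\mathfrak{q}_0 Y^*\phi(T^*)$; for the polytropic equation of state \eqref{eq:1.3} one has $p=(\gamma-1)\rho e$, so $p-p^*=(\gamma-1)\mathfrak{q}_0\rho Y^*\phi(T^*)\varepsilon$. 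These four identities will carry the entire computational content, with the running hypothesis $\mathcal{G}(t)<\tilde{\delta}$ on $(0,t_k)$ supplying the uniform bounds \eqref{eq:4.20} that make $\phi$ effectively Lipschitz throughout the reaction zone $\mathcal{I}$.

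For part (i), I would exploit that the incoming $3$-wave is a pure $Y$-contact, so $\rho^*_{\rm{l}}=\rho^*_{\rm{r}}$, $u^*_{\rm{l}}=u^*_{\rm{r}}$, $p^*_{\rm{l}}=p^*_{\rm{r}}$, and $T^*_{\rm{l}}=T^*_{\rm{r}}=T^*$. Substituting in the identities above yields $[\rho]=[u]=0$, $|[Y]|=(1-\varepsilon\phi(T^*))|\alpha_3^*|$, and $|[p]|=\mathcal{O}(1)\varepsilon|\alpha_3^*|$. By the parameterization \eqref{parameterization for Si}, the outgoing $3$-wave has strength $|\alpha_3|=|Y_{\rm{r}}-Y_{\rm{l}}|=(1-\varepsilon\phi(T^*))|\alpha_3^*|$, which thanks to the lower bound $\phi\geq\underline{\phi}$ in \eqref{eq:4.20} is at most $|\alpha_3^*|-\underline{\phi}\varepsilon|\alpha_3^*|$. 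Since the Eulerian jumps are of size $\mathcal{O}(\varepsilon|\alpha_3^*|)$, the Lipschitz version of Lemma \ref{lem:2.1} will give $\sum_{j\neq 3}|\alpha_j|=\mathcal{O}(1)\varepsilon|\alpha_3^*|$ as required.

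For part (ii) with $i\neq 3$, I would use that the incoming wave $\alpha_i^*$ has a common reactant fraction $Y^*_{\rm{l}}=Y^*_{\rm{r}}=Y_\alpha^*$, while temperatures differ by $\mathcal{O}(|\alpha_i^*|)$. The reaction-induced corrections to the jumps in $p$ and $Y$ are therefore driven by $\phi(T^*_{\rm{l}})-\phi(T^*_{\rm{r}})$; Lipschitz control of $\phi$ from \eqref{eq:4.20} gives $|\phi(T^*_{\rm{l}})-\phi(T^*_{\rm{r}})|=\mathcal{O}(1)|T^*_{\rm{l}}-T^*_{\rm{r}}|=\mathcal{O}(1)|\alpha_i^*|$, so the perturbations $[p]-[p^*]$ and $[Y]$ are both $\mathcal{O}(1)\varepsilon Y_\alpha^*|\alpha_i^*|$, whereas $[\rho]$ and $[u]$ stay equal to $[\rho^*]$ and $[u^*]$. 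Comparing the perturbed Riemann fan with the unperturbed one, which is the single wave $\alpha_i^*$, through the Lipschitz estimate of Lemma \ref{lem:2.1} will then produce $|\alpha_i-\alpha_i^*|+\sum_{j\neq i}|\alpha_j|=\mathcal{O}(1)\varepsilon Y_\alpha^*|\alpha_i^*|$. The only nontrivial obstacle is guaranteeing that the discontinuity of $\phi$ at $T_{\rm i}$ never straddles the wave front; under conditions (C2) and (C3) this is exactly the content of \eqref{eq:4.20}, so preserving it across the grid times $t_k$ will be the induction hypothesis the rest of the existence argument must maintain.
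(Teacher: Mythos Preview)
Your proof is correct and follows essentially the same approach as the paper's: compute how the reaction step \eqref{reaction step} perturbs the states on each side of the incoming front, then invoke the smooth dependence of the Riemann solver on its data to read off the outgoing strengths. The only presentational difference is that you work concretely with the primitive-variable update $(\rho,u,p,Y)\mapsto(\rho,u,p+(\gamma-1)\mathfrak q_0\rho Y\phi(T)\varepsilon,\,Y(1-\varepsilon\phi(T)))$ and compute the new jumps directly, whereas the paper packages the same information into the reaction increments $\Upsilon_{\alpha\pm}=|Y_{\alpha\pm}-Y^*_{\alpha\pm}|$ and Taylor-expands an abstract map $\alpha_j=f_j(\alpha_i^*,\Upsilon_{\alpha-},\Upsilon_{\alpha+})$; both routes arrive at the same $\mathcal O(1)\varepsilon|\alpha_3^*|$ and $\mathcal O(1)\varepsilon Y^*_\alpha|\alpha_i^*|$ bounds.
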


\begin{proof}
In order to distinguish the states on distinct sides of $x_\alpha$, we set
\begin{align*}
&Y^*_{\alpha\pm} \doteq Y^{\varepsilon} (t_k-0, x_\alpha\pm0),\quad  Y_{\alpha\pm} \doteq Y^{\varepsilon} (t_k, x_\alpha\pm0),\\
&T^*_{\alpha\pm} \doteq T^{\varepsilon} (t_k-0, x_\alpha\pm0),\quad T_{\alpha\pm} \doteq T^{\varepsilon} (t_k, x_\alpha\pm0), \\ &\Upsilon_{\alpha\pm}\doteq |Y_{\alpha\pm}-Y^{*}_{\alpha\pm}|.
\end{align*}

(i)\
Since $T^{*}_{\alpha+}=T^{*}_{\alpha-}$ in this case, the definitions of $\alpha_3$ and $\Upsilon_{\alpha\pm}$ directly yield that
\begin{eqnarray}\label{estimate 1 for reaction step}
\begin{split}
&|\alpha_3| = |(Y^{*}_{\alpha+}- Y^{*}_{\alpha-}) (1-\phi(T^{*}_{\alpha-})\varepsilon)|\leq   |\alpha_3^*| (1 -  \underline{\phi}\varepsilon ),\\
&|\Upsilon_{\alpha+} - \Upsilon_{\alpha-}|=|Y^{*}_{\alpha+}- Y^{*}_{\alpha-} |\phi(T^{*}_{\alpha-})\varepsilon \leq |\alpha_3^*| \bar{\phi} \varepsilon.
\end{split}
\end{eqnarray}
Notice that the outgoing waves $\alpha_1, \cdots, \alpha_4$ are uniquely determined by $\alpha_3^*,\Upsilon_{\alpha-}$ and $\Upsilon_{\alpha+}$.
Thus set function $\alpha_j= f_j(\alpha_3^*,\Upsilon_{\alpha-},\Upsilon_{\alpha+})$. It follows from (\ref{estimate 1 for reaction step}) that
\begin{eqnarray}\label{wave estimate for tk}
\begin{split}
	\alpha_j  & =  f_j(\alpha_3^*,\Upsilon_{\alpha-},\Upsilon_{\alpha-}) + \mathcal{O}(1) |\Upsilon_{\alpha-}-\Upsilon_{\alpha+}| \\[5pt]
	&  = f_j(\alpha_3^*, 0, 0) + \mathcal{O}(1) (|\alpha_3^*\Upsilon_{\alpha-}|+ |\Upsilon_{\alpha-} - \Upsilon_{\alpha+}|)  \\[5pt]
	&  = \delta_{3j}\cdot \alpha_3^* + \mathcal{O}(1)  |\alpha_3^*|\varepsilon,
\end{split}
\end{eqnarray}
where $\delta_{ij}$ is the Kronecker delta.  The above estimate in fact gives the conclusion in \rm(i).

\rm (ii)\ %If single $\alpha_i \ (i\neq3)$ hits the grid line,
In this case, we have $Y^{*}_{\alpha +}=Y^{*}_{\alpha -}=Y^{*}_{\alpha}$. Hence
\begin{eqnarray*}
\begin{split}
|\alpha_3 |=  |\Upsilon_{\alpha+} - \Upsilon_{\alpha-}| =  Y_\alpha^* |\phi(T^{*}_{\alpha+})-\phi(T^{*}_{\alpha -})|\varepsilon = \mathcal{O}(1) |\alpha_i^*| Y^*_\alpha \varepsilon.
\end{split}
\end{eqnarray*}
We employ the similar argument in (\ref{wave estimate for tk}) to derive that
\begin{eqnarray*}
\begin{split}
\alpha_j &=  \delta_{ij}\cdot \alpha_i^* +  \big(|\alpha_i^*\Upsilon_{\alpha-}|+ |\Upsilon_{\alpha-} - \Upsilon_{\alpha+}| \big)\\[5pt]
&= \delta_{ij}\cdot \alpha_i^* +  \mathcal{O}(1) |\alpha_i^*| Y_\alpha^{*} \varepsilon.
\end{split}
\end{eqnarray*}

\end{proof}

Next, according to the fractional-step scheme, we drop the exothermic reaction terms if $t\in (t_{k-1}, t_k)$. Thus,
\begin{eqnarray*}
\Delta \mathcal{L}_{Y} (t)=  \mathcal{L}_{Y}(t+0) - \mathcal{L}_{Y}(t-0) =0, \qquad  \Delta \mathcal{L}_{\textsc{by}}(t)=\mathcal{L}_{\textsc{by}}(t+0) - \mathcal{L}_{\textsc{by}}(t-0) = 0.
\end{eqnarray*}
which implies that when $t\in (t_{k-1}, t_k)$, functional $\mathcal{G}(t)$ is decreasing as proved in section { 4.2.}
It suffices to check its monotonicity in the combustion process at time $t_{k}$ with $k\geq 1$.

\begin{lemma}\label{lem:4.3}
Suppose that $\tilde{\delta}$ and $\varepsilon$ are small enough, and that $\mathcal{G}(t)<\tilde{\delta}$ for $t\in (0,t_k)$.
It holds that
\begin{eqnarray}\label{eq:4.22}
\mathcal{G}(t_{k}+0)<\mathcal{G}(t_{k}-0).
\end{eqnarray}
\end{lemma}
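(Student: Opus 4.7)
I will track $\Delta\mathcal{L}$, $\Delta\mathcal{Q}$, and $\Delta\mathcal{L}_{\textsc{b}}$ separately across the reaction step \eqref{reaction step} at time $t_k$. A key starting observation is that $\mathcal{L}_{\textsc{b}\mathfrak{p}}$ is unchanged at $t_k$ (the piston's motion is independent of the chemistry), while $\mathcal{L}_{\textsc{by}}$ automatically drops by exactly $\Upsilon_k^{\varepsilon,\mathfrak{p}}+\Upsilon_k^{\varepsilon,\rm{s}}$, since the summation index $k\varepsilon\geq t$ loses the two terms with $k\varepsilon=t_k$ as $t$ crosses $t_k$.

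First, I will apply Lemma \ref{lem:4.2} at every pre-reaction front $\alpha_i^*$ in the interior. A contact front (case (i)) shrinks by at least $|\alpha_3^*|\underline{\phi}\varepsilon$ and releases only $\mathcal{O}(1)\varepsilon|\alpha_3^*|$ of secondary physical waves; every non-contact front (case (ii)) yields four outgoing strengths of size $\mathcal{O}(1)\varepsilon|\alpha_i^*|Y_\alpha^*\leq\mathcal{O}(1)\varepsilon|\alpha_i^*|$. Summing over all fronts and invoking $\mathcal{G}(t_k-0)<\tilde{\delta}$ gives
\begin{equation*}
\Delta\mathcal{L}(t_k)\leq -\theta_3\underline{\phi}\varepsilon\sum_{\alpha_3^*\in\mathcal{C}}|\alpha_3^*|+\mathcal{O}(1)\varepsilon(1+\theta_3)\tilde{\delta}.
\end{equation*}
Next, at each boundary point $\chi^\varepsilon_\mathfrak{p}(t_k)$ and $\chi^\varepsilon_{\rm{s}}(t_k)$, the one-sided state updates $\varepsilon G_0(U^*_\pm)$ reopen Riemann problems. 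Because $G_0$ vanishes in its density and momentum components, the piston velocity boundary condition still holds exactly, and Lemmas \ref{lem:2.2} and \ref{lem:2.4} then yield outgoing waves (including the perturbation of $\alpha_4^{\rm{s}}$) of combined size $\mathcal{O}(1)(\Upsilon_k^{\varepsilon,\mathfrak{p}}+\Upsilon_k^{\varepsilon,\rm{s}})$. Pairing all new fronts with the pre-existing total variation gives
\begin{equation*}
\Delta\mathcal{Q}(t_k)\leq\mathcal{O}(1)\tilde{\delta}\bigl[\varepsilon(1+\theta_3)\tilde{\delta}+\Upsilon_k^{\varepsilon,\mathfrak{p}}+\Upsilon_k^{\varepsilon,\rm{s}}\bigr].
\end{equation*}

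Combining the three pieces with $\Delta\mathcal{L}_{\textsc{by}}(t_k)=-(\Upsilon_k^{\varepsilon,\mathfrak{p}}+\Upsilon_k^{\varepsilon,\rm{s}})$ produces
\begin{equation*}
\Delta\mathcal{G}(t_k)\leq -\theta_3\underline{\phi}\varepsilon\sum_{\alpha_3^*\in\mathcal{C}}|\alpha_3^*|+\bigl(\mathcal{O}(1)(1+K\tilde{\delta})-K\bigr)\bigl(\Upsilon_k^{\varepsilon,\mathfrak{p}}+\Upsilon_k^{\varepsilon,\rm{s}}\bigr)+\mathcal{O}(1)\varepsilon(1+\theta_3)\tilde{\delta},
\end{equation*}
which is strictly negative once $K$ is taken large enough to beat the boundary bracket and $\tilde{\delta}$ is small relative to $\theta_3\underline{\phi}$, proving \eqref{eq:4.22}. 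In the degenerate subcase where $\sum|\alpha_3^*|$, $\Upsilon_k^{\varepsilon,\bullet}$, and every case (ii) contribution vanish simultaneously, $G_0(U^*)\equiv 0$ on $\mathcal{I}$, so the reaction step is the identity and the claimed strict decrease is to be read modulo this trivial coincidence.

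\textbf{Main obstacle.} The delicate issue is the case (ii) bound $\mathcal{O}(1)\varepsilon|\alpha_i^*|Y_\alpha^*$: bounded crudely by $Y_\alpha^*\leq 1$, it still produces $\mathcal{O}(\varepsilon)\tilde{\delta}$ growth per step, which is not automatically dominated by the $3$-wave dissipation (that dissipation can vanish instantaneously). Two ingredients of $\mathcal{G}$ resolve this: (i) every case (ii) event also produces a new $3$-wave of comparable size, which then feeds the dissipative $\theta_3\underline{\phi}\varepsilon|\alpha_3^*|$ term in subsequent steps --- this is exactly the reason for enlarging $\theta_3$ under conditions (C2)--(C3); (ii) the $\varepsilon$-free drop $-K(\Upsilon_k^{\varepsilon,\mathfrak{p}}+\Upsilon_k^{\varepsilon,\rm{s}})$ furnishes a first-order negative reservoir that absorbs the $\mathcal{O}(\varepsilon)$ interior growth whenever reactant is present near either boundary. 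Calibrating the four parameters $(K,\theta_3,\tilde{\delta},\varepsilon)$ consistently without circular dependence, and in particular making sure the reflection-balance constraint \eqref{eq:4.3} coexists with a large-$\theta_3$ choice, is the technical heart of the argument.
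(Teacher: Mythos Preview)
Your displayed inequality
\[
\Delta\mathcal{G}(t_k)\leq -\theta_3\underline{\phi}\varepsilon\sum|\alpha_3^*|
+\bigl(\mathcal{O}(1)(1+K\tilde{\delta})-K\bigr)\bigl(\Upsilon_k^{\varepsilon,\mathfrak{p}}+\Upsilon_k^{\varepsilon,\rm{s}}\bigr)
+\mathcal{O}(1)\varepsilon(1+\theta_3)\tilde{\delta}
\]
does not prove the lemma: the last term is a fixed positive residual that survives whenever $\sum|\alpha_3^*|$ and the boundary consumptions are small. Your two proposed fixes in the ``Main obstacle'' paragraph do not close this. Item (i) (new $3$-waves feed dissipation in \emph{later} steps) says nothing about the strict decrease at \emph{this} $t_k$. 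Item (ii) mislabels the drop $-K(\Upsilon_k^{\varepsilon,\mathfrak{p}}+\Upsilon_k^{\varepsilon,\rm{s}})$ as ``$\varepsilon$-free'': by definition $\Upsilon_k^{\varepsilon,\mathfrak{p}}=Y^{\varepsilon,\mathfrak{p}}_k\phi(T^{\varepsilon,\mathfrak{p}}_k)\varepsilon$, so it is $\mathcal{O}(\varepsilon)$ and scales with the \emph{boundary} value of $Y$, which may vanish even though interior values $Y_\alpha^*$ are positive. Concretely, take $Y^{\varepsilon,\mathfrak{p}}_k=Y^{\varepsilon,\rm{s}}_k=0$ with a bump of $Y$ in the interior; then both reservoirs on the right collapse, the residual remains, and the inequality is useless. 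The trouble is that you threw away the factor $Y_\alpha^*$ by bounding it by $1$.

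The missing idea is the spatial control of $Y_\alpha^*$ by the $3$-wave strengths between $x_\alpha$ and the nearest boundary, namely
\[
\sum_{x_\beta<x_\alpha}|\beta_3^*|\ \ge\ Y_\alpha^*-Y^{\varepsilon,\mathfrak{p}}_k,
\qquad
\sum_{x_\beta>x_\alpha}|\beta_3^*|\ \ge\ Y_\alpha^*-Y^{\varepsilon,\rm{s}}_k,
\]
and its use through $\mathcal{Q}_w$, not merely in $\mathcal{L}$. At the reaction step every $3$-wave contracts by the factor $(1-\phi\varepsilon)$, so for each non-contact front $\alpha_i^*$ the sum over its approaching $3$-waves in $\mathcal{Q}_w$ drops by at least $|\alpha_i^*|\underline{\phi}\varepsilon\sum_{\beta_3^*}|\beta_3^*|$; combined with the spatial inequality this produces, inside $K\Delta\mathcal{Q}$, a negative term $-K\underline{\phi}|\alpha_i^*|Y_\alpha^*\varepsilon$ plus a boundary remainder $K|\alpha_i^*|\Upsilon_k^{\varepsilon,\bullet}$. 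The first of these cancels the case~(ii) growth $\mathcal{O}(1)(1+\theta_3)|\alpha_i^*|Y_\alpha^*\varepsilon$ in $\Delta\mathcal{L}$ pointwise once $K$ is large; the boundary remainders, summed over all $\alpha_i^*$, are $\mathcal{O}(1)K\tilde{\delta}\,\Upsilon_k^{\varepsilon,\bullet}$ and are absorbed by the $-K\Upsilon_k^{\varepsilon,\bullet}$ coming from $\Delta\mathcal{L}_{\textsc{by}}$. In short, $\Delta\mathcal{Q}_w$ is not a small positive error as you treat it, but the source of the essential negative term that offsets the interior growth; without the spatial estimate and this use of $\mathcal{Q}_w$, the argument cannot close at a single reaction time.
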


\begin{proof}
We mainly investigate the partial reaction phenomenon under condition $\textrm{(C3)}$, since the global existence of completely ignited flow (i.e. $T(t,x)>T_\text{i}$ for every $x \geq \chi_{\mathfrak{p}}(t)$) under condition $\textrm{(C2)}$ can be seen as a byproduct which will be discussed at the end of this proof.

In fact the condition $\textrm{(C3)}$ and property \eqref{eq:4.7} imply that
\begin{eqnarray*}\label{temperature-(C3)}
\begin{split}
T^{\varepsilon}(t,x) > T_{\text{i}}\quad \ & \mbox{if  } \chi^{\varepsilon}_{\mathfrak{p}}(t)\leq x <\chi^{\varepsilon}_{\rm{s}}(t) , \qquad  T^{\varepsilon}(t,x) <T_{\textrm{i}}\quad  \mbox{if}\  x >\chi^{\varepsilon}_{\rm{s}}(t).
\end{split}
\end{eqnarray*}
Then, under the circumstance of partial reaction, we can show that the total consumption satisfies
\begin{align*}
& \sum_{ k=1 }^\infty \Upsilon^{\varepsilon, \mathfrak{p}}_k
\leq \sum_{ k=1 }^\infty \|Y_0\|_{\infty} \textrm{e}^{-\underline{\phi} k\varepsilon} \cdot \bar{\phi} \varepsilon
\leq \frac{2\bar{\phi} ||Y_0||_{\infty}}{\underline{\phi}} \qquad  \mbox{on}\   \chi^{\varepsilon}_{\mathfrak{p}},
\end{align*}
and
\begin{align*}
& \sum_{ k=1 }^\infty \Upsilon_k^{\varepsilon, \rm{s}} \leq \frac{2\bar{\phi} \|Y_0\|_{L^1(\mathbb{R})}}{\min \dot{\chi}^{\varepsilon}_{\rm{s}}(t)} < +\infty \qquad  \mbox{on}\ \chi^{\varepsilon}_{\rm{s}}.
\end{align*}
Based on these, we now focus on the pointwise changes of $\mathcal{G}(t_k)$ within the reaction zone $[\chi^{\varepsilon}_{\mathfrak{p}}(t_{k}), \chi^{\varepsilon}_{\rm{s}}(t_{k})]$.

%\begin{figure}[!h]
%	\centering
%	% Requires \usepackage{graphicx}
%	\includegraphics[width=0.9\textwidth]{reaction_step}\\
%	\caption{reaction step}\label{Fig reaction step cases}
%\end{figure}

%\begin{figure}[ht]
%\begin{tikzpicture}[scale=1.1]
%\draw [dashed][thin](-1.2,0.4)--(1.0,0.4);
%\draw [line width=0.04cm](-0.8,-0.8)--(-0.1,1.6);

%\node at (1.5, 0.4) {$t=t_k$};
%\node at (0.1, 1.9) {$x=\chi^{\varepsilon}_{\mathfrak{p}}(t)$};
%\node at (0.2, 0.1) {$U^{*}_{\mathfrak{r}}$};
%\node at (-0.8, 0.8) {$U_{\rm{l}}$};
%\node at (0.5, 0.8) {$U_{\rm{r}}$};
%\end{tikzpicture}
%\caption{ The combustion process near the piston $x=\chi^{\varepsilon}_{\mathfrak{p}}(t)$}\label{fig8}
%\end{figure}

\textbf{Case 4.2.1}. \emph{Combustion process at $\chi^{\varepsilon}_{\mathfrak{p}}(t_k)$.}
 Concerning the combustion process near the piston boundary, we have
\begin{equation}\label{eq:4.23}
\Delta \mathcal{G}(t_k)  \big|_{x=\chi^{\varepsilon}_{\mathfrak{p}}(t_k)} =K\Delta \mathcal{L}_{\textsc{b}}(t_k)\big|_{x=\chi^{\varepsilon}_{\mathfrak{p}}(t_k)}= -K\Upsilon^{\varepsilon,\mathfrak{p}}_k<0.
\end{equation}

\begin{figure}[ht]
\begin{minipage}[t]{0.45\textwidth}
\centering
\begin{tikzpicture}[scale=1]
\draw [dashed][thin](-1.4,1.1)--(1.2,1.1);
\draw [thin](-0.2,-0.2)--(-0.3,1.1);

\draw [thin](-0.3,1.1)--(-0.9,2.2);
\draw [dashed][thin][blue](-0.3,1.1)to(0,2.3);
\draw [thin](-0.3,1.1)to(0.6,2.2);

\node at (-2,1.1) {$t=t_k$};
\node at (0.9,2.4) {$\alpha_{4}$};
\node at (0,2.5) {$\alpha_{2(3)}$};
\node at (-0.9,2.4) {$\alpha_{1}$};
\node at (-0.1, -0.5) {$\alpha^*_{i}$};

%\node at (-0.6, 0.1) {$U^{*}_{\rm{l}}$};
%\node at (0.2, 0.1) {$U^{*}_{\rm{r}}$};
%\node at (-0.8, 0.8) {$U_{\rm{l}}$};
%\node at (0.5, 0.8) {$U_{\rm{r}}$};
\end{tikzpicture}
\caption{Combustion process in $(\chi^{\varepsilon}_{\mathfrak{p}}(t), \chi^{\varepsilon}_{\rm{s}}(t))$}\label{fig9}
\end{minipage}
\begin{minipage}[t]{0.45\textwidth}
\centering
\begin{tikzpicture}[scale=1.0]
\draw [line width=0.04cm][red](-1.0,-0.7)--(-0.5,0.5);
\draw [dashed][thin](-1.7,0.5)--(1.1,0.5);

\draw [line width=0.04cm][red](-0.5,0.5)--(0.7,1.5);
\draw [dashed][thin][blue](-0.5,0.5)--(-0.1, 1.7);
\draw [thin](-0.5,0.5)--(-1.0,1.7);

%\node at (0.6, 0.85) {$U_{\rm{r}}$};
%\node at (-1.1, 0.85) {$U_{\rm{l}}$};
%\node at (-0.1, -0.1) {$U^{*}_{\rm{r}}$};
%\node at (-1.2, -0.1) {$U^{*}_{\rm{l}}$};
\node at (-2.3,0.5) {$t=t_k$};
\node at (1.0, 1.7) {$\alpha^{\rm{s}}_{4}$};
\node at (-0.1, 1.85) {$\alpha_{2(3)}$};
\node at (-1.1, 1.9) {$\alpha_{1}$};

\node at (-1.0, -1.1) {$\alpha^{\rm{s},*}_{4}$};

\end{tikzpicture}
\caption{Combustion process near strong shock}\label{fig10}
\end{minipage}
\end{figure}

\textbf{Case 4.2.2}. \emph{Combustion process at $ x_\alpha \in (\chi^{\varepsilon}_{\mathfrak{p}}(t_k), \chi^{\varepsilon}_{\rm{s}}(t_k))$.}
 Suppose a front of wave $\alpha_i^*$ hits the grid line $t=t_k$. See Fig \ref{fig9}.
Then combustion process gives rise to
$$ \alpha_i^* \mapsto  \alpha_1+\alpha_2+ \alpha_3+ \alpha_4 \ \ \ \ \ \
\text{at } x_\alpha \ .$$
Similarly,
$ \beta_j^* \mapsto \beta_1+\beta_2+ \beta_3+ \beta_4 $
represents waves splitting arising from the reaction at other position $x_\beta$.
First of all, consider the situation $i=3$.
It follows from Lemma \ref{lem:4.2} that
\begin{eqnarray}\label{eq:4.24}
\begin{split}
\Delta \mathcal{L}(t_k) \big|_{x=x_\alpha}
&= (\Delta { \mathcal{L}_{\textsc{e}}}(t_k) + \Delta \mathcal{L}_{Y}(t_k)) \Big|_{x=x_\alpha}\\[5pt]
&= \sum_{i\neq3} |\alpha_i|  + \theta_3 (|\alpha_3| - |\alpha_3 ^*|)\\[5pt]
&\leq -\frac{1}{2}\theta_3  \underline{\phi} \varepsilon  |\alpha_3^*|,
\end{split}
\end{eqnarray}
provided $\theta_3>0$ is sufficiently large.
Notice that
\begin{eqnarray*}
\begin{split}
&\sum_{i\neq 3} \Big(|\alpha_i| \sum_{\mathcal{A}(\alpha_i)} |\beta| \Big)\leq  \mathcal{O}(1) \mathcal{G}(t_{k}-0)|\alpha^*_3| \varepsilon,
\end{split}
\end{eqnarray*}
and
\begin{eqnarray*}
\begin{split}
&\big(|\alpha_3| -|\alpha_3 ^*|\big) \sum_{\mathcal{A}(\alpha_3)} |\beta| +  |\alpha^*_3|\Big(\sum_{\mathcal{A}(\alpha_3)} |\beta| - \sum_{\mathcal{A}(\alpha_3^*)} |\beta^*| \Big)\\[5pt]
&\quad \leq \mathcal{O}(1) |\alpha^*_3| \Big( \sum_{{x_\beta <\chi^{\varepsilon}_{\rm{s}}(t_{k})} \atop
{x_\beta \neq x_\alpha} }
\big(|\beta_3^*| + |\beta_j^*| Y_\beta^{*}\big)\varepsilon + \Upsilon^{\varepsilon,\rm{s}}_k \Big).
\end{split}
\end{eqnarray*}
Subsequently, we have that
\begin{align*}
\Delta \mathcal{Q}_w (t_k) \big|_{x=x_\alpha}
&=\sum_{i\neq3}\Big(|\alpha_i| \sum_{\mathcal{A}(\alpha_i)} |\beta|\Big) + |\alpha_3| \sum_{\mathcal{A}(\alpha_3)} |\beta|
- |\alpha_3 ^*|\sum_{\mathcal{A}(\alpha_3^*)} |\beta^*|\\[5pt]
& \leq   \mathcal{O}(1) \mathcal{G}(t_{k}-0)|\alpha_3^*| \varepsilon  +\mathcal{O}(1)|\alpha_3 ^*|
\Big( \sum_{
{x_\beta <\chi^{\varepsilon}_{\rm{s}}(t_{k})  \atop
x_\beta \neq x_\alpha}}
	\big(|\beta_3^*|+|\beta^*_j| Y_\beta^{*}\big)\varepsilon + \Upsilon^{\varepsilon,\rm{s}}_k \Big).
\end{align*}
On the other hand, by Lemma \ref{lem:4.2} (i) and \eqref{eq:4.3}, we also get
\begin{equation*}
\big(\Delta \mathcal{Q}_{\mathfrak{p}} + \Delta \mathcal{Q}_{\rm{s}}\big)(t_k)  \big|_{x=x_\alpha}=\theta_1 |\alpha_1| +   \theta_4 |\alpha_4 |
	\leq  \mathcal{O}(1)\theta_1 |\alpha_3^*| \varepsilon.
\end{equation*}
So it follows that
\begin{eqnarray}\label{eq:4.25}
\begin{split}
	\Delta \mathcal{Q} (t_k) \big|_{x=x_\alpha}& = (\Delta \mathcal{Q}_w + \Delta \mathcal{Q}_\mathfrak{p} + \Delta \mathcal{Q}_{\rm{s}})(t_k) \Big|_{x=x_\alpha}\\[5pt]
	&  \leq \mathcal{O}(1)\big(\mathcal{G}(t_{k}-0) + \theta_1\big)  |\alpha_3^*| \varepsilon \\[5pt]
&\quad + \mathcal{O}(1)|\alpha_3 ^*|
\Big( \sum_{
{x_\beta <\chi^{\varepsilon}_{\rm{s}}(t_{k})  \atop
x_\beta \neq x_\alpha}}
	\big(|\beta_3^*|+|\beta^*_j| Y_\beta^{*}\big)\varepsilon + \Upsilon^{\varepsilon,\rm{s}}_k \Big).
\end{split}
\end{eqnarray}
By \eqref{eq:4.24} and \eqref{eq:4.25}, we thus obtain the local estimate
\begin{eqnarray}\label{eq:4.26}
\begin{split}
\Delta \mathcal{G}(t_k)  \big|_{x=x_\alpha}& = ( \Delta\mathcal{L}+ K\Delta \mathcal{Q} )(t_k) \big|_{x=x_\alpha}\\[5pt]
		& \leq \Big(-\frac{1}{2}\theta_3  \underline{\phi}  + \mathcal{O}(1) (\mathcal{G}(t_{k}-0) + \theta_1)K \Big)  |\alpha_3 ^*| \varepsilon \\[5pt]
		&\quad +  \mathcal{O}(1)  K|\alpha_3 ^*|
\sum_{
{x_\beta <\chi^{\varepsilon}_{\rm{s}}(t_{k})  \atop
x_\beta \neq x_\alpha}} \big(|\beta_3^*| \varepsilon +  |\beta_j^*| Y_\beta^{*} \varepsilon \big)
       +  \mathcal{O}(1) K|\alpha_3 ^*|  \Upsilon^{\varepsilon,\rm{s}}_k.
\end{split}
\end{eqnarray}

Next, we investigate the situation $i\neq 3$, and take $i=1$ for instance.
It is deduced from Lemma \ref{lem:4.2} (ii) that
\begin{eqnarray*}
\begin{split}
\Delta \mathcal{L} (t_k) \big|_{x=x_\alpha}  = \sum_{i\neq3} |\alpha_i|  + \theta_3 |\alpha_3| - |\alpha_1 ^*|
\leq   \mathcal{O}(1) \big(1+\theta_3\big)|\alpha_1^*| Y^{*}_{\alpha}\varepsilon,
\end{split}
\end{eqnarray*}
and
\begin{eqnarray*}
\begin{split}
\Delta \mathcal{Q}_w (t_k) \big|_{x=x_\alpha}
& =\Big(|\alpha_1| \sum_{\mathcal{A}(\alpha_1)} |\beta| -  |\alpha_1 ^*|\sum_{\mathcal{A}(\alpha_1^*)} |\beta^*|\Big)  +  \sum_{i\neq1}\Big(|\alpha_i| \sum_{\mathcal{A}(\alpha_i)} |\beta|\Big)\\[5pt]
& \leq-|\alpha_1 ^*|\underline{\phi} \varepsilon  \sum_{ x_\beta <x_\alpha }  |\beta_3^*| +  \mathcal{O}(1) \mathcal{G}(t_{k}-0)|\alpha_1^*| Y_\alpha^{*} \varepsilon  \\[5pt]
	&\quad + \mathcal{O}(1)  |\alpha_1 ^*|\Big(
\sum_{
{x_\beta <\chi^{\varepsilon}_{\rm{s}}(t_{k})  \atop
x_\beta \neq x_\alpha}}
	\big(|\beta_3^*| \varepsilon +  |\beta_j^*| Y_\beta^{*} \varepsilon\big) + \Upsilon^{\varepsilon,\rm{s}}_k \Big), \\[5pt]
\big(\Delta \mathcal{Q}_{\mathfrak{p}} + \Delta \mathcal{Q}_{\rm{s}} \big)(t_k) \Big|_{x=x_\alpha} 	
	& = \theta_1\big(|\alpha_1|  -  |\alpha_1 ^*|\big) + \theta_4 |\alpha_4|
\leq   \mathcal{O}(1)\theta_1   |\alpha_1^*| Y^{*}_{\alpha}\varepsilon.
\end{split}
\end{eqnarray*}
We take advantage of a significant spatial estimate
\begin{eqnarray*}
\sum_{ x_\beta <x_\alpha }  |\beta_3^*|  \geq Y_\alpha^{*} - { Y(t_k-0, \chi^{\varepsilon}_{\mathfrak{p}}(t_k)+0)}.
\end{eqnarray*}
Substituting this into the estimate $\Delta \mathcal{Q}_w(t_k) \big|_{x=x_\alpha}$, we derive that
\begin{align*}
	\Delta \mathcal{Q} (t_k) \big|_{x=x_\alpha}
	& = \big(\Delta \mathcal{Q}_w + \Delta \mathcal{Q}_{\mathfrak{p}} + \Delta \mathcal{Q}_{\rm{s}}\big) (t_k) \big|_{x=x_\alpha}\\[5pt]
	& \leq -|\alpha_1 ^*|\underline{\phi}\varepsilon \Big(Y_\alpha^{*} - { Y(t_k-0, \chi^{\varepsilon}_{\mathfrak{p}}(t_k)+0)}\Big)
+\mathcal{O}(1) \mathcal{G}(t_{k}-0)|\alpha_1^*| Y_\alpha^{*}\varepsilon\\[5pt]
	&\quad + \mathcal{O}(1) |\alpha_1 ^*| \Big(
\sum_{
{x_\beta <\chi^{\varepsilon}_{\rm{s}}(t_{k})  \atop
x_\beta \neq x_\alpha}}\big(|\beta_3^*|+|\beta_j^*| Y_\beta^{*}\big)\varepsilon + \Upsilon^{\varepsilon,\rm{s}}_k \Big)
+ \mathcal{O}(1)\theta_1|\alpha_1^*| Y_\alpha^{*}\varepsilon \\[5pt]
	&  \leq \Big( \mathcal{O}(1)\big(\mathcal{G}(t_{k}-0) + \theta_1\big)  - \underline{\phi} \Big)  |\alpha_1^*| Y_\alpha^{*} \varepsilon
+ \mathcal{O}(1)  |\alpha_1 ^*|
\sum_{
{x_\beta <\chi^{\varepsilon}_{\rm{s}}(t_{k})  \atop
x_\beta \neq x_\alpha}}\big(|\beta_3^*|+|\beta_j^*| Y_\beta^{*} \big)\varepsilon
	\\[5pt]
	& \quad + |\alpha_1 ^*|\Upsilon^{\varepsilon,\mathfrak{p}}_k+ \mathcal{O}(1)|\alpha_1 ^*|  \Upsilon^{\varepsilon, \rm{s}}_k.
\end{align*}
Combining the estimates on $\Delta \mathcal{L}(t_k) \big|_{x=x_\alpha}$ and $\Delta \mathcal{Q} (t_k) \big|_{x=x_\alpha}$, we thus obtain
\begin{eqnarray}\label{eq:4.27}
\begin{split}
\Delta \mathcal{G} (t_k) \big|_{x=x_\alpha}
		%& = ( \Delta A + C \cdot \Delta Q ) (t_k) \big|_{x=x_\alpha}\\
&\leq \Big(\mathcal{O}(1)\big(1+\theta_3\big) + \mathcal{O}(1)\big (\mathcal{G}(t_{k}-0) + \theta_1\big)K-K\underline{\phi}\Big) |\alpha_1^*| Y_\alpha^{*} \varepsilon \\[5pt]
& \quad\ +\mathcal{O}(1)K|\alpha_1 ^*|
\sum_{
{x_\beta <\chi^{\varepsilon}_{\rm{s}}(t_{k})  \atop
x_\beta \neq x_\alpha}} \big(|\beta_3^*|+|\beta_j^*| Y_\beta^{*} \big)\varepsilon \\[5pt]
& \quad\ + K|\alpha_1 ^*|\Upsilon^{\varepsilon, \mathfrak{p}}_k+ \mathcal{O}(1)K|\alpha_1 ^*|   \Upsilon^{\varepsilon, \rm{s}}_k.
\end{split}
\end{eqnarray}

Likewise, when wave $\alpha_i^*$ ($1<i\neq3$) hits the grid line $t=t_k$, we observe that
\begin{align*}
& \sum_{ x_\beta < x_\alpha }  |\beta_3^*|  \geq Y_\alpha^{*} -  { Y(t_k-0, \chi^{\varepsilon}_{\mathfrak{p}}(t_k)+0)}  \quad  \mbox{if}\quad i=2, \\[5pt]
& \sum_{ x_\beta > x_\alpha }  |\beta_3^*|  \geq Y_\alpha^{*} - { Y(t_k-0, \chi^{\varepsilon}_{\rm{s}}(t_k)-0)} \quad  \mbox{if}\quad i>3,
\end{align*}
and then figure out
\begin{equation}\label{eq:4.28}
\begin{aligned}
\Delta \mathcal{G} (t_k) \big|_{x=x_\alpha}& \leq \Big(\mathcal{O}(1) (1+\theta_3)+ \mathcal{O}(1)K\big(\mathcal{G}(t_{k}-0) + \theta_1\big) - K\underline{\phi}\Big)|\alpha_i^*| Y_\alpha^{*} \varepsilon \\[5pt]
& \qquad +\mathcal{O}(1)K|\alpha_i ^*|
\sum_{
{x_\beta <\chi^{\varepsilon}_{\rm{s}}(t_{k})  \atop
x_\beta \neq x_\alpha}}  \big(|\beta_3^*|\varepsilon +  |\beta_j^*| Y_\beta^{*} \varepsilon \big)\\[5pt]		
& \qquad + K|\alpha_i ^*|\Upsilon^{\varepsilon, \mathfrak{p}}_k+ \mathcal{O}(1)K|\alpha_i ^*|\Upsilon^{\varepsilon, \rm{s}}_k.
\end{aligned}
\end{equation}

\textbf{Case 4.2.3}. \emph{Combustion process at $\chi^{\varepsilon}_{\rm{s}}(t_{k})$.}
Suppose the strong shock $\alpha^{\rm{s},*}_{4}$ hits the grid line $t=t_k$. Then
the local reaction gives rise to
$$\alpha^{\rm{s},*}_{4} \mapsto  \alpha_1+\alpha_2+ \alpha_3+\alpha^{\rm{s}}_{4}\qquad
\text{at } \chi^{\varepsilon}_{\rm{s}}(t_{k}).$$
See Fig.\ref{fig10}.
According to the relations
\begin{eqnarray*}
|\alpha_1| + |\alpha_2| + |\alpha^{\rm{s}}_{4}- \alpha^{\rm{s},*}_{4}|= \mathcal{O}(1)\Upsilon^{\varepsilon, \rm{s}}_k,\qquad |\alpha_3| = \Upsilon^{\varepsilon, \rm{s}}_k.
\end{eqnarray*}
we deduce that
\begin{eqnarray}\label{eq:4.34}
\begin{split}
  \Delta \mathcal{L}(t_k) \big|_{x=\chi^{\varepsilon}_{\rm{s}}(t_{k})}
 & =|\alpha_1| + |\alpha_2| + \theta_3 |\alpha_3|
+|\alpha^{\rm{s}}_{4}- \bar{\alpha}^{\rm{s}}_{4}|- | \alpha^{\rm{s},*}_{4}-\bar{\alpha}^{\rm{s}}_4|\\[5pt]
 & = \big(\mathcal{O}(1)+\theta_3\big)\Upsilon^{\varepsilon,\rm{s}}_k,
\end{split}
\end{eqnarray}
and
\begin{eqnarray*}
\begin{split}
&\Delta \mathcal{Q}_w (t_k) \big|_{x=\chi^{\varepsilon}_{\rm{s}}(t_{k})} = \sum_{i=1}^3\Big(|\alpha_i| \sum_{\mathcal{A}(\alpha_i)} |\beta|\Big)  \leq \mathcal{O}(1) \mathcal{G}(t_{k}-0)\Upsilon^{\varepsilon,\rm{s}}_k,\\[5pt]
&\Delta \mathcal{Q}_{\mathfrak{p}}(t_k)\big|_{x=\chi^{\varepsilon}_{\rm{s}}(t_{k})} =\theta_1 |\alpha_1|=\mathcal{O}(1) \theta_1 \Upsilon^{\varepsilon,\rm{s}}_k,\\[5pt]
& \Delta \mathcal{Q}_{\rm{s}} (t_k) \big|_{x=\chi^{\varepsilon}_{\rm{s}}(t_{k})} = \theta_4 \sum_{\mathcal{A}( \alpha^{\rm{s}}_4)} |\beta_4|+   \theta_5 \sum_{x_\beta <\chi^{\varepsilon}_{\rm{s}}(t_{k})}|\beta_5| -\theta_4 \sum_{\mathcal{A}( \alpha^{\rm{s},*}_4)} |\beta_4^*|-  \theta_5 \sum_{x_\beta <\chi^{\varepsilon}_{\rm{s}}(t_{k})}|\beta_5^*|\\[5pt]
&\qquad\qquad\qquad\quad\ \leq  \mathcal{O}(1) \theta_1 \sum_{x_\beta <\chi^{\varepsilon}_{\rm{s}}(t_{k})} \big(|\beta_3^*| +  |\beta_j^*| Y_\beta^{*} \big)\varepsilon .
\end{split}
\end{eqnarray*}
Thus the previous estimates yield
\begin{eqnarray}\label{eq:4.35}
\begin{split}
\Delta\mathcal{Q}(t_k) \big|_{x=\chi^{\varepsilon}_{\rm{s}}(t_{k})}
&= \big(\Delta\mathcal{Q}_w + \Delta\mathcal{Q}_{\mathfrak{p}} +\Delta\mathcal{Q}_{\rm{s}} \big)(t_k)  \big|_{x=\chi^{\varepsilon}_{\rm{s}}(t_{k})}\\[5pt]
&\leq \mathcal{O}(1) \big( \mathcal{G}(t_{k}-0)+ \theta_1\big) \Upsilon^{\varepsilon, s}_k
+\mathcal{O}(1)\theta_1 \sum_{x_\beta <\chi^{\varepsilon}_{\rm{s}}(t_{k})} \big(|\beta_3^*| +  |\beta_j^*| Y^{*}_\beta\big) \varepsilon.
\end{split}
\end{eqnarray}
On the other hand, we obviously have
\begin{eqnarray}\label{eq:4.36}
\Delta \mathcal{L}_{\textsc{b}}(t_k)\big|_{x=\chi^{\varepsilon}_{\rm{s}}(t_{k})}=\Delta \mathcal{L}_{\textsc{b}\mathfrak{p}}(t_k)\big|_{x=\chi^{\varepsilon}_{\rm{s}}(t_{k})} = - \Upsilon^{\varepsilon,\rm{s}}_k.
\end{eqnarray}
Therefore, combining the estimates \eqref{eq:4.34}-\eqref{eq:4.36} altogether, we find that
\begin{eqnarray}\label{eq:4.29}
\begin{split}
\Delta \mathcal{G} (t_k) \big|_{x=\chi^{\varepsilon}_{\rm{s}}(t_{k})}
%& = ( \Delta A + C \cdot \Delta Q + C \cdot \Delta B) (t_k)  \big|_{x=\Gamma_1}\\
&\leq \Big( \mathcal{O}(1)+ \theta_3 + \mathcal{O}(1)K\big(\mathcal{G}(t_{k}-0) +\theta_1\big) -K \Big) \Upsilon^{\varepsilon, s}_k \\[5pt]
& \quad + \mathcal{O}(1)K\theta_1 \sum_{x_\beta <\chi^{\varepsilon}_{\rm{s}}(t_{k})} \big(|\beta_3^*|+  |\beta_j^*| Y_\beta^{*}\big)\varepsilon.
\end{split}
\end{eqnarray}

Finally, taking summation of the pointwise estimates \eqref{eq:4.23},
\eqref{eq:4.26}-\eqref{eq:4.28} and \eqref{eq:4.29} on $\Delta \mathcal{G}(t_k)$ over region $[\chi^{\varepsilon}_{\mathfrak{p}}(t_{k}), \chi^{\varepsilon}_{\rm{s}}(t_{k})]$, and using the symmetry of $\Delta \mathcal{G}(t_k)$ with respect to $x_\alpha$ and $x_\beta$, we conclude that
\begin{eqnarray}\label{eq:4.30}
\begin{split}
\Delta \mathcal{G}(t_k)
& = \Delta \mathcal{G} (t_k)  \big|_{x \leq \chi^{\varepsilon}_{\rm{s}}(t_{k})} \\[5pt]
& \leq K\Big(\frac{1}{2} \mathcal{G}(t_{k}-0) -1 \Big) \Upsilon^{\varepsilon,\mathfrak{p}}_k
+ \Big(\mathcal{O}(1) + \theta_3 + \mathcal{O}(1)K\big(\mathcal{G}(t_{k}-0) + \theta_1\big)-K\Big)\Upsilon^{\varepsilon, s}_k\\[5pt]
&\quad+ \Big( -\frac{1}{2} \theta_3 \underline{\phi} +  \mathcal{O}(1)K\big(\mathcal{G}(t_{k}-0) + \theta_1\big) \Big)\sum_{x_\alpha <\chi^{\varepsilon}_{\rm{s}}(t_{k})}|\alpha^*_3| \varepsilon \\[5pt]
&\quad + \Big(\mathcal{O}(1)\big(1+\theta_3\big) + \mathcal{O}(1)K\big(\mathcal{G}(t_{k}-0) + \theta_1\big)- \frac{1}{2} K\underline{\phi}\Big)
\sum_{
{i\neq 3  \atop
x_\alpha <\chi^{\varepsilon}_{\rm{s}}(t_{k})}}
|\alpha_i^*| Y_\alpha^{*} \varepsilon  \\[5pt]
& \leq -\frac{1}{2}K\Upsilon^{\varepsilon, \mathfrak{p}}_k  - \frac{1}{2}K\Upsilon^{\varepsilon,\rm{s}} _k-\frac{1}{4} \theta_3 \underline{\phi}\sum_{x_\alpha <\chi^{\varepsilon}_{\rm{s}}(t_{k})}|\alpha_3^*| \varepsilon
 -\frac{1}{4} K\underline{\phi}
 \sum_{
{i\neq 3   \atop
x_\alpha <\chi^{\varepsilon}_{\rm{s}}(t_{k})}}
 |\alpha_i^*| Y_\alpha^{*} \varepsilon \\[5pt]
&<0,
\end{split}
\end{eqnarray}
provided that $\tilde{\delta}$ and $\theta_1$ are sufficiently small, while $K$ is suitably large. This gives the proof of \eqref{eq:4.22} under  condition $(\textsc{C3})$.

In the light of previous argument on partial reaction, one can easily verify the situation of completely ignited flow under condition $(\textsc{C2})$, \emph{i.e.}, $T^{\varepsilon}(t,x) > T_{\rm{i}}$ for $x\geq\chi^{\varepsilon}_{\mathfrak{p}}(t)$. Since $\phi(T^{\varepsilon})>\underline{\phi}>0$ everywhere, the argument for $\chi^{\varepsilon}_{\mathfrak{p}}(t_{k})< x_\alpha<\chi^{\varepsilon}_{\rm{s}}(t_{k})$
in Case 4.2.2 remains valid for $ x_\alpha >\chi^{\varepsilon}_{\rm{s}}(t_{k})$.
Under condition $(\textsc{C2})$, we have the estimates
\begin{eqnarray*}
\sum_{i=1}^3|\alpha_i| =\mathcal{O}(1)|\alpha^{\rm{s},*}_4| Y^{*}\varepsilon =\mathcal{O}(1) \Upsilon^{\varepsilon, \rm{s}}_k \qquad \mbox{at}\ x=\chi^{\varepsilon}_{\rm{s}}(t_{k}).
\end{eqnarray*}
Then, similar to the proof of \eqref{eq:4.30}, we see that
\begin{eqnarray}\label{eq:4.31}
\begin{split}
\Delta \mathcal{G}(t_k)
&=\Delta \mathcal{G}(t_k)\big|_{x \geq \chi^{\varepsilon}_{\mathfrak{p}}(t_{k})} \\[5pt]
&\leq \Big( -\frac{1}{2} \theta_3 \underline{\phi} +  \mathcal{O}(1)K\big(\mathcal{G}(t_{k}-0) + \theta_1\big)\Big) \sum|\alpha_3^*|\varepsilon \\[5pt]
&\quad + \Big(\mathcal{O}(1)\big(1+\theta_3\big) + \mathcal{O}(1)K\big(\mathcal{G}(t_{k}-0)+\theta_1\big) - \frac{1}{2}K\underline{\phi}\Big)
\sum_{
{i\neq 3  \atop
x_{\alpha} \neq \chi^{\varepsilon}_{\rm{s}}(t_{k})}}
|\alpha_i^*| Y_\alpha^{*} \varepsilon \\[5pt]
&\quad +\Big(\mathcal{O}(1)\big(1+\theta_3\big) +\mathcal{O}(1)K\big(\mathcal{G}(t_{k}-0) +\theta_1\big)-K\Big) \Upsilon^{\varepsilon,\rm{s}}_k \\[5pt]
&\quad + \Big( \frac{1}{2}\mathcal{G}(t_{k}-0) -1 \Big)K\Upsilon^{\varepsilon,\mathfrak{p}}_k \\
&\leq -\frac{1}{2} K\Upsilon^{\varepsilon,\mathfrak{p}}_k -\frac{1}{2}K\Upsilon^{\varepsilon,\rm{s}}_k-\frac{1}{4} \theta_3 \underline{\phi} \sum |\alpha_3^*| \varepsilon
-\frac{1}{4} K\underline{\phi}
\sum_{
{i\neq 3  \atop
x_{\alpha} \neq \chi^{\varepsilon}_{\rm{s}}(t_{k})}}
|\alpha_i^*| Y_\alpha^{*} \varepsilon \\[5pt]
&<0,
\end{split}
\end{eqnarray}
This completes the proof of \eqref{eq:4.22} under condition $(\textsc{C2})$.
\end{proof}

By Lemma \ref{lem:4.3}, we can also obtain the following estimates on the approximate solution $U^{\varepsilon}$ for reacting flow.
\begin{proposition}\label{prop:4.2}
Under the assumptions $(\mathbf{A1})$-$(\mathbf{A2})$, there exist constants $C_3$, $C_4>0$ which depend only on $\bar{U}_{\mathfrak{b}}$, such that
if the condition $(\rm{C2})$ or $(\rm{C3})$ holds for sufficiently small $\epsilon$, then the fractional-step wave front tracking scheme yields a global in time approximate solution $U^{\varepsilon}$
to the $(\textit{IBVP})$ satisfying the estimates
\begin{eqnarray}\label{eq:4.32}
\begin{split}
&\sup_{x\in(\chi^{\varepsilon}_{\mathfrak{p}}(t),\chi^{\varepsilon}_{\rm{s}}(t))}|U^{\varepsilon}(t,\cdot)-\bar{U}_{\mathfrak{b},\rm{l}}|
+\sup_{x\in(\chi^{\varepsilon}_{\rm{s}}(t),+\infty)}|U^{\varepsilon}(t,\cdot)-\bar{U}_{\mathfrak{b},\rm{r}}|
+\sup_{t>0}|\dot{\chi}^\varepsilon_{\rm{s}}(t)-\bar{\rm{s}}_{\mathfrak{b}}|\\[5pt]
&\quad\ +T.V.\{U^{\varepsilon}(t,\cdot);(\chi^{\varepsilon}_{\mathfrak{p}}(t),\chi^{\varepsilon}_{\rm{s}}(t))\}+T.V.\{U^{\varepsilon}(t,\cdot);(\chi^{\varepsilon}_{\rm{s}}(t),+\infty)\}<C_3\epsilon,
\end{split}
\end{eqnarray}
for $t>0$, and
\begin{eqnarray}\label{eq:4.33}
\|U^{\varepsilon}(t,\cdot)-U^{\varepsilon}( \tilde{t} ,\cdot)\|_{L^1(\mathbb{R})}+|\chi^{\varepsilon}_{\rm{s}}(t)-\chi^{\varepsilon}_{\rm{s}}(\tilde{t} )|
\leq C_4\big( t -\tilde{t}+\varepsilon\big),
\end{eqnarray}
for any $ t >\tilde{t}>0$.
\end{proposition}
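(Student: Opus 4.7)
The plan is to derive \eqref{eq:4.32} from a uniform \emph{a priori} bound on the modified Glimm-type functional $\mathcal{G}(t)$, and then extract the space-time Lipschitz estimate \eqref{eq:4.33} from the flux-consistency identities of the scheme together with the reaction update \eqref{reaction step}. First I would combine Lemmas \ref{lem:4.1} and \ref{lem:4.3} inductively on the grid index $k$: within each strip $(t_{k-1}, t_k)$ only homogeneous interactions and boundary reflections can occur, so Lemma \ref{lem:4.1} gives strict decrease across every such event, while at each reaction time $t_k$, Lemma \ref{lem:4.3} gives the jump $\mathcal{G}(t_k+0) < \mathcal{G}(t_k-0)$. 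A bootstrap on $k$, with $\mathcal{G}(t-0) < \tilde{\delta}$ as the persistent hypothesis, then yields the global monotonicity $\mathcal{G}(t) \leq \mathcal{G}(0)$ for every $t>0$.

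The second step is to verify $\mathcal{G}(0) \leq \mathcal{O}(1)\epsilon$ under conditions \textnormal{(C2)} and \textnormal{(C3)}. The pieces $\mathcal{L}(0) + \mathcal{L}_{\textsc{b}\mathfrak{p}}(0) + K\mathcal{Q}(0)$ are controlled directly by the total-variation assumption in \eqref{eq:1.12}, together with Lemma \ref{lem:2.4} to bound $|\alpha^{\rm{s}}_4(0) - \bar{\alpha}^{\rm{s}}_4|$ by $T.V.\{U^{\varepsilon}_0 - \bar{U}_{\mathfrak{b},\rm{r}}\}$. The delicate quantity is $\mathcal{L}_{\textsc{by}}(0)$, for which the argument opening the proof of Lemma \ref{lem:4.3} gives
\[
\sum_{k\geq 1}\Upsilon^{\varepsilon,\mathfrak{p}}_k \leq \frac{2\bar{\phi}\|Y_0\|_{\infty}}{\underline{\phi}},\qquad
\sum_{k\geq 1}\Upsilon^{\varepsilon,\rm{s}}_k \leq \frac{2\bar{\phi}\|Y_0\|_{L^1(\mathbb{R}_+)}}{\min \dot{\chi}^{\varepsilon}_{\rm{s}}(t)}.
\]
Under \textnormal{(C2)} the uniform lower bound $\phi(T^\varepsilon) \geq \underline{\phi} > 0$ everywhere together with $\|Y_0\|_\infty\leq 1$ makes the first sum small (absorbable into the small parameter of \eqref{eq:1.12}) and suppresses reaction near the shock. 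Under \textnormal{(C3)} the second sum is the essential new obstruction; it is bounded precisely by the quantity $\bar{u}^{-1}_{\mathfrak{p}}\|Y_0\|_{L^1(\mathbb{R}_+)}$ that was inserted into the smallness hypothesis of \textnormal{(C3)}, since $\min\dot{\chi}^\varepsilon_{\rm{s}} \approx \bar{\rm{s}}_\mathfrak{b} \gtrsim \bar{u}_\mathfrak{p}$.

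With $\mathcal{G}(t) \leq C\epsilon$ established, estimate \eqref{eq:4.32} is immediate from the localization \eqref{eq:4.7} and the equivalence
\[
|\dot{\chi}^\varepsilon_{\rm{s}}(t)-\bar{\rm{s}}_{\mathfrak{b}}| + T.V.\{U^\varepsilon(t,\cdot);(\chi^\varepsilon_\mathfrak{p},\chi^\varepsilon_{\rm{s}})\} + T.V.\{U^\varepsilon(t,\cdot);(\chi^\varepsilon_{\rm{s}},+\infty)\} \leq \mathcal{O}(1)\mathcal{G}(t).
\]
For \eqref{eq:4.33} I would adapt the flux-consistency argument in \eqref{eq:4.19}: summing Proposition \ref{prop:3.1} across all jumps $\alpha\in\mathcal{J}(U^\varepsilon)$ in a spatial slab between times $\tilde t$ and $t$, and using that physical fronts travel with speed at most $\hat{\lambda}$ while non-physical ones contribute $\mathcal{O}(\varepsilon)$, one bounds $\|U^\varepsilon(t,\cdot)-U^\varepsilon(\tilde t,\cdot)\|_{L^1}$ by $\mathcal{O}(1)(t-\tilde t)$ from the homogeneous part plus $\mathcal{O}(\varepsilon)\sum_{t_k\in(\tilde t,t]} 1\cdot\|(\rho Y\phi)(t_k-0,\cdot)\|_{L^1}\leq \mathcal{O}(\varepsilon)(t-\tilde t+\varepsilon)$ from the reaction updates \eqref{reaction step}, yielding exactly the form in \eqref{eq:4.33}. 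The shock-position estimate $|\chi^\varepsilon_{\rm{s}}(t)-\chi^\varepsilon_{\rm{s}}(\tilde t)|\leq C_4(t-\tilde t)$ follows from $|\dot{\chi}^\varepsilon_{\rm{s}}-\bar{\rm{s}}_\mathfrak{b}| = \mathcal{O}(1)\mathcal{G}(t)$.

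The main obstacle is the estimate on $\mathcal{L}_{\textsc{by}}(0)$ under \textnormal{(C3)}: the exothermic source has no uniform temporal decay, so the only available control on $\sum_k \Upsilon^{\varepsilon,\rm{s}}_k$ comes from a \emph{spatial} estimate transporting $Y$ along particle paths from the initial profile to the moving leading shock, which is why $\|Y_0\|_{L^1(\mathbb{R}_+)}$, not merely $T.V.\{Y_0\}$, must be included in the smallness hypothesis of \textnormal{(C3)}. Verifying that this $L^1$-based bound integrates correctly with the weights chosen in $\mathcal{G}(t)$ is the step that genuinely uses the nonuniform spatial estimate emphasized in the introduction.
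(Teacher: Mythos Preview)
Your overall strategy is the paper's: induct on the strip index using Lemmas~\ref{lem:4.1} and~\ref{lem:4.3} to get $\mathcal{G}(t)\le\mathcal{G}(0)$, check $\mathcal{G}(0)=\mathcal{O}(1)\epsilon$, and read off \eqref{eq:4.32}. For \eqref{eq:4.33} the paper takes a shorter path than you propose: it simply reuses the homogeneous Lipschitz bound \eqref{eq:4.17} within each strip $[t_{k-1},t_k)$ and adds the reaction jumps
\[
\|U^\varepsilon(t_k{+}0,\cdot)-U^\varepsilon(t_k{-}0,\cdot)\|_{L^1}
\le \mathcal{O}(1)\varepsilon\|Y^\varepsilon(t_k{-}0,\cdot)\|_{L^1}
\le \mathcal{O}(1)\big(\|Y_0\|_{L^1}+1\big)\varepsilon,
\]
then sums over the $\lceil(t-\tilde t)/\varepsilon\rceil$ grid points. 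Your flux-consistency route via Proposition~\ref{prop:3.1} would work too, but is unnecessary. Note also that your bound on the reaction contribution should read $\mathcal{O}(1)(t-\tilde t+\varepsilon)$, not $\mathcal{O}(\varepsilon)(t-\tilde t+\varepsilon)$.

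There is one genuine slip in your treatment of $\mathcal{L}_{\textsc{by}}(0)$ under \textnormal{(C2)}. The inequality $\sum_k\Upsilon^{\varepsilon,\mathfrak{p}}_k\le 2\bar\phi\|Y_0\|_\infty/\underline\phi$ is only $\mathcal{O}(\epsilon)$ if $\|Y_0\|_\infty=\mathcal{O}(\epsilon)$; the hypothesis $\|Y_0\|_\infty\le 1$ alone gives an $\mathcal{O}(1)$ bound, not a small one. The missing observation is that $Y_0\in L^1\cap BV$ with $Y_0\ge 0$ forces $Y_0(x)\to 0$ as $x\to\infty$, so $\|Y_0\|_\infty\le T.V.\{Y_0;\mathbb{R}_+\}<\epsilon$ under either \textnormal{(C2)} or \textnormal{(C3)}. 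This same fact, combined with the exponential decay $Y^\varepsilon(t_k,x)\le\|Y_0\|_\infty e^{-\underline\phi k\varepsilon}$ available under \textnormal{(C2)} (reaction everywhere), makes the shock-consumption sum small as well; your phrase ``suppresses reaction near the shock'' is misleading --- it is the reaction ahead of the shock that depletes $Y$ before the shock arrives.
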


\begin{proof}
The argument of \eqref{eq:4.32} is completely analogous to that of \eqref{eq:4.16} under condition (C1).
It remains to verify the estimate \eqref{eq:4.33} for $U^{\varepsilon}$.
According to \eqref{eq:4.17} in Proposition \ref{prop:4.1}, we have
\begin{equation*}
\|{U}^\varepsilon(t,\cdot)- {U}^\varepsilon(\tilde{t},\cdot)\|_{L^1(\mathbb{R})} \leq \mathcal{O}(1)|t -\tilde{t}|,\quad \mbox{for}\ \mbox{any}\quad \tilde{t}, t \in [t_{k-1}, t_k).
\end{equation*}
On the other hand, by \eqref{reaction step}\eqref{eq:4.18}, we obtain that
\begin{eqnarray*}
\begin{split}
\|{U}^\varepsilon(t_k+0,\cdot)- {U}^\varepsilon(t_k-0,\cdot)\|_{L^1(\mathbb{R})}
&\leq \mathcal{O}(1)\varepsilon \int^{+\infty}_{\chi^{\varepsilon}_{\mathfrak{p}} (t_k)} Y^{\varepsilon}(t_{k}-0,x) \phi(T^{\varepsilon}(t_{k}-0,x))dx\\[5pt]
&\leq \mathcal{O}(1)\varepsilon \|Y^{\varepsilon}(t_{k}-0,\cdot)\|_{L^1(\mathbb{R})}\\[5pt]
&\leq \mathcal{O}(1) \big(\|Y_0\|_{L^1(\mathbb{R})} +1\big)  \varepsilon.
\end{split}
\end{eqnarray*}
Then, combining the above two estimates altogether, we can derive the approximate continuity of  $\|{U}^\varepsilon(t,\cdot)\|_{L^1(\mathbb{R})}$ with respect to $t$, \emph{i.e.},
\begin{eqnarray*}
\|{U}^\varepsilon( t )-{U}^\varepsilon(\tilde{t})\|_{L^1(\mathbb{R})} \leq \mathcal{O}(1) \big( t -\tilde{t}+ \lceil ( t -\tilde{t})/\varepsilon \rceil \cdot \varepsilon\big)\leq \mathcal{O}(1) \big( t -\tilde{t}+ \varepsilon\big),
\end{eqnarray*}
for any $ t >\tilde{t}> 0$.
\end{proof}

\begin{remark}\label{rem:4.1}
Review the existence argument by means of Glimm-type functional in this section. Basically, it is allowed that the initial positions of piston and large shock are separated,
namely $\chi^{\varepsilon}_{\mathfrak{p}}(0)< \chi^{\varepsilon}_{\rm{s}}(0)$. Therefore, the initial restriction on
$T.V.\{U_{0}(\cdot)-\bar{U}_{\mathfrak{b},\rm{r}}; \mathbb{R}_{+}\}$
in Theorem \ref{thm:1.1} can be relaxed to a smallness condition of
\begin{eqnarray*}
T.V.\{U_0(\cdot) - \bar{U}_{\mathfrak{b},\rm{l}}; (\chi^{\varepsilon}_{\mathfrak{p}}(0), \chi^{\varepsilon}_{\rm{s}}(0))\}
+ T.V.\{U_0(\cdot)-\bar{U}_{\mathfrak{b},\rm{r}}; (\chi^{\varepsilon}_{\rm{s}}(0), +\infty) \} .
\end{eqnarray*}
%where intervals $I_{\mathfrak{l}}=(\chi^{\varepsilon}_{\mathfrak{p}}(0), \chi^{\varepsilon}_{\rm{s}}(0))$ and $I_{\mathfrak{r}}=(\chi^{\varepsilon}_{\rm{s}}(0), +\infty)$.
The global existence conclusion in Theorem \ref{thm:1.1} remains true. This observation will be used to analyze the trajectories of reacting flow in section 6.
\end{remark}

\emph{Proof of Theorem \ref{thm:1.1} under condition $(\rm{C2})$ or $(\rm{C3})$}.
Similar to the proof under condition $(\textrm{C1})$, we follow Proposition \ref{prop:4.2}
and apply Helly's compactness theorem to get the global existence of combustion solution $U$ to (\emph{IBVP})  under condition $(\textrm{C2})$ or $(\textrm{C3})$.
\hfill$\Box$

%\section{Properties of quasi-characteristics }

\subsection{Some further properties of the entropy solution}

Given initial-boundary data, one can derive the variation of the flow on any space-like curve by Glimm functional $\mathcal{G}(t) $.
Likewise, one can obtain the information of the flow on any time-like curve. For this purpose we devise a new geometric object
called quasi-characteristic, which is a generalization of characteristics in the classic theory of hyperbolic partial differential equations.
Furthermore, it becomes an essential tool to establish $L^1$ stability of weak solutions in sections 5.
\begin{definition}\label{def:5.1}
Assume that a generic hyperbolic system
\begin{equation}\label{eq:5.1}
\partial_t E({U}) + \partial_x F({U})=G({U})
\end{equation}
has $n$ eigenvalues such that
\begin{eqnarray*}
\lambda_{1}(U)  \leq  \cdots  \leq \lambda_{i}(U)  \leq \cdots \leq \lambda_{n}(U).
\end{eqnarray*}
Let ${U}(t,x)$ be a solution to system \eqref{eq:5.1}, which takes values in domain $\mathcal{D}_*$.
We say a Lipschitz continuous curve $x=\chi(t)$ is an $i$\emph{-quasi-characteristic} associated with ${U}(t,x)$,
if there exists a constant $d>0$ such that
\begin{eqnarray*}
\sup_{U\in\mathcal{D}_*} \lambda_{j}(U) +d<\dot{\chi}(t) <\inf_{U\in\mathcal{D}_*}\lambda_{k}(U) -d, \qquad \forall\ t>0,
\end{eqnarray*}
for any indices $j$, $k$ satisfying {$j<i<k$} and $\lambda_{j}(U)< \lambda_{i}(U) <\lambda_{k}(U)$.
\end{definition}

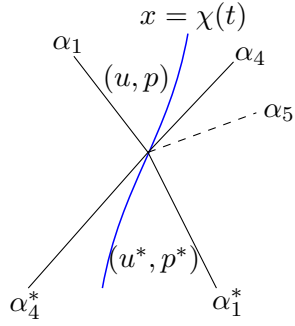
\begin{figure}[ht]
\begin{center}
\begin{tikzpicture}[scale=0.75]
\draw [blue][line width=0.02cm](-1.0,-1.5)to[out=80,in=-100](0.5,3.0);
\draw [thin](-2.3,-1.5)to(-0.2,0.9)to(1.3,2.5);
\draw [thin][dashed](-0.2,0.9)to(1.7,1.6);
\draw [thin](1.0,-1.5)to(-0.2,0.9)to(-1.5,2.6);
%\draw [thin][dashed](-2.8,0.9)to(2.8,0.9);
\node at (-1.6,2.8) {$\alpha_1$};
\node at (0.6,3.3) {$x=\chi(t)$};
\node at (1.6,2.6) {$\alpha_4$};
\node at (2.1,1.6) {$\alpha_5$};
\node at (-2.35,-1.8) {$\alpha^{*}_4$};
\node at (1.2,-1.75) {$\alpha^{*}_1$};
\node at (-0.1,-1.0) {$(u^{*},p^{*})$};
\node at (-0.4,2.2) {$(u,p)$};
%\node at (3.6,0.9) {$t=t_k$};
\end{tikzpicture}
\end{center}
\caption{Quasi-characteristic curve }\label{fig11}
\end{figure}

As pointed out previously, the class of quasi-characteristics includes not only classic characteristics, but
their small perturbations in Lipschitz sense. Now we suppose $\chi(t)$ is a 2-quasi-characteristic  associated with (exact or approximate) solution
${U}$ to (\emph{IBVP}). Moreover, it is required that $\chi(t)$ satisfies
\begin{equation}\label{eq:5.2}
\chi_{\mathfrak{p}}(t) \leq \chi(t) < \chi_{\rm{s}} (t) \qquad \mbox{for any }\quad  t> 0,
\end{equation}
where $\chi_{\mathfrak{p}}(t)$ is the piston path and $\chi_{\rm{s}} (t)$ is the large shock front path.

Set $\chi(I)=\{(t,\chi(t)): x=\chi(t), t\in I\}$. Let notation $T.V.\{f; \chi(I)\}$ stand for the variation of $f$ along the Lipschitz continuous curve $x=\chi(t)$ over interval $I$.
Then we introduce the following functionals which are used to investigate the variation of $(u,p)$ on quasi-characteristic $x=\chi(t)$. Set
\begin{eqnarray*}\label{eq:5.3}
\begin{split}
\mathcal{L}^\chi_{\textsc{w}}(t) &= \sum_{
                                \begin{subarray}{c}
                                 i\neq 2,3\\
                                x_\alpha <\chi(t)
                                \end{subarray}}
                                l_i|\alpha_i| + \sum_{
                                \begin{subarray}{c}
                                 i\neq 2,3\\
                                x_\alpha >\chi(t)
                                \end{subarray}}r_i|\alpha_i|  + \theta_3 \sum |\alpha_{3}|,
\end{split}
\end{eqnarray*}
and
\begin{eqnarray*}\label{eq:5.4}
\begin{split}
\mathcal{L}^\chi_{\textsc{b}}(t) &= T.V.\{(u,p); \chi(I_{t})\} + a \sum_{ t_k\geq t } \Upsilon^{\chi+}_k + a\sum_{t_k\geq t} \Upsilon^{\chi-}_k, \quad
I_{t}=\text{interval }(0,t),
\end{split}
\end{eqnarray*}
where the transient consumption of reactant on $\chi(t)$ are
\begin{eqnarray*}\label{eq:5.5}
\begin{split}
&\Upsilon^{\chi+}_k=Y(t_k-0, \chi(t_k)+0) \phi(T((t_k-0, \chi(t_k)+0)))\varepsilon,\\[5pt]
&\Upsilon^{\chi-}_k = Y(t_k-0, \chi(t_k)-0) \phi(T((t_k-0, \chi(t_k)-0))) \varepsilon.
\end{split}
\end{eqnarray*}
Here the weights satisfy
\begin{equation}\label{eq:5.6}
r_1>l_1>l_4>r_4>0, \quad l_5>r_5>0.
\end{equation}

Define the redistributed Glimm-type functional for ${U}$ by
\begin{eqnarray*}\label{eq:5.7}
\mathcal{G}^{\chi}(t)= \mathcal{L}^\chi_{\textsc{w}}(t) + K\mathcal{Q}(t) + K \mathcal{L}_{\textsc{b}}(t) + \hat{\epsilon} \mathcal{L}^\chi_{\textsc{b}}(t)
\end{eqnarray*}
with $0<\hat{\epsilon} \ll 1$.
The assumption (\ref{eq:5.6}) on coefficients readily gives the fact that functional $\mathcal{G}^{\chi}(t)$ is decreasing when any single $\alpha_i\ (i\neq 2,3)$ goes across the curve $x=\chi(t)$.
Next, it suffices to prove the monotonicity of $\mathcal{G}^{\chi}(t)$ at time $t$ of waves interaction and of reaction step. Hence we show the following lemma that covers the situations of non-reacting and reacting flows.

\begin{lemma}\label{lem:5.1}
Redistributed Glimm-type functional $\mathcal{G}^{\chi}(t)$ is decreasing in $t$.
\end{lemma}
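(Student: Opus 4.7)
The plan is to proceed by cataloguing the finitely many events that can cause $\mathcal{G}^{\chi}(t)$ to jump and verifying event-by-event that each produces a non-positive increment, for suitable choices of $K$, $\hat\epsilon$, $a$ and the weights $l_i,r_i$. Between events the state along $\chi$ is locally constant (no wave front crosses $\chi$), so $\mathcal{L}^{\chi}_{\textsc{b}}$, and similarly $\mathcal{L}^{\chi}_{\textsc{w}}$, $\mathcal{Q}$ and $\mathcal{L}_{\textsc{b}}$, remain constant. Thus it suffices to examine: (a) a weak front of family $i\in\{1,4,5\}$ crossing $\chi$; (b) an interior wave--wave interaction, a reflection on $\Gamma_{\mathfrak{p}}^{\varepsilon}$, or the birth of a $4$-wave from a corner of the piston, each occurring strictly on one side of $\chi$; and (c) the reaction step at $t=t_k$.

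Case (a) is the genuinely new ingredient and should be handled first. Definition~\ref{def:5.1} applied with $i=2$ forces $\dot\chi(t)$ to lie strictly between the speeds of the 1-family and those of the 4- and 5-families, so a 1-front must cross $\chi$ from right to left while any 4- or 5-front crosses from left to right. For a 1-front crossing at time $t^{*}$, the only changes are $\Delta\mathcal{L}^{\chi}_{\textsc{w}}=(l_1-r_1)|\alpha_1|$ and an added jump in $T.V.\{(u,p);\chi(I_t)\}$ of size at most $a|\alpha_1|$; by \eqref{eq:5.6} the overall increment is bounded by $\bigl[(l_1-r_1)+\hat\epsilon\,a\bigr]|\alpha_1|$, which is strictly negative once $\hat\epsilon<(r_1-l_1)/a$. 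Crossings by 4- and 5-fronts are handled in the same way, using $l_4>r_4$ and $l_5>r_5$.

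For events of type (b), the interaction or reflection occurs at a single point lying on one definite side of $\chi$, so inside $\mathcal{L}^{\chi}_{\textsc{w}}$ the relevant linear weights are uniformly either $l_i$ or $r_i$. With this substitution, the local change estimates in Cases 4.1.1--4.1.6 of Lemma~\ref{lem:4.1} transfer verbatim: one reads $l_1$ (or $r_1$) in place of $\theta_1$, and similarly $l_4,l_5$ (or $r_4,r_5$) in place of $\theta_4,\theta_5$. The ordering \eqref{eq:5.6}, together with the implicit sub-unit reflection condition $l_4>l_1|\kappa_{\mathrm{s}}|$, preserves every sign used in that proof, so $\Delta(\mathcal{L}^{\chi}_{\textsc{w}}+K\mathcal{Q}+K\mathcal{L}_{\textsc{b}})\le 0$ after choosing $K$ sufficiently large. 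The term $\mathcal{L}^{\chi}_{\textsc{b}}$ is unchanged here because $\chi$ meets neither the interaction point nor any reactant consumption on $\chi$.

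The reaction step at $t_k$ is the main obstacle and is handled last by mimicking Lemma~\ref{lem:4.3}, again with the $l_i/r_i$ split replacing the unit linear weight. Lemma~\ref{lem:4.2} still yields the dissipative contributions $-\tfrac14\theta_3\underline\phi|\alpha_3^{*}|\varepsilon$ and $-\tfrac14 K\underline\phi|\alpha_i^{*}|Y^{*}_{\alpha}\varepsilon$ in the weighted pointwise sums, and these dominate the coupling terms once $K$ is large and $\theta_1$ is small. The new feature is the behaviour of $\hat\epsilon\,\mathcal{L}^{\chi}_{\textsc{b}}$ at the single point $\chi(t_k)$: generically this point does not lie on a wave front, so $\rho$, $Y$, $T$ are continuous there and the reaction lifts $p$ along $\chi$ by an amount of order $(\gamma-1)\mathfrak{q}_0\rho\,\Upsilon^{\chi\pm}_k$, whereas the consumption sums in $\mathcal{L}^{\chi}_{\textsc{b}}$ decrease by $a(\Upsilon^{\chi+}_k+\Upsilon^{\chi-}_k)$; hence $\Delta\mathcal{L}^{\chi}_{\textsc{b}}|_{t_k}\le\bigl[(\gamma-1)\mathfrak{q}_0\rho-2a\bigr]\Upsilon^{\chi}_k\le 0$ provided $a$ is sufficiently large. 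The real difficulty is the tuning: one must verify that the constants $K,\theta_1,\theta_3,\hat\epsilon,a,l_i,r_i$ can be chosen \emph{simultaneously} so that strict decrease holds in every case, a compatibility check that requires care but no new estimate.
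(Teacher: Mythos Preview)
Your overall case-by-case strategy is correct and tracks the paper's proof, but you misread the structure of $\mathcal{G}^{\chi}$ in a way that distorts case~(b). The term $\mathcal{Q}$ in $\mathcal{G}^{\chi}$ is \emph{literally the same} $\mathcal{Q}$ as in $\mathcal{G}$, with the original weights $\theta_1,\theta_4,\theta_5$ unchanged; the new weights $l_i,r_i$ enter only the linear part $\mathcal{L}^{\chi}_{\textsc{w}}$. So there is no substitution ``$l_1$ in place of $\theta_1$'' to make. The paper's first sentence in the proof says exactly this: since $\mathcal{Q}$ is shared, $K\Delta\mathcal{Q}$ supplies the same strictly negative contribution already computed in Lemma~\ref{lem:4.1}, while $\Delta\mathcal{L}^{\chi}_{\textsc{w}}$ is at worst $\mathcal{O}(1)$ times the wave strengths (with bounded coefficients $l_i$ or $r_i$) and is therefore dominated by $K\Delta\mathcal{Q}$ for $K$ large. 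In particular your ``implicit sub-unit reflection condition $l_4>l_1|\kappa_{\mathrm{s}}|$'' is neither stated in \eqref{eq:5.6} nor needed: the weak--strong interaction occurs at $\chi_{\rm s}>\chi$, and the decrease there still comes from $\theta_4>\theta_1|\kappa_{\mathrm{s}}|$ inside the unchanged $\mathcal{Q}$, not from any relation among the $l_i,r_i$. What \eqref{eq:5.6} \emph{is} used for is exactly your case~(a) and the piston reflection (the paper's Case~4.3.1, where $l_4<l_1$ gives $\Delta\mathcal{L}^{\chi}_{\textsc{w}}\le\mathcal{O}(1)|\alpha_1^*|^2$).

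A secondary point: instead of splitting into ``single crossing'' plus ``interaction strictly on one side of $\chi$'', the paper handles directly the composite event of an interaction \emph{at} $\chi(t)$ (Cases~4.3.3--4.3.4), computing $\Delta\mathcal{L}^{\chi}_{\textsc{w}}$ and the increment of $T.V.\{(u,p);\chi\}$ in one stroke; this is where the inequality $l_1-r_1+\hat\epsilon a<0$ (and its analogues) is actually used. Your decomposition is equivalent under the genericity of Remark~\ref{rem:3.1}. For the reaction step the paper proceeds the same way, analyzing a wave front sitting at $\chi(t_k)$ and using the cancellation $a\Upsilon^{\chi-}_k-a\Upsilon^{\chi+}_k-a\Upsilon^{\chi-}_k\le 0$ together with the weight drop $(l_1-r_1)|\alpha_1^*|$ in $\mathcal{L}^{\chi}_{\textsc{w}}$ and the smallness of $\hat\epsilon$; no freedom to enlarge $a$ is invoked.
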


\begin{proof}
Since $\mathcal{G}^{\chi}(t)$ and $\mathcal{G} (t)$ have the same term $\mathcal{Q}(t)$, the argument here is highly analogous to that in sections 4.2-4.3.
So we only need to show some key estimates to prove the lemma. \\
(i) \emph{Non-reacting flow under condition $\textsc{(C1)}$}.

\textbf{Case 4.3.1}. \emph{Reflection $\alpha_1^*   \mapsto   \alpha_4$ at $\chi_\mathfrak{p}(t)$.} Recall that $\alpha_4 = \alpha_1^* + \mathcal{O}(1) |\alpha_1^*|^2$,
 then by the coefficients restriction \eqref{eq:5.6}, we have
\begin{eqnarray*}
\begin{split}
\Delta\mathcal{L}^\chi_{\textsc{w}}(t)
&= l_4 |\alpha_4| - l_1 |\alpha_1^*| =  l_4 \big(|\alpha_4| -  |\alpha_1^*|\big) + (l_4-l_1)|\alpha_1^*| \\[5pt]
&\leq \mathcal{O}(1) |\alpha_1^*|^2 \\[5pt]
&\leq \mathcal{O}(1) \mathcal{G} (t-0) |\alpha_1^*|.
\end{split}
\end{eqnarray*}
Therefore,
\begin{eqnarray*}
\begin{split}
\Delta \mathcal{G}^\chi(t) &%= \Delta A_\chi(t) +C \cdot Q(t)
\leq \mathcal{O}(1) \mathcal{G} (t-0) |\alpha_1^*| - \frac{1}{2}K\big(\theta_1 - \theta_4\big)|\alpha_1^*|\\[5pt]
&\leq - \frac{1}{4}K(\theta_1 - \theta_4)|\alpha_1^*| <0 .
\end{split}
\end{eqnarray*}

\textbf{Case 4.3.2}. \emph{Appearance of single $\alpha_4$ from boundary.}
It is clear that
\begin{eqnarray*}
\begin{split}
\Delta \mathcal{G}^\chi(t)\leq l_4 |\alpha_4| - \frac{1}{2a}K |\alpha_4|\leq - \frac{1}{4a}K |\alpha_4| <0.
\end{split}
\end{eqnarray*}

\textbf{Case 4.3.3}. \emph{Interaction $\alpha_i^* + \alpha_j^* \mapsto \alpha_1+ \alpha_2+ \alpha_4 \ (i,j \neq 3)$ at $\chi(t)$.}
It suffices to verify the typical case $i=4, j=1$ at $x=\chi(t)$.
The other cases can be treated likewise. Since
\begin{eqnarray*}
\begin{split}
\Delta \mathcal{L}^\chi_{\textsc{w}}(t)
& =l_1 |\alpha_1| + r_4 |\alpha_4|  - r_1 |\alpha_1^*| -  l_4 |\alpha_4^*| \\[5pt]
&= (l_1-r_1)|\alpha_1^*| + (r_4 - l_4)|\alpha_4^*|+ \mathcal{O}(1) |\alpha_1^*\alpha_4^*|,\\[5pt]
\Delta \mathcal{L}^\chi_{\textsc{b}}(t)& =|u-u^*| + |p-p^*|\\[5pt]
&\leq a (|\alpha_1|+ |\alpha_4^*|) =  a (|\alpha_1^*|+ |\alpha_4^*| +\mathcal{O}(1) |\alpha_1^*\alpha_4^*|),
\end{split}
\end{eqnarray*}
it follows from \eqref{eq:5.6} that
\begin{eqnarray*}
\begin{split}
\Delta \mathcal{G}^\chi(t) %& =  \Delta A_\chi(t) +C \cdot Q(t) + \epsilon\cdot \Delta B_\chi(t) \\
	& \leq  \big(l_1-r_1 +\hat{\epsilon} a\big)|\alpha_1^*| + \big(r_4 - l_4 + \hat{\epsilon} a \big)|\alpha_4^*|+ \mathcal{O}(1) |\alpha_1^*\alpha_4^*|
	- \frac{1}{2}K |\alpha_1^*\alpha_4^*| \\[5pt]
	& \leq - \frac{1}{4}K |\alpha_1^*\alpha_4^*| <0 .
\end{split}
\end{eqnarray*}

\textbf{Case 4.3.4}. \emph{Interaction $\alpha_i^* +  \alpha_j^* \mapsto \square + \alpha_5\ (i,j \neq 3)$ at $\chi(t)$.}
When a non-physical wave $\alpha_5$ appears at $x=\chi(t)$, we also figure out the estimates for redistributed functionals similar to that in Case 4.1.6.
For instance, we consider the typical situation $i=4, \ j=1$. See Fig.\ref{fig11}. Notice that
\begin{eqnarray*}
\alpha_1=\alpha_1^*, \quad \alpha_4=\alpha_4^*,\quad  \alpha_5=\mathcal{O}(1) |\alpha_1^* \alpha_4^*|.
\end{eqnarray*}
Hence a direct computation shows that
\begin{eqnarray*}
\begin{split}
\Delta \mathcal{L}^\chi_{\textsc{w}}(t)&= \big(l_1 -r_1\big)|\alpha_1^*| +\big(r_4-l_4\big)|\alpha_4^*|+ \mathcal{O}(1) |\alpha_1^* \alpha_4^*|,\\[5pt]
\Delta \mathcal{L}^\chi_{\textsc{b}}(t) &  \leq  a\big(|\alpha_1|+ |\alpha_4^*|\big),
\end{split}
\end{eqnarray*}
which yield
\begin{align*}
\Delta \mathcal{G}^\chi(t) %& = \Delta A_\chi(t)+ C\cdot\Delta Q(t) + \epsilon\cdot\Delta B_\chi(t) \\
& \leq \big(l_1 -r_1 +\hat{ \epsilon} a \big)|\alpha_1^*| + \big(r_4-l_4 + \hat{\epsilon} a\big) |\alpha_4^*|+\mathcal{O}(1) |\alpha_i^* \alpha_j^*| - \frac{1}{2} K |\alpha_1^* \alpha_4^*| \\[5pt]
& < - \frac{1}{4} K |\alpha_1^* \alpha_4^*|<0.
\end{align*}
Analogously, for every $i,j<5$, there holds
\begin{eqnarray*}
\Delta \mathcal{G}^\chi(t) < - \frac{1}{4} K |\alpha_i^* \alpha_j^*| <0.
\end{eqnarray*}

If $i=5$ and $j=1$, then the estimates $\alpha_1=\alpha_1^*$ and $\alpha_5=\alpha_5^*+\mathcal{O}(1) |\alpha_1^* \alpha_5^*|  $ imply
\begin{eqnarray*}
\begin{split}
\Delta \mathcal{G}^\chi(t) & \leq \big(l_1 -r_1 + \hat{\epsilon} a \big)|\alpha_1^*| + \big(r_5 -l_5 + \hat{\epsilon} a\big) |\alpha_5^*|
+\mathcal{O}(1) |\alpha_1^* \alpha_5^*| - \frac{1}{2}K|\alpha_1^* \alpha_5^*| \\[5pt]
& < - \frac{1}{4}K|\alpha_1^* \alpha_5^*| <0.
\end{split}
\end{eqnarray*}
Similarly, if $i=5$ and $ j>1$, there holds
\begin{eqnarray*}
\mathcal{G}^\chi(t) < - \frac{1}{4} K|\alpha_i^* \alpha_j^*| <0.
\end{eqnarray*}

The rest of cases for waves collision at $x \neq\chi(t)$ are so trivial and easy to verify. This completes the proof for non-reacting flow.

(ii) \emph{Reacting flow under condition $\textsc{(C2)}$ or $\textsc{(C3)}$}.
Consider the $\varepsilon$-approximation solution $U^{\varepsilon}$ for partially ignited or completely ignited flow.
Based on the previous argument, functional $\mathcal{G}^\chi(t)$ is decreasing for $t \in (t_{k-1} , t_k)$.
Our purpose now is verifying the monotonicity for reaction process.
In fact, it suffices to establish the local estimate on $\Delta \mathcal{G}^{\chi}(t_k) \big|_{x=\chi(t_{k})}$.
We first consider  that wave $\alpha_1^*$ hits the grid line  $t=t_k$ and then splits into new waves by reaction, \emph{i.e.},
\begin{eqnarray*}
\alpha_1^* \mapsto \alpha_1+\alpha_2+ \alpha_3+\alpha_4  \quad \mbox{at}\  x=\chi(t_k).
\end{eqnarray*}
A straightforward calculation gives that
\begin{eqnarray*}
\begin{split}
\Delta \mathcal{L}^\chi_{\textsc{w}}(t_k) \big|_{x=\chi(t_k)}
& = l_1 \big(|\alpha_1| -|\alpha_1^*|\big) + \big(l_1 -r_1\big) |\alpha_1^*| + r_4|\alpha_4| + \theta_3 |\alpha_3| \\[5pt]
& \leq \mathcal{O}(1) |\alpha_1^*| Y_\alpha^{*}\varepsilon + (l_1 -r_1) |\alpha_1^*|, \\[5pt]
\Delta\mathcal{L}^{\chi}_{\textsc{b}} (t_k)\big|_{x=\chi(t_k)}
& \leq a \big(|\alpha_1| + \Upsilon^{\chi-}_k\big) - a\Upsilon^{\chi+}_k - a\Upsilon_k^{\chi-} \\[5pt]
&\leq a|\alpha_1^*| + \mathcal{O}(1)  |\alpha_1^*| Y_\alpha^{*} \varepsilon.
\end{split}
\end{eqnarray*}
Hence, we have
\begin{eqnarray*}
\begin{split}
\big( \Delta \mathcal{L}^\chi_{\textsc{w}} +\hat{\epsilon} \Delta\mathcal{L}^{\chi}_{\textsc{b}} \big) (t_k)\big|_{x=\chi(t_k)}
&\leq  \mathcal{O}(1) |\alpha_1^*| Y_\alpha^{*} \varepsilon + \big(l_1 -r_1 + \hat{\epsilon} a\big) |\alpha_1^*|\\[5pt]
&\leq   \mathcal{O}(1) |\alpha_1^*| Y_\alpha^{*} \varepsilon,
\end{split}
\end{eqnarray*}
if $\hat{\epsilon}$ is small enough.
Generally, for every $i\neq3$, reaction
$\alpha_i^* \mapsto \alpha_1+\alpha_2+ \alpha_3+\alpha_4 $ at $x=\chi(t_k)$
always implies
\begin{eqnarray*}
\big( \Delta \mathcal{L}^\chi_{\textsc{w}}  +\hat{\epsilon} \Delta\mathcal{L}^{\chi}_{\textsc{b}}\big) (t_k)\big|_{x=\chi(t_k)}
\leq  \mathcal{O}(1) |\alpha_i^*| Y_\alpha^{*} \varepsilon.
\end{eqnarray*}

When reaction $\alpha_3^* \mapsto \alpha_1+\alpha_2+ \alpha_3+\alpha_4 $ takes place at $x=\chi(t_k)$, we readily derive the estimate
\begin{eqnarray*}
\begin{split}
\big( \Delta \mathcal{L}^\chi_{\textsc{w}}  +\hat{\epsilon}\Delta\mathcal{L}^{\chi}_{\textsc{b}}\big) (t_k)\big|_{x=\chi(t_k)}
&\leq - \frac{1}{2}\theta_3 |\alpha_3^*|\underline{\phi} \varepsilon + \mathcal{O}(1) \hat{\epsilon} |\alpha_3^*|  \varepsilon \\[5pt]
&\leq  - \frac{1}{4}\theta_3 |\alpha_3^*|\underline{\phi} \varepsilon .
\end{split}
\end{eqnarray*}

Note that $\mathcal{L}^\chi_{\textsc{w}}(t)$ merely extracts part of waves from $\mathcal{L}(t)$, particularly preserves all the $3$-waves with respect to mass fraction.
Replace $\Delta\mathcal{L}(t)$ in sections 4.2-4.3 with estimates on $\Delta\mathcal{L}^\chi_{\textsc{w}}(t) +\hat{\epsilon} \Delta \mathcal{L}^\chi_{\textsc{b}}(t)$.
By the argument analogous to \eqref{eq:4.30}-\eqref{eq:4.31}, we can also show that $\Delta \mathcal{G}^{\chi} (t_k)<0$ regardless of whether $\bar{T}_{\rm{r}} > T_{\text{i}}$.
This concludes the proof.
%As a result, $\mathcal{F}_{\chi} (t)$ for reacting flow is decreasing in $t$.
\end{proof}

The decrease of $\mathcal{G}^\chi(t)$ implies
\begin{equation}\label{eq:5.8}
\hat{\epsilon} T.V.\{(u,p); \chi(I_{\infty})\}\leq \mathcal{G}^\chi(+\infty) \leq \mathcal{G}^\chi(0)
\leq \mathcal{O}(1) \mathcal{G}(0),
\end{equation}
where $I_{\infty}=(0,+\infty)$. Hence we conclude that the total variation of components $(u,p)$ in ${U}$ on any quasi-characteristic $x=\chi(t)$ satisfying \eqref{eq:5.2}
is bounded provided $ \mathcal{G}(0)$ is small enough.

We next compare the state distribution on distinct quasi-characteristics. To this end, suppose that ${U}=(\rho, u, p, Y)^{\top}$ is a solution to system \eqref{eq:2.1},
and $x=\tilde{\chi}(t)$ is its $2$-quasi-characteristic curve satisfying \eqref{eq:5.2} instead of $\chi(t)$. Then, we have the following property on the difference of fluid velocities.
%Recall that $x=\chi_p(t)$ is the boundary curve. We assert that
%the difference of fluid velocities on $ \chi_1 $ and $ \bar{\chi} $ is dominated by
%initial-boundary data.

\begin{proposition}[Velocity comparison]\label{prop:5.1}
For any $t>\tilde{t}\geq 0$, there holds
\begin{eqnarray}\label{eq:5.9}
\begin{split}
&\|u(\tau, \chi_{\mathfrak{p}}(\tau)) - u(\tau, \tilde{\chi}(\tau))\|_{L^{1}([\tilde{t}, t])}\\[5pt]
&\qquad\leq { \mathcal{O}(1) \Big(T.V.\{U_{0}(\cdot); \mathbb{R}_{+}\}+ T.V.\{u_{\mathfrak{p}}(\cdot); \mathbb{R}_{+}\} \Big)\|\chi_{\mathfrak{p}}(\tau)-\tilde{\chi}(\tau)\|_{C([\tilde{t},t])}},
\end{split}
\end{eqnarray}
where $x=\chi_{\mathfrak{p}} (t)$ is the path of piston.
\end{proposition}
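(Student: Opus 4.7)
The plan is to prove the bound first for the fractional-step approximation $U^{\varepsilon}$ and then send $\varepsilon\to 0^{+}$ by the same compactness argument used in Propositions \ref{prop:4.1}--\ref{prop:4.2}. Since the boundary condition gives $u^{\varepsilon}(\tau,\chi^{\varepsilon}_{\mathfrak{p}}(\tau)+0)=u^{\varepsilon}_{\mathfrak{p}}(\tau)$ and $u$ is continuous across every $2$- and $3$-contact discontinuity, the pointwise estimate
\begin{equation*}
|u^{\varepsilon}(\tau,\chi^{\varepsilon}_{\mathfrak{p}}(\tau))-u^{\varepsilon}(\tau,\tilde{\chi}(\tau))|\leq \sum_{\alpha\in\mathcal{I}^{\varepsilon}(\tau)} \bigl|[u^{\varepsilon}]_{\alpha}\bigr|
\end{equation*}
holds, where $\mathcal{I}^{\varepsilon}(\tau)$ collects all $1$-, $4$- and non-physical fronts whose position $x_{\alpha}(\tau)$ lies strictly between $\chi^{\varepsilon}_{\mathfrak{p}}(\tau)$ and $\tilde{\chi}(\tau)$; the strong $4$-shock itself is excluded thanks to \eqref{eq:5.2}.

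I would then integrate over $\tau\in[\tilde{t},t]$ and apply Tonelli's theorem to rewrite the right-hand side as $\sum_{\alpha}|[u^{\varepsilon}]_{\alpha}|\cdot L_{\alpha}$, where $L_{\alpha}$ denotes the Lebesgue measure of $\{\tau\in[\tilde{t},t]:\chi^{\varepsilon}_{\mathfrak{p}}(\tau)<x_{\alpha}(\tau)<\tilde{\chi}(\tau)\}$. The crucial geometric input is $L_{\alpha}\leq W/d$ with $W\doteq\|\chi^{\varepsilon}_{\mathfrak{p}}-\tilde{\chi}\|_{C([\tilde{t},t])}$: for any $i$-wave with $i\in\{1,4,5\}$, Definition \ref{def:5.1} together with the spectral gap \eqref{eq:4.1} produces a uniform constant $d>0$ separating $\dot{x}_{\alpha}$ from both $\dot{\chi}^{\varepsilon}_{\mathfrak{p}}\approx\bar{u}_{\mathfrak{p}}$ and $\dot{\tilde{\chi}}\approx\bar{u}_{\rm{l}}=\bar{u}_{\mathfrak{p}}$. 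Hence $\tau\mapsto x_{\alpha}(\tau)-\chi^{\varepsilon}_{\mathfrak{p}}(\tau)$ is strictly monotone with slope of modulus at least $d$, and the front cannot remain inside a strip of width $\leq W$ for longer than $W/d$ (a $1$-wave eventually reflects at $\chi^{\varepsilon}_{\mathfrak{p}}$ into a $4$-wave, but the combined dwell time is still $\mathcal{O}(W/d)$).

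The last ingredient is an $\varepsilon$-uniform bound on the total wave strength,
\begin{equation*}
\sum_{\alpha}\bigl|[u^{\varepsilon}]_{\alpha}\bigr|\leq\mathcal{O}(1)\bigl(T.V.\{U_{0}\}+T.V.\{u_{\mathfrak{p}}\}\bigr).
\end{equation*}
Each contributing front either (a) is already present at $\tau=\tilde{t}$, (b) enters through the lateral boundary $\tilde{\chi}$ or the piston boundary $\chi^{\varepsilon}_{\mathfrak{p}}$, or (c) is created by a waves interaction or a reaction step inside the strip. Type-(a) fronts are dominated by $T.V.\{u^{\varepsilon}(\tilde{t},\cdot)\}\leq\mathcal{O}(1)\mathcal{G}(0)$; type-(b) fronts contribute at most $T.V.\{u^{\varepsilon};\tilde{\chi}(I_{\infty})\}+T.V.\{u^{\varepsilon}_{\mathfrak{p}}\}$, both controlled by the redistributed functional $\mathcal{G}^{\tilde{\chi}}(0)$ via \eqref{eq:5.8}; and type-(c) fronts are bookkept by the interaction potential $K\mathcal{Q}$ and the reaction term $K\mathcal{L}_{\textsc{by}}$, whose cumulative strength is once again absorbed by $\mathcal{G}(0)$. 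Since $\mathcal{G}(0)\leq\mathcal{O}(1)(T.V.\{U_{0}\}+T.V.\{u_{\mathfrak{p}}\})$ under any of the conditions $(\textrm{C1})$--$(\textrm{C3})$, combining the three estimates gives \eqref{eq:5.9} for $U^{\varepsilon}$, and passing $\varepsilon\to 0^{+}$ transfers it to the entropy solution $U$.

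The main obstacle is the careful bookkeeping of the interior-created type-(c) fronts. A naive $L^{\infty}_{t}$ bound on the total variation of $U^{\varepsilon}(\tau,\cdot)$ would produce a $(t-\tilde{t})$ factor on the right-hand side instead of the desired spatial factor $W$. What rescues the argument is that the per-front dwell-time estimate $L_{\alpha}\leq W/d$ is used \emph{before} summing strengths, so the Glimm-type monotonicity of $\mathcal{G}$ and $\mathcal{G}^{\tilde{\chi}}$ is applied to the \emph{cumulative} wave strength (which is $\mathcal{O}(1)\mathcal{G}(0)$, independent of $t-\tilde{t}$) rather than to the instantaneous total variation.
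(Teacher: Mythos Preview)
Your overall strategy shares the two key ingredients with the paper's proof: the dwell-time bound $L_{\alpha}\le W/d$ for $1$-, $4$- and $\mathcal{NP}$-fronts in the strip, and a strength control coming from the Glimm machinery. The gap is in the type-(c) bookkeeping. After Tonelli you need
\[
\sum_{\alpha}\bigl|[u^{\varepsilon}]_{\alpha}\bigr|\le\mathcal{O}(1)\mathcal{G}(0),
\]
where the sum runs over \emph{every} front segment that ever lies in $(\chi^{\varepsilon}_{\mathfrak{p}},\tilde{\chi})$ during $[\tilde{t},t]$. But a segment that terminates at an interior interaction is replaced by outgoing segments whose total strength equals the \emph{incoming} strength up to an $\mathcal{O}(|\alpha^{*}\beta^{*}|)$ correction; the interaction potential $\mathcal{Q}$ controls only this correction, not the inherited part. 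Hence each interior interaction essentially recounts the participating waves' strength, and since the number of interactions inside the strip is not bounded uniformly in $\varepsilon$, your sum over all segments is not covered by $\mathcal{G}(0)$. The sentence ``cumulative strength is absorbed by $\mathcal{G}(0)$'' conflates the genuinely new strength (which $\Delta\mathcal{Q}$ does bound) with the recycled incoming strength (which it does not).

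The paper sidesteps this by a homotopy: set $\chi_{\tilde{\theta}}(t)=\tilde{\theta}\chi_{\mathfrak{p}}(t)+(1-\tilde{\theta})\tilde{\chi}(t)$ and choose a finite partition $0=\tilde{\theta}_{0}<\dots<\tilde{\theta}_{n}=1$ so that \emph{no interaction occurs} in any sub-strip between $\chi_{\tilde{\theta}_{i}}$ and $\chi_{\tilde{\theta}_{i+1}}$ (possible because the $\varepsilon$-approximation has only finitely many interaction points). In such a sub-strip every front is a single segment crossing once, so the crossing-strength sum is exactly $T.V.\{(u,p);\chi_{\tilde{\theta}_{i}}((\tilde{t},t))\}$, and this is bounded \emph{uniformly in $i$} by $\mathcal{O}(1)\mathcal{G}(0)$ via Lemma~\ref{lem:5.1} and \eqref{eq:5.8}. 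Summing the thin-strip estimates and using $\sum_{i}\|\chi_{\tilde{\theta}_{i}}-\chi_{\tilde{\theta}_{i+1}}\|_{C}=\|\chi_{\mathfrak{p}}-\tilde{\chi}\|_{C}$ then yields \eqref{eq:5.9}. In other words, the homotopy replaces your uncontrolled sum over segments by a telescoping sum of total variations along $2$-quasi-characteristics, each of which \emph{is} controlled by the redistributed Glimm functional.
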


\begin{proof}
It suffices to technically treat ${U}$ as the $\varepsilon$--approximate solution which includes finitely many constant states.
Suppose $x=\chi(t)$ is some 2-quasi-characteristic associated with ${U}$ and satisfies \eqref{eq:5.2}.
Let $h(t)$ be a non-negative function which is a small perturbation of $\chi(t)$ with $\|h\|_{C^{0,1}} \ll 1$ so that

$\bullet$ there exists a positive constant $d$ such that
\begin{eqnarray*}
\begin{split}
&\sup_{\mathcal{N}_{\delta}(\bar{U}_{\mathfrak{b},\rm{l}})} \lambda_1(U) + d < \dot{\chi}(t) < \inf_{\mathcal{N}_{\delta}(\bar{U}_{\mathfrak{b},\rm{l}})} \lambda_4(U) -d,\\[5pt]
&\sup_{\mathcal{N}_{\delta}(\bar{U}_{\mathfrak{b},\rm{l}})} \lambda_1(U) + d < \dot{\chi}(t) + \dot{h}(t)< \inf_{\mathcal{N}_{\delta}(\bar{U}_{\mathfrak{b},\rm{l}})} \lambda_4(U) -d;
\end{split}
\end{eqnarray*}

$\bullet$ there is not any wave-colliding point $(t,x)$ in the region $\chi(t) <x <\chi(t)+h(t) $ for every $t>0$;

$\bullet$ there are finitely many wave fronts $\{x_\alpha\}$ in ${U}$ which divide the region between $\chi(t)$ and $\chi(t)+h(t) $ into many subregions with constant states.

%\begin{itemize}
%\item[(i)] there exists a positive $d$ such that
%\begin{eqnarray*}
%\begin{split}
%&\sup_{\mathcal{O}_{\epsilon^*}(\bar{U}_{\mathfrak{b,l}})} \lambda_1(U) + d < \dot{\chi}(t) < \inf_{\mathcal{O}_{\epsilon^*}(\bar{U}_{\mathfrak{b,l}})} \lambda_4(U) -d,\\[5pt]
%&\sup_{\mathcal{O}_{\epsilon^*}(\bar{U}_{\mathfrak{b,l}})} \lambda_1(U) + d < \dot{\chi}(t) + \dot{h}(t)< \inf_{\mathcal{O}_{\epsilon^*}(\bar{U}_{\mathfrak{b,l}})} \lambda_4(U) -d;
%\end{split}
%\end{eqnarray*}
%
%\item[(ii)] there is not any wave-colliding point $(t,x)$ in the region $\chi(t) <x <\chi(t)+h(t) $ for every $t>0$;
%
%\item[(iii)] there are finitely many wave fronts $\{x_\alpha\}$ in ${U}$ which divide the region between $\chi(t)$ and $\chi(t)+h(t) $ into many subregions with constant states.
%
%\end{itemize}

Next we only focus on the fronts of family $i$ with $i=1, 4, 5$. Assume $x=x_\alpha (t)$ is an $i$-wave front that moves from
point $(\tau_0, x_0) \in \chi$ (resp. $\chi +h  $) to  point $(\tau_1,x_1) \in\chi +h  $ (resp. $\chi $). Then it satisfies
\begin{eqnarray*}
|\dot{\chi}(t) - \dot{x}_\alpha (t)| \geq d ,\qquad  |\dot{\chi}(t)+\dot{h}(t) - \dot{x}_\alpha (t)| \geq d,
\end{eqnarray*}
for a.e. $t\in (\tau_0, \tau_1)$. This implies that
\begin{eqnarray*}
d\Delta t_\alpha \leq \int_{\tau_0}^{\tau_1} |\dot{\chi}(\tau) +\dot{h}(\tau) -\dot{x}_\alpha (\tau)| dt \leq \|h\|_{C([\tau_0,\tau_1])},
\end{eqnarray*}
where $\Delta t_\alpha \doteq \tau_1- \tau_0$.
Let $u^{\alpha+}, u^{\alpha-}$ stand for the velocities at two banks of discontinuity $x_\alpha$. Then from above inequality we can deduce for every $t>\tilde{t}\geq 0$ that
\begin{eqnarray}\label{eq:5.10}
\begin{split}
\int^t_{\tilde{t}} |u(\tau,\chi(\tau)) - u(\tau,\chi(\tau)+ h(\tau))|d\tau
& = \sum_{\alpha} |u^{\alpha+} - u^{\alpha-}| \Delta t_\alpha\\[5pt]
& \leq \frac{\ 1\ }{d}\|h\|_{C({ [\tilde{t},t]})}\sum_{\alpha} |u^{\alpha+} - u^{\alpha-}| \\[5pt]
&= \mathcal{O}(1)\|h\|_{C([\tilde{t},t])} T.V.\{(u,p); \chi((\tilde{t},t))\}.
\end{split}
\end{eqnarray}

Define a homotopic mapping
\begin{eqnarray*}
\chi_{\tilde{\theta}} (t) = \tilde{\theta} \chi_{\mathfrak{p}}(t) + (1-\tilde{\theta}) \tilde{\chi}(t) \quad \mbox{for}\ \tilde{\theta} \in [0,1].
\end{eqnarray*}
Decompose $[0,1]$ into finitely many small intervals $[\tilde{\theta}_i, \tilde{\theta}_{i+1}]$ with $0=\tilde{\theta}_0 < \tilde{\theta}_1 < \cdots <\tilde{\theta}_{n-1} < \tilde{\theta}_n=1$,
so that the wave interactions do not occur in each region between the curves $\chi_{\tilde{\theta}_i}$ and $\chi_{\tilde{\theta}_{i+1}}$.
Then, by \eqref{eq:5.8} and \eqref{eq:5.10}, we obtain
\begin{eqnarray*}
\begin{split}
&\quad \int^t_{\tilde{t}} |u(\tau, \chi_{\mathfrak{p}}(\tau)) - u(\tau, \tilde{\chi}(\tau)| d\tau\\[5pt]
	& \leq \sum_i  \int^t_{\tilde{t}} |u(\tau,\chi_{\tilde{\theta}_i}(\tau)) - u(\tau,\chi_{\tilde{\theta}_{i+1}}(\tau))| d\tau \\[5pt]
	& \leq \mathcal{O}(1)T.V.\{(u,p); \chi_{\tilde{\theta}_i}((\tilde{t},t))\}\sum_i \|\chi_{\tilde{\theta}_i}(\tau) - \chi_{\tilde{\theta}_{i+1}}(\tau) \|_{C([\tilde{t},t])} \\[5pt]
	& \leq \mathcal{O}(1) \Big( T.V.\{{U}_0(\cdot);\mathbb{R}_{+}\} + T.V.\{u_{\mathfrak{p}}(\cdot);\mathbb{R}_{+}\} \Big)\|\chi_{\mathfrak{p}}(\cdot)-\tilde{\chi}(\cdot)\|_{C([\tilde{t},t])},
\end{split}
\end{eqnarray*}
which gives the estimate \eqref{eq:5.9}.

\end{proof}

\section{$L^1$-stability for the entropy solutions to (\emph{IBVP})}

In this section, we will further study the $L^1$-stability of entropy solutions $U$ to (\emph{IBVP}). Suppose that
$U^{\varepsilon}=(U^{\textsc{e}, \varepsilon}, Y^{\varepsilon}_{1})$ and $V^{\varepsilon}=(V^{\textsc{e}, \varepsilon}, Y^{\varepsilon}_{2})$
with $U^{\textsc{e}, \varepsilon}=(\rho^{\varepsilon}_1, u^{\varepsilon}_1,p^{\varepsilon}_1)^{\top}$ and $V^{\textsc{e}, \varepsilon}=(\rho^{\varepsilon}_2, u^{\varepsilon}_2,p^{\varepsilon}_2)^{\top}$
are two $\varepsilon$-approximate solutions corresponding to the initial-boundary data $(U^{\varepsilon}_{0}, u^{\varepsilon}_{\mathfrak{p}})$
and $(V^{\varepsilon}_{0}, v^{\varepsilon}_{\mathfrak{p}})$, respectively. In the following discussion, we will drop the superscript $\varepsilon$ in ${U}^\varepsilon$, ${V}^\varepsilon$ as well as the corresponding data for simplicity.

For given boundary data $u_{\mathfrak{p}}$ and $v_{\mathfrak{p}}$, we define the corresponding boundary curves by
\begin{eqnarray*}
\chi_{\mathfrak{p},1}(t)\doteq \int_0^t u_{\mathfrak{p}}(\tau) d\tau, \qquad \chi_{\mathfrak{p},2}(t)\doteq \int_0^t v_\mathfrak{p}(\tau) d\tau
\end{eqnarray*}
for $t\geq 0$.
Then denote their minimal and maximal curves respectively by
\begin{equation*}\label{Def chi bar}
\chi^{\textrm{m}}_{\mathfrak{p}}(t) \doteq \min\big\{\chi_{\mathfrak{p},1}(t),\ \chi_{\mathfrak{p},2}(t)\big\}, \quad
\chi^{\textsc{m}}_{\mathfrak{p}}(t) \doteq \max\big\{\chi_{\mathfrak{p},1}(t),\ \chi_{\mathfrak{p},2}(t)\big\} .
\end{equation*}
Indeed, $\chi^{\textsc{m}}_{\mathfrak{p}}(t)$ is a $2$-quasi-characteristic associated with the solutions $U$ and $V$.
Besides, let's denote the paths of large $4$-shocks in $U$ and $V$ by $\chi_{\rm{s},1}(t)$, $\chi_{\rm{s},2}(t)$, respectively.
Then define another two curves
\begin{equation*}\label{Def chi3 bar}
\chi^{\textrm{m}}_{\rm{s}}(t) \doteq \min\big\{\chi_{\rm{s},1}(t),\ \chi_{\rm{s},2}(t)\big\}, \qquad \chi^{\textsc{m}}_{\rm{s}}(t) \doteq \max\big\{\chi_{\rm{s},1}(t),\ \chi_{\rm{s},2}(t)\big\}.
\end{equation*}

\subsection{Construction of the weighted Lyapunov functional }
For any $(t,x)\in\{(t,x)\in\mathbb{R}^2: x\geq \chi^{\textsc{m}}_{\mathfrak{p}}(t), t>0\} $, we make an orientation rule by
\begin{equation}\label{orientation rule}
(\tilde{{U}}_{\rm{l}}, \tilde{{U}}_{\rm{r}})=\left\{
\begin{aligned}
		&({U},{V}) &\qquad& \mbox{if}\ \mbox{both}\ U^{\textsc{e}}\ \mbox{and}\ V^{\textsc{e}} \in \mathcal{N}_{\delta}(\bar{U}^{\textsc{e}}_{\mathfrak{b},\rm{l}})\ \mbox{or}\ \mathcal{N}_{\delta}(\bar{U}^{\textsc{e}}_{\mathfrak{b},\rm{r}}),\\[5pt]
		&({U},{V}) &\qquad& \mbox{if}\ U^{\textsc{e}} \in \mathcal{N}_{\delta}(\bar{U}^{\textsc{e}}_{\mathfrak{b},\rm{l}})\ \mbox{and } V^{\textsc{e}}\in\mathcal{N}_{\delta}(\bar{U}^{\textsc{e}}_{\mathfrak{b},\rm{r}}),\\[5pt]
		&({V},{U}) &\qquad& \mbox{if}\ V^{\textsc{e}} \in \mathcal{N}_{\delta}(\bar{U}^{\textsc{e}}_{\mathfrak{b},\rm{l}})\ \mbox{and } U^{\textsc{e}}\in\mathcal{N}_{\delta}(\bar{U}^{\textsc{e}}_{\mathfrak{b},\rm{r}}).
\end{aligned}
\right.
\end{equation}
Then, according to Lemma \ref{lem:2.5}, states $\tilde{{U}}_{\rm{l}}$ and $\tilde{{U}}_{\rm{r}}$ can be connected  by the Hugoniot curves of system \eqref{eq:2.2}.
That is, there exists a group of distance indices $\mathbf{q}= (q_1, q_2, q_3, q_4)$ such that
\begin{eqnarray*}
\tilde{{U}}_{\rm{r}}=\mathcal{H}(\mathbf{q})(\tilde{{U}}_{\rm{l}})=S_4(q_4)\circ S_3(q_3)\circ S_2(q_2)\circ S_1(q_1)(\tilde{{U}}_{\rm{l}}).
\end{eqnarray*}
%Moreover, it can be rewritten as
%\begin{eqnarray*}
%&\tilde{U}^{\textsc{e}}_{\rm{r}} = S^{\textsc{e}}_4(q_4)\circ  S^{\textsc{e}}_2(q_2)\circ S^{\textsc{e}}_1(q_1)(\tilde{U}^{\textsc{e}}_{\rm{l}}), \qquad
%\tilde{Y}_{\rm{r}} = S^{\textsc{y}}_3(q_3)(\tilde{Y}_{\rm{l}}) = \tilde{Y}_{\rm{l}} + q_3.
%\end{eqnarray*}
%Set $\omega_0=\tilde{U}^{\textsc{e}}_{\rm{l}}$ and $\omega_4=\tilde{U}^{\textsc{e}}_{\rm{r}}$ and define the intermediate states
%\begin{equation*}
%\omega_1 \doteq S^{\textsc{e}}_1(q_1)(\tilde{U}^{\textsc{e}}_{\rm{l}}), \quad \omega_3 =\omega_2  \doteq S^{\textsc{e}}_2(q_2)\circ  S^{\textsc{e}}_1(q_1)(\tilde{U}^{\textsc{e}}_{\rm{l}}).
%\end{equation*}
Set $\omega_0=\tilde{U}_{\rm{l}}$ and $\omega_4=\tilde{U}_{\rm{r}}$. Define the intermediate states
\begin{equation*}
%\omega_1 \doteq S_1(q_1)(\tilde{U}_{\rm{l}}), \quad \omega_3 =\omega_2  \doteq S_2(q_2)\circ  S_1(q_1)(\tilde{U}_{\rm{l}}).
\omega_i \doteq S_i(q_i)(\omega_{i-1}) \qquad  (i=1,2,3).
\end{equation*}
Obviously, the total strength $\mathop{\sum}_{i=1}^4 |q_i|$ is equivalent to $|{U}-{V}|$.
Henceforth, we will employ notation $\lambda_i \doteq \lambda_i(\omega_{i-1}, \omega_i)$ $(1\leq i\leq 4)$ for brevity.

Now we define Lyapunov functional for (\emph{IBVP}) by
\begin{equation*}
\mathscr{L}({U}(t), {V}(t))= \sum_{i=1}^4 \int^{+\infty}_{\chi^{\textrm{M}}_{\mathfrak{p}} (t)} \ell_i \mathcal{W}_i |q_i|dx,
\end{equation*}
where $\ell_i=\ell_i(t,x)$ is a function to balance the distinct speeds of large shocks. Assume further $\ell_i$ is piecewise constant in $x$ and assigned values as in Tables 1 and 2.

%Now, we define Lyapunov functional for \emph{IBVP} by
%\begin{equation*}
%\Phi(\mathbf{U}(t), \mathbf{V}(t))= \sum\limits_{i=1}^4 \int_{\bar{\chi} (t)}^{+\infty} |q_i|\cdot  \ell_i W_i dx .
%\end{equation*}
%Here $\ell_i=\ell_i(t,x)$ is a function to balance the distinct speed of large shocks.
%To simplify, we further assume $\ell_i$ is piecewise constant in $x$, and assigned values as in Tables 1 and 2.

\begin{table}[!h]
\begin{center}
\begin{tabu}to 0.6\textwidth{|X[c]|X[c]|X[c]|X[c]|}
\hline			
$x$  & ($\chi^{\textsc{m}}_{\mathfrak{p}}, \ \chi^{\textrm{m}}_{\rm{s}}$) & ($\chi^{\textrm{m}}_{\rm{s}},\ \chi^{\textsc{m}}_{\rm{s}}$) & ($\chi^{\textsc{m}}_{\rm{s}}, \ +\infty$) \\[2pt]
\hline
% after \\: \hline or \cline{col1-col2} \cline{col3-col4} ...			
$\ell_1$ & $\ell$ & $\ell^2$ & $\ell^3$ \\[2pt]
			$\ell_2$ & 1 & $\ell^2$ & $\ell^3$\\[2pt]
			$\ell_3$ & $1/\mathcal{K}$ & $\ell/\mathcal{K}$ & $\ell^2/\mathcal{K}$ \\[2pt]
			$\ell_4$ & $\kappa_0\ell$ & 1 & $\ell^3$ \\[2pt]
			\hline
		\end{tabu}
	\end{center}
\caption{Distribution of $\ell_i (t,x)$ under condition (C1)}
\label{ell distribution for (C1)}
\end{table}

\begin{table}[!h]
\begin{center}
		\begin{tabu}to 0.6\textwidth{|X[c]|X[c]|X[c]|X[c]|}
			\hline
			% after \\: \hline or \cline{col1-col2} \cline{col3-col4} ...
			$x$  &  ($\chi^{\textsc{m}}_{\mathfrak{p}}, \ \chi^{\textrm{m}}_{\rm{s}}$) & ($\chi^{\textrm{m}}_{\rm{s}},\ \chi^{\textsc{m}}_{\rm{s}}$) & ($\chi^{\textsc{m}}_{\rm{s}}, \ +\infty$) \\[2pt]
			\hline
			$\ell_1$ & $\ell$ & $\ell^2$ & $\ell^3$ \\[2pt]
			$\ell_2$ & 1 & $\ell^2$ & $\ell^3$ \\[2pt]
			$\ell_3$ & $\ell^4$ & $\ell^5$ & $\ell^6$ \\[2pt]
			$\ell_4$ & $\kappa_0\ell$ & 1 & $\ell^3$ \\[2pt]
			\hline
		\end{tabu}
	\end{center}
\caption{Distribution of $\ell_i (t,x)$ under condition (C2) or (C3)} \label{ell distribution (C2) or (C3)}
\end{table}

We require that the constants $\mathcal{K}$ and $\ell$ are chosen so large that
\begin{equation}\label{condition kappa ell}
\mathcal{K} \gg \ell^6 \gg 1 ,\qquad  \ell > \frac{1}{\theta_3}.
\end{equation}
The different strategies of $\ell_3$ distribution in two tables are caused by lack of smallness restriction on $T.V.\{Y_0; \mathbb{R}_{+}\}$ in condition $\textrm{(C1)}$.
So we require  $\ell_3$ is small enough for non-reacting flow, while $\ell_3$ is much larger than other $\ell_i$ for reacting flow.
The positive coefficient $\kappa_0$ will be specified later. $\mathcal{W}_i$ is the key weight to retrieve the monotonicity of $\mathscr{L}({U}(t), {V}(t))$. It is originally introduced by
Bressan-Liu-Yang\cite{Bressan-Liu-Yang-1999} for general small \emph{BV} data and by \cite{Lewicka-2004} for large \emph{BV} data. Specifically,
\begin{equation}\label{Def Wi}
\mathcal{W}_i (x,t)\doteq 1 + \mathcal{K}\Big( A_i(x) +K\big(\mathcal{Q}({U}(t) )+\mathcal{Q}({V}(t))\big) +K\mathcal{L}_{\hat{\textsc{b}}}(t)\Big),
\end{equation}
where $K$ is the large constant from Glimm functional $\mathcal{G}(t)$, and the linear terms $A_i$ only includes part of waves selected from ${U}$ and ${V}$.
Precisely, let $\alpha$ be a wave of the $k_\alpha$th family, with location  $x_\alpha$ and strength $|\alpha|$.
Given time $t$, introduce notations
\begin{eqnarray*}
\begin{split}
&\mathcal{J}^{\textsc{e}}\doteq\mathcal{J}^{\textsc{e}}({U})\cup \mathcal{J}^{\textsc{e}}({V}) ,\qquad  \mathcal{J}^{\textsc{y}}_3 \doteq \mathcal{J}^{\textsc{y}}_3 ({U}) \cup  \mathcal{J}^{\textsc{y}}_3 ({V}),\\[5pt]
&\mathcal{J}^{\textsc{e}}_{\rm{l}}\doteq\mathcal{J}^{\textsc{e}}_{\rm{l}}({U})\cup \mathcal{J}^{\textsc{e}}_{\rm{l}}({V}),
\qquad \mathcal{J}^{\textsc{e}}_{\rm{r}}\doteq\mathcal{J}^{\textsc{e}}_{\rm{r}}({U})\cup \mathcal{J}^{\textsc{e}}_{\rm{r}}({V}),
\end{split}
\end{eqnarray*}
where
\begin{eqnarray*}
\begin{split}
&\mathcal{J}^{\textsc{e}}({U}) = \{ \alpha \ | \ \alpha \mbox{ is a small wave of }{U},\  k_\alpha \neq 3\}, \\[5pt]
&\mathcal{J}^{\textsc{e}}_{\rm{l}}({U}) = \{ \alpha \ | \ \alpha \in \mathcal{J}({U}) \mbox{ and component } U^{\textsc{e}}(t,x_\alpha \pm 0) \in \mathcal{N}_{\delta}(\bar{U}^{\textsc{e}}_{\mathfrak{b},\rm{l}}) \},\\[5pt]
&\mathcal{J}^{\textsc{e}}_{\rm{r}}({U}) = \{ \alpha \ | \ \alpha \in \mathcal{J}({U}) \mbox{ and component } U^{\textsc{e}}(t,x_\alpha \pm 0) \in \mathcal{N}_{\delta}(\bar{U}^{\textsc{e}}_{\mathfrak{b},\rm{r}}) \}, \\[5pt]
&\mathcal{J}^{\textsc{y}}_3 ({U})=\{  \alpha \ | \ \alpha \mbox{ is a 3-wave of }{U}\},
\end{split}
\end{eqnarray*}
and similarly define $ \mathcal{J}^{\textsc{e}}({V}), \mathcal{J}^{\textsc{e}}_{\rm{l}}({V})$, $ \mathcal{J}^{\textsc{e}}_{\rm{r}}({V})$ and $\mathcal{J}^{\textsc{y}}_3 ({V})$.
For $i=1,4 $, we define
\begin{eqnarray*}
	A_i(t,x)= &   \Bigg(\displaystyle\sum\limits_{
		\begin{subarray}{c}
			i<k_\alpha \leq 4, \ 	x_\alpha <x, \\
			\alpha \in \mathcal{J}^{\textsc{e}}
		\end{subarray}
	}
    +\sum\limits_{
        \begin{subarray}{c}
            1\leq k_\alpha <i,\  x_\alpha >x, \\
			\alpha \in \mathcal{J}^{\textsc{e}}
		\end{subarray}
	}
	\Bigg) |\alpha|
	+  \theta_3  \Bigg(\sum\limits_{
		\begin{subarray}{c}
			i=1, \ 	x_\alpha <x, \\
			\alpha \in \mathcal{J}^{\textsc{y}}_3
		\end{subarray}
	}
	+\sum\limits_{
		\begin{subarray}{c}
			i=4,\  x_\alpha >x, \\
			\alpha \in \mathcal{J}^{\textsc{y}}_3
		\end{subarray}
	}
	+ \sum\limits_{\alpha \in \mathcal{J}^{\textsc{y}}_3}
	\Bigg) |\alpha|   \\
& + \left\{
	\begin{aligned}
		& \Bigg(\sum\limits_{
			\begin{subarray}{c}
				k_\alpha =i ,  x_\alpha <x,\\
				\alpha \in \mathcal{J}^{\textsc{e}}({U})\\
			\end{subarray}
		}
		+ \sum\limits_{
			\begin{subarray}{c}
				k_\alpha =i ,  x_\alpha >x \\
				\alpha \in \mathcal{J}^{\textsc{e}}({V})
			\end{subarray}
		}
		\Bigg) |\alpha|
		\ \ \hspace{2cm} && \mbox{if   small }  q_i(x)<0 ,\\
		& \Bigg(\sum\limits_{
			\begin{subarray}{c}
				k_\alpha =i ,  x_\alpha <x,\\
				\alpha \in \mathcal{J}^{\textsc{e}}({V})\\
			\end{subarray}
		}
		+ \sum\limits_{
			\begin{subarray}{c}
				k_\alpha =i ,  x_\alpha >x \\
				\alpha \in \mathcal{J}^{\textsc{e}}({U})
			\end{subarray}
		}
		\Bigg) |\alpha|
		&& \mbox{if   small }  q_i(x) >0 ,\\
		& \Bigg(\sum\limits_{
			\begin{subarray}{c}
				k_\alpha =i ,  x_\alpha <x,\\
				\alpha \in \mathcal{J}^{\textsc{e}}_{\rm{l}}\\
			\end{subarray}
		}
		+ \sum\limits_{
			\begin{subarray}{c}
				k_\alpha =i ,  x_\alpha >x \\
				\alpha  \in \mathcal{J}^{\textsc{e}}_{\rm{r}}
			\end{subarray}
		}
		\Bigg) |\alpha| \ \ \
		&& \text{if } i=4 \mbox{ and } |q_i(x)|\ \mbox{is\ large },\\
		&\ \ 0   && \mbox{otherwise};
	\end{aligned}
	\right.
\end{eqnarray*}
while for $i=2,3$, we define
\begin{eqnarray*}
A_i(t,x)=   \Bigg(\sum\limits_{
	\begin{subarray}{c}
		k_\alpha =4, \  x_\alpha <x,  \\
		\alpha \in \mathcal{J}^{\textsc{e}}
	\end{subarray}
}
+\sum\limits_{
	\begin{subarray}{c}
		k_\alpha=1, \ x_\alpha >x, \\
		\alpha \in \mathcal{J}^{\textsc{e}}
	\end{subarray}
}
+ \theta_3 \sum\limits_{
	\alpha \in \mathcal{J}^{\textsc{y}}_3  }
\Bigg) |\alpha|.
\end{eqnarray*}
In addition, the term $\mathcal{Q}$ in \eqref{Def Wi} remains the amount of potential interactions in $\mathcal{G}(t)$; and
the term $\mathcal{L}_{\hat{\textsc{b}}}(t)$ is given by
\begin{align*}
\mathcal{L}_{\hat{\textsc{b}}}(t) & \doteq \mathcal{L}_{\textsc{b}}({U}(t)) +\mathcal{L}_{\textsc{b}}({V}(t))\\[5pt]
	&  = \Big(T.V.\{u_{\mathfrak{p}}; (t,+\infty)\}  + \sum_{ k\varepsilon \geq t } \Upsilon^{\mathfrak{p}}_k + \sum\limits_{k\varepsilon \geq t} \Upsilon^{\rm{s}}_k \Big) _{{U}}\\[5pt]
	& \quad \   + \Big( T.V.\{v_{\mathfrak{p}}; (t,+\infty)\} + \sum_{ k\varepsilon \geq t } \Upsilon^{\mathfrak{p}}_k+ \sum_{k\varepsilon \geq t} \Upsilon^{\rm{s}}_k\Big)  _{{V}}.
\end{align*}

Finally, we claim that if the initial-boundary data $({U}_0, u_{\mathfrak{p}})$ and $({V}_0, v_{\mathfrak{p}})$ are sufficiently small in \emph{BV} sense, then there holds
\begin{equation}\label{condition W}
1 \leq \mathcal{W}_i \leq \mathcal{W}  \qquad  (1\leq i\leq 4)
\end{equation}
for some constant $\mathcal{W}$ very close to 1.

\subsection{$L^1$-stability and uniqueness for non-reacting flow}
By means of Lyapunov functional $\mathscr{L}$, we now proceed to show the stability of global solutions for non-reacting flow.
Owing to condition $\textrm{(C1)}$, the functional $\mathscr{L}({U}(t), {V}(t))$ is Lipschitz continuous in $t$ and differentiable at any time except a finite subset of $(0,+\infty)$.
We will consider how $\mathscr{L}({U}(t), {V}(t))$ evolves at the time $t$ without waves interaction away from ${\chi}^{\textsc{m}}_{\mathfrak{p}}(t)$ and
 without waves emergence on ${\chi}^{\textsc{m}}_{\mathfrak{p}}(t)$. To this end, let's suppose that $\mathcal{J}$ denotes the set of all the waves in ${U} $ and ${V}$ at time $t$.
And assume that $\alpha\in\mathcal{J}$ which belongs to $k_\alpha$-familiy. Differentiating $\mathscr{L}$ with respect to $t$ gives that
\begin{eqnarray*}
\begin{split}
\dot{\mathscr{L}} ({U}(t) ,{V}(t) )
=&\mathop{\sum}_{\alpha \in\mathcal{J} }\mathop{\sum}_{i=1}^{4}
	\Big(|q_i^{\alpha-}| \ell_i^{\alpha-} \mathcal{W}_i ^{\alpha-}  -|q_i^{\alpha+}|
	\ell_i^{\alpha+} \mathcal{W}_i ^{\alpha+} \Big) \dot{x}_\alpha
	-\sum_{i=1}^4 |q_i^{\mathfrak{p}+}|
	\ell_i^{\mathfrak{p}+} \mathcal{W}_i ^{\mathfrak{p}+} \dot{\chi}^{\textsc{m}}_{\mathfrak{p}}\\[5pt]
	=&\mathop{\sum}_{\alpha \in \mathcal{J} }\mathop{\sum}_{i=1}^4
	\Big(|q_i^{\alpha+}| \ell_i^{\alpha+}\mathcal{ W}_i ^{\alpha+}(\lambda_i^{\alpha+}-\dot{x}_\alpha )
	- |q_i^{\alpha-}| \ell_i^{\alpha-} \mathcal{W}_i ^{\alpha-}(\lambda_i^{\alpha-}-\dot{x}_\alpha )\Big) \\[5pt]
	& \quad + \sum_{i=1}^4 |q_i^{\mathfrak{p}+}| \ell_i^{\mathfrak{p}+} \mathcal{W}_i ^{\mathfrak{p}+}\big(\lambda_i^{\mathfrak{p}+}-\dot{\chi}^{\textsc{m}}_{\mathfrak{p}} \big)\\[5pt]
	&\doteq \mathop{\sum}_{\alpha \in \mathcal{J} }\mathop{\sum}_{i=1}^4 \mathscr{E}_{i,\alpha}
	+\sum_{i=1}^4 \mathscr{E}_{i,\mathfrak{p}},
\end{split}
\end{eqnarray*}
where the symbols $q_i^{\alpha-}, \ell_i^{\alpha-},  \mathcal{W}_i ^{\alpha-}, \lambda_i^{\alpha-} $
(resp. $q_i^{\alpha+}, \ell_i^{\alpha+},  \mathcal{W}_i ^{\alpha+} , \lambda_i^{\alpha+}$)
stand for the quantities on the left-hand side (resp. right-hand side) of a front $x_\alpha$, while the terms
 $q_i^{\mathfrak{p}+}, \ell_i^{\mathfrak{p}+},  \mathcal{W}_i ^{\mathfrak{p}+}, \lambda_i^{\mathfrak{p}+}$
 on curve $x={\chi}^{\textsc{m}}_{\mathfrak{p}}(t)$ can be interpreted in the similar way.

In the sequel, our main purpose is to verify that
\begin{eqnarray}\label{eq:conclusion Ei}
\begin{split}
&\mathop{\sum}_{i=1}^4 \mathscr{E}_{i,\alpha}< \mathcal{O}(1)\varepsilon|\alpha|,\qquad     && \alpha\in \mathcal{J} \setminus\mathcal{NP}, \\[5pt]
&\mathop{\sum}_{i=1}^4 \mathscr{E}_{i,\alpha} \leq \mathcal{O}(1)|\alpha|,\qquad && \alpha\in \mathcal{NP}, \\[5pt]
&\sum_{i=1}^4 \mathscr{E}_{i,\mathfrak{p}} \leq \mathcal{O}(1) |\Delta u^{\textsc{m}}_{\mathfrak{p}}|,
\end{split}
\end{eqnarray}
where $\Delta u^{\textsc{m}}_{\mathfrak{p}}= u_1(t,{\chi}^{\textsc{m}}_{\mathfrak{p}}(t)+0) - u_2(t,{\chi}^{\textsc{m}}_{\mathfrak{p}}(t)+0)$.
Given wave $\alpha \in \mathcal{J}$, we rewrite $\mathscr{E}_{i,\alpha}$ in the concise form
\begin{equation}\label{Def Ei}
\mathscr{E}_i = |q_i^+| \ell_i^+ \mathcal{W}_i^+ (\lambda_i^+ - \dot{x}_\alpha) -  |q_i^-| \ell_i^- \mathcal{W}_i^- (\lambda_i^- - \dot{x}_\alpha), \quad 1\leq i\leq 4,
\end{equation}
and prove the estimates \eqref{eq:conclusion Ei} in different cases.

\begin{figure}[ht]
\begin{minipage}[t]{0.45\textwidth}
\centering
\begin{tikzpicture}[scale=0.7]
\draw [thin][->] (-4.3, -1)--(4, -1);
\draw [thin][line width=0.02cm](-3.5,0.7)--(-1, 0.7)--(-1,-0.3)--(3,-0.3);
\draw [thin][line width=0.02cm](-3.5,1.5)--(1.5, 1.5)--(1.5,0.2)--(3,0.2);
\draw[thin][dotted][line width=0.03cm](-1,-0.3)--(-1,-1);
\draw[thin][dotted][line width=0.03cm](1.5,0.2)--(1.5,-1);
\draw [thick][->] (-1, 0.2)--(0.2, 0.2);
\draw [thick][->] (1.5, 0.9)--(2.4, 0.9);
\node at (-0.9,-1.4) {$\chi^{\textrm{m}}_{\rm{s}}$};
\node at (1.6,-1.4) {$\chi^{\textsc{m}}_{\rm{s}}$};
\node at (4,-1.3) {$x$};
\node at (-0.4,0.6) {$\alpha^{\rm{s}}_{4}$};
\node at (-3.8,1.5) {$U$};
\node at (-3.8,0.7) {$V$};
\end{tikzpicture}
\caption{Flow comparison for Case 5.1.1}\label{Case 6.1.1: flow comparison}
\end{minipage}
\begin{minipage}[t]{0.5\textwidth}
\centering
\begin{tikzpicture}[scale=0.7]
\draw [thick][dashed](0,1) circle [radius=2];
\draw [thick][dashed](5.5,1) circle [radius=1.5];
\draw [thin][->](0.2,2.2) to [out=-30, in=180](5.7, 1.1);
\draw [thin][->] (-0.2, -0.2)to [out=40, in=260](0.8, 1.2)to[out=-30, in=180](5.7, 0.9);
\draw [thin][->] (-0.2, -0.2)to [out=160, in=-80](-1.4, 1.0)to[out=10, in=240](0.1, 2.15);

\node at (0.2,2.2) {$\bullet$};
\node at (-0.2,-0.2) {$\bullet$};
\node at (5.8,1) {$\bullet$};
\node at (0.2, 2.55) {$V^{\textsc{e},-}$};
\node at (-0.6, 1.8) {$\mathbf{q}^{-}$};
\node at (0, -0.55) {$U^{\textsc{e}}$};
\node at (3.0, 0.3) {$\mathbf{q}^{+}$};
\node at (3.0, 1.7) {$\alpha^{\rm{s}}_{4}$};
\node at (5.9, 1.55) {$V^{\textsc{e},+}$};
\node at (0.2,-1.7) {$\mathcal{N}_{\delta}(\bar{U}^{\textsc{e}}_{\mathfrak{b},\rm{l}})$};
\node at (5.8,-1.5) {$\mathcal{N}_{\delta}(\bar{U}^{\textsc{e}}_{\mathfrak{b},\rm{r}})$};
\end{tikzpicture}
\caption{Projected phase space for Case 5.1.1}\label{Case 6.1.1: projected phase space}
\end{minipage}
\end{figure}

\textbf{Case 5.1.1}. \emph{Front $x_\alpha = \chi^{\rm{m}}_{\rm{s}}(t)$}. Suppose that $\alpha=\alpha^{\rm{s}}_4$ is the large $4$--shock of ${V}$ which satisfies
$V^{\textsc{e},-}=(\rho, u, p)^{\top}= S^{\textsc{e}}_4(-\alpha^{\rm{s}}_4)(V^{\textsc{e},+})$ for $V^{\textsc{e},+}=(\rho^+, u^+, p^+)^{\top}$ (see Figs \ref{Case 6.1.1: flow comparison}-\ref{Case 6.1.1: projected phase space}).
For given strength $|\alpha^{\rm{s}}_4|$, the variables $q_1^-, q_2^-, q_4^+$ are uniquely determined by $q_1^+, q_2^+, q_4^-$, but independent of  $q_3^- , q_3^+$. In particular $q_3^- = q_3^+$. In projected phase space, set intermediate state
$\omega_2^{\textsc{e},-} = S^{\textsc{e}}_2(q^-_2)\circ S^{\textsc{e}}_1(q^-_1) (U^{\textsc{e}})$.
Thus the indices $q^\pm_i$ ($i \neq 3$) satisfy the relations
\begin{align*}
&U^{\textsc{e}}+ \int_0^{q_1^-} \textbf{r}^{\textsc{e}}_1(U^{\textsc{e}},\sigma) d\sigma +  q_2^- \textbf{r}^{\textsc{e}}_2 +   \int_0^{q_4^-} \textbf{r}^{\textsc{e}}_4(\omega_2^{\textsc{e},-},\sigma) d\sigma
=V^{\textsc{e},+} + \int_0^{-\alpha^{\rm{s}}_4} \textbf{r}^{\textsc{e}}_4(V^{\textsc{e},+},\sigma) d\sigma,
\end{align*}
and
\begin{align*}
&U^{\textsc{e}}+ \int_0^{q_1^+} \textbf{r}^{\textsc{e}}_1(U^{\textsc{e}},\sigma) d\sigma +  q_2^+\textbf{r}^{\textsc{e}}_2
=V^{\textsc{e},+} +\int_0^{-q_4^+} \textbf{r}^{\textsc{e}}_4(V^{\textsc{e},+},\sigma) d\sigma .
\end{align*}
Subtract to obtain that
\begin{align*}
\int_{{ q_1^+}}^{q_1^-} \textbf{r}^{\textsc{e}}_1(U^{\textsc{e}},\sigma) d\sigma + (q_2^- - q_2^+)\textbf{r}^{\textsc{e}}_2
+  \int_0^{q_4^-} \textbf{r}^{\textsc{e}}_4(\omega_2^{\textsc{e},-},\sigma) d\sigma
=  \int_{-q_4^+}^{-\alpha^{\rm{s}}_4}\textbf{r}^{\textsc{e}}_4(V^{\textsc{e},+},\sigma) d\sigma .
\end{align*}
Assume  $q_1^- = f_1 (q_1^+ , q_2^+ ,q_4^-)$. By direct computation, we immediately find that
\begin{equation}\label{partial q1- 1}
\frac{\partial }{\partial q_1^+} f_1(q_1^+ ,0,0) = 1, \qquad \frac{\partial }{\partial q_2^+} f_1(0,q_2^+ ,0) = 0.
\end{equation}
Let $q_1^+ = q_2^+=0$, then
\begin{align*}
\int_0^{q_1^-} \textbf{r}^{\textsc{e}}_1(U^{\textsc{e}},\sigma) d\sigma
+  q_2^- \textbf{r}^{\textsc{e}}_2
+  \int_0^{q_4^-}\textbf{r}^{\textsc{e}}_4(\omega_2^{\textsc{e},-},\sigma) d\sigma
=  \int_{-q_4^+}^{-\alpha^{\rm{s}}_4} \textbf{r}^{\textsc{e}}_4(V^{\textsc{e},+},\sigma) d\sigma .
\end{align*}
Differentiating this with respect to $q_4^-$ and taking $q_4^- =0$, it yields
\begin{equation*}
\textbf{r}^{\textsc{e}}_1 (V^{\textsc{e},-}) \frac{\partial f_1}{\partial q_4^-}
+ \textbf{r}^{\textsc{e}}_2 \frac{\partial q_2^-}{\partial q_4^-} +\textbf{r}^{\textsc{e}}_4 (V^{\textsc{e},-})
=  \textbf{r}^{\textsc{e}}_4 (V^{\textsc{e},+}, -\alpha^{\rm{s}}_4)\frac{\partial q_4^+}{\partial q_4^-},
\end{equation*}
where $\textbf{r}^{\textsc{e}}_4 (V^{\textsc{e},+}, -\alpha^{\rm{s}}_4)= (\rho', u', p')^{\top}$ with parameter $\sigma = -\alpha^{\rm{s}}_4$. From above equation, we can further derive that
\begin{equation}\label{partial q1- 2}
\kappa_{\rm{s}} \doteq\frac{\partial }{\partial q_4^-} f_1(0,0,0)
= -\frac{\det \big(\textbf{r}^{\textsc{e}}_4 (V^{\textsc{e},-}),\ \textbf{r}^{\textsc{e}}_2,\ \textbf{r}^{\textsc{e}}_4(V^{\textsc{e},+}, -\alpha^{\rm{s}}_4) \big)}
{\det\big(\textbf{r}^{\textsc{e}}_1 (V^{\textsc{e},-}),\ \textbf{r}^{\textsc{e}}_2,\ \textbf{r}^{\textsc{e}}_4(V^{\textsc{e},+}, -\alpha^{\rm{s}}_4) \big)},
\end{equation}
where
\begin{eqnarray*}
\begin{split}
\det \big(\textbf{r}^{\textsc{e}}_4 (V^{\textsc{e},-}),\ \textbf{r}^{\textsc{e}}_2,\ \textbf{r}^{\textsc{e}}_4(V^{\textsc{e},+}, -\alpha^{\rm{s}}_4) \big)
	&= \frac{2}{(\gamma+1)c}\big(\gamma pu' -cp'\big),\\[5pt]
\det\big(\textbf{r}^{\textsc{e}}_1 (V^{\textsc{e},-}),\ \textbf{r}^{\textsc{e}}_2,\ \textbf{r}^{\textsc{e}}_4(V^{\textsc{e},+}, -\alpha^{\rm{s}}_4) \big)
	& =\frac{2}{(\gamma+1)c}\big(-\gamma pu' -cp'\big).
\end{split}
\end{eqnarray*}
Then, from Lemma \ref{lem:2.1a}, it follows that
\begin{eqnarray*}
|\kappa_{\rm{s}}|=\left|\frac{\gamma pu' -cp'}{\gamma pu' + cp'}\right| <1.
\end{eqnarray*}
By (\ref{partial q1- 1})(\ref{partial q1- 2}) and Taylor formula, we obtain the estimate
\begin{equation}\label{estimate q1- case 4.3.1}
q_1^- = q_1^+ + \kappa_{\rm{s}} q_4^- +\mathcal{O}(1)\big( |q_1^+|^2 + |q_2^+|^2 + |q_4^-|^2\big)
\end{equation}
with $|\kappa_{\rm{s}}|<1$.

\begin{lemma}[Stability condition]\label{Lemma mu1<1}
Assume parameter $\epsilon$ in the conditions $(\rm{C1})$-$(\rm{C3})$ is small enough. Then there holds
\begin{equation}\label{estimate mu*speed ratio}
|\kappa_{\rm{s}}| \cdot \frac{|\lambda_1(V^-) - \dot{x}_\alpha|}{|\lambda_4(V^-) - \dot{x}_\alpha|} <1
\end{equation}
for the large $4$-shock $\alpha^{\rm{s}}_4$ joining two states  ${ V^{-}}\in\mathcal{N}_{\delta}({ \bar{U}}_{\mathfrak{b},\rm{l}})$
and ${ V^{+}}\in\mathcal{N}_{\delta}({ \bar{U}}_{\mathfrak{b},\rm{r}})$.
\end{lemma}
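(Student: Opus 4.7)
The plan is to leverage continuity in the data $(V^{\pm},\dot{x}_\alpha)$ together with the closed-form expressions for $\kappa_{\rm{s}}$, $\lambda_1(V^-)$ and $\lambda_4(V^-)$ already derived in Section 5.1, in order to reduce the stability condition to a purely algebraic identity at the background state. Since $\kappa_{\rm{s}}$, the eigenvalues $\lambda_i(V^-)$, and the $4$-shock speed $\dot{x}_\alpha$ all depend continuously on the indices $\mathbf{q}=(q_1,q_2,q_3,q_4)$ through the Hugoniot map $\mathcal{H}(\mathbf{q})$, while Propositions \ref{prop:4.1}--\ref{prop:4.2} give
\begin{equation*}
|V^{-}-\bar{U}_{\mathfrak{b},\rm{l}}|+|V^{+}-\bar{U}_{\mathfrak{b},\rm{r}}|+|\dot{x}_\alpha-\bar{\rm{s}}_{\mathfrak{b}}|=\mathcal{O}(1)\epsilon,
\end{equation*}
it suffices to establish the strict inequality in the unperturbed limit $V^{\pm}=\bar{U}_{\mathfrak{b},\rm{r}/\rm{l}}$ and $\dot{x}_\alpha=\bar{\rm{s}}_{\mathfrak{b}}$; a modulus-of-continuity argument then propagates strictness to all nearby states.

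Setting $m\doteq\bar{\rm{s}}_{\mathfrak{b}}-\bar{u}_{\rm{l}}$, I would first observe that the Lax $4$-shock entropy conditions
$\lambda_4(\bar{U}_{\mathfrak{b},\rm{r}})<\bar{\rm{s}}_{\mathfrak{b}}<\lambda_4(\bar{U}_{\mathfrak{b},\rm{l}})$ and $\lambda_1(\bar{U}_{\mathfrak{b},\rm{l}})<\bar{\rm{s}}_{\mathfrak{b}}$ imply $0<m<\bar{c}_{\rm{l}}$, so that
\begin{equation*}
\frac{|\lambda_1(\bar{U}_{\mathfrak{b},\rm{l}})-\bar{\rm{s}}_{\mathfrak{b}}|}{|\lambda_4(\bar{U}_{\mathfrak{b},\rm{l}})-\bar{\rm{s}}_{\mathfrak{b}}|}=\frac{\bar{c}_{\rm{l}}+m}{\bar{c}_{\rm{l}}-m}.
\end{equation*}
For $|\bar{\kappa}_{\rm{s}}|$ I would use the explicit formula displayed right after \eqref{partial q1- 2}; since Lemma \ref{lem:2.1a} guarantees $p'(-\bar{\alpha}^{\rm{s}}_4)>0$ and $u'(-\bar{\alpha}^{\rm{s}}_4)>0$, both the numerator and denominator there are positive, which gives the naive bound $|\bar{\kappa}_{\rm{s}}|<1$ but \emph{not} the sharper target inequality, because the accompanying factor $(\bar{c}_{\rm{l}}+m)/(\bar{c}_{\rm{l}}-m)$ exceeds one.

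To close this gap, I would convert the ratio $p'(-\bar{\alpha}^{\rm{s}}_4)/u'(-\bar{\alpha}^{\rm{s}}_4)$ into physical shock invariants by differentiating the Rankine--Hugoniot system \eqref{eq:2.6} with respect to $\alpha_4$ and evaluating at the background point. The identities $\bar{\rm{s}}_{\mathfrak{b}}[\rho]=[\rho u]$ and $\bar{\rm{s}}_{\mathfrak{b}}[\rho u]=[\rho u^2+p]$, combined with the equation of state $\bar{c}_{\rm{l}}^2=\gamma\bar{p}_{\rm{l}}/\bar{\rho}_{\rm{l}}$ and the mass-flux identity $\bar{\rho}_{\rm{l}} m=\bar{\rho}_{\rm{r}}\bar{\rm{s}}_{\mathfrak{b}}$, allow one to rewrite $\gamma\bar{p}_{\rm{l}}u'/(\bar{c}_{\rm{l}}p')$ as an explicit rational function of $m$, $\bar{c}_{\rm{l}}$, and $\bar{c}_{\rm{r}}$ alone.

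The hard part will be the final algebraic step: after substituting this rational expression into $|\bar{\kappa}_{\rm{s}}|\cdot(\bar{c}_{\rm{l}}+m)/(\bar{c}_{\rm{l}}-m)$ and cross-multiplying, the target inequality must collapse to a positivity statement equivalent to $\bar{c}_{\rm{l}}^{2}-m^{2}>0$, which is precisely the Lax subsonic-behind condition recovered above. Demonstrating that all higher-order terms in $\gamma$ and in the shock amplitude $|\bar{\alpha}^{\rm{s}}_4|$ either cancel or combine into manifestly positive expressions, \emph{without} invoking any artificial restriction of the form $1<\gamma\leq 3$ (in contrast with \cite{Ding-2018, Kuang-Zhao-2020}), is the crux of the argument; it is also what justifies the weight distribution in Tables~\ref{ell distribution for (C1)}--\ref{ell distribution (C2) or (C3)} that hinges on this sharp inequality.
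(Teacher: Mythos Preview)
Your outline is on the right track in spirit—the proof does hinge on combining the explicit formula for $\kappa_{\rm s}$ with the Rankine--Hugoniot identities—but the crucial algebraic step is both missing and misidentified.

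First, the continuity reduction to the background is unnecessary: the paper proves the inequality \emph{directly} at the perturbed states $V^{\pm}$, with no limiting argument. This is cleaner and avoids any worry about uniformity of the modulus of continuity.

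More seriously, your claim that after cross-multiplying the target ``must collapse to a positivity statement equivalent to $\bar c_{\rm l}^{2}-m^{2}>0$'' is incorrect. The paper's key algebraic move is to rewrite the product as
\[
\left|\frac{c\bigl(\gamma p u'-p'(\mathrm{s}-u)\bigr)-\bigl(c^{2}p'-\gamma p u'(\mathrm{s}-u)\bigr)}{c\bigl(\gamma p u'-p'(\mathrm{s}-u)\bigr)+\bigl(c^{2}p'-\gamma p u'(\mathrm{s}-u)\bigr)}\right|
=\left|\frac{cA-B}{cA+B}\right|,
\]
so the inequality reduces to showing \emph{both} $A=\gamma p u'-p'(\mathrm{s}-u)>0$ and $B=c^{2}p'-\gamma p u'(\mathrm{s}-u)>0$. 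Neither of these is the Lax subsonic-behind condition $c^{2}>m^{2}$; each requires a separate Rankine--Hugoniot identity. Specifically, differentiating the RH relations along the Hugoniot curve yields
\[
A=\mathrm{s}'\Bigl([p]-\tfrac{\gamma-1}{2}\rho^{+}[u]^{2}\Bigr),\qquad
B=c^{2}\mathrm{s}'\rho^{+}[u],
\]
and positivity of $A$ then uses $[u]^{2}=[\rho][p]/(\rho\rho^{+})$ together with the Hugoniot density bound $1<\rho/\rho^{+}<(\gamma+1)/(\gamma-1)$, while positivity of $B$ is immediate from $\mathrm{s}'>0$ (Lemma~\ref{lem:2.1a}) and $[u]>0$. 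You have not carried out this step, and the substitute condition you propose would not suffice: $c^{2}-m^{2}>0$ is automatic for any Lax $4$-shock and contains no information about $p',u'$, so it cannot by itself control the product $|\kappa_{\rm s}|\cdot(c+m)/(c-m)$.
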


\begin{proof}
Let ${ \rm{s}} = \dot{x}_\alpha$ be the speed of large $4$-shock $\alpha^{\rm{s}}_4$. A straightforward computation gives
\begin{eqnarray*}
\begin{split}
|\kappa_{\rm{s}}| \cdot \frac{|\lambda_1(V^-) - \dot{x}_\alpha|}{|\lambda_4(V^-) - \dot{x}_\alpha|}
	& = \bigg| \frac{\gamma p u' - cp'}{\gamma p u' + cp'}\bigg| \cdot \bigg|\frac{c+({\rm{s}}-u)}{c-({ \rm{s}}-u)}\bigg|\\[5pt]
	& = \bigg| \frac{c\big(\gamma p u' -p'({ \rm{s}}-u)\big) - \big(c^2p' -\gamma p u' ({ \rm{s}}-u)\big)}{c\big(\gamma p u' -p'({ \rm{s}}-u)\big) + \big(c^2p' -\gamma p u' ({ \rm{s}}-u)\big)}\bigg|.
\end{split}
\end{eqnarray*}
Therefore, it suffices to prove
\begin{equation}\label{eq:key estimates}
\gamma p u' -({ \rm{s}}-u)p'>0 ,\qquad  c^2p' -\gamma ({ \rm{s}}-u)p u'>0.
\end{equation}
By the Rankine-Hugoniot condition $F({ V^{-}})-F({ V^{+}})={ \rm{s}}\big(E({ V^{-}})-E({ V^{+}})\big)$, we have

\begin{eqnarray}\label{eq:6.14}
\begin{split}
& [u]^2 = \frac{[\rho] [p]}{\rho\rho^+} ,\\[5pt]
& p'-\rho u'({ \rm{s}}-u)= { \rm{s}}'( [\rho u] -u[\rho]),\\[5pt]
&\gamma p u' -p'({ \rm{s}}-u) = { \rm{s}}'  \Big([p] - \frac{\gamma-1}{2}\cdot \rho^+ [u]^2\Big) .
\end{split}
\end{eqnarray}
The equality $\eqref{eq:6.14}_1$ together with $1< \rho/\rho^+  <  \frac{\gamma+1}{\gamma-1}$ gives
\begin{equation*}\label{RH fact 6}
[p] - \frac{\gamma-1}{2}\cdot \rho^+ [u]^2 = \frac{1}{2} [p]\Big( 3-\gamma   + (\gamma-1)\cdot \frac{\rho^+}{\rho }\Big) >0.
\end{equation*}
By $\eqref{eq:6.14}_2$-$\eqref{eq:6.14}_3$ and ${ \rm{s}}'>0$, we see that the first inequality in \eqref{eq:key estimates} holds. Equality $\eqref{eq:6.14}_1$ also yields
the second inequality in \eqref{eq:key estimates}, i.e.,
\begin{eqnarray*}
c^2p' -\gamma ({ \rm{s}}-u) p u'= c^2 \big(p' - ({ \rm{s}}-u)\rho u'\big)=c^2{ \rm{s}}' ([\rho u] - u [\rho])=c^2 { \rm{s}}' \rho^+ [u] >0.
\end{eqnarray*}
\end{proof}

Now we proceed to prove the estimate \eqref{eq:conclusion Ei} for $x_\alpha=\chi^{\rm{m}}_{\rm{s}}(t)$. Observe that
\begin{align*}
& \lambda_1^- - \dot{x}_\alpha=  \lambda_1(V^-) - \dot{x}_\alpha  + \mathcal{O}(1)\big(|q_1^+| + |q_2^+| +|q_4^-| \big),\\
& \lambda_4^- - \dot{x}_\alpha=   \lambda_4(V^-) - \dot{x}_\alpha  + \mathcal{O}(1)|q_4^-|,
\end{align*}
and then
\begin{equation*}\label{estimate m}
|\kappa_{\rm{s}} |\cdot |\lambda_1^- - \dot{x}_\alpha|
= \frac{|\kappa_{\rm{s}} | \cdot |\lambda_1(V^-) - \dot{x}_\alpha|}{|\lambda_4(V^-) - \dot{x}_\alpha|}
	\cdot |\lambda_4 ^- - \dot{x}_\alpha| + \mathcal{O}(1)\big(|q_1^+| + |q_2^+| +|q_4^-| \big).
\end{equation*}
This together with (\ref{estimate mu*speed ratio}) implies that
\begin{equation}\label{estimate mu1}
|\kappa_{\rm{s}} | \cdot |\lambda_1^- - \dot{x}_\alpha| <\kappa_1 |\lambda_4 ^- - \dot{x}_\alpha|,
\end{equation}
for some positive constant $\kappa_1<1$.
On the other hand, we obviously have
\begin{equation}\label{estimate q case 4.3.1}
	|q_2^+ - q_2^-|= \mathcal{O}(1)| q_4^-|,\ \ \ \ q_3^+ = q_3^-,\ \ \ \
	|\lambda_4^+ - \dot{x}_\alpha|= \mathcal{O}(1)| q_4^-|.
\end{equation}
From (\ref{Def Ei})(\ref{estimate q1- case 4.3.1})(\ref{estimate mu1})(\ref{estimate q case 4.3.1}), the assumption \eqref{eq:4.1} on distinct characteristics fields, weight condition (\ref{condition W}) and
the $\ell_i$-distribution in Table \ref{ell distribution for (C1)}, we deduce
\begin{align*}
\sum_{i=1}^4 \mathscr{E}_i
& \leq -|q_1^+| \cdot\ell_1^+ \mathcal{W}_1^+ |\lambda_1^+ - \dot{x}_\alpha|+ \kappa_1 |q_4^-|\ell_1^- \mathcal{W}_1^- |\lambda_4^- - \dot{x}_\alpha| \\[5pt]
	& \quad  + \Big( |q_1^+| + \mathcal{O}(1) \big(|q_1^+|^2 + |q_2^+|^2 + |q_4^-|^2\big)\Big) \ell_1^- \mathcal{W}_1^- |\lambda_1^- - \dot{x}_\alpha| \\[5pt]
	& \quad - |q_2^+|\ell_2^+ \mathcal{W}_2^+ |\lambda_2^+ - \dot{x}_\alpha| +  \big( |q_2^+|+  \mathcal{O}(1) |q_4^-| \big) \ell_2^- \mathcal{W}_2^- |\lambda_2^- - \dot{x}_\alpha|\\[5pt]
	& \quad -|q_3^+| \ell_3^+ \mathcal{W}_3^+ |\lambda_3^+ - \dot{x}_\alpha| +|q_3^-|\ell_3^- \mathcal{W}_3^- |\lambda_3^- - \dot{x}_\alpha| \\[5pt]
	& \quad + |q_4^+| \ell_4^+ \mathcal{W}_4^+\cdot \mathcal{O}(1) |q_4^-| -  |q_4^-|\ell_4^- \mathcal{W}_4^- |\lambda_4^- - \dot{x}_\alpha| \\[5pt]
	& \leq   |q_1^+| \ell_1^+ \cdot |\lambda_1^+ - \dot{x}_\alpha|\big( - 1 + \mathcal{O}(1)\ell^{-1} \big)\\[5pt]
	& \quad + |q_2^+|\ell_2^+ \cdot|\lambda_2^+ - \dot{x}_\alpha|\big( - 1 +\mathcal{O}(1)(\ell^{-1}+\ell^{-2})\big)\\[5pt]
	& \quad +  |q_3^+|\ell_3^+\cdot |\lambda_3^+ - \dot{x}_\alpha| \big( - 1 + \mathcal{O}(1)\ell^{-1} \big) \\[5pt]
	& \quad + |q_4^-|\ell_4^- \cdot|\lambda_4^- - \dot{x}_\alpha|\Big( - 1 +  \kappa_0^{-1} \kappa_1 \mathcal{W} +   \mathcal{O}(1)\big( \ell^{-1} + |q_4^-|\big)\Big) \\[5pt]
	&<0,
\end{align*}
provided that $\ell$ sufficiently large, the variation of initial-boundary data sufficiently small, and
\begin{equation}\label{mu0 condition 1}
\kappa_0> \kappa_1 \mathcal{W}.
\end{equation}

Suppose $\alpha=\alpha^{\rm{s}}_4$  is the large $4$--shock of ${U}$. Similar to estimates (\ref{estimate q1- case 4.3.1})(\ref{estimate mu1}), we have
\begin{align*}
& q_1^- = - q_1^+ + { \kappa_{\rm{s}}} q_4^- +\mathcal{O}(1) \big( |q_1^+|^2 + |q_2^+|^2 + |q_4^-|^2\big),\\
& |\kappa_{\rm{s}} | \cdot |\lambda_1^- - \dot{x}_\alpha| <\kappa_1 |\lambda_4 ^- - \dot{x}_\alpha|,
\end{align*}
for some positive constant $\kappa_1<1$.
Note that
\begin{equation*}
|q_2^+ - q_2^-|= \mathcal{O}(1)\big( |q_1^+| + |q_2^+|+ |q_4^-|\big),\quad |\lambda_4^+ - \dot{x}_\alpha|= \mathcal{O}(1)\big( |q_1^+| + |q_2^+| + |q_4^-|\big).
\end{equation*}
Hence we deduce
\begin{eqnarray*}
\begin{split}
\sum_{i=1}^4 \mathscr{E}_i
& \leq -|q_1^+| \cdot\ell_1^+ \mathcal{W}_1^+ |\lambda_1^+ - \dot{x}_\alpha|
	+ \kappa_1 | q_4^-|\ell_1^- \mathcal{W}_1^- |\lambda_4^- - \dot{x}_\alpha|\\[5pt]
	& \quad +\Big( |q_1^+|  +\mathcal{O}(1)\big( |q_1^+|^2 + |q_2^+|^2 + |q_4^-|^2\big) \Big)\ell_1^- \mathcal{W}_1^- \cdot|\lambda_1^- - \dot{x}_\alpha|\\[5pt]
	&\quad  -|q_2^+|\ell_2^+ \mathcal{W}_2^+ |\lambda_2^+ - \dot{x}_\alpha| +   \mathcal{O}(1)\big( |q_1^+| + |q_2^+|  + |q_4^-| \big) \ell_2^- \mathcal{W}_2^- |\lambda_2^- - \dot{x}_\alpha|\\[5pt]
	& \quad -|q_3^+|\ell_3^+ \mathcal{W}_3^+ |\lambda_3^+ - \dot{x}_\alpha| +|q_3^-|\ell_3^- \mathcal{W}_3^- |\lambda_3^- - \dot{x}_\alpha| \\[5pt]
	&\quad  +\mathcal{O}(1) |q_4^+| \ell_4^+ \mathcal{W}_4^+ \big( |q_1^+| + |q_2^+|  + |q_4^-|\big)  -   |q_4^-| \ell_4^- \mathcal{W}_4^- |\lambda_4^- - \dot{x}_\alpha| \\[5pt]
	& \leq \sum_{i=1}^2 |q_i^+|\ell_i^+ |\lambda_i^+ - \dot{x}_\alpha|\Big( - 1 +  \mathcal{O}(1) \big(\ell^{-1} + \ell^{-2} \big)  \Big) \\[5pt]
	& \quad +  |q_3^+|\ell_3^+ |\lambda_3^+ - \dot{x}_\alpha| \Big( - 1 + \mathcal{O}(1)\ell^{-1} \Big) \\[5pt]
	&\quad   +  |q_4^-|\ell_4^- |\lambda_4^- - \dot{x}_\alpha|\Big( - 1 + \kappa_0^{-1} \kappa_1 \mathcal{W} + \mathcal{O}(1)\big(\ell^{-1} + |q_4^-|\big)  \Big) \\[5pt]
	& <0,
\end{split}
\end{eqnarray*}
provided that $\ell \gg 1 \gg \epsilon$ and $\kappa_0> \kappa_1 \mathcal{W}$.

%\begin{figure}[h]
%	\centering
%	% Requires \usepackage{graphicx}
%	\includegraphics[width=1\textwidth]{Case6-1-2}\\
%	\caption{Case 6.1.2}\label{Fig_Case6.1.2}
%\end{figure}

\begin{figure}[ht]
\begin{minipage}[t]{0.45\textwidth}
\centering
\begin{tikzpicture}[scale=0.7]
\draw [thin][->] (-4.3, -1)--(4, -1);
\draw [thin][line width=0.02cm](-3.5,0.7)--(-1, 0.7)--(-1,-0.3)--(3.5,-0.3);
\draw [thin][line width=0.02cm](-3.5,1.5)--(1.5, 1.5)--(1.5,0.2)--(3.5,0.2);
\draw[thin][dotted][line width=0.03cm](-1,-0.3)--(-1,-1);
\draw[thin][dotted][line width=0.03cm](1.5,0.2)--(1.5,-1);
\draw [thick][->] (-1, 0.2)--(0.2, 0.2);
\draw [thick][->] (1.5, 0.9)--(2.8, 0.9);
\node at (-0.9,-1.4) {$\chi^{\textrm{m}}_{\rm{s}}$};
\node at (1.6,-1.4) {$\chi^{\textsc{m}}_{\rm{s}}$};
\node at (4,-1.3) {$x$};
\node at (2.2,1.3) {$\alpha^{s}_{4}$};
\node at (-3.8,1.5) {$U$};
\node at (-3.8,0.7) {$V$};
\end{tikzpicture}
\caption{Flow comparison for Case 5.1.2}\label{Case 6.1.2: flow comparison}
\end{minipage}
\begin{minipage}[t]{0.5\textwidth}
\centering
\begin{tikzpicture}[scale=0.7]
\draw [thick][dashed](0,1) circle [radius=1.6];
\draw [thick][dashed](5.5,1) circle [radius=2.0];
\draw [thin][->](0.2,1.8) to [out=-30, in=190](5.4, 1.8);
\draw [thin][->] (0.2, 1.8)to [out=-110, in=30](-0.6, 0.8)to[out=-30, in=180](5.4, 0);
\draw [thin][->] (5.5,1.8)to [out=-30, in=120](6.5, 1.0)to[out=-120, in=30](5.6,0.0);

\node at (0.2,1.8) {$\bullet$};
\node at (5.5,1.8) {$\bullet$};
\node at (5.5,0.0) {$\bullet$};

\node at (7.0, 1.1) {$\mathbf{q}^{+}$};
\node at (3.0, -0.4) {$\mathbf{q}^{-}$};
\node at (2.7, 1.8) {$\alpha^{s}_{4}$};
\node at (0.1, 2.1) {$U^{\textsc{e},-}$};
\node at (5.9, 2.3) {$U^{\textsc{e},+}$};
\node at (5.5, -0.5) {$V^{\textsc{e}}$};
\node at (0.2,-1.7) {$\mathcal{N}_{\delta}(\bar{U}^{\textsc{e}}_{\mathfrak{b},\rm{l}})$};
\node at (5.8,-1.7) {$\mathcal{N}_{\delta}(\bar{U}^{\textsc{e}}_{\mathfrak{b},\rm{r}})$};
\end{tikzpicture}
\caption{Projected phase space for Case 5.1.2}\label{Case 6.1.2: projected phase space}
\end{minipage}
\end{figure}

\textbf{Case 5.1.2}. \emph{Front $x_\alpha = \chi^{\textsc{m}}_{\rm{s}}(t)$.} Assume that $\alpha=\alpha^{\rm{s}}_4$  is the large $4$--shock of ${U}$ or ${V}$
(see Figs \ref{Case 6.1.2: flow comparison}-\ref{Case 6.1.2: projected phase space}). In this case, we have the estimates
\begin{eqnarray*}
|q_1^-| +|q_2^-| = \mathcal{O}(1) \sum_{i\neq3} |q_i^+ |, \quad |q_3^+| = |q_3^-|, \quad |\lambda_4^- - \dot{x}_\alpha| = \mathcal{O}(1) \sum_{i\neq3} |q_i^+|,
\end{eqnarray*}
which yield that
\begin{align*}
\sum_{i=1}^4 \mathscr{E}_i
& \leq -|q_1^+|\ell_1^+ \mathcal{W}_1^+ |\lambda_1^+ - \dot{x}_\alpha| + \mathcal{O}(1) \sum_{i\neq3} |q_i^+|\ell_1^- \mathcal{W}_1^- |\lambda_1^- - \dot{x}_\alpha|\\[5pt]
	&\quad -|q_2^+|\ell_2^+ \mathcal{W}_2^+|\lambda_2^+ - \dot{x}_\alpha| + \mathcal{O}(1) \sum_{i\neq3} |q_i^+| \ell_2^- \mathcal{W}_2^- |\lambda_2^- - \dot{x}_\alpha|\\[5pt]
	& \quad -|q_3^+|\ell_3^+ \mathcal{W}_3^+ |\lambda_3^+ - \dot{x}_\alpha| +|q_3^-|\ell_3^- \mathcal{W}_3^- |\lambda_3^- - \dot{x}_\alpha| \\[5pt]
	&\quad  -|q_4^+|\ell_4^+ \mathcal{W}_4^+ |\lambda_4^+ - \dot{x}_\alpha| + \mathcal{O}(1) |q_4^-|\ell_4^- \mathcal{W}_4^-  \sum_{i\neq3} |q_i^+| \\[5pt]
	& \leq \sum_{i=1}^4 |q_i^+|\ell_i^+ |\lambda_i^+ - \dot{x}_\alpha|\Big( - 1 +  \mathcal{O}(1) \big(\ell^{-1} + \ell^{-3} \big)  \Big) \\[5pt]
	& <0.
\end{align*}

%\begin{figure}[h]
%	\centering
%	% Requires \usepackage{graphicx}
%	\includegraphics[width=1\textwidth]{Case6-1-3}\\
%	\caption{Case 6.1.3}\label{Fig_Case6.1.3}
%\end{figure}

\begin{figure}[ht]
\begin{minipage}[t]{0.45\textwidth}
\centering
\begin{tikzpicture}[scale=0.7]
\draw [thin][->] (-4.3, -1)--(4, -1);
\draw [thin][line width=0.02cm](-3.5,0.7)--(-1.5, 0.7)--(-1.5,-0.3)--(3,-0.3);
\draw [thin][line width=0.02cm](-3.5,2.0)--(0,2.0)--(0,1.5)--(1.5, 1.5)--(1.5,0.2)--(3,0.2);
\draw[thin][dotted][line width=0.03cm](-1.5,-0.3)--(-1.5,-1);
\draw[thin][dotted][line width=0.03cm](0,1.5)--(0,-1);
\draw[thin][dotted][line width=0.03cm](1.5,0.2)--(1.5,-1);

\draw [thick][->] (-1.5, 0.2)--(-0.4, 0.2);
\draw [thick][->] (1.5, 0.9)--(2.4, 0.9);
\node at (-1.4,-1.4) {$\chi^{\textrm{m}}_{\rm{s}}$};
\node at (0.1,-1.45) {$x_{\alpha}$};
\node at (1.6,-1.4) {$\chi^{\textsc{m}}_{\rm{s}}$};
\node at (4,-1.3) {$x$};
\node at (0.1,2.3) {$\alpha$};
\node at (-3.8,2.0) {$U$};
\node at (-3.8,0.7) {$V$};
\end{tikzpicture}
\caption{Flow comparison for Case 5.1.3}\label{Case 6.1.3: flow comparison}
\end{minipage}
\begin{minipage}[t]{0.5\textwidth}
\centering
\begin{tikzpicture}[scale=0.7]
\draw [thick][dashed](0,1) circle [radius=2];
\draw [thick][dashed](5.5,1) circle [radius=1.5];
\draw [thin][->](-1.4, 1.0)to[out=10, in=240](0.2,2.2) to [out=-30, in=180](5.7, 1.1);
\draw [thin][->] (-0.2, -0.2)to [out=40, in=260](0.8, 1.2)to[out=-30, in=180](5.7, 0.9);
%\draw [thin][->] (-0.2, -0.2)to [out=160, in=-80](-1.4, 1.0)to[out=10, in=240](0.1, 2.15);
\draw [thin][->](-0.2, -0.2)to [out=160, in=-80](-1.4, 0.9);

\node at (-1.4, 1.0) {$\bullet$};
\node at (-0.2,-0.2) {$\bullet$};
\node at (5.8,1) {$\bullet$};
\node at (-1.2, 1.5) {$U^{\textsc{e},+}$};
\node at (0, -0.55) {$U^{\textsc{e},-}$};
\node at (3.0, 1.7) {$\mathbf{q}^{+}$};
\node at (3.0, 0.3) {$\mathbf{q}^{-}$};
\node at (-0.5, 0.5) {$\alpha$};
\node at (5.9, 1.55) {$V^{\textsc{e}}$};
\node at (0.2,-1.7) {$\mathcal{N}_{\delta}(\bar{U}^{\textsc{e}}_{\mathfrak{b},\rm{l}})$};
\node at (5.8,-1.5) {$\mathcal{N}_{\delta}(\bar{U}^{\textsc{e}}_{\mathfrak{b},\rm{r}})$};
\end{tikzpicture}
\caption{Projected phase space for Case 5.1.3}\label{Case 6.1.3: projected phase space}
\end{minipage}
\end{figure}

\textbf{Case 5.1.3}. \emph{Front $x_\alpha \in ({\chi}^{\rm{m}}_{\rm{s}}(t), {\chi}^{\textsc{m}}_{\rm{s}}(t))$}. We start with the case $k_\alpha \in \{1,2,4\}$
({see} Figs \ref{Case 6.1.3: flow comparison}-\ref{Case 6.1.3: projected phase space}). For $i<3$, there hold
\begin{eqnarray*}
|q_i^+ - q_i^-|=\mathcal{O}(1)|\alpha|, \quad |\lambda_i^+ - \lambda_i^-|=\mathcal{O}(1)|\alpha|,
\end{eqnarray*}
and
\begin{equation*}
|\mathcal{W}_i^+ - \mathcal{W}_i^-| = \left\{
	\begin{aligned}
		& \mathcal{K} |\alpha |  \qquad &&  \mbox{if}\ i\neq k_\alpha,\\[5pt]
		&  \mathcal{K} |\alpha |  \qquad &&\mbox{if}\ i= k_\alpha=1\ \mbox{and}\ q_i^+ q_i^->0,\\[5pt]
		&  0  \qquad &&\mbox{if}\ i= k_\alpha=2.
	\end{aligned}
	\right.
\end{equation*}
If $q_i^+ q_i^->0$, then
\begin{align*}
\mathscr{E}_i & =|q_i^+| \ell_i \big( \mathcal{W}_i^+ - \mathcal{W}_i^- \big) \big(\lambda_i^+ - \dot{x}_\alpha\big)
	+\ell_i \mathcal{W}_i^- \Big( \big(|q_i^+| -|q_i^-| \big) \big(\lambda_i^+ - \dot{x}_\alpha\big)  + |q_i^-|\big(\lambda_i^+ - \lambda_i^-\big)\Big) \\%[5pt]
	& \leq |q_i^+|\ell_i \mathcal{K} |\alpha| \bar{d} +\ell_i \mathcal{W} \big( |q_i^+- q_i^-|\overline{d} + |q_i^-||\lambda_i^+ - \lambda_i^-| \big) \\%[5pt]
	& \leq \mathcal{O}(1) \ell^2  |\alpha|.
\end{align*}
If $q_i^+ q_i^- \leq 0$, then the relation $|q_i^+| +|q_i^-| = |q_i^+ - q_i^-| =\mathcal{O}(1)|\alpha|$ implies
\begin{eqnarray*}
\begin{split}
\mathscr{E}_i & = |q_i^+| \ell_i  \mathcal{W}_i^+ (\lambda_i^+ - \dot{x}_\alpha)- |q_i^-|\ell_i \mathcal{W}_i^- (\lambda_i^+ - \dot{x}_\alpha) \\[5pt]
	& \leq  \ell^2  \mathcal{W} \big(|q_i^+| |\lambda_i^+ - \dot{x}_\alpha| + |q_i^-| |\lambda_i^- - \dot{x}_\alpha| \big) \\[5pt]
	& \leq \mathcal{O}(1) \ell^2  |\alpha|.
\end{split}
\end{eqnarray*}

For $i=3$, we have $\mathcal{W}_3^+ - \mathcal{W}_3^- = \mathcal{K} |\alpha |\text{sgn} ( k_\alpha -2)$ and $|q_3^+|=|q_3^-|$. Then
\begin{eqnarray}\label{estimate E3 in case 6.1.3}
\begin{split}
\mathscr{E}_3 & = \ell_3 |q_3^+| \Big( \big(\mathcal{W}_3^+ -\mathcal{W}_3^-\big)\big(\lambda_3^+ -\dot{x}_\alpha \big)+ \mathcal{W}_3^- \big(\lambda_3^+ -\lambda_3^-\big) \Big)\\[5pt]
	& \leq \ell_3 |q_3^+| \Big( \mathcal{K} \underline{d}|\alpha| + \mathcal{O}(1) \mathcal{W} |\alpha|\Big) \\[5pt]
&= \mathcal{O}(1) \ell |\alpha|,
\end{split}
\end{eqnarray}
because $\ell_3 \mathcal{K} = \ell$ according to Table \ref{ell distribution for (C1)}.

For $i=4$, the weight
\begin{equation*}
\mathcal{W}_4^+ - \mathcal{W}_4^- = \left\{
\begin{aligned}
		&  -\mathcal{K} |\alpha | \qquad  &&\mbox{if }\ k_\alpha<3,\\[5pt]
		&  \mathcal{K} |\alpha | \qquad  &&  \mbox{if }\ k_\alpha=4 \ \mbox{and }\ \alpha \in \mathcal{J}^{\textsc{e}}_{\rm{l}}, \\[5pt]
		&  -\mathcal{K} |\alpha | \qquad &&  \mbox{if }\ k_\alpha=4 \ \mbox{and }\ \alpha \in \mathcal{J}^{\textsc{e}}_{\rm{r}},
	\end{aligned}
	\right.
\end{equation*}
implies
\begin{eqnarray*}
\begin{split}
\mathscr{E}_4 &= |q_4^+| \ell_4 \big( \mathcal{W}_4^+ - \mathcal{W}_4^- \big) \big(\lambda_4^+ - \dot{x}_\alpha\big)
+\ell_4 \mathcal{W}_4^- \Big( \big(|q_4^+| -|q_4^-| \big) \big(\lambda_4^+ - \dot{x}_\alpha\big)  + |q_4^-|\big(\lambda_4^+ - \lambda_4^-\big)\Big) \\[5pt]
	& \leq -\mathcal{K}|\alpha||q_4^+| \underline{d} + \mathcal{W} \big( |q_4^+- q_4^-|\bar{d} + \mathcal{O}(1)|q_4^-| |\alpha| \big) \\[5pt]
	& \leq -\mathcal{K}|\alpha| |q_4^+| \underline{d} + \mathcal{O}(1)|\alpha|.
\end{split}
\end{eqnarray*}

In summary, we conclude that if $k_\alpha\in \{1,2,4\}$, there holds
\begin{equation*}\label{Estimate sum Ei for Euler case 5}
\sum_{i=1}^4 \mathscr{E}_i \leq  -\mathcal{K} \underline{d} |\alpha| |q_4^+|  + \mathcal{O}(1)\ell^2 |\alpha| <0.
\end{equation*}

When $k_\alpha =3$, it is clear that
\begin{eqnarray*}
q_i^+ = q_i^-,\quad \lambda_i^+ = \lambda_i^-,\quad  \mathcal{W}^+_i- \mathcal{W}^-_i = \mathcal{K} \theta_3 |\alpha|\text{sgn}(2-i),
\end{eqnarray*}
for every $i\neq 3$. Note that $|q_3^+- q_3^-|= |\alpha|$ and $\lambda_3^+ =\lambda_3^- $. Then we have
\begin{eqnarray*}
\begin{split}
\mathscr{E}_1 &  \leq  \ell_1 \mathcal{K} \theta_3  |\alpha||q_1^+| \bar{d} = \mathcal{O}(1) \ell_1  |\alpha|,\quad \mathscr{E}_2  =0,\\[5pt]
\mathscr{E}_3 & = \ell_3 \mathcal{W}_3^+ (|q_3^+|- |q_3^-|)(\lambda_3^+ - \dot{x}_\alpha) =  \mathcal{O}(1) \ell_3 |\alpha|,\\[5pt]
\mathscr{E}_4 &  \leq - \ell_4 \mathcal{K} \theta_3|\alpha||q_4^+| \underline{d}.
\end{split}
\end{eqnarray*}
From the above estimates, it follows that
\begin{eqnarray*}
\begin{split}
\sum_{i=1}^4 \mathscr{E}_i \leq \mathcal{O}(1) \ell^2 |\alpha|-\frac{\mathcal{K}}{\ell} \underline{d} |\alpha||q_4^+| <0,
\end{split}
\end{eqnarray*}
due to the coefficients condition (\ref{condition kappa ell}).

When $k_\alpha =5$, that is $\alpha\in \mathcal{NP}$, then
\begin{eqnarray*}
\mathcal{W}_i^+ = \mathcal{W}_i^- , \quad q_i^+- q_i^- = \mathcal{O}(1)|\alpha| , \quad  \lambda_i^+ - \lambda_i^- = \mathcal{O}(1)|\alpha|,
\end{eqnarray*}
and
\begin{eqnarray*}
\begin{split}
\mathscr{E}_i &= \ell_i \mathcal{W}_i \Big( (|q_i^+| -|q_i^-| ) (\lambda_i^+ - \dot{x}_\alpha)  + |q_i^-|  (\lambda_i^+ - \lambda_i^-)\Big)= \mathcal{O}(1)  |\alpha|
\end{split}
\end{eqnarray*}
for $1\leq i \leq 4$. Therefore
$$
\sum_{i=1}^4 \mathscr{E}_i \leq \mathcal{O}(1)  |\alpha|.
$$

%\begin{figure}[h]
%	\centering
%	% Requires \usepackage{graphicx}
%	\includegraphics[width=0.85\textwidth]{Case6-1-4}\\
%	\caption{Case 6.1.4}\label{Fig_Case6.1.4}
%\end{figure}

\begin{figure}[ht]
\begin{minipage}[t]{0.45\textwidth}
\centering
\begin{tikzpicture}[scale=0.7]
\draw [thin][->] (-4.3, -1)--(4, -1);
\draw [thin][line width=0.02cm](-3.5,0.7)--(-0.5, 0.7)--(-0.5,-0.3)--(3,-0.3);
\draw [thin][line width=0.02cm](-3.5,2.0)--(-2,2.0)--(-2,1.5)--(1.5, 1.5)--(1.5,0.2)--(3,0.2);
\draw[thin][dotted][line width=0.03cm](-0.5,-0.3)--(-0.5,-1);
\draw[thin][dotted][line width=0.03cm](-2,1.5)--(-2,-1);
\draw[thin][dotted][line width=0.03cm](1.5,0.2)--(1.5,-1);

\draw [thick][->] (-0.5, 0.2)--(0.6, 0.2);
\draw [thick][->] (1.5, 0.9)--(2.4, 0.9);
\node at (-0.4,-1.4) {$\chi^{\textrm{m}}_{{\rm{s}}}$};
\node at (-2,-1.45) {$x_{\alpha}$};
\node at (-2,2.3) {$\alpha$};
\node at (1.6,-1.4) {$\chi^{\textsc{m}}_{{\rm{s}}}$};
\node at (4,-1.3) {$x$};
\node at (-3.8,2.0) {$U$};
\node at (-3.8,0.7) {$V$};
\end{tikzpicture}
\caption{Flow comparison for Case 5.1.4}\label{Case 6.1.4: flow comparison}
\end{minipage}
\begin{minipage}[t]{0.5\textwidth}
\centering
\begin{tikzpicture}[scale=0.7]
\draw [thick][dashed](1,1) circle [radius=2.5];
\draw [thin][->](2.2,1.2)to[out=190, in=10](1.0,0.8) to [out=160, in=-80](0, 2.2);
\draw [thin][->] (1.3, -0.3)to [out=180, in=0](-0.5, -0.2)to[out=100, in=210](-0.1, 2.3);
\draw [thin][->](1.3, -0.3)to [out=30, in=260](2.2, 1.1);

\node at (0, 2.3) {$\bullet$};
\node at (1.3,-0.3) {$\bullet$};
\node at (2.2,1.2) {$\bullet$};
\node at (1.6, -0.7) {$U^{\textsc{e},-}$};
\node at (2.9, 1.3) {$U^{\textsc{e},+}$};
\node at (0, 2.7) {${ V^{\textsc{e}}}$};
\node at (1.2, 1.5) {$\mathbf{q}^{+}$};
\node at (0.2, 0.3) {$\mathbf{q}^{-}$};
\node at (2.2, 0.1) {$\alpha$};

\node at (1.5,-2.3) {$\mathcal{N}_{\delta}(\bar{U}^{\textsc{e}}_{\mathfrak{b},\rm{l}})$};
\end{tikzpicture}
\caption{Projected phase space for Case 5.1.4}\label{Case 6.1.4: projected phase space}
\end{minipage}
\end{figure}

\textbf{Case 5.1.4}. \emph{Front $x_\alpha \in ({\chi}^{\textsc{m}}_{\mathfrak{p}}(t), { {\chi}^{\rm{m}}_{{\rm{s}}}}(t))\ \text{or}\  ({\chi}^{\textsc{m}}_{{\rm{s}}}(t), + \infty)$}.
This case is similar to the small initial data problems in \cite{Bressan-Liu-Yang-1999}, since projected states $U^{\textsc{e}}, V^{\textsc{e}}$ are located in
the identical domain $\mathcal{N}_{\delta}(\bar{U}^{\textsc{e}}_{\mathfrak{b},\rm{l}})$ or $\mathcal{N}_{\delta}(\bar{U}^{\textsc{e}}_{\mathfrak{b},\rm{r}})$
(see Figs \ref{Case 6.1.4: flow comparison}-\ref{Case 6.1.4: projected phase space}).

If $k_\alpha=1$ or $k_\alpha=4$, we deduce that
\begin{eqnarray*}
\begin{split}
\mathscr{E}_3 &= \ell_3 |q_3^+| \Big( \big(\mathcal{W}_3^+ - \mathcal{W}_3^-\big)\big(\lambda_3^+ - \dot{x}_\alpha\big)  + \mathcal{W}_3^- \big(\lambda_3^+ - \lambda_3^-\big) \Big)\\[5pt]
	&= \ell_3 |q_3^+| \big( -\mathcal{K} |\alpha| \underline{d} + \mathcal{O}(1)  |\alpha| \big) \\[5pt]
	&\leq -\frac{1}{2} \mathcal{K} \underline{d}\ell_3|\alpha| |q_3^+|  \\[5pt]
&<0 .
\end{split}
\end{eqnarray*}
By the same argument in \cite{Bressan-2000, Bressan-Liu-Yang-1999}, it is trivial that
$ \mathop{\sum}_{i\neq3} \mathscr{E}_{i}< \mathcal{O}(1) \varepsilon|\alpha|$ which leads to estimate $\eqref{eq:conclusion Ei}_1$.

If $k_\alpha =2 $, then $\lambda_3^+ - \lambda_3^- =  \mathcal{O}(1) |\alpha| \big(|q_1^+|+ |q_4^+|\big)$ which gives
\begin{eqnarray*}
\mathscr{E}_3 = \ell_3 |q_3^+|  \mathcal{W}_3^+ \big(\lambda_3^+ - \lambda_3^-\big)
\leq  \mathcal{O}(1)\ell_3 |\alpha| |q_3^+| \big(|q_1^+|+ |q_4^+|\big).
\end{eqnarray*}
On the other hand, based on the estimates $(8.59)$ and $(8.60)$ in \cite{Bressan-2000}, we see that
\begin{eqnarray*}
\sum_{i\neq3} \mathscr{E}_i  \leq  -\frac{1}{2}\mathcal{K} \underline{d} |\alpha|\big(|q_1^+| + |q_4^+|\big)
+ \mathcal{O}(1)\sum_{i\neq3} \ell_i |\alpha|\big(|q_1^+| + |q_4^+|\big)<0.
\end{eqnarray*}
Thus, combining the above two estimates altogether and taking $\mathcal{K}\gg\sum_{i=1}^4 \ell_i$, we obtain $\sum_{i=1}^4 \mathscr{E}_i <0$.

If $k_\alpha =3$, then there hold
\begin{eqnarray*}
&q_i^+ = q_i^-,\quad  \lambda_i^+ = \lambda_i^- ,\quad \mathcal{W}^+_i- \mathcal{W}^-_i= \mathcal{K} \theta_3 |\alpha| \text{sgn}(2-i) \quad \mbox{for}\  i\neq 3,
\end{eqnarray*}
and
\begin{eqnarray*}
&|q_3^+- q_3^-|= |\alpha|, \quad \lambda_3^+ = \lambda_3^-, \quad \lambda_3^+ - \dot{x}_\alpha =\mathcal{O}(1) \big(|q_1^+| + |q_4^+|\big).
\end{eqnarray*}
Owing to condition (\ref{condition kappa ell}) and above estimates, we have
\begin{align*}
&\mathscr{E}_1  \leq - \ell_1 \mathcal{K} \theta_3 |\alpha||q_1^+| \underline{d} \leq -  \mathcal{K}  |\alpha| |q_1^+| \underline{d},\qquad
\mathscr{E}_2  =0 ,\\[5pt]
&\mathscr{E}_3  \leq  \mathcal{O}(1) \ell_3 \mathcal{W}|\alpha| \big(|q_1^+| + |q_4^+|\big),\qquad
\mathscr{E}_4   \leq - \ell_4 \mathcal{K} \underline{d}\theta_3 |\alpha||q_4^+|\leq -  \mathcal{K}{{ \kappa_0}}  \underline{d}|\alpha||q_4^+|.
\end{align*}
Therefore, by choosing $\mathcal{K} \gg \ell_3$, it follows that
\begin{eqnarray*}
\begin{split}
\sum_{i=1}^4 \mathscr{E}_i \leq \mathcal{O}(1) \ell_3 |\alpha|\big(|q_1^+| + |q_4^+|\big)- \mathcal{K} {{ \kappa_0}} \underline{d}|\alpha|\big(|q_1^+| + |q_4^+|\big) <0 .
\end{split}
\end{eqnarray*}

If $k_\alpha=5$, the argument for $\alpha\in \mathcal{NP}$ can be repeated as in $\emph{Case 5.1.3}$. Hence we can also derive the estimate
 $\eqref{eq:conclusion Ei}_2$.

\textbf{Case 5.1.5}. \emph{Front $x={\chi}^{\textsc{m}}_{\mathfrak{p}}(t)$}.
Set ${U}^{\textsc{e}}_{\mathfrak{p}} \doteq U^{\textsc{e}}(t, \chi^{\textsc{m}}_{\mathfrak{p}}(t)+0)$ and  ${V}^{\textsc{e}}_{\mathfrak{p}} \doteq V^{\textsc{e}}(t, \chi^{\textsc{M}}_{\mathfrak{p}}(t)+0)$.
They satisfy
\begin{eqnarray}\label{eq:L1-boundary}
{V}^{\textsc{e}}_{\mathfrak{p}} =  S^{\textsc{e}}_4(q_4^{\mathfrak{p}+})\circ  S^{\textsc{e}}_2(q_2^{\mathfrak{p}+})\circ S^{\textsc{e}}_1(q_1^{\mathfrak{p}+})({U}^{\textsc{e}}_{\mathfrak{p}}).
\end{eqnarray}
Let the projected intermediate state $\omega^{\textsc{e}}_2 \doteq S^{\textsc{e}}_2(q_2^{\mathfrak{p}+})\circ S^{\textsc{e}}_1(q_1^{\mathfrak{p}+})({U}^{\textsc{e}}_{\mathfrak{p}})$. For fixed state ${U}^{\textsc{e}}_{\mathfrak{p}}\in \mathcal{N}_{\delta}({ \bar{U}^{\textsc{e}}_{\mathfrak{b},\rm{l}}})$, we find that
$q_4^{\mathfrak{p}+}$ can be uniquely determined by $q_1^{\mathfrak{p}+}, q_2^{\mathfrak{p}+}$ and $\Delta u^{\textsc{m}}_{\mathfrak{p}} \doteq u_{1}(t, \chi^{\textsc{m}}_{\mathfrak{p}}(t)+0) - u_{2}(t,\chi^{\textsc{m}}_{\mathfrak{p}}(t)+0) $. Next, we begin to evaluate $q_4^{\mathfrak{p}+}$. To this end, we can rewrite \eqref{eq:L1-boundary} as
\begin{eqnarray*}
\int_0^{q_1^{\mathfrak{p}+}} \textbf{r}^{\textsc{e}}_1(U^{\textsc{e}}_{\mathfrak{p}}, \sigma)d\sigma +q_2^{\mathfrak{p}+}\textbf{r}^{\textsc{e}}_2
+\int_0^{q_4^{\mathfrak{p}+}} \textbf{r}^{\textsc{e}}_4(\omega_2^{\textsc{e}} , \sigma) d\sigma=V^{\textsc{e}}_{\mathfrak{p}} - U^{\textsc{e}}_{\mathfrak{p}}.
\end{eqnarray*}
Multiply it by $\textbf{n}= (0,1,0)$ to get
\begin{equation}\label{Eq state on chi-}
\textbf{n} \cdot \int_0^{q_1^{\mathfrak{p}+}} \textbf{r}^{\textsc{e}}_1(U^{\textsc{e}}_{\mathfrak{p}}, \sigma)d\sigma
	+ \textbf{n} \cdot \int_0^{q_4^{\mathfrak{p}+}} \textbf{r}^{\textsc{e}}_4(\omega_2^{\textsc{e}} , \sigma) d\sigma
	= -\Delta u^{\textsc{m}}_{\mathfrak{p}}.
\end{equation}
By (\ref{Eq state on chi-}), we figure out that
\begin{eqnarray*}
\begin{split}
&\frac{\partial q_4^{\mathfrak{p}+}}{\partial q_1^{\mathfrak{p}+}} \bigg|_{q_1^{\mathfrak{p}+}=q_2^{\mathfrak{p}+}=0, \ \Delta u^{\textsc{m}}_{\mathfrak{p}}=0}= - \frac{\textbf{n}\cdot\text{r}^{\textsc{e}}_1(U^{\textsc{e}}_{\mathfrak{p}})}{\textbf{n}\cdot\textbf{r}^{\textsc{e}}_4(U^{\textsc{e}}_{\mathfrak{p}})} =-1 , \\ & \left(\frac{\partial }{\partial {q_2^{\mathfrak{p}+}} }\right) ^k q_4^{\mathfrak{p}+} \bigg|_{q_1^{\mathfrak{p}+}=q_2^{\mathfrak{p}+}=0, \ \Delta u^{\textsc{m}}_{\mathfrak{p}}=0} = 0,\quad k=1,2,\cdots,
\end{split}
\end{eqnarray*}
and
\begin{eqnarray*}
\frac{\partial q_4^{\mathfrak{p}+}}{\partial \Delta u^{\textsc{m}}_{\mathfrak{p}}} \bigg|_{q_1^{\mathfrak{p}+}=q_2^{\mathfrak{p}+}=0, \ \Delta u^{\textsc{m}}_{\mathfrak{p}}=0} = -\frac{1}{\textbf{n}\cdot\textbf{r}^{\textsc{e}}_4(U^{\textsc{e}}_{\mathfrak{p}})} = -\frac{\gamma+1}{2}.
\end{eqnarray*}
Hence there hold
\begin{eqnarray*}
q_4^{\mathfrak{p}+} = -q_1^{\mathfrak{p}+} -\frac{\gamma+1}{2} \Delta u^{\textsc{m}}_{\mathfrak{p}} + \mathcal{O}(1) \Big(|q_1^{\mathfrak{p}+}|^2 + |\Delta u^{\textsc{m}}_{\mathfrak{p}}|^2 + |q_1^{\mathfrak{p}+} q_2^{\mathfrak{p}+}|
+ |q_1^{\mathfrak{p}+}\Delta u^{\textsc{m}}_{\mathfrak{p}}|+ |q_2^{\mathfrak{p}+}\Delta u^{\textsc{m}}_{\mathfrak{p}}|\Big),
\end{eqnarray*}
and
\begin{equation}\label{estimate q3+ for case 4.3.7}
|q_4^{\mathfrak{p}+} |\leq |q_1^{\mathfrak{p}+}| +  \mathcal{O}(1)|\Delta u^{\textsc{m}}_{\mathfrak{p}}| + \mathcal{O}(1)\Big(|q_1^{\mathfrak{p}+}|+| q_2^{\mathfrak{p}+}|\Big)|q_1^{\mathfrak{p}+}|.
\end{equation}
Then it follows from (\ref{estimate q3+ for case 4.3.7}) that
\begin{align}
\notag
|\lambda_2^{\mathfrak{p}+} - \dot{\chi}^{\textsc{m}}_{\mathfrak{p}}(t)| & \leq |\lambda_2^{\mathfrak{p}+} - \lambda_2 ({U}^{\textsc{e}}_{\mathfrak{p}})|
+ |\lambda_2 ({U}^{\textsc{e}}_{\mathfrak{p}})- \dot{\chi}^{\textsc{m}}_{\mathfrak{p}}(t)| \\[5pt]
\notag		& \leq \mathcal{O}(1) \max\big\{|q_1^{\mathfrak{p}+} | , |q_4^{\mathfrak{p}+} | \big\} + |\Delta u^{\textsc{m}}_{\mathfrak{p}}|\\[5pt]
\label{estimate lamda2-chi}		& \leq \mathcal{O}(1) \big(|q_1^{\mathfrak{p}+}| + |\Delta u^{\textsc{m}}_{\mathfrak{p}}|\big).
\end{align}
We further find the relation
\begin{equation}\label{estimate lamda3-chi}
|\lambda_1^{\mathfrak{p}+}-\dot{\chi}^{\textsc{m}}_{\mathfrak{p}}(t)| - |\lambda_4^{\mathfrak{p}+} -\dot{\chi}^{\textsc{m}}_{\mathfrak{p}}(t)|
 = \mathcal{O}(1) \big(|q_1^{\mathfrak{p}+}| +|q_2^{\mathfrak{p}+}|+ |\Delta u^{\textsc{m}}_{\mathfrak{p}}|\big) .
\end{equation}
From (\ref{estimate q3+ for case 4.3.7})-(\ref{estimate lamda3-chi}), we deduce

\begin{eqnarray}\label{Estimate sum Ei on boundary}
\begin{split}
\sum_{i\neq3}\mathscr{E}_{i,\mathfrak{p}} &= \sum_{i\neq3} |q_i^{\mathfrak{p}+}| \ell_i^{\mathfrak{p}+} \mathcal{W}_i ^{\mathfrak{p}+}\big(\lambda_i^{\mathfrak{p}+}-\dot{\chi}^{\textsc{m}}_{\mathfrak{p}}(t)\big)\\[5pt]
	& \leq -|q_1^{\mathfrak{p}+}|\ell_1^{\mathfrak{p}+} \mathcal{W}_1^{\mathfrak{p}+}|\lambda_1^{\mathfrak{p}+} -\dot{\chi}^{\textsc{m}}_{\mathfrak{p}}(t)|
		+\mathcal{O}(1) |q_2^{\mathfrak{p}+}|\ell_2^{\mathfrak{p}+}\mathcal{W}_2^{\mathfrak{p}+} \big(|q_1^{\mathfrak{p}+}| + |\Delta u^{\textsc{m}}_{\mathfrak{p}}|\big)\\[5pt]
		&\quad+\Big( |q_1^{\mathfrak{p}+}| +  \mathcal{O}(1)|\Delta u^{\textsc{m}}_{\mathfrak{p}}| + \mathcal{O}(1)\big(|q_1^{\mathfrak{p}+}|+| q_2^{\mathfrak{p}+}|\big)|q_1^{\mathfrak{p}+}| \Big)
\ell_4^{\mathfrak{p}+} \mathcal{W}_4^{\mathfrak{p}+} \\[5pt]
		&\qquad\cdot \Big(|\lambda_1^{\mathfrak{p}+} - \dot{\chi}^{\textsc{m}}_{\mathfrak{p}}(t)| + \mathcal{O}(1) \big(|q_1^{\mathfrak{p}+}| +| q_2^{\mathfrak{p}+}| + |\Delta u^{\textsc{m}}_{\mathfrak{p}}|\big) \Big)\\[5pt]
&\leq \Big(-\ell_1^{\mathfrak{p}+} \mathcal{W}_1^{\mathfrak{p}+}+\ell_4^{\mathfrak{p}+} \mathcal{W}_4^{\mathfrak{p}+}\Big)|\lambda_1^{\mathfrak{p}+} - \dot{\chi}^{\textsc{m}}_{\mathfrak{p}}(t)||q_1^{\mathfrak{p}+}|\\[5pt]
&\quad +\mathcal{O}(1)\Big(|q_1^{\mathfrak{p}+}|+|q_2^{\mathfrak{p}+}|+|\Delta u^{\textsc{m}}_{\mathfrak{p}}|\Big)|q_1^{\mathfrak{p}+}|+\Big(\ell_4^{\mathfrak{p}+} \mathcal{W}_4^{\mathfrak{p}+}|\lambda_1^{\mathfrak{p}+} - \dot{\chi}^{\textsc{m}}_{\mathfrak{p}}(t)|+|q_2^{\mathfrak{p}+}|\Big)|\Delta u^{\textsc{m}}_{\mathfrak{p}}|
\\[5pt]
		& \leq \Big( - 1 + \kappa_0 \mathcal{W}+ \mathcal{O}(1)\big(\ell^{-1} + |q_1^{\mathfrak{p}+}| +|q_2^{\mathfrak{p}+}| + |\Delta u^{\textsc{m}}_{\mathfrak{p}}|  \big)  \Big) |\lambda_1^{\mathfrak{p}+} - \dot{\chi}^{\textsc{m}}_{\mathfrak{p}}(t)|\ell_1^{\mathfrak{p}+}|q_1^{\mathfrak{p}+}| \\[5pt]
		&\quad+  \mathcal{O}(1)(1+\ell) |\Delta u^{\textsc{m}}_{\mathfrak{p}}| \\[5pt]
		& <-\frac{1}{2} ( 1 - \kappa_0 \mathcal{W}) |\lambda_1^{\mathfrak{p}+} - \dot{\chi}^{\textsc{m}}_{\mathfrak{p}}(t)|\ell_1^{\mathfrak{p}+} |q_1^{\mathfrak{p}+}|
+ \mathcal{O}(1)(1+\ell) |\Delta u^{\textsc{m}}_{\mathfrak{p}}|,
\end{split}
\end{eqnarray}
 provided $\ell \gg 1 \gg \epsilon$ and
\begin{equation}\label{mu0 condition 2}
	\kappa_0  < \frac{1}{\mathcal{W}}.
\end{equation}
Recall that the constant $\mathcal{W}$ is sufficiently close to 1.  Now from (\ref{mu0 condition 1})(\ref{mu0 condition 2}), we can choose the appropriate $\kappa_0$ such that
\begin{equation*}\label{condition mu0}
\kappa_1 \mathcal{W}< \kappa_0  < \frac{1}{\mathcal{W}}.
\end{equation*}
Since $\ell_3 < 1$ under condition $\textrm{(C1)}$, it is deduced from the estimate
$\lambda_2^{\mathfrak{p}+} -\dot{\chi}^{\textsc{m}}_{\mathfrak{p}}(t) = \mathcal{O}(1)\big(|q_1^{\mathfrak{p}+}| + |\Delta u^{\textsc{m}}_{\mathfrak{p}}|\big)$ that
\begin{equation}\label{Estimate E3 on boundary}
\mathscr{E}_{3,\mathfrak{p}} = \ell_3 \mathcal{W}_3^{\mathfrak{p}+} |q_3^{\mathfrak{p}+}|\big(\lambda_2^{\mathfrak{p}+} -\dot{\chi}^{\textsc{m}}_{\mathfrak{p}}(t)\big)
	=  \mathcal{O}(1)\big(|q_1^{\mathfrak{p}+}| + |\Delta u^{\textsc{m}}_{\mathfrak{p}}|\big).
\end{equation}
Finally, by the estimates (\ref{Estimate sum Ei on boundary})(\ref{Estimate E3 on boundary}), we arrive at
\begin{eqnarray*}
\begin{split}
\sum_{i=1}^4 \mathscr{E}_{i,\mathfrak{p}}&\leq -\frac{1}{2}( 1 - \kappa_0 \mathcal{W} ) \ell \underline{d}|q_1^{\mathfrak{p}+}| +  \mathcal{O}(1)(1+\ell)|\Delta u^{\textsc{m}}_{\mathfrak{p}}|
+  \mathcal{O}(1)\big(|q_1^{\mathfrak{p}+}| + |\Delta u^{\textsc{m}}_{\mathfrak{p}}|\big)\\[5pt]
&\leq  \mathcal{O}(1) |\Delta u^{\textsc{m}}_{\mathfrak{p}}|,
\end{split}
\end{eqnarray*}
which shows the estimate $\eqref{eq:conclusion Ei}_3$.

We eventually verified all the conclusions in \eqref{eq:conclusion Ei}, and then derive the derivative estimate
\begin{eqnarray*}
\dot{\mathscr{L}} ({U}(t), {V}(t)) \leq \mathcal{O}(\varepsilon + |\Delta u^{\textsc{m}}_{\mathfrak{p}}|),\quad \mbox{for}\ \emph{a.e.}\ t \in (0,+\infty).
\end{eqnarray*}
Integrate it from $\tilde{t}$ to $t$ to get
\begin{equation}\label{Phi(t) inequality 1}
\mathscr{L}({U}(t) , {V}(t)) \leq \mathscr{L}({U}(\tilde{t}) , {V}(\tilde{t}))+\mathcal{O}(1)  \int_{\tilde{t}}^t |\Delta u^{\textsc{m}}_{\mathfrak{p}}(\tau)| d\tau
+\mathcal{O}(1)\varepsilon (t-\tilde{t}).
\end{equation}
In addition, it is obvious that the distance of two pistons satisfies
\begin{equation}\label{piston distance}
\|\chi_{\mathfrak{p},1}(\cdot)- \chi_{\mathfrak{p},2}(\cdot)\| _{C{ ([\tilde{t},t])}} \leq |\chi_{\mathfrak{p},1}(\tilde{t}) - \chi_{\mathfrak{p},2}(\tilde{t})|
+ \int^t_{\tilde{t}} |u_{\mathfrak{p}}(\tau) - v_{\mathfrak{p}}(\tau)| d\tau.
\end{equation}
Combining (\ref{Phi(t) inequality 1}) with (\ref{piston distance}), and employing Proposition \ref{prop:5.1}, we obtain
\begin{eqnarray*}
\begin{split}
	& \quad \int_{\chi^{\textsc{m}}_{\mathfrak{p}}(t)}^{+\infty} |{U}(t,x)-{V}(t,x)| dx  \\[5pt]
	& \leq \mathcal{O}(1) \Big( \int_{\chi^{\textsc{m}}_{\mathfrak{p}}(\tilde{t})}^{+\infty} |{U}(\tilde{t},x)- {V}(\tilde{t},x)| dx
+ \int_{\tilde{t}}^t |\Delta u^{\textsc{m}}_{\mathfrak{p}}(\tau)| d\tau  +\varepsilon (t-\tilde{t}) \Big)\\[5pt]
	& \leq \mathcal{O}(1) \Big( \int_{\chi^{\textsc{m}}_{\mathfrak{p}}(\tilde{t})}^{+\infty} |{U}(\tilde{t},x)- {V}(\tilde{t},x)| dx
+ \int_{\tilde{t}}^t |u_{\mathfrak{p}}(\tau) - u_1 (\tau, \chi^{\textsc{m}}_{\mathfrak{p}}(\tau))| d\tau \\[5pt]
	&\quad + \int_{\tilde{t}}^t |v_{\mathfrak{p}}(\tau) - u_2 (\tau, \chi^{\textsc{m}}_{\mathfrak{p}}(\tau))| d\tau
	+ \int_{\tilde{t}}^t |u_{\mathfrak{p}}(\tau) -v_{\mathfrak{p}}(\tau)| d\tau +\varepsilon (t-\tilde{t}) \Big) \\[5pt]
	& \leq \mathcal{O}(1) \Big( \int_{\chi^{\textsc{m}}_{\mathfrak{p}}(\tilde{t})}^{+\infty} |{U}(\tilde{t},x)-{V}(\tilde{t},x)|dx
	+   \int_{\tilde{t}}^t |u_{\mathfrak{p}}(\tau) -v_{\mathfrak{p}}(\tau)| d\tau +\varepsilon (t-\tilde{t})  \Big)  \\[5pt]
&\quad +\mathcal{O}(1)\|\chi_{\mathfrak{p},1}(\cdot) - \chi_{\mathfrak{p},2}(\cdot)\| _{C{ ([\tilde{t},t])}}\\[5pt]
	& \leq \mathcal{O}(1)\Big(  \int_{\chi^{\textsc{m}}_{\mathfrak{p}}(\tilde{t})}^{+\infty} |{U}(\tilde{t},x)-{V}^\varepsilon(\tilde{t},x)|dx
+ \int^t_{\tilde{t}} |u_{\mathfrak{p}}(\tau) - v_{\mathfrak{p}}(\tau)| d\tau+ \varepsilon (t-\tilde{t})\Big)\\[5pt]
&\quad +\mathcal{O}(1)|\chi_{\mathfrak{p},1}(\tilde{t}) - \chi_{\mathfrak{p},2}(\tilde{t})|.
\end{split}
\end{eqnarray*}
We thus deduce that
\begin{eqnarray}\label{estimate stability for Euler approximation}
\begin{split}
	& \quad\ \|{U}(t,\cdot)- {V}(t,\cdot)\|_{L^1(\mathbb{R})} \\[5pt]
		& = \int_{\chi^{\textrm{m}}_{\mathfrak{p}}(t)}^{\chi^{\textsc{m}}_{\mathfrak{p}}(t)}  |{U}(t,x)- {V}(t,x)| dx
+ \int_{\chi^{\textsc{m}}_{\mathfrak{p}}(t)}^{+\infty} |{U}(t,x)- {V}(t,x)| dx\\[5pt]
		& \leq  \mathcal{O}(1) \Big(\int_{\chi^{\textsc{m}}_{\mathfrak{p}}(\tilde{t})}^{+\infty} |{U}(\tilde{t},x)-{V}(\tilde{t},x)| dx
+ \int^t_{\tilde{t}} |u_{\mathfrak{p}}(\tau) - v_{\mathfrak{p}}(\tau)| d\tau\Big) \\[5pt]
		& \quad + \mathcal{O}(1)\Big( |\chi_{\mathfrak{p},1}(t) - \chi_{\mathfrak{p},2}(t)|+|\chi_{\mathfrak{p},1}(\tilde{t}) - \chi_{\mathfrak{p},2}(\tilde{t})|\Big)+\mathcal{O}(1)\varepsilon (t-\tilde{t})\\[5pt]
& \leq \mathcal{O}(1) \Big( \|{U}(\tilde{t},\cdot)- {V}(\tilde{t},\cdot) \|_{L^1(\mathbb{R})}
		+\|u_{\mathfrak{p}}(\cdot) -v_{\mathfrak{p}}(\cdot)\|_{L^1((\tilde{t},t))}+\varepsilon (t-\tilde{t}) \Big),
\end{split}
\end{eqnarray}
where we have the estimate
\begin{equation*}
|\chi_{\mathfrak{p},1}(\tilde{t}) - \chi_{\mathfrak{p},2}(\tilde{t})|\leq \mathcal{O}(1)\int_{\chi^{\textrm{m}}_{\mathfrak{p}}(t)}^{\chi^{\textsc{m}}_{\mathfrak{p}}(t)}  |{U}(\tilde{t},x)- {V}(\tilde{t},x)| dx.
\end{equation*}

\emph{Proof of Theorem \ref{thm:1.1} for the $L^1$-stability under condition $\rm{(C1)}$}.
Suppose ${U}^\mu, {U}^\nu$ are two approximate solutions to (\emph{IBVP}) under condition $\rm{(C1)}$. Particularly, we take
\begin{eqnarray*}
\varepsilon = \max\{\mu,\nu\}, \quad ({U}^\varepsilon , u_{\mathfrak{p}}^\varepsilon) = ({U}^\mu, u_{\mathfrak{p}}^\mu), \quad
({V}^\varepsilon , v_{\mathfrak{p}}^\varepsilon) = ({U}^\nu, u_{\mathfrak{p}}^\nu),
\end{eqnarray*}
and let $\tilde{t}=0$ in (\ref{estimate stability for Euler approximation}). Then, it yields that
\begin{align*}
&\quad \|{U}^\mu(t,\cdot)- {U}^\nu(t,\cdot)\|_{L^1(\mathbb{R})} \\[5pt]
& \leq \mathcal{O}(1) \Big( \|{U}_0^\mu- {U}_0^\nu \|_{L^1(\mathbb{R}_{+})}
	+\|u^{\mu}_{\mathfrak{p}}(\tau) -u^{\nu}_{\mathfrak{p}}(\tau)\|_{L^1([0,t])} \Big)+\mathcal{O}(1)\max\{\mu,\nu\} t\\[5pt]
	&\leq \mathcal{O}(1) (\mu+\nu) +\mathcal{O}(1)\max\{\mu,\nu\} t.
\end{align*}
Passing to the limits as $\mu,\ \nu \rightarrow 0$, we see that
\begin{eqnarray*}
\|{U}^\mu(t,\cdot)- {U}^\nu(t,\cdot)\|_{L^1(\mathbb{R})} \rightarrow 0.
\end{eqnarray*}
This means the approximations $\{{U}^\varepsilon \}$ constructed by fractional-step front tracking algorithm is indeed a Cauchy sequence which has a unique limit ${U}$.
In terms of the strong convergence, we suppose that
\begin{eqnarray*}
{U}^\varepsilon \to  {U}, \quad
{V}^\varepsilon \to {V}, \quad
{u}^\varepsilon_{\mathfrak{p}} \to  {u}_{\mathfrak{p}}, \quad
{v}^\varepsilon_{\mathfrak{p}} \to {v}_{\mathfrak{p}} \qquad\mbox{as}\quad \varepsilon \rightarrow 0.
\end{eqnarray*}
Then estimate (\ref{estimate stability for Euler approximation}) obviously implies that
\begin{eqnarray*}
\|{U}(t,\cdot)- {V}(t,\cdot)\|_{L^1(\mathbb{R})}\leq \mathcal{O}(1) \Big( \|{U}(\tilde{t},\cdot)- {V}(\tilde{t},\cdot) \|_{L^1(\mathbb{R})}
+ \|u_{\mathfrak{p}}(\cdot)  - v_{\mathfrak{p}}(\cdot)\|_{L^1([\tilde{t},t])}\Big)
\end{eqnarray*}
for any $t>\tilde{t}\geq0$. This eventually established the $L^1$-stability of non-reacting flow under condition $\textrm{(C1)}$.
\hfill$\Box$

\subsection{$L^1$-stability for reacting flow}

Consider the $\varepsilon$-approximate solutions ${U}$ and ${V}$ to the (\emph{IBVP})  under condition $\textrm{(C2)}$ or $\textrm{(C3)}$ constructed by fractional-step wave front tracking scheme as stated in section 3.
Then, for any $t\in (t_{k-1}, t_k)$ with $k\geq 1$, the analysis of $\dot{\mathscr{L}}({U}(t), {V}(t))$ in section 5.2 remains valid by only slight modifications on $\ell_3$ distribution in
Table \ref{ell distribution (C2) or (C3)}.
More precisely, since $T.V.\{Y_{1}(0,\cdot); (0,+\infty)\}+T.V.\{Y_{2}(0,\cdot); (0,+\infty)\} < 2\epsilon \ll 1$, we can use $\mathcal{K} |q_3| <1$ instead of $\mathcal{K} \ell_3 = \ell$ in (\ref{estimate E3 in case 6.1.3}) and deduce again that
\begin{equation}\label{Phi for non-reaction}
\dot{\mathscr{L}}({U}(t), {V}(t)) \leq \mathcal{O}(1) (\varepsilon + |\Delta u^{\textsc{m}}_{\mathfrak{p}}| ), \qquad \mbox{for}\ \mbox{a.e.}\quad t\in (t_{k-1}, t_k).
\end{equation}
So our purpose now is to show the changes of $\mathscr{L}$ when crossing the time $t=t_k$. Denote the states before reaction by
\begin{eqnarray*}
{U}^* = (\rho_{1}^*, u_{1}^*, p_{1}^*, Y_{1}^*)^{\top},\quad {V}^* = (\rho_{2}^*, u_{2}^*, p_{2}^*, Y_{2}^*)^{\top},
\end{eqnarray*}
and states after reaction by
\begin{eqnarray*}
 {U} = (\rho_{1}, u_{1}, p_{1}, Y_{1})^{\top}, \quad {V} = (\rho_{2}, u_{2}, p_{2}, Y_{2})^{\top}.
\end{eqnarray*}
They satisfy the relations \eqref{reaction step}. Besides, we always use the notations
\begin{align*}
&q_i = q_i(t_k,x),  \qquad \quad \mathcal{W}_{i} = \mathcal{W}_{i}(t_k,x), \\
&q_i^* = q_i(t_k-0,x), \quad\mathcal{W}_{i}^{*} = \mathcal{W}_{i}(t_k-0,x)
\end{align*}
for brevity.
Above all, we consider $\Delta \mathscr{L}(U(t_{k}), V(t_{k}))$ for partially ignited flow in three cases as below.

\textbf{Case 5.3.1}. \emph{Difference $\Delta \mathscr{L}$ at $x\in (\chi^{\textsc{m}}_{\mathfrak{p}}(t_k), \chi^{\rm{m}}_{\rm{s}}(t_k))$}.
We begin with the estimates on distance indices in combustion process.

\begin{lemma}\label{Lemma qi for reaction step}
For sufficiently small $\varepsilon$, there hold
\begin{equation}\label{estimate combustion q0}
|q_3| - |q_3^*| \leq -|Y_{1}^* -Y_{2}^*| \underline{\phi}\varepsilon + \mathcal{O}(1)\sum_{j\neq 3}  |q_j^*| (Y_{1}^* + Y_{2}^*) \varepsilon,
\end{equation}
and
\begin{equation}\label{estimate combustion qi}
|q_i| - |q_i^*| = \mathcal{O}(1)\Big( |Y_{1}^* -Y_{2}^*| +  \sum_{j\neq 3}   |q_j^*| (Y_{1}^* + Y_{2}^*)  \Big) \varepsilon \qquad\mbox{for}\  i \neq 3,
\end{equation}
\end{lemma}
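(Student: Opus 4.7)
The plan is to derive both bounds from the explicit reaction law \eqref{reaction step} combined with the smoothness of the distance map $F:(U_l,U_r)\mapsto\mathbf{q}$ supplied by Lemma \ref{lem:2.5}. The starting point is the observation that the reaction step preserves $\rho_j$ and $u_j$ and acts as $Y_j=Y_j^{*}(1-\phi(T_j^{*})\varepsilon)$ and $p_j-p_j^{*}=(\gamma-1)\mathfrak{q}_0\rho_j^{*}Y_j^{*}\phi(T_j^{*})\varepsilon$. Throughout I will use the symmetric identity
\begin{equation*}
a_1b_1-a_2b_2=\tfrac{1}{2}(a_1-a_2)(b_1+b_2)+\tfrac{1}{2}(a_1+a_2)(b_1-b_2),
\end{equation*}
together with the Lipschitz bound $|\phi(T_1^{*})-\phi(T_2^{*})|=\mathcal{O}(1)\sum_{j\neq 3}|q_j^{*}|$ and the pointwise lower bound $\phi\geq\underline{\phi}$ from \eqref{eq:4.20}.

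First I would treat \eqref{estimate combustion q0} directly. Since $S_1,S_2,S_4$ preserve $Y$ while $S_3$ acts only on $Y$, the parametrization gives $|q_3|=|Y_1-Y_2|$ and $|q_3^{*}|=|Y_1^{*}-Y_2^{*}|$. Applying the identity above to $Y_1^{*}\phi(T_1^{*})-Y_2^{*}\phi(T_2^{*})$ yields
\begin{equation*}
Y_1-Y_2=(Y_1^{*}-Y_2^{*})\Bigl(1-\tfrac{\varepsilon}{2}(\phi(T_1^{*})+\phi(T_2^{*}))\Bigr)-\tfrac{\varepsilon}{2}(Y_1^{*}+Y_2^{*})(\phi(T_1^{*})-\phi(T_2^{*})),
\end{equation*}
and passing to absolute values produces \eqref{estimate combustion q0}. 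For \eqref{estimate combustion qi}, since $q_1,q_2,q_4$ depend only on the Eulerian components, and only the pressures are altered, I would Taylor-expand $F^{\textsc{e}}$ at $(U^{\textsc{e},*}_1,U^{\textsc{e},*}_2)$ in the pressure variables:
\begin{equation*}
q_i-q_i^{*}=\varepsilon\,\partial_{p_1}F_i^{\textsc{e}}\cdot\Delta_1+\varepsilon\,\partial_{p_2}F_i^{\textsc{e}}\cdot\Delta_2+\mathcal{O}(\varepsilon^2),\qquad\Delta_j=(\gamma-1)\mathfrak{q}_0\rho_j^{*}Y_j^{*}\phi(T_j^{*}),
\end{equation*}
split this as $\varepsilon(\partial_{p_1}F_i^{\textsc{e}}+\partial_{p_2}F_i^{\textsc{e}})\Delta_1+\varepsilon\,\partial_{p_2}F_i^{\textsc{e}}(\Delta_2-\Delta_1)$, and bound $|\Delta_1-\Delta_2|$ by the symmetric identity to get $\mathcal{O}(|Y_1^{*}-Y_2^{*}|+(Y_1^{*}+Y_2^{*})\sum_{j\neq 3}|q_j^{*}|)$. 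The $\Delta_1$-term is controlled by the identity $F^{\textsc{e}}(U,U)\equiv 0$, which forces $\partial_{p_1}F_i^{\textsc{e}}+\partial_{p_2}F_i^{\textsc{e}}=\mathcal{O}(\|U^{\textsc{e},*}_1-U^{\textsc{e},*}_2\|)=\mathcal{O}(\sum_{j\neq 3}|q_j^{*}|)$, so that this term contributes at most $\mathcal{O}(\varepsilon Y_1^{*}\sum_{j\neq 3}|q_j^{*}|)$; combining the two contributions yields \eqref{estimate combustion qi}.

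The main obstacle is precisely this diagonal cancellation: a naive Taylor expansion in $(p_1,p_2)$ would produce an $\mathcal{O}(\varepsilon Y^{*})$ contribution without any accompanying smallness factor, which is too crude for the subsequent Lyapunov analysis, whereas the identity $F^{\textsc{e}}(U,U)\equiv 0$ upgrades the common-translation piece of the pressure perturbation to $\mathcal{O}(\varepsilon Y^{*}\sum_{j\neq 3}|q_j^{*}|)$. Once this cancellation is in place, the remaining book-keeping is elementary: the stray term $\varepsilon Y_1^{*}|q_3^{*}|=\varepsilon Y_1^{*}|Y_1^{*}-Y_2^{*}|$ is absorbed into the $\varepsilon|Y_1^{*}-Y_2^{*}|$ contribution via $Y_j^{*}\in[0,1]$, and the symmetric $(Y_1^{*}+Y_2^{*})$ factor in the statement is obtained from the symmetric decomposition of $\rho_1^{*}Y_1^{*}\phi(T_1^{*})-\rho_2^{*}Y_2^{*}\phi(T_2^{*})$.
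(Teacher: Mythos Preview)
Your proof is correct and rests on the same key cancellation as the paper, though the packaging differs. For \eqref{estimate combustion q0}, the paper uses the asymmetric decomposition $Y_1-Y_2=(Y_1^*-Y_2^*)(1-\phi(T_1^*)\varepsilon)+Y_2^*(\phi(T_2^*)-\phi(T_1^*))\varepsilon$ (together with the variant obtained by swapping the roles of $1$ and $2$), whereas you use the symmetric bilinear splitting; both reach the bound, yours in a single step with the $(Y_1^*+Y_2^*)$ factor appearing directly.

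For \eqref{estimate combustion qi}, the paper does not Taylor-expand in the pressure variables. Instead it introduces the consumed amounts $\Upsilon_j=Y_j^*\phi(T_j^*)\varepsilon$, writes $q_i=f_i(\mathbf{q}^*,\Upsilon_1,\Upsilon_2)$ with $\mathbf{q}^*=(q_1^*,q_2^*,q_4^*)$, and exploits the two-variable cancellation
\[
f_i(\mathbf{q}^*,r,r)=q_i^*+\mathcal{O}(1)\,|\mathbf{q}^*|\,r,
\]
which follows from $f_i(0,r,r)=0$, together with the bound $|\Upsilon_1-\Upsilon_2|\leq\mathcal{O}(1)\big(|Y_1^*-Y_2^*|+Y_2^*\sum_{j\neq 3}|q_j^*|\big)\varepsilon$. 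Your argument via $\partial_{p_1}F_i^{\textsc{e}}+\partial_{p_2}F_i^{\textsc{e}}=0$ on the diagonal $\{U_1^{\textsc{e}}=U_2^{\textsc{e}}\}$ is exactly the same cancellation, expressed in the original state coordinates rather than in the $(\mathbf{q}^*,\Upsilon_1,\Upsilon_2)$ chart. The paper's route is a bit more streamlined because it bundles the pressure and $Y$ changes into the single parameter $\Upsilon_j$ and never names derivatives of the distance map; yours makes the analytic mechanism more explicit and would extend more easily if the reaction perturbed more than one Eulerian component. One small remark: your closing worry about a ``stray term $\varepsilon Y_1^*|q_3^*|$'' is unnecessary, since $F_i^{\textsc{e}}$ depends only on the Eulerian parts and hence $\|U_1^{\textsc{e},*}-U_2^{\textsc{e},*}\|\sim\sum_{j\neq 3}|q_j^*|$ already excludes $q_3^*$.
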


\begin{proof}
First, by \eqref{reaction step}, one figures out that
\begin{eqnarray*}
Y_{1}  -Y_{2}=(Y_{1}^* -Y_{2}^*) \big(1-\phi(T_{1}^*)\varepsilon\big) + Y_{2}^* \big(\phi(T_{2}^*)-\phi(T_{1}^*)\big) \varepsilon.
\end{eqnarray*}
Then, by condition \eqref{eq:4.20} on $\phi$, we obtain
\begin{eqnarray*}
\begin{split}
|q_3| - |q_3^*|&=|Y_{1}  -Y_{2} | - |Y_{1}^* -Y_{2}^*|\\[5pt]
&\leq  -|Y_{1}^* -Y_{2}^*| \underline{\phi}\varepsilon + \mathcal{O}(1)Y_{2}^* \varepsilon  \sum_{j\neq 3} |q_j^*|,
\end{split}
\end{eqnarray*}
or
\begin{eqnarray*}
\begin{split}
|q_3| - |q_3^*|&=|Y_{1}  -Y_{2} | - |Y_{1}^* -Y_{2}^*|\\[5pt]
&\leq  -|Y_{1}^* -Y_{2}^*| \underline{\phi} \varepsilon + \mathcal{O}(1)Y_{1}^*\varepsilon  \sum_{j\neq 3}   |q_j^*|.
\end{split}
\end{eqnarray*}
Thus above two estimates give (\ref{estimate combustion q0}).

Set $\Upsilon_1 \doteq |Y_{1} - Y_{1}^*|, \ \ \Upsilon_2 \doteq |Y_{2} - Y_{2}^*|$. Then we have
\begin{eqnarray}\label{estimate r1-r2}
\begin{split}
|\Upsilon_1 -\Upsilon_2| & \leq |(Y_{1}^*- Y_{2}^*) \phi(T_{1}^*)  + Y_{2}^* ( \phi(T_{1}^*) -\phi(T_{2}^*)  )| \varepsilon \\[5pt]
		&  \leq \mathcal{O}(1) \Big(|Y_{1}^*- Y_{2}^*| + Y_{2}^* \sum_{j\neq 3}  |q_j^*|\Big)\varepsilon .
\end{split}
\end{eqnarray}
Consider the function $q_i = f_i (\mathbf{q}^* ,\Upsilon_1 ,\Upsilon_2)$ where $\mathbf{q}^* = (q_1^*, q_2^*, q_4^*)$. It follows from (\ref{estimate r1-r2}) that
\begin{eqnarray*}
\begin{split}
q_i  & = f_i (\mathbf{q}^* ,\Upsilon_1 ,\Upsilon_1) +  \mathcal{O}(1) |\Upsilon_1 -\Upsilon_2|\\[5pt]
	& = q_i^* +  \mathcal{O}(1) \big( |\mathbf{q}^*| \cdot|\Upsilon_1| +   |\Upsilon_1 -\Upsilon_2|\big) \\[5pt]
	& = q_i^* +  \mathcal{O}(1) \Big( |Y_{1}^*- Y_{2}^*|\varepsilon + \sum_{j\neq 3}   |q_j^*| (Y_{1}^* + Y_{2}^*)\varepsilon  \Big),
\end{split}
\end{eqnarray*}
which leads to the estimate (\ref{estimate combustion qi}).
\end{proof}

Now we continue to study $\Delta \mathscr{L}(U(t_{k}), V(t_{k}))$. Recall the estimate \eqref{eq:4.30} on $\Delta \mathcal{G} (t_k)$. We obtain
\begin{eqnarray*}
\mathcal{W}_{i}  - \mathcal{W}_{i}^*
\leq -\frac{1}{4} \mathcal{K}\Big(  K\Upsilon^{\rm{s}}_k+ \theta_3 \underline{\phi}  \sum_{x_\alpha \leq \chi_{\rm{s},1}(t_{k})}|\alpha_3^*| \varepsilon
\Big)_{U}
-\frac{1}{4} \mathcal{K}\Big( K\Upsilon^{\rm{s}}_k + \theta_3 \underline{\phi}  \sum_{x_\beta \leq \chi_{\rm{s},2}(t_{k})}|\beta_3^*|\varepsilon
\Big)_{V} .
\end{eqnarray*}
Thus

\begin{align}
\notag
\sum_{ i \neq 3} \ell_i \big( \mathcal{W}_{i} |q_i| - \mathcal{W}_{i}^* |q_i^*|\big)
&= \sum_{ i \neq 3} \ell_i \Big( \mathcal{W}_{i} (|q_i| - |q_i^*|) + ( \mathcal{W}_{i}  - \mathcal{W}_{i}^*) |q_i^*| \Big) \\[5pt]
\notag
&\leq   \mathcal{O}(1) \mathcal{W } \sum_{ i \neq 3} \ell_i \Big( |Y_{1}^* -Y_{2}^*| +  \sum_{j\neq 3}   |q_j^*| (Y_{1}^* + Y_{2}^*)  \Big) \varepsilon  \\[5pt]
\notag
& \quad-\frac{1}{4} \mathcal{K}  \sum_{ i \neq 3} \ell_i |q_i^*| \Big( K\Upsilon_k^{\rm{s}} + \theta_3 \underline{\phi}  \sum_{x_\alpha \leq \chi_{\rm{s},1}(t_{k})}|\alpha_3^*| \varepsilon\Big)_{U}  \\[5pt]
\label{estimate li Wi qi 1-3}
& \quad -\frac{1}{4} \mathcal{K}  \sum_{ i \neq 3} \ell_i |q_i^*| \Big( K\Upsilon_k^{\rm{s}} + \theta_3 \underline{\phi}  \sum_{x_\beta \leq \chi_{s,2}(t_{k})}|\beta_3^*|\varepsilon\Big)_{V}.
\end{align}
By (\ref{estimate combustion q0}), we deduce that
\begin{eqnarray}\label{estimate l0 W0 q0}
\begin{split}
\ell_3 \big( \mathcal{W}_{3} |q_3| - \mathcal{W}_{3}^* |q_3^*|\big)
& = \ell_3 \Big( \big(\mathcal{W}_{3} -\mathcal{W}_{3}^*\big) |q_3|  +  \mathcal{W}_{3}^* \big( |q_3| - |q_3^*| \big) \Big) \\[5pt]
& < -\ell_3 |Y_{1}^* -Y_{2}^*| \underline{\phi}\varepsilon   +  \mathcal{O}(1)\ell_3 \mathcal{W} \sum_{j\neq 3}   |q_j^*| \big(Y_{1}^* + Y_{2}^*\big)\varepsilon.
\end{split}
\end{eqnarray}
Therefore, it follows from the estimates (\ref{estimate li Wi qi 1-3})(\ref{estimate l0 W0 q0}) and the coefficients condition (\ref{condition kappa ell}) that
\begin{eqnarray}\label{estimate 1 of Phic for reaction step}
\begin{split}
&\quad \sum_{i=1}^4 \ell_i \big(\mathcal{ W}_{i} |q_i| - \mathcal{W}^*_{i} |q_i^*|\big)\\[5pt]
& \leq\Big( \mathcal{O}(1) \sum_{ i \neq 3} \ell_i   - \ell_3 \underline{\phi} \Big) |Y_{1}^* -Y_{2}^*| \varepsilon  \\[5pt]
& \quad +  \mathcal{O}(1)  \sum_{i=1}^4 \ell_i\sum_{j\neq 3}|q_j^*|
\bigg( \Big( \Upsilon_k^{\rm{s}} +  \sum_{x_\alpha \leq \chi_{\rm{s},1}(t_{k})}|\alpha_3^*|  \varepsilon \Big)_{U}+\Big(\Upsilon_k^{\rm{s}} +   \sum_{x_\beta \leq \chi_{\rm{s},2}(t_{k})}|\beta_3^*|  \varepsilon \Big)_{V} \bigg)\\[5pt]
&\quad -\frac{1}{4} \mathcal{K}  \sum_{i \neq 3} \ell_i |q_i^*| \Big( K\Upsilon_k^{\rm{s}} + \theta_3 \underline{\phi}  \sum_{x_\alpha \leq \chi_{\rm{s},1}(t_{k})}|\alpha_3^*| \varepsilon\Big)_{U} \\[5pt]
&\quad  -\frac{1}{4} \mathcal{K} \sum_{ i \neq 3} \ell_i |q_i^*| \Big( K\Upsilon_k^{\rm{s}} + \theta_3 \underline{\phi}  \sum_{x_\beta \leq \chi_{\rm{s},2}(t_{k})}|\beta_3^*|\varepsilon\Big)_{V} \\[5pt]
&\leq   \Big( \mathcal{O}(1)\sum_{ i \neq 3} \ell_i   - \ell_3 \underline{\phi} \Big) |Y_{1}^* -Y_{2}^*| \varepsilon  \\[5pt]
& \quad + \Big( \mathcal{O}(1)  \sum_{i=1}^4 \ell_i  - \frac{1}{4} \mathcal{K} \Big) \sum_{i \neq 3} |q_i^*| \Big( K\Upsilon_k^{\rm{s}}
+\theta_3 \underline{\phi}  \sum_{x_\alpha \leq \chi_{\rm{s},1}(t_{k})}|\alpha_3^*|\varepsilon\Big)_{U}  \\[5pt]
& \quad  +  \Big( \mathcal{O}(1)  \sum_{i=1}^4 \ell_i  - \frac{1}{4} \mathcal{K} \Big)  \sum_{i \neq 3} |q_i^*| \Big( K\Upsilon_k^{\rm{s}}+ \theta_3 \underline{\phi}  \sum_{x_\beta \leq \chi_{\rm{s},2}(t_{k})}|\beta_3^*| \varepsilon
	\Big)_{V} \\[5pt]
& <0.
\end{split}
\end{eqnarray}

\textbf{Case 5.3.2}. \emph{Difference $\Delta \mathscr{L}$ at $x \in (\chi^{\rm{m}}_{\rm{s}}(t_k) ,  \chi^{\textsc{m}}_{\rm{s}}(t_k))$}. Without loss of generality, we assume that $T_{1} > T_{\text{i}} > T_{2}$ for this case.
Hence
\begin{eqnarray*}
\phi(T_{1}) >0 = \phi(T_{2}),\quad \Upsilon_1 = |Y_{1} - Y_{1}^*| \neq 0, \quad \Upsilon_2 = |Y_{2} - Y_{2}^*|=0.
\end{eqnarray*}
By direct computation, we know that
\begin{align*}
&\Upsilon_{1}  = Y_{1}^* \phi(T_{1}^*)\varepsilon \leq \mathcal{O}(1) \Big( \sum_{x_\alpha<\chi_{s,1}(t_k)} |\alpha_3^*|\varepsilon + \Upsilon_k^s \Big)_{U} , \\[5pt]
&q_i - q_i^* =  \mathcal{O}(1)\Upsilon_{1}  \quad \mbox{for } i\neq3, \quad |q_3| - |q_3^*| = |Y_{1}- Y_{2}| - |Y_{1}^*- Y_{2}| \leq \Upsilon_{1},
\end{align*}
and
\begin{eqnarray*}
&\mathcal{W}_{i}  -\mathcal{W}_{i}^*  <  -\frac{1}{4} \mathcal{K}\Big( K\Upsilon_k^s + \theta_3 \underline{\phi} \displaystyle\sum_{x_\alpha \leq \chi_{s,1}(t_k)}|\alpha_3^*| \varepsilon\Big)_{U}.
\end{eqnarray*}
These imply that
\begin{equation*}
\begin{split}
\sum_{i\neq 3} \ell_i \big(\mathcal{W}_{i} |q_i| - \mathcal{W}_{i}^* |q_i^*|\big)
&= \sum_{i\neq 3} \ell_i \Big( \mathcal{W}_{i} \big(|q_i| - |q_i^*|\big) + \big( \mathcal{W}_{i}  -\mathcal{W}_{i}^*\big) |q_i^*| \Big) \\[5pt]
& \leq \mathcal{O}(1) \mathcal{W} \sum_{i\neq 3} \ell_i  \Upsilon_{1}
- \frac{1}{4} \mathcal{K} \sum_{i\neq 3} |q_i^*| \Big(  K\Upsilon_k^s + \theta_3 \underline{\phi}  \sum_{x_\alpha \leq \chi_{s,1}(t_k)}|\alpha_3^*| \varepsilon\Big)_{U},
\end{split}
\end{equation*}
and
\begin{eqnarray*}
\begin{split}
\ell_3 \big( \mathcal{W}_{3} |q_3| - \mathcal{W}_{3}^* |q_3^*|\big) & = \ell_3 \Big( \mathcal{W}_{3} \big(|q_3| - |q_3^*|\big) + \big(\mathcal{W}_{3}  - \mathcal{W}_{3}^*\big) |q_3^*| \Big)\\[5pt]
	& \leq \ell_3 \mathcal{W} \Upsilon_{1}.
\end{split}
\end{eqnarray*}
Thus we conclude that
\begin{eqnarray}\label{estimate 2 of Phic for reaction step}
\begin{split}
&\quad \sum_{i=1}^4 \ell_i ( \mathcal{W}_{i} |q_i| -\mathcal{W}_{i}^* |q_i^*|) \\[5pt]
&  \leq \Big(\mathcal{O}(1) \sum_{i=1}^4 \ell_i
- \frac{1}{4} \mathcal{K} \sum_{i\neq 3} |q_i^*|\Big) \cdot \Big(  K\Upsilon_k^{\rm{s}}  + \theta_3 \underline{\phi}  \sum_{x_\alpha \leq \chi_{\rm{s},1}(t_k)}|\alpha_3^*| \varepsilon\Big)_{U} \\[5pt]
		& <0,
\end{split}
\end{eqnarray}
because $|q_4^*|$ is approximate to strength of large $4$-shock and $\mathcal{K}$ is large enough.

\textbf{Case 5.3.3}. \emph{Difference $\Delta \mathscr{L}$ at $x\in (\chi^{\textsc{m}}_{\rm{s}} (t_k), +\infty )$}. In this case, we know that both $T_{1}$ and $T_{2}$ are less than $T_{\text{i}}$.
This leads to $\Upsilon_{1} =\Upsilon_{2} =0$, \emph{i.e.}, the chemical reaction is absent in the region $(\chi^{\textsc{m}}_{\rm{s}} (t_k) , +\infty)$.
By construction, no wave interaction takes place at $t=t_k$. Hence the Lipschitz continuity yields
\begin{equation}\label{estimate 3 of Phic for reaction step}
\int_{\chi^{\textsc{m}}_{\rm{s}} (t_k)} ^{+\infty}  \sum_{i=1}^4 \ell_i \big( \mathcal{W}_{i} |q_i| -\mathcal {W}_{i}^*|q_i^*|\big) dx =0.
\end{equation}

Combining (\ref{estimate 3 of Phic for reaction step}) with (\ref{estimate 1 of Phic for reaction step})(\ref{estimate 2 of Phic for reaction step}), we conclude that at $t=t_k$, there holds
\begin{equation}\label{Phi for reaction Tr<Ti}
\Delta \mathscr{L} (t_k)  = \bigg(\int_{\chi^{\textsc{m}}_{\mathfrak{p}}}^{\chi^{\textrm{m}}_{\rm{s}}}+ \int_{\chi^{\textrm{m}}_{\rm{s}}}^{\chi^{\textsc{m}}_{\rm{s}}}+\int_{\chi^{\textsc{m}}_{\rm{s}}} ^{+\infty} \bigg)
	\sum_{i=1}^4 \ell_i\big( \mathcal{W}_{i} |q_i| - \mathcal{W}_{i}^* |q_i^*|\big) dx <0.
\end{equation}

Finally, we are concerned with the completely ignited flow. It follows from \eqref{eq:4.31} that
\begin{eqnarray*}
\mathcal{W}_{i}  - \mathcal{W}_{i}^*\leq -\frac{1}{4} \mathcal{K}\theta_3 \underline{\phi} \sum_{\alpha_3^* \in \mathcal{J}^{\textsc{y}}_3}|\alpha_3^*| \varepsilon \qquad \mbox{for}\quad 1\leq i \leq4.
\end{eqnarray*}
Then, according to Lemma \ref{Lemma qi for reaction step}, there holds
\begin{eqnarray*}
\begin{split}
&\quad\ \sum_{i=1}^4 \ell_i \big( \mathcal{W}_{i} |q_i| - \mathcal{W}_{i}^* |q_i^*|\big)\\[5pt]
&= \sum_{i=1}^4 \ell_i \Big( \mathcal{W}_{i} \big(|q_i| -  |q_i^*|\big)+ \big(\mathcal{W}_{i} - \mathcal{W}_{i}^*\big)|q_i^*| \Big)\\[5pt]
		&  \leq   \Big( \mathcal{O}(1)   \sum_{i\neq3} \ell_i   - \ell_3 \underline{\phi} \Big) |Y_{1}^* -Y_{2}^*| \varepsilon
		+  \mathcal{O}(1)  \sum_{i=1}^4 \ell_i\sum_{j\neq3}  |q_j^*| (Y_{1}^* + Y_{2}^*) \varepsilon  \\[5pt]
		&\quad-\frac{1}{4} \mathcal{K}\theta_3 \underline{\phi} \sum_{j\neq3}  |q_j^*|\cdot \sum_{\alpha_3^* \in \mathcal{J}^{\textsc{y}}_3}|\alpha_3^*| \varepsilon \\[5pt]
		&  \leq  \Big( \mathcal{O}(1)  \sum_{i\neq3} \ell_i   - \ell_3 \underline{\phi} \Big) |Y_{1}^* -Y_{2}^*| \varepsilon  \\[5pt]
		& \quad+ \Big( \mathcal{O}(1)   \sum_{i=1}^4 \ell_i  - \frac{1}{4} \mathcal{K}\theta_3 \underline{\phi}   \Big)  \sum_{j\neq3}  |q_j^*|  \cdot \sum_{\alpha_3^* \in \mathcal{J}^{\textsc{y}}_3}|\alpha_3^*| \varepsilon \\[5pt]
		& <0 .
\end{split}
\end{eqnarray*}
Therefore at $t=t_k$, we still have
\begin{equation}\label{Phi for reaction Tr>Ti}
\Delta \mathscr{L}(t_k)  = \int_{\chi^{\textsc{m}}_{\mathfrak{p}}}^{+\infty}\sum_{i=1}^4 \ell_i\big( \mathcal{W}_{i} |q_i| - \mathcal{W}_{i}^* |q_i^*|\big)dx <0.
\end{equation}

As a consequence, estimates (\ref{Phi for non-reaction})(\ref{Phi for reaction Tr<Ti})(\ref{Phi for reaction Tr>Ti}) give
\begin{equation*}\label{estimate Phic epsilon}
\mathscr{L}({U}(t), {V}(t)) \leq\mathscr{L}({U}(\tilde{t}),{V}(\tilde{t})) + \mathcal{O}(1) \varepsilon (t-\tilde{t})+\mathcal{O}(1) \int_{\tilde{t}}^t |\Delta u^{\textsc{m}}_{\mathfrak{p}}(\tau)|d\tau
\end{equation*}
for every $t>\tilde{t}\geq 0$, which implies that
\begin{eqnarray*}
\begin{split}
&\quad\int_{\chi^{\textsc{m}}_{\mathfrak{p}}(t)}^\infty |{U}(t,\cdot)- {V}(t,\cdot)| dx\\[5pt]
&\leq \mathcal{O}(1) \bigg(\int_{\chi^{\textsc{m}}_{\mathfrak{p}}(\tilde{t})}^\infty|{U}(\tilde{t},\cdot)- {V}(\tilde{t},\cdot)|dx
+ \int_{\tilde{t}}^t |\Delta u^{\textsc{m}}_{\mathfrak{p}}(\tau)| d\tau +\varepsilon (t-\tilde{t}) \bigg).
\end{split}
\end{eqnarray*}

\emph{Proof of Theorem \ref{thm:1.1} for the $L^1$-stability under condition $\rm{(C2)}$ or $\rm{(C3)}$}.
By Proposition \ref{prop:5.1} for the velocities comparison, and apply the similar argument for non-reacting flow in section 5.2 to reacting flow, we deduce that
\begin{eqnarray}\label{estimate stability for ZND approximation}
\begin{split}
&\quad \|{U}^\varepsilon(t,\cdot)-{V}^\varepsilon(t,\cdot)\|_{L^1(\mathbb{R})} \\[5pt]
&\leq \mathcal{O}(1) \Big( \|{U}^\varepsilon(\tilde{t},\cdot)-{V}^\varepsilon(\tilde{t},\cdot) \|_{L^1(\mathbb{R})}
	+ \|u^\varepsilon_{\mathfrak{p}}(\cdot) - v^\varepsilon_{\mathfrak{p}}(\cdot)\|_{L^1([\tilde{t},t])}
+\varepsilon(t-\tilde{t}) \Big).
\end{split}
\end{eqnarray}
Take $\tilde{t}=0$ in \eqref{estimate stability for ZND approximation}. Extracting a subsequence $\{\varepsilon_{k}\}$ if necessary, then as $\varepsilon_{k} \to 0$, we see that%Passing to limits as $\varepsilon \to 0$, we see that
\begin{equation*}
\|{U}(t,\cdot)- {V}(t,\cdot)\|_{L^1(\mathbb{R})}\leq \mathcal{O}(1) \Big(\|{U}_0(\cdot)- {V}_0(\cdot)\|_{L^1(\mathbb{R})} + \|u_{\mathfrak{p}}(\cdot) - v_{\mathfrak{p}}(\cdot)\|_{L^1([0,t])}\Big),
\end{equation*}
 which establishes the $L^1$-stability for combustion flow.
\hfill$\Box$

\section{Uniqueness of limit solution to (\emph{IBVP}) for combustion flow}

In this section, we will consider the uniqueness of the entropy solutions obtained by fractional-step wave front tracking scheme for the combustion reacting flow under condition $\textrm{(C2)}$ or $\textrm{(C3)}$.
%Since the uniqueness of limit for non-reacting flow was proved in subsection 6.1,
To complete it, we need to establish the error estimates on distinct trajectories. %So It Is Necessary To Rewrite The Stability Result (\Ref{Estimate Stability For Znd Approximation}) In The Operator Form.
For this purpose, we first rewrite the approximate solutions to the (\emph{IBVP}) with the initial-boundary data $(U^\varepsilon_{0}, u^\varepsilon_{\mathfrak{p}})$ constructed in section 3 in the operator form:
\begin{equation*}
{U}^\varepsilon (t,x) = \mathcal{P}_t ^\varepsilon ({U}^\varepsilon_0(x),u^\varepsilon_{\mathfrak{p}}(t))\doteq
\mathcal{S}_{t-k\varepsilon} ^\varepsilon \circ(\mathcal{T}_{\varepsilon}\circ\mathcal{S}_\varepsilon ^\varepsilon )^k({U}^\varepsilon _0(x), u^\varepsilon_{\mathfrak{p}}(t)),
\end{equation*}
where integer $k=\text{min}\{\lceil t/\varepsilon \rceil, N\}$. Here, $\mathcal{S} ^\varepsilon$ denotes the approximate solution operator
to the (\emph{IBVP}) for non-reaction flow
\begin{equation}\label{Eq: non reaction step}
\left\{
\begin{array}{llll}
\partial_t E({U}) + \partial_x F({U}) =0, \qquad &(t,x)\in \Omega^\varepsilon_k ,\\[5pt]
{U}(t_{k-1},x)  ={U}^\varepsilon (t_{k-1},x),\qquad & x >\chi^\varepsilon_{\mathfrak{p}}(t_{k-1}), \\[5pt]
u(t, \chi^\varepsilon_{\mathfrak{p}}(t))= u^\varepsilon _{\mathfrak{p}}(t),\qquad & t \in [t_{k-1}, t_{k}).
\end{array}
\right.
\end{equation}
Moreover, the solution to (\ref{Eq: non reaction step}) is defined by
\begin{eqnarray*}\label{non reaction solution}
{U}^\varepsilon (t,x)\doteq \mathcal{S}_{t-t_{k-1}} ^\varepsilon ({U}^\varepsilon (t_{k-1},x), u_{\mathfrak{p}}^\varepsilon(t)),\qquad (t,x)\in \Omega^\varepsilon_k \quad \mbox{for}\quad k\geq 1.
\end{eqnarray*}
Let $\mathcal{T}$ be the approximate solution operator to the following Cauchy problem of ordinary differential equations:
\begin{equation*}\label{Eq2: reaction ODE}
\left\{
\begin{array}{llll}
\partial_t E({U})= G({U}), \qquad  & t \in [0,\varepsilon],\\[5pt]
{U}(0,x)= {U}^{\varepsilon}(t_k-0,x), \qquad & x >\chi^\varepsilon_{\mathfrak{p}}(t_k),
\end{array}
\right.
\end{equation*}
whose approximate solution can be given by
\begin{eqnarray*}\label{reaction solution}
\mathcal{T}_{t} ({U}(0,x)) \doteq {U}(0,x) + t G_{0}({U}(0,x)).%,\quad G_{0}(U)=\Big(0,0, (\gamma-1)\mathfrak{q}_{0}Y\phi(T),-Y\phi(T)\Big)^{\top}.
\end{eqnarray*}

%Since the uniqueness of limit for non-reacting flow was proved in subsection 6.1,
%we only discuss the uniqueness problem on reacting flow under condition (A2) or (A3).
%
%In order to establish the error estimates on distinct trajectories, it is necessary to rewrite the stability result (\ref{estimate stability for ZND approximation}) in the operator form. Unless additional claim is made, notations $\mathbf{U}_0, \mathbf{V}_0, \mathbf{U} _{\tau}, \mathbf{V} _{\tau} $ (or $u_b, v_b$) etc. in the following are always used to represent the piecewise constant approximations to initial (or boundary) values.

In the sequel, we always use the notations ${U}_0, {V}_0, {U} _{\tau}, {V} _{\tau} $ (or $u_{\mathfrak{p}}, v_{\mathfrak{p}}$) etc.
to represent the piecewise constant approximations to the initial (or boundary) values. Then, by \eqref{estimate stability for ZND approximation}, we have

\begin{proposition}\label{prop:7.1}
Given two $\varepsilon$-approximate initial-boundary data $({U}_0 , u_{\mathfrak{p}})$ and $(V_0 , v_{\mathfrak{p}} )$ satisfying condition $\textrm{(C2)}$, there holds
\begin{eqnarray*}
\begin{split}
&\quad\ \|\mathcal{P}_t^\varepsilon ({U}_0 , u_{\mathfrak{p}}) - \mathcal{P}_t^\varepsilon  ({V}_0 , v_{\mathfrak{p}})\|_{L^1(\mathbb{R})}\\[5pt]
&\leq \mathcal{O}(1) \Big( \|\mathcal{P}_{\tilde{t}}^\varepsilon ({U}_0 , u_{\mathfrak{p}})- \mathcal{P}_{\tilde{t}}^\varepsilon ({V}_0 , v_{\mathfrak{p}})\|_{L^1(\mathbb{R})}+\|u_{\mathfrak{p}}-v_{\mathfrak{p}}\|_{L^1([\tilde{t},t])} \Big) \\[5pt]
&\quad+ \mathcal{O}(1)  \varepsilon (t-\tilde{t}),
\end{split}
\end{eqnarray*}
for any $t>\tilde{t}\geq 0$. Moreover, the above flow estimate also holds under condition $\textrm{(C3)}$.
\end{proposition}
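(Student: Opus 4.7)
The plan is to derive this proposition essentially as a direct reformulation of the $L^1$-stability estimate \eqref{estimate stability for ZND approximation} established in Section 5.3, now expressed in the operator-product notation $\mathcal{P}^\varepsilon_t = \mathcal{S}^\varepsilon_{t-k\varepsilon}\circ(\mathcal{T}_\varepsilon\circ\mathcal{S}^\varepsilon_\varepsilon)^k$. First I would identify the two $\varepsilon$-approximations $U^\varepsilon(t,\cdot)\doteq\mathcal{P}^\varepsilon_t(U_0,u_\mathfrak{p})$ and $V^\varepsilon(t,\cdot)\doteq\mathcal{P}^\varepsilon_t(V_0,v_\mathfrak{p})$ as precisely the kind of fractional-step solutions analyzed in Section 5.3; this requires no new construction, since the operator composition coincides step-by-step with the scheme from Section 3.

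Next I would assemble the weighted Lyapunov functional $\mathscr{L}(U^\varepsilon(t),V^\varepsilon(t))$ with the $\ell_i$-weights from Table \ref{ell distribution (C2) or (C3)} and invoke the two key monotonicity estimates already proved: on each open interval $(t_{k-1},t_k)$ (where only $\mathcal{S}^\varepsilon$ acts), the derivative bound \eqref{Phi for non-reaction} gives $\dot{\mathscr{L}}\leq\mathcal{O}(1)(\varepsilon+|\Delta u^{\textsc{m}}_\mathfrak{p}|)$; at each grid time $t_k$ (where $\mathcal{T}_\varepsilon$ acts), the reaction-step estimates \eqref{Phi for reaction Tr<Ti} under (C3) or \eqref{Phi for reaction Tr>Ti} under (C2) yield $\Delta\mathscr{L}(t_k)\leq 0$. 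Summing the jumps at all $t_k\in(\tilde{t},t]$ and integrating the continuous-time bound produces
\begin{equation*}
\mathscr{L}(U^\varepsilon(t),V^\varepsilon(t))\leq\mathscr{L}(U^\varepsilon(\tilde{t}),V^\varepsilon(\tilde{t}))+\mathcal{O}(1)\int_{\tilde{t}}^{t}|\Delta u^{\textsc{m}}_\mathfrak{p}(\tau)|\,d\tau+\mathcal{O}(1)\varepsilon(t-\tilde{t}).
\end{equation*}

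Finally, I would convert this $\mathscr{L}$-estimate into the $L^1$-estimate of the proposition via two standard ingredients already at hand. On the one hand, the uniform weight bound $1\leq\mathcal{W}_i\leq\mathcal{W}$ from \eqref{condition W} yields the equivalence $\mathscr{L}(U^\varepsilon,V^\varepsilon)\sim\|U^\varepsilon-V^\varepsilon\|_{L^1([\chi^{\textsc{m}}_\mathfrak{p},+\infty))}$; on the other hand, the velocity-comparison Proposition \ref{prop:5.1} together with the homotopy trick used to derive \eqref{estimate stability for Euler approximation} lets me replace the boundary velocity difference $|\Delta u^{\textsc{m}}_\mathfrak{p}|$ by $\|u_\mathfrak{p}-v_\mathfrak{p}\|_{L^1([\tilde{t},t])}$ plus a piston-path separation that is absorbed into the $L^1$-difference of the two flows at the endpoints $\tilde{t}$ and $t$. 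Since every estimate involved holds with constants depending only on $\bar{U}_\mathfrak{b}$ (and not on $\varepsilon$), the resulting inequality is exactly the claim under either (C2) or (C3). No major obstacle is anticipated: the content is bookkeeping, and the only point that deserves care is checking that the reaction-step inequalities \eqref{Phi for reaction Tr<Ti}--\eqref{Phi for reaction Tr>Ti} apply uniformly across the two conditions via Table \ref{ell distribution (C2) or (C3)}, which they do.
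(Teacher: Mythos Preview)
Your proposal is correct and matches the paper's approach exactly: the paper states Proposition~\ref{prop:7.1} as an immediate consequence of estimate~\eqref{estimate stability for ZND approximation} (``Then, by \eqref{estimate stability for ZND approximation}, we have\ldots''), which is precisely the reformulation in operator notation that you describe. Your additional unpacking of the Lyapunov-functional argument from Section~5.3 simply retraces the derivation of \eqref{estimate stability for ZND approximation} itself, so there is no divergence in method.
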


Next, we turn to discuss the evolution of reacting flows after shifting the initial time to $\tau>0$. Thus the initial position of large leading shock must be separated from that of piston.
But we need not be concerned with what the backward trajectory is and whether the large shock front coincides with the piston in the past.
Suppose that ${U} _{\tau} = (U^{\textsc{e}} _{\tau}, Y_{1,\tau} )$ and ${V} _{\tau} =(V^{\textsc{e}} _{\tau}, Y_{2,\tau} )$ are new initial data for reacting flow,
which are the small perturbation of the background states $\bar{U}_{\mathfrak{b}}(\tau,x)$.
Moreover, the boundary data $u _\mathfrak{p}$ and  $v _\mathfrak{p}$ are constructed by small perturbations of reference velocity $\bar{u}_{\mathfrak{p}}$ on the boundary.

Let $\chi_{\mathfrak{p},1}(\tau)$ and ${ \chi_{\rm{s},1}}(\tau)$ be respectively the positions of the piston and large shock corresponding to $U_\tau$, while
$\chi_{\mathfrak{p},2}(\tau)$ and ${ \chi_{\rm{s},2}}(\tau)$ corresponding to $V_\tau $. Then define the initial intervals at time $\tau$ by
\begin{eqnarray*}
\begin{split}
&I_{j}(\tau)= (\chi_{\mathfrak{p},j}(\tau), { \chi_{{\rm{s}},j}}(\tau)), \quad J_{j}(\tau)= ({ \chi_{{\rm{s}},j}}(\tau), + \infty), \qquad \mbox{for}\quad j=1,2.
%	&I_2= (\chi_2 (\tau), \Gamma_2 (\tau)), \ \ \ \
%	J_2= ( \Gamma_2 (\tau) , + \infty).
\end{split}
\end{eqnarray*}
It is required  that $I_1(\tau) \cap I_2(\tau) \neq \emptyset$. We also demand that the new initial-boundary data satisfy
\begin{eqnarray}\label{shift initial boundary condition}
\begin{split}
&\|U^{\textsc{e}}_\tau  - \bar{U}^{\textsc{e}}_{\mathfrak{b}}\|_{L^1(\mathbb{R})}+\|V^{\textsc{e}}_\tau  - \bar{U}^{\textsc{e}}_{\mathfrak{b}}\|_{L^1(\mathbb{R})}
+\|u _\mathfrak{p}-\bar{u}_{\mathfrak{p}}\|_{L^1(\mathbb{R}_{+})}+\|v _\mathfrak{p}-\bar{u}_{\mathfrak{p}}\|_{L^1(\mathbb{R}_{+})} < +\infty, \\[5pt]
&T.V.\{U^{\textsc{e}}_\tau -\bar{U}^{\textsc{e}}_{\mathfrak{b},\rm{l}}; I_1(\tau)\} + T.V.\{U^{\textsc{e}}_\tau -\bar{U}^{\textsc{e}}_{\mathfrak{b},\rm{r}}; J_1(\tau)\} \\[5pt]
&\qquad +T.V.\{u_{\mathfrak{p}} ; (\tau,+\infty)\} +T.V.\{Y_{1,\tau} ; I_1(\tau)\cup J_1(\tau)\}\ll 1, \\[5pt]
&T.V.\{V^{\textsc{e}}_\tau -\bar{U}^{\textsc{e}}_{\mathfrak{b},\rm{l}}; I_2(\tau)\} + T.V.\{V^{\textsc{e}}_\tau -\bar{U}^{\textsc{e}}_{\mathfrak{b},\rm{r}}; J_2(\tau)\}\\[5pt]
&\qquad +T.V.\{v_{\mathfrak{p}} ; (\tau,+\infty)\} +T.V.\{Y_{2,\tau} ; I_2(\tau)\cup J_2(\tau)\}\ll 1,\\[5pt]
&\bar{u}_{\mathfrak{p}} ^{-1} \big(\|Y_{1,\tau} \|_{L^1(\mathbb{R})} + \|Y_{2,\tau}\|_{L^1(\mathbb{R})}\big)\ll 1.
\end{split}
\end{eqnarray}
We make the following assumptions:\\
$\widetilde{\rm{(C1)}}$\ conditions $(\ref{shift initial boundary condition})_1$-$(\ref{shift initial boundary condition})_3$ hold if $\bar{T}_{\rm{r}}> T_\text{i}$;\\
$\widetilde{\rm{(C2)}}$\ conditions $(\ref{shift initial boundary condition})_1$-$(\ref{shift initial boundary condition})_4$ hold if $\bar{T}_{\rm{l}}> T_\text{i}> \bar{T}_{\rm{r}}$.

Upon Remark \ref{rem:4.1} and previous stability argument in sections 5.2-5.3, we get the following conclusions on operators $\mathcal{P}^\varepsilon_t $ and $\mathcal{S}^\varepsilon_t $.
\begin{proposition}\label{prop:7.2}
Suppose initial-boundary data $({U}_{\tau} , u_{\mathfrak{p}} )$ and $({V} _{\tau}, v_{\mathfrak{p}})$ satisfy the { assumption} $\widetilde{\rm{(C1)}}$ or $\widetilde{\rm{(C2)}}$.
Then the reacting flows constructed by fractional-step front tracking method satisfy
\begin{eqnarray}\label{P_t approximate continuity 1}
\begin{split}
&\quad\|\mathcal{P}_t^\varepsilon  ({U} _{\tau}, u_{\mathfrak{p}}) - \mathcal{P}_t^\varepsilon ({V} _{\tau}, v_{\mathfrak{p}} )\|_{L^1(\mathbb{R})}\\[5pt]
&\leq \mathcal{O}(1) \Big( \|\mathcal{P}_{\tilde{t}}^\varepsilon ({U} _{\tau}, u_{\mathfrak{p}})- \mathcal{P}_{\tilde{t}}^\varepsilon ({V} _{\tau}, v_{\mathfrak{p}}) \|_{L^1(\mathbb{R})}
	+ \|u_{\mathfrak{p}}- v_{\mathfrak{p}} \|_{L^{1}([\tilde{t},t])}  \Big) \\[5pt]
&\quad + \mathcal{O}(1)\varepsilon (t-\tilde{t}),
\end{split}
\end{eqnarray}
while the non-reacting flows satisfy
\begin{eqnarray}\label{S_t approximate continuity 1}
\begin{split}
&\quad \|\mathcal{S}_t^\varepsilon ({U} _{\tau}, u_{\mathfrak{p}}) - \mathcal{S}_t^\varepsilon ({V} _{\tau}, v_{\mathfrak{p}})\|_{L^1(\mathbb{R})}\\[5pt]
&\leq \mathcal{O}(1) \Big(\|\mathcal{S}_{\tilde{t}}^\varepsilon({U}_{\tau}, u_{\mathfrak{p}})- \mathcal{S}_{\tilde{t}}^\varepsilon ({V}_{\tau}, v_{\mathfrak{p}})\|_{L^1(\mathbb{R})}
+\|u_{\mathfrak{p}}  - v_{\mathfrak{p}}\|_{L^1([\tilde{t},t])}\Big)\\[5pt]
&\qquad + \mathcal{O}(1) \varepsilon (t-\tilde{t}),
\end{split}
\end{eqnarray}
and
\begin{align}
\label{S_t approximate continuity 2}
&\|\mathcal{S}_t^\varepsilon ({U} _{\tau}, u_{\mathfrak{p}}) - \mathcal{S}_{\tilde{t}}^\varepsilon ({U} _{\tau}, u_{\mathfrak{p}}) \|_{L^1(\mathbb{R})}\leq  \mathcal{O}(1)(t-\tilde{t}), \\[5pt]
\label{S_t approximate continuity 3}
&\|\mathcal{S}_t^\mu ({U}_{\tau} , u_{\mathfrak{p}}) - \mathcal{S}_t^\nu ({U}_{\tau},  u_{\mathfrak{p}})\|_{L^1(\mathbb{R})}\leq  \mathcal{O}(1)\max\{\mu,\nu\}t,
\end{align}
for any $t>\tilde{t}\geq0$ and $\mu, \nu \leq \varepsilon$.

%\begin{align}
%\nonumber	
%		&||\mathcal{S}_t^\varepsilon  (\mathbf{U} _{\tau}, u_b ) - \mathcal{S}_t^\varepsilon  (\mathbf{V} _{\tau}, v_b )||_{L^1_x}
%		\leq \mathcal{O}(1) \Big( ||\mathcal{S}_s^\varepsilon  (\mathbf{U} _{\tau}, u_b )- \mathcal{S}_s^\varepsilon  (\mathbf{V} _{\tau}, v_b ) ||_{L_x^1}
%		+   \int_s^t |u_b (z)  - v_b (z) | dz  \Big) \\
%		\label{S_t approximate continuity 1}	
%		&  \hspace{5.5cm} + \mathcal{O}(1)  \varepsilon (t-s) ,  \\
%		\label{S_t approximate continuity 2}
%		&||\mathcal{S}_t^\varepsilon  (\mathbf{U} _{\tau}, u_b ) - \mathcal{S}_s^\varepsilon  (\mathbf{U} _{\tau}, u_b ) ||_{L^1_x}
%		\leq  \mathcal{O}(1)   (t-s)  , \\
%		\label{S_t approximate continuity 3}
%		&||\mathcal{S}_t^\mu (\mathbf{U}_{\tau} , u_b) - \mathcal{S}_t^\nu (\mathbf{U}_{\tau} , u_b)||_{L^1_x}
%		\leq  \mathcal{O}(1)\max\{\mu,\nu\} t
%	\end{align}
%	for any $\mu, \nu \leq \varepsilon$.
\end{proposition}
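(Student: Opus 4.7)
The plan is to reduce all four estimates to the Glimm-functional and Lyapunov-functional machinery developed in Sections 4--5, with small modifications that account for the shifted initial time and the separated initial positions of piston and large shock. My first step is to observe that, under $\widetilde{\rm{(C1)}}$ or $\widetilde{\rm{(C2)}}$, Remark \ref{rem:4.1} guarantees the modified Glimm-type functional $\mathcal{G}(t)$ stays uniformly small along the two approximate solutions generated from $(U_\tau, u_\mathfrak{p})$ and $(V_\tau, v_\mathfrak{p})$. Consequently the uniform $BV$ and $L^1$ bounds of Propositions \ref{prop:4.1}--\ref{prop:4.2}, the Riemann-problem and wave-interaction estimates of Section 2, and the velocity-comparison inequality of Proposition \ref{prop:5.1} all transfer verbatim to the shifted setting.

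For \eqref{P_t approximate continuity 1} I would reinstate the weighted Lyapunov-functional construction of Section 5.1 with the Table \ref{ell distribution (C2) or (C3)} weight distribution. The derivative of $\mathscr{L}(\mathcal{P}_t^\varepsilon(U_\tau,u_\mathfrak{p}),\mathcal{P}_t^\varepsilon(V_\tau,v_\mathfrak{p}))$ between reaction times splits into the five cases 5.1.1--5.1.5 classified by the position of each interacting front relative to $\chi^{\textrm{m}}_{\rm{s}}, \chi^{\textsc{m}}_{\rm{s}}, \chi^{\textsc{m}}_{\mathfrak{p}}$, and each case closes via the cancellation identities \eqref{estimate q1- case 4.3.1}, \eqref{estimate mu1}, \eqref{estimate q3+ for case 4.3.7} together with the strict reflection inequality of Lemma \ref{Lemma mu1<1}. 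The reaction step is treated as in Cases 5.3.1--5.3.3 by invoking Lemma \ref{Lemma qi for reaction step} to control $\Delta\mathscr{L}(t_k)$ by a non-positive combination of $\Upsilon^{\textrm{s}}_k$ and $\sum|\alpha_3^*|\varepsilon$ terms. Integrating from $\tilde{t}$ to $t$ and then applying Proposition \ref{prop:5.1} to replace the velocity traces along $\chi^{\textsc{m}}_{\mathfrak{p}}$ by the boundary data $u_\mathfrak{p}, v_\mathfrak{p}$ produces \eqref{P_t approximate continuity 1}. Estimate \eqref{S_t approximate continuity 1} then follows at once by deleting the reaction-step contributions and using the lighter Table \ref{ell distribution for (C1)} weights, while \eqref{S_t approximate continuity 2} is immediate from finite propagation speed: only finitely many wave fronts of speed bounded by $\hat{\lambda}$ sweep across any strip $[\tilde{t},t]\times\mathbb{R}$, and the Rankine--Hugoniot jumps combined with Proposition \ref{prop:3.1} yield the desired Lipschitz-in-time dependence.

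The hard part will be \eqref{S_t approximate continuity 3}, which compares two front-tracking approximations with distinct thresholds $\mu,\nu\le\varepsilon$ applied to identical data; since the two schemes produce genuinely different wave configurations, the Lyapunov functional cannot be applied termwise. My plan is to interpolate between $\mathcal{S}^\mu_t$ and $\mathcal{S}^\nu_t$ by inserting composite operators $\mathcal{S}^\nu_{t-s}\circ\mathcal{S}^\mu_s$, transport the difference across time via the $L^1$-stability \eqref{S_t approximate continuity 1}, and bound the per-step discrepancy by $\mathcal{O}(1)\max\{\mu,\nu\}$ using Propositions \ref{prop:3.2}--\ref{prop:3.3}: every time one scheme invokes its \textnormal{(SRS)} while the other uses the \textnormal{(ARS)}, a non-physical front of size $\mathcal{O}(|\alpha^*_i\alpha^*_j|)$ is injected, and the threshold choice $\varrho=\varrho(\varepsilon)$ together with the uniform bound on $\mathcal{Q}(t)$ ensures the total injection across $[\tilde{t},t]$ accumulates as $\mathcal{O}(1)\max\{\mu,\nu\}(t-\tilde{t})$. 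Summing these local errors then yields \eqref{S_t approximate continuity 3}.
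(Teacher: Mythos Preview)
Your treatment of \eqref{P_t approximate continuity 1}, \eqref{S_t approximate continuity 1}, and \eqref{S_t approximate continuity 2} is correct and coincides with what the paper does: it simply refers back to Remark \ref{rem:4.1} and the Lyapunov-functional arguments of Sections 5.2--5.3, which transfer unchanged once the Glimm functional is known to stay small for data with separated piston and shock positions.

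Where you go astray is in calling \eqref{S_t approximate continuity 3} ``the hard part'' and proposing an interpolation scheme. In the paper this estimate is the \emph{easiest} part: it is an immediate corollary of \eqref{S_t approximate continuity 1}. The point---made explicit at the end of Section 5.2 when proving uniqueness under (C1)---is that the Lyapunov-functional analysis of Section 5 never uses that the two approximations $U^\varepsilon,V^\varepsilon$ were built with the \emph{same} front-tracking parameters; it only uses that each is a piecewise-constant approximate solution whose fronts satisfy the error bounds of Proposition \ref{prop:3.1}. Hence one may take $(U^\varepsilon,u_\mathfrak{p}^\varepsilon)=(\mathcal{S}_t^\mu(U_\tau,u_\mathfrak{p}),u_\mathfrak{p})$ and $(V^\varepsilon,v_\mathfrak{p}^\varepsilon)=(\mathcal{S}_t^\nu(U_\tau,u_\mathfrak{p}),u_\mathfrak{p})$ with $\varepsilon=\max\{\mu,\nu\}$, apply \eqref{S_t approximate continuity 1} with $\tilde t=0$, and since the initial data and boundary data coincide exactly, the first two terms on the right vanish and only $\mathcal{O}(1)\max\{\mu,\nu\}t$ survives.

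Your interpolation argument via composites $\mathcal{S}^\nu_{t-s}\circ\mathcal{S}^\mu_s$ is not obviously wrong, but it is both unnecessary and delicate: $\mathcal{S}^\varepsilon$ is not an exact semigroup, so the telescoping identity requires its own error terms, and your claimed per-step bound via the \textnormal{(ARS)}/\textnormal{(SRS)} discrepancy and the threshold $\varrho(\varepsilon)$ would need a careful accounting of interaction times that the paper never performs. Drop that plan and invoke \eqref{S_t approximate continuity 1} directly.
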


According to the equivalence of total variation and Glimm-type functional, there exists a constant $C_5>0$ dependent of $\bar{U}_{\mathfrak{b}}$ such that
condition $T.V.\{{U}_0(\cdot); \mathbb{R}_+\} +T.V.\{u_{\mathfrak{p}}(\cdot); \mathbb{R}_+\}<\epsilon$ implies
\begin{eqnarray*}
 T.V.\{\mathcal{S}^\varepsilon_t ({U}_0 , u_\mathfrak{p}); (\chi_{\mathfrak{p},1},+\infty)\} < C_5 \epsilon \qquad \forall\ t>0.
\end{eqnarray*}
Thus we can choose a positive $\bar{t}$ suitably small, so that
\begin{eqnarray*}
T.V.\big\{\mathcal{S}^\varepsilon_t ({U}_0 , u_{\mathfrak{p}}) + { \varepsilon} G_{0}\big(\mathcal{S}^\varepsilon_t ({U}_0 , u_{\mathfrak{p}}) \big); (\chi_{\mathfrak{p}, 1},+\infty) \big\} < 2C_5 \epsilon , \qquad
\forall\ \varepsilon\in (0,\bar{t}).
\end{eqnarray*}
The smallness of variation ensures the global stability after perturbation. So we can utilize all the results proved in section 5.3 to establish the following lemma on operators commutation.

\begin{lemma}[Commutation estimate]\label{commutation estimate}
Assume $({U}_\tau, u_\mathfrak{p})$ is the initial-boundary data under condition $\widetilde{\rm{(C1)}}$ or $\widetilde{\rm{(C2)}}$. Then the operators $\mathcal{S}^\varepsilon _t$ and $\mathcal{T}_\varepsilon$ satisfy
\begin{eqnarray*}
\| \mathcal{S}^\varepsilon _t\circ \mathcal{T}_\varepsilon ({U}_\tau , u_{\mathfrak{p}})- \mathcal{T}_\varepsilon \circ\mathcal{S}^\varepsilon _t ({U}_\tau , u_{\mathfrak{p}})\|_{L^1(\mathbb{R})}\leq \mathcal{O}(1) \varepsilon t,
\end{eqnarray*}
for any $t>0$ and $0<\varepsilon\ll \bar{t}$.
\end{lemma}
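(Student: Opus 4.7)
The strategy is to recognize that $V_2(t,x)\doteq \mathcal{T}_\varepsilon\circ\mathcal{S}^\varepsilon_t(U_\tau,u_\mathfrak{p})$ is itself an admissible $\varepsilon$-approximate solution to the non-reacting piston problem \eqref{Eq: non reaction step} with the \emph{same} initial data and the \emph{same} boundary velocity as the standard front-tracking approximation $V_1(t,x)\doteq \mathcal{S}^\varepsilon_t\circ\mathcal{T}_\varepsilon(U_\tau,u_\mathfrak{p})$, but with slightly enlarged Rankine--Hugoniot residuals along each front. Once this is established, the conclusion will follow from the $L^1$-Lyapunov comparison of Section~5 applied to the pair $(V_1,V_2)$.

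Setting $U(t,x)\doteq\mathcal{S}^\varepsilon_t(U_\tau,u_\mathfrak{p})$, the field $V_2=U+\varepsilon G_0(U)$ is piecewise constant with jumps along exactly the same curves $x=x_\alpha(t)$ as $U$; it shares $U$'s piston velocity trace because $G_0$ has vanishing second component, and $V_1(0)=V_2(0)=\mathcal{T}_\varepsilon(U_\tau)$. Using the polytropic relation $e=p/((\gamma-1)\rho)$, a direct algebraic computation gives the pointwise identities
\[
E(\mathcal{T}_\varepsilon(U))=E(U)+\varepsilon G(U),\qquad F(\mathcal{T}_\varepsilon(U))=F(U)+\varepsilon H(U),
\]
with $H(U)\doteq(0,(\gamma-1)\rho\mathfrak{q}_0 Y\phi(T),\gamma\rho u\mathfrak{q}_0 Y\phi(T),-\rho u Y\phi(T))^\top$ smooth in $U$. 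Consequently, along every front $x=x_\alpha(t)$,
\[
\dot{x}_\alpha[E(V_2)]-[F(V_2)]=\bigl\{\dot{x}_\alpha[E(U)]-[F(U)]\bigr\}+\varepsilon\bigl\{\dot{x}_\alpha[G(U)]-[H(U)]\bigr\}.
\]
Proposition~\ref{prop:3.1} bounds the first brace by $\mathcal{O}(1)\varepsilon|\alpha|$ for physical waves and by $\mathcal{O}(1)|\alpha|$ for $\mathcal{NP}$ waves, while the smoothness of $G,H$ and the uniform bound on $|\dot{x}_\alpha|$ make the second brace $\mathcal{O}(1)|\alpha|$; multiplying by $\varepsilon$, the total residual matches the admissibility thresholds of Proposition~\ref{prop:3.1}, so $V_2$ is a valid $\varepsilon$-approximate solution of \eqref{Eq: non reaction step}.

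Applying the weighted Lyapunov functional $\mathscr{L}$ from Section~5.1 to the pair $(V_1,V_2)$ (both living entirely in the non-reacting regime on $[0,t]$, both sharing the piston curve so that $\Delta u_\mathfrak{p}^{\textsc{m}}\equiv 0$), the derivative decomposition
\[
\dot{\mathscr{L}}(V_1,V_2)=\sum_{\alpha}\sum_{i=1}^4\mathscr{E}_{i,\alpha}+\sum_{i=1}^4\mathscr{E}_{i,\mathfrak{p}}
\]
combined with the wavewise bounds \eqref{eq:conclusion Ei} — which rely only on the $\mathcal{O}(\varepsilon|\alpha|)$/$\mathcal{O}(|\alpha|)$ residual sizes just established — yields $\dot{\mathscr{L}}\le \mathcal{O}(1)\varepsilon$ for a.e.\ $t$. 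Since $\mathscr{L}(V_1(0),V_2(0))=0$, integration and the equivalence of $\mathscr{L}$ with the $L^1$-norm deliver the desired bound. The main obstacle I anticipate is verifying that the full Lyapunov machinery of Section~5 (weight distribution $\ell_i$ from Tables~\ref{ell distribution for (C1)}--\ref{ell distribution (C2) or (C3)}, distance indices, potentials $\mathcal{Q}$, the reflection bound of Lemma~\ref{Lemma mu1<1}, etc.)\ remains valid when one of the two solutions, namely $V_2$, is obtained by the nonlinear post-composition $U\mapsto U+\varepsilon G_0(U)$ rather than by running the scheme directly: one must verify that the additional $\mathcal{O}(\varepsilon|\alpha|)$ forcing from the second curly bracket above propagates through each $\mathscr{E}_{i,\alpha}$ only as a $\mathcal{O}(1)\varepsilon$ right-hand side (precisely the origin of the factor $\varepsilon t$ in the conclusion) and does not spoil the monotonicity choices of $\ell_i$ and the cancellations near $\chi^{\textsc{m}}_{\mathfrak{p}}$ and near the large shock. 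I expect this verification to be a bookkeeping-intensive but routine adaptation of the non-reacting stability argument of Sections~5.1--5.2.
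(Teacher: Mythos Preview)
Your proposal takes essentially the same route as the paper: compare the two trajectories via a Lyapunov functional and show $\dot{\mathscr{L}}\le\mathcal{O}(1)\varepsilon$, exploiting that both share the same piston boundary so the $\Delta u_\mathfrak{p}^{\textsc{M}}$ term vanishes. You also correctly isolate the only real obstacle, namely that $V_2=\mathcal{T}_\varepsilon\circ\mathcal{S}^\varepsilon_t(U_\tau,u_\mathfrak{p})$ is not produced by the front-tracking scheme, so its discontinuities are not single-family elementary waves and the wave-counting objects $A_i$, $\mathcal{Q}$, $\mathcal{W}_i$ are not a priori defined for it.

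The paper's proof resolves this obstacle not by treating $V_2$ as an admissible approximate solution in its own right (as you suggest), but by a \emph{perturbed} functional $\mathscr{L}_{\mathrm{p}}(\hat U,\hat V+\hat w)$ in which the distance indices $q_i$ connect $\hat U$ to $\hat V+\hat w$ while the weights $\mathcal{W}_i$ are built from the wave families of the genuine approximations $\hat U$ and $\hat V$ only. This sidesteps entirely the question of what ``waves of $V_2$'' should mean. To estimate $\widetilde{\mathscr{E}}_{i,\alpha}$ across a front $\alpha$ of $\hat V$, the paper introduces an auxiliary right state $(\hat V+\hat w)^\oplus\doteq S_{k_\alpha}(\alpha)(\hat V^-+\hat w^-)$ and splits $\widetilde{\mathscr{E}}_{i,\alpha}=\widetilde{\mathscr{E}}_{i,\alpha}^-+\widetilde{\mathscr{E}}_{i,\alpha}^+$; the first piece is handled verbatim by the Section~5.2 analysis (now with $\hat V^-+\hat w^-$ and $(\hat V+\hat w)^\oplus$ playing the roles of $V^-,V^+$), and the second piece, measuring the discrepancy between the actual right state $(\hat V+\hat w)^+$ and the auxiliary one, is bounded case-by-case by $\mathcal{O}(1)\big(|\alpha|(|\hat w^-|+\varepsilon)+|\Delta\hat w|\big)$, which sums to $\mathcal{O}(1)\varepsilon$ because $\|\hat w\|_{L^\infty}+T.V.\{\hat w\}=\mathcal{O}(1)\varepsilon$.

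So your anticipated ``bookkeeping-intensive but routine adaptation'' is exactly right in spirit, but the two concrete devices---building $\mathcal{W}_i$ on $(\hat U,\hat V)$ rather than $(\hat U,\hat V+\hat w)$, and the auxiliary-state splitting $\widetilde{\mathscr{E}}_{i,\alpha}^-+\widetilde{\mathscr{E}}_{i,\alpha}^+$---are what make it go through cleanly, and are worth naming explicitly rather than leaving as a verification.
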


\begin{proof}
Set $\hat{U}(t)=\mathcal{S}^\varepsilon_t \circ\mathcal{T}_\varepsilon ({U}_\tau, u_{\mathfrak{p}})$, $\hat{V}(t)= \mathcal{S}^\varepsilon _t ({U}_\tau, u_{\mathfrak{p}})$ and
$\hat{w}(t)= \varepsilon G_0( \mathcal{S}^\varepsilon _t ({U}_\tau, u_{\mathfrak{p}}))$. Obviously, $\hat{U}$, $\hat{V}$ and $\hat{V}+\hat{w}$ have the same boundary curve $x=\chi_{\mathfrak{p},1}(t)$
and boundary velocity $u_{\mathfrak{p}}$. Connect $\hat{U}$ with $\hat{V}+\hat{w}$ by Hugoniot curves according to the orientation rule in (\ref{orientation rule}).
Then there is a unique vector $\mathbf{q}=(q_1,q_2,q_3,q_4)$ such that $\hat{V}+\hat{w}= \mathcal{H}(\mathbf{q}) (\hat{U})$, or conversely $\hat{U}=\mathcal{H}(\mathbf{q})(\hat{V}+\hat{w})$. Here the mapping $\mathcal{H}$ is given by Definition \ref{Def: distance index}.
We further define a perturbed Lyapunov functional
\begin{eqnarray*}
\mathscr{L}_{\textrm{p}} (\hat{U}(t), \hat{V}(t)+ \hat{w}(t)) \doteq \sum_{i=1}^4 \int_{\chi_{\mathfrak{p},1}} ^{+\infty} \ell_i \mathcal{W}_i|q_i|dx,
\end{eqnarray*}
where
\begin{eqnarray*}
\mathcal{W}_i= 1+ \mathcal{K} \Big(A_i(\hat{U}, \hat{V}) + K\big(\mathcal{Q}(\hat{U})+ \mathcal{Q}(\hat{V})\big) + K\mathcal{L}_{\hat{\textsc{b}}}(\hat{U}, \hat{V}) \Big)
\end{eqnarray*}
follows the definition in (\ref{Def Wi}). Compute the time derivative of $\mathscr{L}_{\textrm{p}}$ to get
\begin{eqnarray*}
\begin{split}
\dot{\mathscr{L}}_{\textrm{p}} (\hat{U}, \hat{V} + \hat{w})
&=\sum_{\alpha \in  \mathcal{J} }\sum_{i=1}^4\big(|q_i^{\alpha+}| \ell_i^{\alpha+} \mathcal{W}_i ^{\alpha+}(\lambda_i^{\alpha+}-\dot{x}_\alpha )
		- |q_i^{\alpha-}| \ell_i^{\alpha-} \mathcal{W}_i ^{\alpha-}(\lambda_i^{\alpha-}-\dot{x}_\alpha )\big) \\[5pt]
&\quad + \sum_{i=1}^4 |q_i^{\mathfrak{p}+}| \ell_i^{\mathfrak{p}+} \mathcal{W}_i ^{\mathfrak{p}+}(\lambda_i^{\mathfrak{p}+}- u_{\mathfrak{p}})\\[5pt]
& \doteq \sum_{\alpha \in  \mathcal{J} } \sum _{i=1}^4 \widetilde{\mathscr{E}}_{i,\alpha}+\sum_{i=1}^4 \widetilde{\mathscr{E}}_{i,\mathfrak{p}}.
\end{split}
\end{eqnarray*}
By identical argument in Case 5.1.5, it is readily derived that
\begin{eqnarray*}
\sum_{i=1}^4 \widetilde{\mathscr{E}}_{i,\mathfrak{p}}<0.
\end{eqnarray*}
We only need  to evaluate $\widetilde{\mathscr{E}}_{i,\alpha}$.

For every wave $\alpha$ of $\hat{U}$, sum $\sum_{i=1}^4 \widetilde{\mathscr{E}}_{i,\alpha} $ satisfies the estimates $(\ref{eq:conclusion Ei})_1$-$(\ref{eq:conclusion Ei})_2$
by the standard argument in section 5.2. Now it remains to consider the wave $\alpha$ of $\hat{V}$, which is  certainly associated with the front of $\hat{V}+ \hat{w}$.
If $1 \leq k_\alpha \leq 4$, one has $\hat{V}^+ = S_{k_\alpha} (\alpha)(\hat{V}^-)$ and the front speed
\begin{equation*}
\dot{x}_\alpha = \left\{
\begin{aligned}
&\lambda_{k_\alpha} (\hat{V}^-) &&\qquad  \mbox{if }  \alpha >0 ,\\
&\lambda_{k_\alpha} (\hat{V}^- , \hat{V}^+) &&\qquad  \mbox{if}\ \alpha<0.
\end{aligned}\right.
\end{equation*}
Then we introduce an auxiliary state
\begin{equation*}
(\hat{V} + \hat{w})^\oplus  \doteq S_{k_\alpha}(\alpha) (\hat{V}^- + \hat{w}^-).
\end{equation*}
Connect state $\hat{U}$ with $(\hat{V} + \hat{w})^\oplus $ by Hugoniot curves according to (\ref{orientation rule}). Denote the successive states from left to right by $\omega_0^\oplus, \omega_1^\oplus, \cdots, \omega_4^\oplus$. Then they satisfy $\omega_i^\oplus = S_i (q_i^\oplus )(\omega_{i-1}^\oplus)$ for some index $q_i^\oplus$. Accordingly, define
\begin{eqnarray*}\label{def q^oplus for uniqueness}
\lambda_i^\oplus   \doteq \lambda_i (\omega_{i-1}^\oplus, \omega_{i}^\oplus),  \quad
\dot{x}^\oplus  \doteq
\left\{
\begin{aligned}
&\lambda_{k_\alpha} (\hat{V}^- + \hat{w}^-) &&\ \mbox{if}\ \alpha >0, \\
&\lambda_{k_\alpha} (\hat{V}^- + \hat{w}^-, (\hat{V} + \hat{w})^\oplus)  &&\ \mbox{if}\ \alpha<0.
\end{aligned}
\right.
\end{eqnarray*}
So one can rewrite the term $\widetilde{\mathscr{E}}_{i,\alpha}$ as
\begin{eqnarray*}
\begin{split}
\widetilde{\mathscr{E}}_{i,\alpha}
&= \ell_i^+ \mathcal{W}_i^+ |q_i^+ | (\lambda_i^+ - \dot{x}_\alpha) -  \ell_i^- \mathcal{W}_i^- |q_i^- | (\lambda_i^- - \dot{x}_\alpha) \\[5pt]
&= \big( \ell_i^+ \mathcal{W}_{i}^+ |q_i^\oplus | (\lambda_i^\oplus - \dot{x}^\oplus) -   \ell_i^- \mathcal{W}_i^- |q_i^- | (\lambda_i^- - \dot{x}^\oplus)\big)\\[5pt]
&\quad +  \big(\ell_i^+ \mathcal{W}_i^+ |q_i^+ | (\lambda_i^+ - \dot{x}_\alpha) - \ell_i^+  \mathcal{W}_{i}^+ |q_i^\oplus | (\lambda_i^\oplus - \dot{x}^\oplus)
+ \ell_i^- \mathcal{W}_i^- |q_i^- | (\dot{x}_\alpha - \dot{x}^\oplus) \big) \\[5pt]
& \doteq \widetilde{\mathscr{E}}^{-}_{i,\alpha}+ \widetilde{\mathscr{E}}^{+}_{i,\alpha}.
\end{split}
\end{eqnarray*}
By similar argument on $\hat{V}^- + \hat{w}^-, (\hat{V} + \hat{w})^\oplus$ instead of ${V}^-, {V}^+$ in section 5.2, it turns out that
\begin{eqnarray*}
\begin{split}
\sum_{i=1}^4 \widetilde{\mathscr{E}}_{i,\alpha}^- &\leq \mathcal{O}(1) \varepsilon |\alpha|, &\qquad &\alpha  \in {\mathcal{J}^{\textsc{e}}(\hat{V})}\cup\mathcal{J}^{\textsc{y}}(\hat{V}) \setminus\mathcal{NP},\\[5pt]
\sum_{i=1}^4  \widetilde{\mathscr{E}}_{i,\alpha}^- &\leq \mathcal{O}(1)  |\alpha|, &\qquad  &\alpha  \in \mathcal{J}^{\textsc{e}} (\hat{V})\cap \mathcal{NP}, \\[5pt]
\sum_{i=1}^4 \widetilde{\mathscr{E}}_{i,\alpha}^- &<0, &\qquad &\alpha \in \mathcal{S}_{\rm{b}}(\hat{V}).
\end{split}
\end{eqnarray*}
Hence, we only need to evaluate $\widetilde{\mathscr{E}}_{i,\alpha}^+$ in the following cases.

\textbf{Case 6.1}. \emph{Wave $\alpha \in \mathcal{J}^{\textsc{e}}(\hat{V}) \setminus\mathcal{NP}$}.
To begin with, we present some elementary estimates on indices $q^\pm_i$ and $q_i^\oplus$, etc.
%\begin{lemma}\label{lemm f estimate}
%If $f(x,y,z) \in C^2(\mathbb{R}\times \mathbb{R}^4\times \mathbb{R}^4; \mathbb{R})$ satisfies
%\begin{eqnarray*}	
%f(x,0,0)=f(0,y,0)=f(0,0,0)=0,
%\end{eqnarray*}
%then
%\begin{eqnarray*}	
%f(x,y,z)=\mathcal{O}(1)\big(|x| \cdot |y| + |z|\big).
%\end{eqnarray*}
%\end{lemma}
%By Lemma \ref{lemm f estimate}, we calculate the difference
For $1\leq i \leq4$, there hold
\begin{eqnarray*}
\begin{split}
&q_i^+  -  q_i^\oplus =\mathcal{O}(1)\big(|\alpha| \cdot |\hat{w}^-| + |\Delta\hat{w}|\big), \quad
q_i^\oplus-q_i^- =\mathcal{O}(1) |\alpha|, \\[5pt]
&\lambda_i^+ -\lambda_i^\oplus =\mathcal{O}(1)\big(|\alpha| \cdot |\hat{w}^-| + |\Delta\hat{w}|\big), \quad
\dot{x}_\alpha  - \dot{x}^\oplus = \mathcal{O}(1)\big( |\hat{w}^-|+\varepsilon\big),
\end{split}
\end{eqnarray*}
where $\Delta\hat{w} = \hat{w}^+ - \hat{w}^-$.

If $q_i^+ q_i^\oplus \leq 0 $, it is clear that
\begin{eqnarray*}
|q_i^+|  +  |q_i^\oplus|= |q_i^+ -q_i^\oplus|= \mathcal{O}(1)\big(|\alpha| \cdot |\hat{w}^-| + |\Delta\hat{w}|\big).
\end{eqnarray*}
Thus,
\begin{eqnarray*}
\begin{split}
\sum_{i=1}^4 \widetilde{\mathscr{E}}^+_{i, \alpha}
&=\sum_{i=1}^4 \ell_i \Big(\mathcal{W}_i^+ |q_i^+ | (\lambda_i^+ - \dot{x}_\alpha) -\mathcal{W}_{i}^+ |q_i^\oplus |\big(\lambda_i^\oplus - \dot{x}^\oplus\big)
+\mathcal{W}_i^- |q_i^- | (\dot{x}_\alpha - \dot{x}^\oplus) \Big)\\[5pt]
&\leq \mathcal{O}(1)\big(|\alpha| \cdot |\hat{w}^-| + |\Delta\hat{w}|\big)
	+ \mathcal{O}(1)\big(|\alpha|\cdot|\hat{w}^-| + |\Delta\hat{w}| + |\alpha|\big)\big(|\hat{w}^-| + \varepsilon\big)\\[5pt]
&= \mathcal{O}(1) \big(|\alpha| ( |\hat{w}^-|+ \varepsilon) + |\Delta\hat{w}| \big).
\end{split}
\end{eqnarray*}

If $q_i^+ q_i^\oplus > 0$, then we have $\mathcal{W}_i^+  |q_i^\oplus | - \mathcal{W}_i^- |q_i^-|= \mathcal{O}(1)|\alpha|$ and
\begin{eqnarray*}
\begin{split}
\sum_{i=1}^4 \widetilde{\mathscr{E}}_{i,\alpha}^+
&=\sum_{i=1}^4 \ell_i  \Big(\mathcal{W}_i^+ \big(|q_i^+|-|q_i^\oplus|\big) \big( \lambda_i^+ - \dot{x}_\alpha\big)+\mathcal{W}_{i}^+  |q_i^\oplus|\big(\lambda_i^+ - \lambda_i^\oplus\big)\\[5pt]
&\quad + \big(\mathcal{W}_i^+|q_i^\oplus|- \mathcal{W}_i^-  |q_i^-|\big) \big(\dot{x}^\oplus-\dot{x}_\alpha\big)\Big)\\[5pt]
&\leq \mathcal{O}(1) \big(|\alpha| \cdot |\hat{w}^-| + |\Delta\hat{w}|\big)+\mathcal{O}(1)|\alpha|\big(|\hat{w}^-| + \varepsilon\big)\\[5pt]
&=\mathcal{O}(1) \Big( |\alpha|\big(|\hat{w}^-|+ \varepsilon\big) + |\Delta\hat{w}|\Big).
\end{split}
\end{eqnarray*}

\textbf{Case 6.2}. \emph{Wave $\alpha \in \mathcal{J}^{\textsc{y}}_3(\hat{V})$}.
For $k_\alpha =3$, one has $\alpha= Y^+ - Y^-$ and the front speed $\dot{x}_\alpha =\lambda_3(\hat{V})$. The corresponding auxiliary quantities satisfy $\lambda_i^\oplus  = \lambda_i ^-$ and
$\dot{x}^\oplus = \lambda_3 (\hat{V}^- + \hat{w}^-)$. Moreover, it is clear that
\begin{eqnarray*}
q_i^+  -  q_i^\oplus =\mathcal{O}(1)|\Delta\hat{w}| , \quad \lambda_i^+  -  \lambda_i^\oplus =\mathcal{O}(1) |\Delta\hat{w}|,\quad \dot{x}_\alpha  - \dot{x}^\oplus = 0,
\end{eqnarray*}
which imply
\begin{eqnarray*}
\begin{split}
\sum_{i=1}^4\widetilde{\mathscr{E}}_{i,\alpha}^+
&=\sum_{i=1}^4 \ell_i \mathcal{W}_i^+ \Big(|q_i^+ | (\lambda_i^+ - \dot{x}_\alpha) -   |q_i^\oplus | (\lambda_i^\oplus - \dot{x}_\alpha) \Big)\\[5pt]
&=\sum_{i=1}^4  \ell_i\mathcal{W}_i^+ \Big( \big(|q_i^+ | -   |q_i^\oplus | \big)\big(\lambda_i^+ - \dot{x}_\alpha\big) + |q_i^\oplus | \big( \lambda_i^+ - \lambda_i^\oplus\big) \Big)\\[5pt]
&=\mathcal{O}(1) |\Delta\hat{w}|.
\end{split}
\end{eqnarray*}

\textbf{Case 6.3}. \emph{Wave $\alpha \in \mathcal{S}_{\rm{b}}(\hat{V})$}.
For the strong $4$-shock $\alpha$ of $\hat{V}$, there hold
\begin{eqnarray*}
q_i^+ - q_i^\oplus=  \mathcal{O}(1)\big(|\hat{w}^-| +|\hat{w}^+|\big),\quad \lambda_i^+ - \lambda_i^\oplus=\mathcal{O}(1)\big(|\hat{w}^-| +|\hat{w}^+| \big),
\end{eqnarray*}
and
\begin{eqnarray*}
\dot{x}_\alpha - \dot{x}^\oplus = \mathcal{O}(1)\big(|\hat{w}^-| +|\hat{w}^+| \big).
\end{eqnarray*}
Then we have
\begin{eqnarray*}
\begin{split}
\quad\sum_{i=1}^4 \widetilde{\mathscr{E}}_{i,\alpha} ^+
&=\sum_{i=1}^4 \Big(\ell_i^+ \mathcal{W}_i^+ \big( (|q_i^+ | - |q_i^\oplus |) (\lambda_i^+ - \dot{x}_\alpha) + |q_i^\oplus | (\lambda_i^+ - \lambda_i^\oplus + \dot{x}^\oplus -\dot{x}_\alpha) \big) \\
& \hspace{1.3cm} + \ell_i^- \mathcal{W}_i^- |q_i^- | (\dot{x}_\alpha - \dot{x}^\oplus) \Big)\\[5pt]
&= \mathcal{O}(1)\big(|\hat{w}^-| +|\hat{w}^+|\big)\\[5pt]
&= \mathcal{O}(1) \varepsilon .
\end{split}
\end{eqnarray*}

\textbf{Case 6.4}. \emph{Wave $\alpha \in \mathcal{J}^{\textsc{e}}(\hat{V})\cap\mathcal{NP}$}.
Recall that $|\alpha|= |\hat{V}^+ - \hat{V}^-|$. Then from
\begin{eqnarray*}
\mathcal{W}_i^+= \mathcal{W}_i^-, \qquad q_i^+  -  q_i^- = \mathcal{O}(1)|\alpha|,\qquad  \lambda_i^+  - \lambda_i^- =\mathcal{O}(1)|\alpha|,
\end{eqnarray*}
it follows that
\begin{align*}
\sum_{i=1}^4 \widetilde{\mathscr{E}}_{i,\alpha}
&=\sum_{i=1}^4 \Big(\ell_i \mathcal{W}_i^+ |q_i^+ |\big(\lambda_i^+ - \dot{x}_\alpha\big) -  \ell_i \mathcal{W}_i^- |q_i^- |\big(\lambda_i^- - \dot{x}_\alpha\big) \Big)\\[5pt]
	& =  \sum_{i=1}^4  \ell_i \mathcal{W}_i^+ \Big( (|q_i^+ | -|q_i^- |) (\lambda_i^- - \dot{x}_\alpha)+  |q_i^- |(\lambda_i^+ - \lambda_i^- )\Big) \\[5pt]
	& = \mathcal{O}(1) |\alpha| .
\end{align*}

Combining the above four cases altogether, we eventually obtain
\begin{eqnarray*}
\begin{split}
\dot{\mathscr{L}}_{\textrm{p}}(\hat{U}, \hat{V} + \hat{w} )
&=\sum_{\alpha} \sum_{i=1}^4 \widetilde{\mathscr{E}}_{i,\alpha} + \sum_{i=1}^4 \widetilde{\mathscr{E}}_{i,\mathfrak{p}}\\[5pt]
	& \leq \mathcal{O}(1) \Big(\varepsilon+ \|{ \hat{w}}\|_{L^{\infty}} + T.V.\{\hat{w}; (\chi_{\mathfrak{p}}(\tau),+\infty) \} \Big) \\[5pt]
	& = \mathcal{O}(1) \big(\varepsilon+  (\gamma-1)\varepsilon T.V.\{Y\phi(T);(\chi_{\mathfrak{p}}(\tau),+\infty)\} \big) \\[5pt]
	& = \mathcal{O}(1) \varepsilon.
\end{split}
\end{eqnarray*}
Therefore
\begin{eqnarray*}
\mathscr{L}_{\textrm{p}}(\hat{U}(t), (\hat{V} + \hat{w})(t)) - \mathscr{L}_{\textrm{p}}(\hat{U}(\tilde{t}), (\hat{V} + \hat{w})(\tilde{t}))
\leq \mathcal{O}(1) \varepsilon (t-\tilde{t}), \quad \forall\ t>\tilde{t} \geq 0.
\end{eqnarray*}
Taking $\tilde{t}=0$ and using the equivalence between $\mathscr{L}_{\textrm{p}}$ and $L^1$ metric, we conclude that
\begin{eqnarray*}
\|\mathcal{S}^\varepsilon _t \circ\mathcal{T}_\varepsilon ({U}_\tau, u_{\mathfrak{p}})-\mathcal{T}_\varepsilon \circ\mathcal{S}^\varepsilon _t ({U}_\tau, u_{\mathfrak{p}})\|_{L^1(\mathbb{R})}
=\|\hat{U}- \hat{V} -\hat{w}\|_{L^1(\mathbb{R})}
\leq \mathcal{O}(1) \varepsilon t.
\end{eqnarray*}

\end{proof}

Next we proceed to establish the tangent estimate which exhibits that $\mathcal{P}^\varepsilon _t$ and $\mathcal{S}^\varepsilon _t\circ \mathcal{T}_t $ have an identical tangent operator at $({U}_\tau, u_{\mathfrak{p}})$.

\begin{lemma}[Tangent estimate] \label{Tangent estimate}
Given initial-boundary data $({U}_\tau, u_{\mathfrak{p}})$ under condition $\widetilde{\rm{(C1)}}$ or $\widetilde{\rm{(C2)}}$, there holds
\begin{eqnarray*}
\|\mathcal{P}^\varepsilon _t ({U}_\tau, u_{\mathfrak{p}})-\mathcal{S}^\varepsilon _t\circ\mathcal{T}_t ({U}_\tau, u_{\mathfrak{p}}) \|_{L^1(\mathbb{R})}=\mathcal{O}(1)\big(1+\|{ Y_{1,\tau}}\|_{L^1(\mathbb{R})} \big) t^2
\end{eqnarray*}
for every $t<\bar{t}$ and $ \varepsilon\ll \min\{t, t^2\}$.
\end{lemma}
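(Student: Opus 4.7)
The plan is to sandwich the intermediate operator $\mathcal{S}^{\varepsilon}_{t}\circ\mathcal{T}_{\varepsilon}^{k}$, where $k\doteq\lfloor t/\varepsilon\rfloor$, between $\mathcal{P}^{\varepsilon}_{t}$ and $\mathcal{S}^{\varepsilon}_{t}\circ\mathcal{T}_{t}$, and then bound each of the two resulting differences in $L^{1}$ separately. The first difference will be handled by a telescoping argument that repeatedly invokes the commutation estimate of Lemma \ref{commutation estimate}, while the second will be controlled by a Taylor-expansion analysis of the Euler-type reaction operator $\mathcal{T}_{\varepsilon}$.

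For the telescoping step I would exploit the semigroup property of $\mathcal{S}^{\varepsilon}$ to write $\mathcal{P}^{\varepsilon}_{t}=\mathcal{S}^{\varepsilon}_{t-k\varepsilon}\circ(\mathcal{T}_{\varepsilon}\circ\mathcal{S}^{\varepsilon}_{\varepsilon})^{k}$ and define, for $0\leq j\leq k$,
\begin{equation*}
  A_{j}\doteq\mathcal{S}^{\varepsilon}_{t-k\varepsilon}\circ(\mathcal{T}_{\varepsilon}\circ\mathcal{S}^{\varepsilon}_{\varepsilon})^{k-j}\circ\mathcal{S}^{\varepsilon}_{j\varepsilon}\circ(\mathcal{T}_{\varepsilon})^{j},
\end{equation*}
so that $A_{0}=\mathcal{P}^{\varepsilon}_{t}$ and $A_{k}=\mathcal{S}^{\varepsilon}_{t}\circ\mathcal{T}_{\varepsilon}^{k}$. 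An algebraic manipulation rewrites the increment $A_{j+1}-A_{j}$ as the composition of $\mathcal{S}^{\varepsilon}_{t-k\varepsilon}\circ(\mathcal{T}_{\varepsilon}\circ\mathcal{S}^{\varepsilon}_{\varepsilon})^{k-j-1}$ with the bracket $\mathcal{S}^{\varepsilon}_{(j+1)\varepsilon}\circ\mathcal{T}_{\varepsilon}-\mathcal{T}_{\varepsilon}\circ\mathcal{S}^{\varepsilon}_{(j+1)\varepsilon}$ and further with $\mathcal{T}_{\varepsilon}^{j}$. Applying Lemma \ref{commutation estimate} to this bracket (with time parameter $(j+1)\varepsilon$ in place of $t$) gives an $L^{1}$-bound $\mathcal{O}(1)(j+1)\varepsilon^{2}$, and the $L^{1}$-Lipschitz continuity of $\mathcal{S}^{\varepsilon}$ and $\mathcal{T}_{\varepsilon}$ supplied by Proposition \ref{prop:7.2} (with $u_{\mathfrak{p}}=v_{\mathfrak{p}}$) propagates this bound up to the outermost operator. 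Summing telescopically yields
\begin{equation*}
  \|\mathcal{P}^{\varepsilon}_{t}(U_{\tau},u_{\mathfrak{p}})-(\mathcal{S}^{\varepsilon}_{t}\circ\mathcal{T}_{\varepsilon}^{k})(U_{\tau},u_{\mathfrak{p}})\|_{L^{1}(\mathbb{R})}\leq\mathcal{O}(1)\varepsilon^{2}\cdot\tfrac{k(k+1)}{2}\leq\mathcal{O}(1)\,t^{2}.
\end{equation*}

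For the second piece I would exploit that both $\mathcal{T}_{\varepsilon}^{k}$ and $\mathcal{T}_{t}$ are first-order approximations of the exact reaction flow $\partial_{t}E(U)=G(U)$ at time $k\varepsilon\approx t$. Iterating $U_{j+1}=U_{j}+\varepsilon G_{0}(U_{j})$ and Taylor-expanding $G_{0}(U_{j})-G_{0}(U_{\tau})$ yields the pointwise identity
\begin{equation*}
  \mathcal{T}_{\varepsilon}^{k}(U_{\tau})(x)-\mathcal{T}_{t}(U_{\tau})(x)=\tfrac{1}{2}\,t^{2}\,DG_{0}(U_{\tau})\,G_{0}(U_{\tau})(x)+\mathcal{O}(t^{3})+\mathcal{O}(\varepsilon t).
\end{equation*}
The key observation is that both $G_{0}(U_{\tau})(x)$ and $DG_{0}(U_{\tau})\,G_{0}(U_{\tau})(x)$ vanish wherever $Y_{1,\tau}(x)=0$, so each pointwise error is dominated by $C\,t^{2}\,Y_{1,\tau}(x)$. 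Integrating in $x$ produces $\|\mathcal{T}_{\varepsilon}^{k}(U_{\tau})-\mathcal{T}_{t}(U_{\tau})\|_{L^{1}(\mathbb{R})}\leq\mathcal{O}(1)(1+\|Y_{1,\tau}\|_{L^{1}(\mathbb{R})})\,t^{2}$, where the $\mathcal{O}(\varepsilon t)$ contribution is absorbed thanks to $\varepsilon\ll t^{2}$. Applying $\mathcal{S}^{\varepsilon}_{t}$ and invoking its $L^{1}$-Lipschitz estimate from Proposition \ref{prop:7.2} preserves this bound, and a triangle inequality with the telescoping estimate concludes the proof.

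The main obstacle I anticipate is the arithmetic bookkeeping in the telescoping step: swapping the $j$-th copy of $\mathcal{T}_{\varepsilon}$ across the accumulated $\mathcal{S}^{\varepsilon}$-block costs an amount that grows linearly in $j$, since the effective length of that block is $(j+1)\varepsilon$; only the arithmetic-series cancellation $\sum_{j=0}^{k-1}(j+1)\varepsilon^{2}\sim(k\varepsilon)^{2}=t^{2}$ produces the sharp quadratic rate, and a naive worst-case bound $k\cdot\mathcal{O}(\varepsilon t)\sim t^{2}/\varepsilon\cdot\varepsilon=t^{2}$ gives the same order but would obscure the dependence on the data. A secondary subtlety is verifying that the local-truncation error of $\mathcal{T}_{\varepsilon}$ localizes to the support of $Y_{1,\tau}$; without this localization the $L^{1}$-bound would scale with $\|U_{\tau}\|_{L^{\infty}}\cdot\|\mathbf{1}_{\mathrm{supp}\,Y_{1,\tau}}\|_{L^{1}}$, which is precisely what the factor $1+\|Y_{1,\tau}\|_{L^{1}}$ is designed to capture.
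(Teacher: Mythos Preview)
Your proposal is correct and follows essentially the same route as the paper's proof: the same interpolating sequence (your $A_j$ coincides, after reindexing, with the paper's $\mathcal{P}^\varepsilon_{(n-k)\varepsilon}\circ\mathcal{S}^\varepsilon_{k\varepsilon}\circ(\mathcal{T}_\varepsilon)^k$), the same use of the commutation estimate at each telescoping step, and the same comparison of $(\mathcal{T}_\varepsilon)^k$ with a single Euler step $\mathcal{T}_t$. The only cosmetic differences are that the paper uses $n=\lceil t/\varepsilon\rceil$ and handles the $\mathcal{T}_\varepsilon^n$ vs.\ $\mathcal{T}_{n\varepsilon}$ gap by induction rather than by your Taylor expansion, and it isolates the discrepancy between $n\varepsilon$ and $t$ as a separate final step; both arguments localize the reaction error through the factor $Y_{1,\tau}$ in $G_0$ to produce the $\|Y_{1,\tau}\|_{L^1}$ dependence.
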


\begin{proof}
Since operator $\mathcal{T}_t$ does not affect the velocity of piston, we claim that $\mathcal{P}^\varepsilon _t ({U}_\tau, u_{\mathfrak{p}})$,
$\mathcal{S}^\varepsilon _t \circ \mathcal{T}_t ({U}_\tau, u_{\mathfrak{p}}) $ have the same boundary $\chi_{\mathfrak{p},1}$.
Set $n=\lceil t/\varepsilon\rceil$. We first consider the partially ignited flow under condition $\widetilde{\textrm{(C2)}}$.
Recall that ${ \chi_{\rm {s},1}}$ is the flame front in ${U}_\tau$. Then the fluid temperature dramatically exceeds its ignition temperature behind the front, \emph{i.e.},
\begin{eqnarray*}
T>T_{\text{i}} \quad \mbox{for}\  x\in (\chi_{\mathfrak{p},1},{ \chi_{\rm {s},1}}),\qquad
T<T_{\text{i}} \quad \mbox{for}\  x\in ({ \chi_{\rm {s},1}}, +\infty).
\end{eqnarray*}
This implies
\begin{eqnarray*}
G_{0}\big((\mathcal{T}_\varepsilon)^n ({U}_\tau)\big)\neq0, \quad G_{0}(\mathcal{T}_{\varepsilon n} ({U}_\tau))\neq 0 \quad \mbox{for}\ \  x\in(\chi_{\mathfrak{p},1},{ \chi_{\rm {s},1}}),
\end{eqnarray*}
and
\begin{eqnarray*}
G_{0}\big((\mathcal{T}_\varepsilon)^n ({U}_\tau)\big)=G_0(\mathcal{T}_{\varepsilon n}({U}_\tau))=0 \quad \mbox{for}\ \  x\in ({ \chi_{\rm {s},1}}, +\infty),
\end{eqnarray*}
provided $n\varepsilon$ is small, where $(\mathcal{T}_\varepsilon)^n$ stands for composition of $n$ operators $\mathcal{T}_\varepsilon$. Furthermore,
\begin{eqnarray*}
\begin{split}
\|G_{0}\big((\mathcal{T}_\varepsilon)^n ({U}_\tau)\big)- G_0(\mathcal{T}_{n\varepsilon} ({U}_\tau))\|_{L^1(\mathbb{R})}
&\leq L_{G_0}  \int_{\chi_{\mathfrak{p},1}} ^{{ \chi_{\rm {s},1}}}| (\mathcal{T}_\varepsilon)^n ({U}_\tau)-\mathcal{T}_{n\varepsilon} ({U}_\tau)|dx \\[5pt]
&\leq L_{G_0}\|(\mathcal{T}_\varepsilon)^n ({U}_\tau)- \mathcal{T}_{n\varepsilon} ({U}_\tau)\|_{L^1(\mathbb{R})}
\end{split}
\end{eqnarray*}
where $L_{G_0}$ denotes the local Lipschitz constant of function $G_0$. By induction, we derive that
\begin{equation}\label{induction on T^n}
\|(\mathcal{T}_\varepsilon)^n({U}_\tau)- \mathcal{T}_{n\varepsilon} ({U}_\tau)\|_{L^1(\mathbb{R})}=\mathcal{O}(1) (n-1)^2 \varepsilon^2 \| Y_{1,\tau}\|_{L^1(\mathbb{R})} .
\end{equation}
The above equality also holds under condition $\widetilde{\textrm{(C1)}}$. The flow estimates (\ref{P_t approximate continuity 1})(\ref{S_t approximate continuity 1})(\ref{induction on T^n}) and Lemma \ref{commutation estimate} yield that
\begin{eqnarray}\label{Estimate Pn-SnTn}
\begin{split}
&\| \mathcal{P}_{n\varepsilon}^\varepsilon ({U}_\tau, u_{\mathfrak{p}})- \mathcal{S}_{n\varepsilon}^\varepsilon\circ \mathcal{T}_{n\varepsilon}({U}_\tau, u_{\mathfrak{p}})\|_{L^1(\mathbb{R})} \\[5pt]
\leq& \| \mathcal{P}_{n\varepsilon}^\varepsilon ({U}_\tau, u_{\mathfrak{p}})
		- \mathcal{S}_{n\varepsilon}^\varepsilon \circ (\mathcal{T}_\varepsilon)^n ({U}_\tau, u_{\mathfrak{p}})\|_{L^1(\mathbb{R})} \\
	&	+ \|\mathcal{S}_{n\varepsilon}^\varepsilon \circ (\mathcal{T}_\varepsilon)^n ({U}_\tau, u_{\mathfrak{p}}) - \mathcal{S}_{n\varepsilon}^\varepsilon \circ\mathcal{T}_{n\varepsilon}({U}_\tau, u_{\mathfrak{p}})\|_{L^1(\mathbb{R})}\\[5pt]
\leq & \sum_{k=1}^n \|\mathcal{P}_{(n-k)\varepsilon}^\varepsilon \circ\mathcal{T}_\varepsilon\circ\mathcal{S}_{k\varepsilon}^\varepsilon \circ (\mathcal{T}_\varepsilon)^{k-1}({U}_\tau, u_{\mathfrak{p}})
		- \mathcal{P}_{(n-k)\varepsilon}^\varepsilon \circ\mathcal{S}_{k\varepsilon}^\varepsilon \circ(\mathcal{T}_\varepsilon)^{k}({U}_\tau, u_{\mathfrak{p}})\|_{L^1(\mathbb{R})}\\[5pt]
	&+\|\mathcal{S}_{n\varepsilon}^\varepsilon\circ (\mathcal{T}_\varepsilon)^n ({U}_\tau, u_{\mathfrak{p}}) - \mathcal{S}_{n\varepsilon}^\varepsilon \circ \mathcal{T}_{n\varepsilon} ({U}_\tau, u_{\mathfrak{p}})\|_{L^1(\mathbb{R})} \\[5pt]
\leq & \mathcal{O}(1) \mathop{\sum}_{k=1}^n \big(\| \mathcal{T}_\varepsilon\circ\mathcal{S}_{k\varepsilon}^\varepsilon\circ (\mathcal{T}_\varepsilon)^{k-1}({U}_\tau, u_{\mathfrak{p}})
		-\mathcal{S}_{k\varepsilon}^\varepsilon \circ(\mathcal{T}_\varepsilon)^{k}({U}_\tau, u_{\mathfrak{p}})\|_{L^1(\mathbb{R})}+(n-k)\varepsilon^2 \big)  \\[5pt]
& + \mathcal{O}(1) \|(\mathcal{T}_\varepsilon)^n ({U}_\tau) -\mathcal{T}_{n\varepsilon}  ({U}_\tau)\|_{L^1(\mathbb{R})}+ \mathcal{O}(1) n\varepsilon^2  \\[5pt]
\leq & \mathcal{O}(1) \sum_{k=1}^n \big( k\varepsilon^2+(n-k)\varepsilon^2   \big)  +  \mathcal{O}(1) (n-1)^2\varepsilon^2\|Y_{1,\tau}\|_{L^1(\mathbb{R})}  + \mathcal{O}(1) n\varepsilon^2  \\[5pt]
= & \mathcal{O}(1)\big(1+\| Y_{1,\tau}\|_{L^1(\mathbb{R})}\big) n^2\varepsilon^2.
\end{split}
\end{eqnarray}
We consequently deduce that
\begin{eqnarray*}
\begin{split}
&\|\mathcal{P}_t^\varepsilon ({U}_\tau, u_{\mathfrak{p}})- \mathcal{S}_t^\varepsilon\circ\mathcal{T}_t ({U}_\tau, u_{\mathfrak{p}})\|_{L^1(\mathbb{R})}\\[5pt]
\leq & \|\mathcal{P}_t^\varepsilon ({U}_\tau, u_{\mathfrak{p}})-   \mathcal{P}_{n\varepsilon}^\varepsilon ({U}_\tau, u_{\mathfrak{p}})\|_{L^1(\mathbb{R})}
	+ \|\mathcal{P}_{n\varepsilon}^\varepsilon({U}_\tau, u_{\mathfrak{p}})- \mathcal{S}_{n\varepsilon}^\varepsilon \circ\mathcal{T}_{n\varepsilon}({U}_\tau, u_{\mathfrak{p}})\|_{L^1(\mathbb{R})}\\[5pt]
&+\|\mathcal{S}_{n\varepsilon}^\varepsilon\circ\mathcal{T}_{n\varepsilon}({U}_\tau, u_{\mathfrak{p}}) - \mathcal{S}_{t}^\varepsilon \circ\mathcal{T}_{t} ({U}_\tau, u_{\mathfrak{p}})\|_{L^1(\mathbb{R})} \\[5pt]
\leq & \mathcal{O}(1)(t-n\varepsilon) +  \mathcal{O}(1)\big(1+\| Y_{1,\tau}\|_{L^1(\mathbb{R})}\big) n^2\varepsilon^2 \\[5pt]
&+\mathcal{O}(1)\|\mathcal{T}_{n\varepsilon} ({U}_\tau) - \mathcal{T}_{t} ({U}_\tau)\|_{L^1(\mathbb{R})}   + \mathcal{O}(1)n\varepsilon^2 \\[5pt]
\leq & \mathcal{O}(1)\big(1+\| Y_{1,\tau}\|_{L^1(\mathbb{R})}\big) t^2
\end{split}
\end{eqnarray*}
by estimates (\ref{Estimate Pn-SnTn}) and the Lipschitz continuity of $\mathcal{S}_t^\varepsilon $ in (\ref{S_t approximate continuity 1})(\ref{S_t approximate continuity 2}).

\end{proof}

Finally, we establish the time-additivity of reacting flow $\mathcal{P}^\varepsilon _t (U_\tau, u_{\mathfrak{p}})$.

\begin{lemma}[Additivity estimate]\label{semigroup estimate}
Given initial-boundary data $(U_\tau, u_{\mathfrak{p}})$ under condition $\widetilde{\textsc{(C1)}}$ or $\widetilde{\textsc{(C2)}}$, there holds
\begin{eqnarray*}
\|\mathcal{P}^\varepsilon _{\tilde{t}}\circ\mathcal{P}^\varepsilon _{t}(U_\tau, u_{\mathfrak{p}})- \mathcal{P}^\varepsilon _{{ t+\tilde{t}}}(U_\tau, u_{\mathfrak{p}})\|_{L^1(\mathbb{R})}
=\mathcal{O}(1) \big(1+\varepsilon+t+\tilde{t}+ \| Y_{1,\tau}\|_{L^1(\mathbb{R})}\big)\varepsilon,
\end{eqnarray*}
for every $t,\ \tilde{t}>0$ and $\varepsilon \ll \bar{t}$.
\end{lemma}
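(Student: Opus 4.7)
The plan is to exploit the fractional-step structure of $\mathcal{P}^\varepsilon_t$ and reduce the approximate semigroup identity to a sequence of commutations and time-grid realignments. Setting $n=\lfloor t/\varepsilon\rfloor$, $m=\lfloor \tilde t/\varepsilon\rfloor$, $p=\lfloor (t+\tilde t)/\varepsilon\rfloor$ and fractional remainders $r_1 = t-n\varepsilon$, $r_2=\tilde t -m\varepsilon$, $r_3=(t+\tilde t)-p\varepsilon$ in $[0,\varepsilon)$, a direct count yields $p\in\{n+m,n+m+1\}$, with $r_3=r_1+r_2$ in the first case and $r_3=r_1+r_2-\varepsilon$ in the second. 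With the shorthand $A_\varepsilon\doteq \mathcal{T}_\varepsilon\circ\mathcal{S}^\varepsilon_\varepsilon$ for a single fractional-step block, the two sides read explicitly
$$\mathcal{P}^\varepsilon_{\tilde t}\circ \mathcal{P}^\varepsilon_t=\mathcal{S}^\varepsilon_{r_2}\circ A_\varepsilon^m\circ \mathcal{S}^\varepsilon_{r_1}\circ A_\varepsilon^n,\qquad \mathcal{P}^\varepsilon_{t+\tilde t}=\mathcal{S}^\varepsilon_{r_3}\circ A_\varepsilon^p.$$
The only essential discrepancy is the ``orphan'' evolution $\mathcal{S}^\varepsilon_{r_1}$ that is inserted between the two blocks of fractional steps on the left but is absent on the right.

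The second step is to slide $\mathcal{S}^\varepsilon_{r_1}$ leftward through the block $A_\varepsilon^m$ by means of the commutation estimate in Lemma \ref{commutation estimate}. Each crossing $\mathcal{S}^\varepsilon_{r_1}\circ\mathcal{T}_\varepsilon\leftrightarrow\mathcal{T}_\varepsilon\circ\mathcal{S}^\varepsilon_{r_1}$ costs $\mathcal{O}(1)\varepsilon r_1\leq\mathcal{O}(1)\varepsilon^2$ in $L^1$, whereas the passage through $\mathcal{S}^\varepsilon_\varepsilon$ is handled by the (approximate) semigroup property of the homogeneous front-tracking operator, which contributes at most $\mathcal{O}(\varepsilon)$ per block from scheme restart effects and follows from the Lipschitz stability in \eqref{S_t approximate continuity 1}--\eqref{S_t approximate continuity 3}. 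Crucially, the $L^1$-stability \eqref{S_t approximate continuity 1}, whose Lipschitz constant is independent of time, is invoked to propagate local deviations through the remaining part of the composition additively rather than multiplicatively. Telescoping over the $m$ blocks thus yields
$$\mathcal{P}^\varepsilon_{\tilde t}\circ \mathcal{P}^\varepsilon_t = \mathcal{S}^\varepsilon_{r_2}\circ\mathcal{S}^\varepsilon_{r_1}\circ A_\varepsilon^{n+m} + \mathcal{O}(\tilde t\,\varepsilon).$$

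It then remains to match this with $\mathcal{S}^\varepsilon_{r_3}\circ A_\varepsilon^p$. When $p=n+m$ and $r_3=r_1+r_2$, the approximate additivity $\mathcal{S}^\varepsilon_{r_2}\circ\mathcal{S}^\varepsilon_{r_1}=\mathcal{S}^\varepsilon_{r_3}+\mathcal{O}(\varepsilon)$ closes the estimate. When $p=n+m+1$, one extra block $A_\varepsilon=\mathcal{T}_\varepsilon\circ\mathcal{S}^\varepsilon_\varepsilon$ must be inserted; the extra $\mathcal{S}^\varepsilon_\varepsilon$ compared with the shorter $\mathcal{S}^\varepsilon_{r_3}$ contributes $\mathcal{O}(\varepsilon)$ by the time-Lipschitz continuity \eqref{S_t approximate continuity 2}, while the extra reaction step has size
$$\|\mathcal{T}_\varepsilon U-U\|_{L^1}=\varepsilon\|G_0(U)\|_{L^1}\leq \mathcal{O}(1)\varepsilon\|Y\|_{L^1(\mathbb{R})}\leq \mathcal{O}(1)\bigl(\|Y_{1,\tau}\|_{L^1(\mathbb{R})}+(t+\tilde t)\varepsilon\bigr)\varepsilon,$$
the last inequality following from a reactant-conservation estimate analogous to \eqref{eq:4.19}. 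Summing the three contributions yields the claimed bound $\mathcal{O}(1)(1+\varepsilon+t+\tilde t+\|Y_{1,\tau}\|_{L^1(\mathbb{R})})\varepsilon$. The principal obstacle is the book-keeping in the telescoping step: a naive error propagation would multiply the per-block $\mathcal{O}(\varepsilon^2)$ cost by an exponential amplification factor as it passes through $\tilde t/\varepsilon$ intervening operators, so the uniform-in-time $L^1$-stability with Lipschitz constant of order unity is essential to keep the total commutation error linear in $\tilde t$ rather than exponentially large.
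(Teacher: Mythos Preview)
Your strategy is genuinely different from the paper's, and as written it does not quite close. The paper does \emph{not} slide $\mathcal{S}^\varepsilon_{r_1}$ through the $m\approx\tilde t/\varepsilon$ blocks via Lemma~\ref{commutation estimate}. Instead it observes that $\mathcal{P}^\varepsilon_{t+\tilde t}(U_\tau)$ and $\mathcal{P}^\varepsilon_{\tilde t}(\tilde U)$, with $\tilde U=\mathcal{P}^\varepsilon_t(U_\tau)$, differ only at a \emph{single} location near the junction time: one expression applies $\mathcal{T}_\varepsilon\circ\mathcal{S}^\varepsilon_{k\varepsilon-t}$ to $\tilde U$ before the remaining $(K-k)$ full blocks, the other starts the new $(K-k)$ blocks directly from $\tilde U$. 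The paper then applies the Lipschitz bound \eqref{P_t approximate continuity 1} for $\mathcal{P}^\varepsilon_{(K-k)\varepsilon}$ exactly once to peel off the common outer part, reducing everything to the single local estimate $\|\mathcal{T}_\varepsilon\circ\mathcal{S}^\varepsilon_{k\varepsilon-t}(\tilde U)-\tilde U\|_{L^1}\le\mathcal{O}(1)(k\varepsilon-t)+\mathcal{O}(1)\varepsilon\|Y_{1,\tau}\|_{L^1}$. No commutation lemma, no telescoping.

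The gap in your route is the additive term in the stability estimates, not the multiplicative one. You correctly note that the uniform-in-time Lipschitz constant prevents exponential amplification. But \eqref{P_t approximate continuity 1} and \eqref{S_t approximate continuity 1} also carry an additive $\mathcal{O}(1)\varepsilon\,(\text{elapsed time})$. In your telescope the $j$-th commutator error $\mathcal{O}(\varepsilon^2)$ must be pushed through $A_\varepsilon^{m-j}$, which contributes $\mathcal{O}(1)\varepsilon\cdot(m-j)\varepsilon$; summing over $j$ gives $\mathcal{O}(m^2\varepsilon^2)=\mathcal{O}(\tilde t^{\,2})$, not $\mathcal{O}(\tilde t\,\varepsilon)$. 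That excess is fatal downstream: in the uniqueness proof the lemma is invoked with $\tilde t$ of order the fixed target time $t$, and the resulting error must vanish with $\varepsilon$. The cure is precisely the paper's move --- apply the flow Lipschitz estimate once instead of $m$ times --- or equivalently note that $\mathcal{S}^\varepsilon_{r_1}$ is $\mathcal{O}(\varepsilon)$-close to the identity and bound $\|A_\varepsilon^m\mathcal{S}^\varepsilon_{r_1}-\mathcal{S}^\varepsilon_{r_1}A_\varepsilon^m\|$ directly by two triangle-inequality terms, each handled with a single use of \eqref{P_t approximate continuity 1}.
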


\begin{proof}
Fix $\varepsilon \ll\bar{t}$. There exist two integers $k$ and $\textrm{K}$ such that
\begin{eqnarray*}
(k-1)\varepsilon < t \leq k\varepsilon, \quad  \textrm{K}\varepsilon \leq t +\tilde{t} < (\textrm{K}+1)\varepsilon,
\end{eqnarray*}
which lead to
\begin{eqnarray*}
\lceil \tilde{t}/\varepsilon \rceil = \textrm{K}-k  \quad\mbox{or}\quad  \textrm{K}-k+1.
\end{eqnarray*}
It suffices to discuss the case $\lceil \tilde{t}/\varepsilon \rceil= \textrm{K}-k$. Set $\tilde{U}\doteq \mathcal{P}_{t}^\varepsilon (U_\tau, u_{\mathfrak{p}})$. Then the Lipschitz continuity
(\ref{P_t approximate continuity 1})-(\ref{S_t approximate continuity 2}) and Proposition \ref{prop:4.1} give that
\begin{eqnarray*}
\begin{split}
&\|\mathcal{P}_{\tilde{t}}^\varepsilon \circ\mathcal{P}_{t}^\varepsilon (U_{\tau},u_{\mathfrak{p}})- \mathcal{P}_{t +\tilde{t}}^\varepsilon (U_{\tau},u_{\mathfrak{p}})\|_{L^1(\mathbb{R})}\\[5pt]
\leq & \|\mathcal{S}_{t+\tilde{t}-\textrm{K}\varepsilon}^\varepsilon \circ\mathcal{S}_{k\varepsilon- t}^\varepsilon\circ\mathcal{P}_{(\textrm{K}-k)\varepsilon}^\varepsilon (\Tilde{U},u_{\mathfrak{p}})
-\mathcal{S}_{t+\tilde{t}-\textrm{K}\varepsilon}^\varepsilon \circ \mathcal{P}_{(\textrm{K}-k)\varepsilon}^\varepsilon \circ\mathcal{T}_\varepsilon\circ \mathcal{S}_{k\varepsilon- t}^\varepsilon (\Tilde{U},u_{\mathfrak{p}})\|_{L^1(\mathbb{R})}  \\[5pt]
\leq & \mathcal{O}(1)\| \mathcal{S}_{k\varepsilon- t}^\varepsilon \circ\mathcal{P}_{(\textrm{K}-k)\varepsilon}^\varepsilon (\Tilde{U},u_{\mathfrak{p}})
	-\mathcal{P}_{(\textrm{K}-k)\varepsilon}^\varepsilon \circ\mathcal{T}_\varepsilon\circ \mathcal{S}_{k\varepsilon- t}^\varepsilon (\Tilde{U},u_{\mathfrak{p}})\|_{L^1(\mathbb{R})} \\[5pt]
&+\mathcal{O}(1) (t+\tilde{t}-\textrm{K}\varepsilon)\varepsilon\\[5pt]
\leq & \mathcal{O}(1)\| \mathcal{P}_{(\textrm{K}-k)\varepsilon}^\varepsilon \circ\mathcal{T}_\varepsilon \circ\mathcal{S}_{k\varepsilon- t}^\varepsilon (\Tilde{U},u_{\mathfrak{p}})
	-\mathcal{P}_{(\textrm{K}-k)\varepsilon}^\varepsilon (\Tilde{U},u_{\mathfrak{p}})\|_{L^1(\mathbb{R})} \\[5pt]
&+\mathcal{O}(1)\|\mathcal{S}_{k\varepsilon- t}^\varepsilon \circ\mathcal{P}_{(\textrm{K}-k)\varepsilon}^\varepsilon (\Tilde{U},u_{\mathfrak{p}})
	- \mathcal{P}_{(\textrm{K}-k)\varepsilon}^\varepsilon (\Tilde{U},u_{\mathfrak{p}})\|_{L^1(\mathbb{R})} + \mathcal{O}(1) \varepsilon^2 \\[5pt]
\leq & \mathcal{O}(1)\|\mathcal{T}_\varepsilon\circ \mathcal{S}_{k\varepsilon- t}^\varepsilon(\Tilde{U},u_{\mathfrak{p}}) - \tilde{{U}}\|_{L^1(\mathbb{R})}+ \mathcal{O}(1) (\textrm{K}-k)\varepsilon^2 %\\[5pt]
 +\mathcal{O}(1) (k\varepsilon - t) + \mathcal{O}(1) \varepsilon^2 \\[5pt]
\leq & \mathcal{O}(1) \big(\| \mathcal{S}_{k\varepsilon- t}^\varepsilon (\Tilde{U},u_{\mathfrak{p}}) - \tilde{{U}}\|_{L^1(\mathbb{R})}
    + \varepsilon\|G_0(\mathcal{S}_{k\varepsilon- t}^\varepsilon (\Tilde{U},u_{\mathfrak{p}}))\|_{L^1(\mathbb{R})} \big) %\\[5pt]
 + \mathcal{O}(1) \big(\tilde{t}\varepsilon + \varepsilon+ \varepsilon^2\big)  \\[5pt]
\leq & \mathcal{O}(1)(k\varepsilon - t)+ \mathcal{O}(1) \| Y_{1,\tau}\|_{L^1(\mathbb{R})}\varepsilon+ \mathcal{O}(1)\big(\tilde{t}\varepsilon + \varepsilon+ \varepsilon^2\big)  \\[5pt]
\leq & \mathcal{O}(1) \big(1+\varepsilon+t+\tilde{t}+ \|Y_{1,\tau}\|_{L^1(\mathbb{R})}\big)\varepsilon.
\end{split}
\end{eqnarray*}

\end{proof}

\emph{Proof of Theorem \ref{thm:1.1} for the uniqueness under condition $\rm{(C2)}$ or $\rm{(C3)}$}.
Remember that $({U}_0^\varepsilon ,u_{\mathfrak{p}}^\varepsilon)$ is an approximate initial-boundary data under condition $\textrm{(C2)}$ or $\textrm{(C3)}$.
Taking $\varepsilon= \mu, \nu$ respectively for $\mathcal{P}_t^\varepsilon ({U}_0^\varepsilon ,u_{\mathfrak{p}}^\varepsilon)$, we have
\begin{eqnarray*}
\begin{split}
\|\mathcal{P}_t^\mu ({U}_0^\mu, u_{\mathfrak{p}}^\mu)-\mathcal{P}_t^\nu  ({U}_0^\nu ,u_{\mathfrak{p}}^\nu)\|_{L^1(\mathbb{R})}
&\leq \|\mathcal{P}_t^\mu (U^\mu_0, u_{\mathfrak{p}}^\mu)-\mathcal{P}_t^\nu (U^\mu_0, u_{\mathfrak{p}}^\mu)\|_{L^1(\mathbb{R})}\\[5pt]
&\quad+ \|\mathcal{P}_t^\nu (\mathbf{U}^\mu_0, u_{\mathfrak{p}}^\mu) -\mathcal{P}_t^\nu (\mathbf{U}^\nu_0, u_{\mathfrak{p}}^\nu)\|_{L^1(\mathbb{R})}.
\end{split}
\end{eqnarray*}
To prove convergence, we need a local error estimate of distinct trajectories $\mathcal{P}_t^\mu (U^\mu_0, u_{\mathfrak{p}}^\mu)$ and $\mathcal{P}_t^\nu (U^\mu_0, u_{\mathfrak{p}}^\mu)$ in advance.
As shifting the initial time to $\tau>0$, we use the notations ${U}^\mu ={U}^\mu(\tau,x)$ and $u_{\mathfrak{p}}^\mu =u_{\mathfrak{p}}^\mu(t+\tau)$ in this paragraph for convenience. Then it is deduced from Lemma \ref{Tangent estimate} and (\ref{S_t approximate continuity 3}) that
\begin{eqnarray}\label{lemma local error Pt}
\begin{split}
&\|\mathcal{P}_t^\mu ({U}^\mu , u_{\mathfrak{p}}^\mu)- \mathcal{P}_t^\nu ({U}^\mu , u_{\mathfrak{p}}^\mu)\|_{L^1(\mathbb{R})} \\[5pt]
\leq & \|\mathcal{P}_t^\mu ({U}^\mu , u_{\mathfrak{p}}^\mu)- \mathcal{S}_t^\mu \circ\mathcal{T}_t({U}^\mu , u_{\mathfrak{p}}^\mu)\|_{L^1(\mathbb{R})}\\[5pt]
&+ \|\mathcal{P}_t^\nu ({U}^\mu , u_{\mathfrak{p}}^\mu)- \mathcal{S}_t^\nu \circ\mathcal{T}_t({U}^\mu, u_{\mathfrak{p}}^\mu)\|_{L^1(\mathbb{R})} \\[5pt]
& +\|\mathcal{S}_t^\mu \circ\mathcal{T}_t ({U}^\mu , u_{{\mathfrak{p}}}^\mu)- \mathcal{S}_t^\nu \circ\mathcal{T}_t({U}^\mu , u_{\mathfrak{p}}^\mu)\|_{L^1(\mathbb{R})} \\[5pt]
\leq & \mathcal{O}(1)\big(1+\|Y^\mu(\tau,\cdot) \|_{L^1(\mathbb{R})}\big) t^2 + \mathcal{O}(1)\max\{\mu, \nu\}t \\[5pt]
\leq & \mathcal{O}(1)\big(1+\|Y^\mu(\tau,\cdot) \|_{L^1(\mathbb{R})}\big) t^2,
\end{split}
\end{eqnarray}
for any positive $\mu,\ \nu \ll \min\{t,t^2\}$ and  $t<\bar{t}$.

Now we proceed to discuss the uniqueness of limit solution. Assume that $({U}^\mu_0, u_{\mathfrak{p}}^\mu)$ and $({U}^\nu_0, u_{\mathfrak{p}}^\nu)$ are two couples of piecewise constant functions, which satisfy that
\begin{eqnarray*}
\begin{split}
&T.V.\{({U}^{\varepsilon}_0,u^{\varepsilon}_{\mathfrak{p}}); \mathbb{R}_+\}\leq T.V.\{({U}_0,u_{\mathfrak{p}}); \mathbb{R}_+\}
\qquad \mbox{for}\quad \varepsilon=\mu,\ \nu,
\end{split}
\end{eqnarray*}
and
\begin{eqnarray*}
\begin{split}
&\|{U}^{\varepsilon}_0- {U}_0\|_{L^1 (\mathbb{R}_+)} < {\varepsilon}, \qquad\|u^{\varepsilon}_{\mathfrak{p}}- u_{\mathfrak{p}}\|_{L^1 (\mathbb{R}_+)} <{\varepsilon},\qquad \mbox{for}\quad \varepsilon=\mu,\ \nu.
\end{split}
\end{eqnarray*}
 We claim that for every $t\geq 0$, there holds
\begin{equation}\label{Cauchy sequence convergence}
\| \mathcal{P}^\mu _{t} ({U}^\mu_0, u_{\mathfrak{p}}^\mu)- \mathcal{P}^\nu _{t} ({U}^\nu_0, u_{\mathfrak{p}}^\nu)\|_{L^1(\mathbb{R})}\rightarrow 0\qquad \mbox{as}\quad \mu,\ \nu \rightarrow 0.
\end{equation}
To prove the claim, let's divide the interval $[0,t]$ equally into $m$ parts such that $\Delta t= t/m$ is small.
Let the points of such division satisfy $0=\tilde{t}_0 < \tilde{t}_1 < \cdots < \tilde{t}_m = t$.
If $\mu, \nu < \text{min}\{\bar{t},(\Delta t) ^2\}$,  it is deduced from Lemma \ref{semigroup estimate} that
\begin{eqnarray}\label{Estimate 4}
\begin{split}
&\quad\|\mathcal{P}_t^\mu ({U}^\mu_0, u^\mu_{\mathfrak{p}})-\mathcal{P}_t^\nu ({U}^\mu_0, u^\mu_{\mathfrak{p}})\|_{L^1(\mathbb{R})}\\[5pt]
&\leq \sum_{i=1}^m \|\mathcal{P}_{t-\tilde{t}_i}^\mu \circ\mathcal{P}_{\tilde{t}_i}^\nu({U}^\mu_0, u^\mu_{\mathfrak{p}})-\mathcal{P}_{t-\tilde{t}_{i-1}}^\mu\circ \mathcal{P}_{\tilde{t}_{i-1}}^\nu({U}^\mu_0, u^\mu_{\mathfrak{p}})\|_{L^1(\mathbb{R})} \\[5pt]
&\leq  \sum_{i=1}^m \|\mathcal{P}_{t-\tilde{t}_i}^\mu\circ \mathcal{P}_{\tilde{t}_i}^\nu({U}^\mu_0, u^\mu_{\mathfrak{p}})- \mathcal{P}_{t-\tilde{t}_{i}}^\mu\circ\mathcal{P}_{\tilde{t}_i-\tilde{t}_{i-1}}^\mu \circ\mathcal{P}_{\tilde{t}_{i-1}}^\nu({U}^\mu_0, u^\mu_{\mathfrak{p}})\|_{L^1(\mathbb{R})}\\[5pt]
&\quad +  \mathcal{O}(1) \sum_{i=1}^m\big(1+\mu+ t-\tilde{t}_{i-1} + \|Y_0^\mu\|_{L^1(\mathbb{R})}\big)\mu \\[5pt]
&\doteq   \Sigma_1 + \Sigma_2.
\end{split}
\end{eqnarray}
Using flow estimates (\ref{P_t approximate continuity 1})(\ref{lemma local error Pt}), Proposition \ref{prop:4.1} and Lemma \ref{semigroup estimate},
we derive
\begin{align}
\notag
\Sigma_1 &= \sum_{i=1}^m \|\mathcal{P}_{t-\tilde{t}_i}^\mu \circ\mathcal{P}_{\tilde{t}_i}^\nu({U}^\mu_0, u^\mu_{\mathfrak{p}})
- \mathcal{P}_{t-\tilde{t}_{i}}^\mu\circ\mathcal{P}_{\tilde{t}_i-\tilde{t}_{i-1}}^\mu\circ \mathcal{P}_{\tilde{t}_{i-1}}^\nu({U}^\mu_0, u^\mu_{\mathfrak{p}})\|_{L^1(\mathbb{R})} \\
\notag
&\leq \mathcal{O}(1)  \sum_{i=1}^m \| \mathcal{P}_{\tilde{t}_i}^\nu({U}^\mu_0, u^\mu_{\mathfrak{p}})-\mathcal{P}_{\tilde{t}_i-\tilde{t}_{i-1}}^\mu \circ\mathcal{P}_{\tilde{t}_{i-1}}^\nu({U}^\mu_0, u^\mu_{\mathfrak{p}})\|_{L^1(\mathbb{R})}
	+ \mathcal{O}(1) \sum_{i=1}^m (t-\tilde{t}_i)\mu \\	
\notag
&\leq \mathcal{O}(1) \sum_{i=1}^m || \mathcal{P}_{\tilde{t}_i-\tilde{t}_{i-1}}^\nu\circ\mathcal{P}_{\tilde{t}_{i-1}}^\nu({U}^\mu_0, u^\mu_{\mathfrak{p}})-\mathcal{P}_{\tilde{t}_i-\tilde{t}_{i-1}}^\mu \circ\mathcal{P}_{\tilde{t}_{i-1}}^\nu({U}^\mu_0, u^\mu_{\mathfrak{p}})\|_{L^1(\mathbb{R})}  \\
\notag
& \quad + \mathcal{O}(1)\sum_{i=1}^m (1+ \nu + \tilde{t}_i +\| Y_0^\mu\|_{L^1(\mathbb{R})}) \nu+\mathcal{O}(1)\sum_{i=1}^m (t-\tilde{t}_i)\mu \\[5pt]
\notag
&\leq \mathcal{O}(1)\sum_{i=1}^m \big(1+\| Y_0^\mu\|_{L^1(\mathbb{R})} \big)(\Delta t)^2 \\
\notag
& \quad +\mathcal{O}(1)\big(1+ \nu +  t +\| Y_0^\mu\|_{L^1(\mathbb{R})} \big )m \nu+\mathcal{O}(1) tm\mu \\
\notag
&\leq \mathcal{O}(1)\big(1+ \mu + \|Y_0\|_{L^1(\mathbb{R})} \big)t\Delta t \\
\label{Estimate sigma1}
&\quad + \mathcal{O}(1) \big(1+\mu+ \nu + t +\|Y_0 \|_{L^1(\mathbb{R})}\big)m \nu+\mathcal{O}(1) tm\mu,
\end{align}
and
\begin{eqnarray}\label{Estimate sigma2}
\begin{split}
\Sigma_2&= \mathcal{O}(1) \sum_{i=1}^m \big(1+\mu+ t-\tilde{t}_{i-1} +\| Y_0^\mu\|_{L^1(\mathbb{R})}\big)\mu \\[5pt]
& \leq \mathcal{O}(1)\big(1+\mu  + t + \|Y_0 \|_{L^1(\mathbb{R})}\big)m \mu .
\end{split}
\end{eqnarray}
The above estimates (\ref{Estimate 4})-(\ref{Estimate sigma2}) yield
\begin{eqnarray*}
\begin{split}
\|\mathcal{P}_t^\mu ({U}^\mu_0, u^\mu_{\mathfrak{p}})-\mathcal{P}_t^\nu ({U}^\mu_0, u^\mu_{\mathfrak{p}})\|_{L^1(\mathbb{R})}
	\leq \mathcal{O}(1) \big (1+ \mu +\nu +t+  \|Y_0\|_{L^1(\mathbb{R})} \big) \big(t\Delta t +m (\mu+\nu) \big).
\end{split}
\end{eqnarray*}
As a result, it follows that
\begin{eqnarray*}
\begin{split}
&\quad \|\mathcal{P}_t^\mu ({U}^\mu_0, u^\mu_{\mathfrak{p}})-\mathcal{P}_t^\nu ({U}^\nu_0, u^\nu_{\mathfrak{p}})\|_{L^1(\mathbb{R})} \\[5pt]
&\leq  \|\mathcal{P}_t^\mu ({U}^\mu_0, u^\mu_{\mathfrak{p}})-\mathcal{P}_t^\nu ({U}^\mu_0, u^\mu_{\mathfrak{p}})\|_{L^1(\mathbb{R})}
+ \|\mathcal{P}_t^\nu ({U}^\mu_0, u^\mu_{\mathfrak{p}})-\mathcal{P}_t^\nu ({U}^\nu_0, u^\nu_{\mathfrak{p}})\|_{L^1(\mathbb{R})} \\[5pt]
&\leq \mathcal{O}(1) \big (1+ \mu +\nu +t+ \|Y_0\|_{L^1(\mathbb{R})} \big) \big(t\Delta t +m (\mu+\nu) \big)\\[5pt]
&\quad+ \mathcal{O}(1) \|{U}^\mu_0 -{U}^\nu_0\|_{L^1(\mathbb{R})} + \mathcal{O}(1)t\nu \\[5pt]
&\leq   \mathcal{O}(1)\big(1+ \mu +\nu +t+\|Y_0\|_{L^1(\mathbb{R})} \big) \big(t\Delta t +m (\mu+\nu)\big).
\end{split}
\end{eqnarray*}
Passing to limits as $\mu,\ \nu \rightarrow 0$, one has
\begin{eqnarray*}
\mathop{\overline{\lim}}_{\mu, \nu \rightarrow 0}
\|\mathcal{P}_t^\mu ({U}^\mu_0, u^\mu_{\mathfrak{p}})-\mathcal{P}_t^\nu ({U}^\nu_0, u^\nu_{\mathfrak{p}})||_{L^1(\mathbb{R})}
\leq  \mathcal{O}(1)\big(1+t+\|Y_0\|_{L^1(\mathbb{R})}\big)t\Delta t,
\end{eqnarray*}
which implies (\ref{Cauchy sequence convergence}) owing to the arbitrariness of $\Delta t$.
Subsequently, applying this strong convergence to estimate
\eqref{estimate stability for ZND approximation}, we establish \eqref{eq:1.16}, namely the Lipschitz continuity of combustion solution
$U(t,x)\doteq \mathcal{P}_t ({U}_0, u_{\mathfrak{p}})$ with respect to initial-boundary data.
\hfill$\Box$

\bigskip
\section*{Acknowledgements}
The research of Kai Hu was supported in part by NSFC Project No.11901475 and China Postdoctoral Science Foundation No.2019M653815XB.
The research of Jie Kuang was supported in part by the NSFC Project No.11801549, NSFC Project No.11971024, the Start-Up Research Grant Project No.Y8S001104 from
 Wuhan Institute of Physics and Mathematics, and the Multidisciplinary Interdisciplinary Cultivation Project No.S21S6401 from
Innovation Academy for Precision Measurement Science and Technology, Chinese Academy of Sciences.

%\bigskip
%\section*{Data Availability Statement}
%No data were generated or used during the study.


\begin{thebibliography}{10}

\bibitem{Amadori-1997} D.~Amadori,
{Initial-boundary value problem for nonlinear systems of conservation laws},
Nonlinear Differ. Equ. Appl., 4(1997), 1-42.


\bibitem{Amadori-Goss-Guerra-2002} D.~Amadori, L.~Gosse and G.~Guerra,
{Global BV entropy solutions and uniqueness for hyperbolic systems of balance laws},
Arch. Rational Mech. Anal., 162(2002), 327-366.


\bibitem{Amadori-Guerra-2002} D.~Amadori and G.~Guerra,
{Uniqueness and continuous dependence for systems of balance laws with dissipation},
Nonlinear Anal., 49(2002), 987-1014.


\bibitem{Bressan-2000} A.~Bressan,
{Hyperbolic Systems of Conservation Laws. The One-Dimensional Cauchy Problem},
Oxford University Press, Oxford, 2000.


\bibitem{Bressan-Liu-Yang-1999} A.~Bressan, T.-P.~Liu and T.~Yang,
{$L^1$ stability estimates for $n\times n$ conservation laws},
Arch. Rational Mech. Anal., 149(1999), 1-22.

%\bibitem{BressanColombo95} A. Bressan and R.M.Colombo,
%{Unique solutions of $2\times 2$ conservation laws with large data},
%Indiana Univ. Math. J., 44(1995), 677-726.

\bibitem{Chen-Wang-2002} G.-Q.~Chen and D.-H.~Wang,
{The Cauchy problem for the Euler equations for compressible fluids},
North-Holland, Elsevier, Amsterdam, Handbook of Mathematical Dynamics, 2002, 421-543.


\bibitem{Chen-Wagner-2003} G.-Q.~Chen and D.~Wagner,
{Global entropy solutions to exothermically reacting compressible Euler equations},
J. Differential Equations, 191(2003), 277-322.


\bibitem{Chen-Chen-Wang-Wang-2005}G.-Q.~Chen, S.~Chen, Z.~Wang and D.~Wang,
{A multidimensional piston problem for the Euler equations for compressible flow},
Discrete Contin. Dyn. Syst., 13(2005), 361-383.


%\bibitem{Chen-2003} S.~Chen,
%{A singular multidimensional piston problem in compressible flow},
%J. Differential Equations, 189(2003), 292-317.


\bibitem{Chen-Wang-Zhang-2004} S.~Chen, Z.~Wang and Y.~Zhang,
{Global existence of shock front solutions to the axially symmetric piston problem for compressible fluids},
J. Hyperbolic Differ. Equ., 1(2004), 51-84.


\bibitem{Chen-Wang-Zhang-2008} S.~Chen, Z.~Wang and Y.~Zhang,
{Global existence of shock front solution to axially symmetric piston problem in compressible flow},
 Z. Angew. Math. Phys., 59(2008), 434-456.


\bibitem{Christoforou-2006} C.~Christoforou,
{Hyperbolic systems of balance laws via vanishing viscosity},
J. Differential Equations, 221(2006), 470-541.


\bibitem{Colombo-Guerra-2010} R.~M.~Colombo and G.~Guerra,
{On general balance laws with boundary},
J. Differential Equations, 248(2010), 1017-1043.


\bibitem{Courant-Friedrichs-1948} R.~Courant and K.~O.~Friedrichs,
{Supersonic flow and shock waves}, Interscience Publishers Inc., New York, 1948.


\bibitem{Dafermos-Hsiao-1982} C.~Dafermos and L.~Hsiao,
{Hyperbolic systems of balance laws with inhomogeneity and dissipation},
Indiana Univ. Math. J., 31(1982), 471-491.


\bibitem{Donadello-Marson-2007} C.~Donadello and A.~Marson,
{Stability of front tracking solutions to the initial and boundary value problem for systems of conservation laws},
Nonlinear Differ. Equ. Appl., 14(2007), 569-592.


\bibitem{Ding-2017} M.~Ding,
{Stability of rarefaction wave to the 1-D piston problem for exothermically reacting Euler equations},
Calc. Var. Partial Differential Equations, 56(2017), Paper No.78, 49pp.


\bibitem{Ding-2018} M.~Ding,
{Global existence of shock front solution to 1-D piston problem for compressible Euler equations},
J. Math. Fluid Mech., 20(2018), 2053-2071.


\bibitem{Ding-Kuang-Zhang-2017} M.~Ding, J.~Kuang and Y.~Zhang,
{Global stability of rarefaction wave to the 1-D piston problem for the compressible full Euler equations},
J. Math. Anal. Appl., 448(2017), 1228-1264.


\bibitem{Hu-2018} K.~Hu,
{Existence of global BV solutions to a model of reacting Euler fluid with variable thermodynamics parameters},
J. Math. Anal. Appl., 457(2018), 890-921.


\bibitem{Hu-2019} K.~Hu,
{Stability and uniqueness of global solutions to Euler equations with exothermic reaction},
Nonlinear Anal. Real World Appl., 48(2019), 362-382.


\bibitem{Kuang-Zhao-2020}J.~Kuang and Q.~Zhao,
{Global existence and stability of shock front solution to 1-D piston problem for exothermically reacting Euler equations},
J. Math. Fluid Mech., 22(2020), Paper No.22, 42 pp.


%\bibitem{Lai-2019}G.~Lai,
%{Detonation wave solution to a 1D piston problem for the Zeldovich-von Neumann-D?ring combustion model},
%J. Differential Equations, 267(2019), 4949-4974.


\bibitem{Lax-1957} P.~Lax,
{Hyperbolic systems of conservation laws II},
Comm. Pure Appl. Math., 10(1957), 537-566.


\bibitem{Lewicka-2004} M.~Lewicka,
{Well-posedness for hyperbolic systems of conservation laws with large BV data},
Arch. Rational Mech. Anal., 173(2004), 415-445.


\bibitem{Lewicka-Trivisa-2002} M.~Lewicka and K.~Trivisa,
{On the $\mathrm{L}^1$ well posedness of systems of conservation laws near solutions containing two large shocks},
Journal of Differential Equations, 179(2002), 133-177.


%\bibitem{ Li-Witt-Yin-2018} J.~Li, I.~Witt and H.-C.~Yin,
%{Global multidimensional shock waves of 2-dimensional and 3-dimensional unsteady potential flow equations},
%SIAM J. Math. Anal., 50(2018), 933-1009.


\bibitem{Liu-1977} T.-P.~Liu,
{Initial-boundary value problems for gas dynamics},
Arch. Ration. Mech. Anal., 64(1977), 137-168.


\bibitem{Liu-1979} T.-P.~Liu,
{Quasilinear hyperbolic systems},
Comm. Math. Phys., 68(1979), 141-172.

%\bibitem{Smith-1979} R.~G.~Smith,
%{The Riemann problem in gas dynamics},
%Trans. Amer. Math. Soc., 249(1979), 1-50.


\bibitem{Smoller-1994} J.~Smoller,
{Shock Waves and Reaction-Diffusion Equations},
Second Edition, Springer-Verlag, Inc., New York, 1994.


\bibitem{Schochet-1991} S.~Schochet,
{Sufficient conditions for local existence via Glimm scheme for large BV data},
J. Differential Equations, 89(1991), 317-354.


\bibitem{Wang-2005} Z.~Wang,
{Global existence of shock front solution to 1-dimensional piston problem (Chinese)}.
Chinese Ann. Math. Ser. A,  26(2005), 549-560.


\bibitem{Williams-1985}F.~A.~Williams,
{Combustion Theory}, 2nd ed., The Benjamin/Cummings Publishing Company,1985.





%\bibitem{Amadori97}
%D. Amadori, \emph{Initial-boundary value problems for nonlinear systems of conservation laws}, Nonlinear differ.equ.appl. \textbf{4} (1997), 1--42.


%\bibitem{DonadelloMarson07} C. Donadello, A. Marson,
%{Stability of front tracking solutions to the initial and boundary value problem for systems of conservation laws}, Nonlinear differ. equ. appl.
%\textbf{14} (2007), 569-592.


%\bibitem{ColomboGuerra10-JDE}
%R.M. Colombo, G.Guerra, \emph{On general balance laws with boundary}, J. Differential Equations \textbf{248} (2010), 1017--1043.


%\bibitem{Smollerbook}
%J. Smoller, \emph{Shock waves and reaction-diffusion equations, Second Edition}, Springer-Verlag, 1994.
%

%\bibitem{Bressanbook}
%A. Bressan, \emph{Hyperbolic systems of conservation laws: the one-dimensional Cauchy problem}, Oxford University Press, 2000.


\end{thebibliography}
\end{document}